\newtheorem{thm}{Theorem}[section]
\newtheorem*{thm*}{Theorem}
\theoremstyle{plain}
\newtheorem{prop}[thm]{Proposition}
\newtheorem{lemma}[thm]{Lemma}
\newtheorem{corollary}[thm]{Corollary}
\theoremstyle{definition}
\newtheorem{definition}[thm]{Definition}
\newtheorem*{definition*}{Definition}
\newtheorem{example}[thm]{Example}
\newtheorem{construction}[thm]{Construction}
\theoremstyle{remark}
\newtheorem{remark}[thm]{Remark}
\let\c@equation=\c@thm
\DeclareMathOperator{\const}{const}
\DeclareMathOperator{\Hom}{Hom}
\DeclareMathOperator{\Ext}{Ext}
\DeclareMathOperator{\pr}{pr}
\DeclareMathOperator{\ev}{ev}
\DeclareMathOperator{\map}{map}
\DeclareMathOperator{\Id}{Id}
\DeclareMathOperator{\Ab}{Ab}
\DeclareMathOperator{\ab}{ab}
\DeclareMathOperator{\ucom}{ucom}
\DeclareMathOperator{\tr}{tr}
\DeclareMathOperator{\res}{res}
\DeclareMathOperator{\incl}{incl}
\DeclareMathOperator{\Ho}{Ho}
\DeclareMathOperator{\gl}{gl}
\DeclareMathOperator{\st}{st}
\DeclareMathOperator{\sh}{sh}
\DeclareMathOperator{\Com}{Com}
\DeclareMathOperator{\Map}{Map}
\DeclareMathOperator{\Cov}{Cov}
\DeclareMathOperator*{\colim}{colim}
\def\eps{\varepsilon}
\def\vcong{\rotatebox[origin=c]{-90}{$\cong$}}
\def\Swarrow{\rotatebox[origin=c]{-135}{$\Rightarrow$}}
\def\vertical{\boxminus}
\def\horizontal{\mathbin{\mathpalette\Verticalversion\relax }}
\newcommand{\Verticalversion}[1]{\rotatebox[origin=c]{-90}{$#1\vertical$}}
\def\inverse{^{-1}}
\def\GF{\mathcal{GF}}
\def\GH{\mathcal{GH}}
\def\SH{\mathcal{SH}}
\def\Green{\mathcal Gl\mathcal Green}
\newcommand{\nameto}[1]{\xrightarrow{#1}}
\newcommand\myatop[2]{\genfrac{}{}{0pt}{}{#1}{#2}}
\let\Oldsquare\square 
\renewcommand{\square}{\mathbin{\Oldsquare}}
\newcommand{\adjunction}[4]{\begin{tikzcd}[cramped, ampersand replacement=\&] #1 \arrow[r, shift left,"{#3}"] \& #2 \arrow[l, shift left, "{#4}"]  \end{tikzcd}}
\newcommand{\outlineadjunction}[5][large]{\begin{tikzcd}[column sep=#1, ampersand replacement=\&] #2 \arrow[r, shift left,"{#4}"] \& #3 \arrow[l, shift left, "{#5}"]  \end{tikzcd}}
\author{Michael Stahlhauer}
\date{\today}
\title{$G_\infty$-ring spectra and Moore spectra for $\beta$-rings}
\begin{document}	

\maketitle
\thispagestyle{fancy}
\lfoot{2020 Mathematics Subject Classification 55P43, 55P91 (primary), 55S91, 18C15, 19A22 (secondary). Please refer to {\tt http://www.ams.org/msc/} for a list of codes.\\[0.5\baselineskip]	
The research in this paper was supported by the DFG Schwerpunktprogramm 1786 Homotopy Theory and Algebraic Geometry (GZ SCHW 860/1-1) and the Max-Planck-Institut für Mathematik Bonn. The author is an Associate Member of the Hausdorff Center for Mathematics at the University of Bonn (DFG GZ 2047/1, project ID 390685813).}
\cfoot{}
\renewcommand{\headrulewidth}{0pt}
\renewcommand{\footrulewidth}{0.4pt}

\begin{abstract}
In this paper, we introduce the notion of $G_\infty$-ring spectra. These are globally equivariant homotopy types with a structured multiplication, giving rise to power operations on their equivariant homotopy and cohomology groups. We illustrate this structure by analysing when a Moore spectrum can be endowed with a $G_\infty$-ring structure. Such $G_\infty$-structures correspond to power operations on the underlying ring, indexed by the Burnside ring. We exhibit a close relation between these globally equivariant power operations and the structure of a $\beta$-ring, thus providing a new perspective on the theory of $\beta$-rings.
\end{abstract} 

\tableofcontents

\section*{Introduction}
\addcontentsline{toc}{section}{Introduction}

The aim of this article is the introduction of the new notion of $G_\infty$-ring spectra. These support power operations on their equivariant homotopy groups and cohomology with coefficients in such spectra. We moreover provide an algebraic description of this notion on Moore spectra, linking $G_\infty$-ring structures on a Moore spectrum to $\beta$-ring structures on the represented ring.

Algebraic invariants are more useful the more structure they are endowed with. One example of this slogan are power operations on cohomology. The Steenrod operations on mod-$p$ cohomology, the Adams operations on $K$-theory and power operations on complex bordism and stable cohomotopy all carry a lot of additional information and have seen extensive use in classical homotopy theory. More recently, Hill, Hopkins and Ravenel used equivariant power operations in the guise of norm maps to prove the non-existence of elements of Kervaire invariant one in \cite{HHR_2016}. This work renewed interest in both multiplicative aspects of homotopy theory and equivariant techniques.

Classical power operations in cohomology arise from an $H_\infty$-ring structure on the representing spectrum, as defined by Bruner, May, McClure and Steinberger in \cite{BMMS_1986}. In the present work, we generalize the notion of an $H_\infty$-ring spectrum to a globally equivariant context, in order to represent equivariant power operations. Here, globally equivariant means that we encode compatible actions by all compact Lie groups, using the framework provided by Schwede in \cite{Schwede_2018}. Thus, a $G_\infty$-ring spectrum encodes power operations on equivariant cohomology groups for all compact Lie groups. Hence, the notion of a $G_\infty$-ring spectrum relates to the stricter notion of an ultra-commutative ring spectrum as an $H_\infty$-ring structure relates to an $E_\infty$-ring spectrum. This is visualized in the following diagram, which exhibits forgetful functors between the corresponding homotopy categories:
\[\begin{tikzcd}
\textrm{Ultra-commutative ring spectra} \rar \dar & G_\infty \textrm{-ring spectra} \dar \\
E_\infty \textrm{-ring spectra} \rar & H_\infty \textrm{-ring spectra}
\end{tikzcd}\]

Algebraically, power operations can be packaged in different ways. The Adams operations on $K$-theory endow it with the structure of a $\lambda$-ring, and the power operations on stable cohomotopy give it the structure of a $\beta$-ring. Among these, $\lambda$-rings are better-behaved and are widely studied in algebraic topology and representation theory, e.g. \cite{Hoffman_1979, Atiyah_Tall_1969, Knutson_1973}. On the other hand, the theory of $\beta$-rings is still largely mysterious, with different definitions and many subtleties not present in the study of $\lambda$-rings, see e.g. \cite{Rymer_1977, Ochoa_1988, Vallejo_1993}. In this paper, we present a different approach to the notion of $\beta$-rings, coming from a well-structured theory of global power operations, where the question of scalar extensions of the Burnside ring global power functor naturally leads to considering $\beta$-rings. In this way, $G_\infty$-ring structures on Moore spectra, which yield scalar extensions of the global power operations on the sphere spectrum, are intimately tied to $\beta$-ring structures. Moreover, we can also obtain the $\beta$-ring structure on stable cohomotopy $\pi^0(X)$ in this way. We believe that this new approach to $\beta$-rings clarifies the structure of power operations indexed by the Burnside ring, and underlines that the notion of a global power functor is more fundamental than the notion of $\beta$-rings.

\paragraph{Results}
In the first part of this work, we introduce the notion of a $G_\infty$-ring spectrum. This is a derived version of a structured ring spectrum. In contrast to e.g. an $E_\infty$-ring spectrum, the definition is at the level of the homotopy category. Concretely, we take the free commutative algebra monad $\mathbb P\colon \mathcal Sp\to \mathcal Sp$ at the level of spectra. This functor is left derivable for the positive global model structure and thus induces a monad on the global homotopy category $\GH$.
\begin{definition*}[Definition \ref{def:G_infty_ring_spectra}]
A $G_\infty$-ring spectrum is an algebra over the monad $\mathbb G= L\mathbb P$.
\end{definition*}
We then study properties of $G_\infty$-ring spectra. As mentioned above, the main property of a $G_\infty$-ring spectrum is that it supports power operations on its equivariant homotopy groups:
\begin{thm*}[Proposition \ref{prop:power_operations_on_G_infty_spectra}]
Let $E$ be a $G_\infty$-ring spectrum. The structure map $\mathbb GE\to E$ defines the structure of a global power functor on $\underline{\pi}_0(E)$.
\end{thm*}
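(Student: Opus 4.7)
The plan is to extract power operations from the monadic structure map $\mathbb{G}E\to E$ by analysing the decomposition of $\mathbb{G}E$ on the level of globally equivariant homotopy. The free commutative algebra splits as $\mathbb{P}E=\bigvee_{n\geq 0} E^{\wedge n}/\Sigma_n$, and since $\mathbb{P}$ is left derivable in the positive global model structure, the derived functor $\mathbb{G}E$ decomposes as a wedge of derived symmetric powers $E^{\wedge n}_{h\Sigma_n}$. Passing to $\underline{\pi}_0$ yields a direct sum decomposition, and the key task is to interpret each summand as the free receptacle of power operations indexed by finite $G$-sets of the appropriate cardinality.

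First I would identify $\underline{\pi}_0(\mathbb{G}E)(G)$ with the set of formal expressions $P^A(x)$, where $A$ is a finite $G$-set and $x$ is a class in $\underline{\pi}_0(E)(H)$ for a stabilizer subgroup $H\leq G$. For a class $x\in\underline{\pi}_0(E)(G)$ represented by a global map $\mathbb{S}\to E$, its $n$-th smash power is canonically $(\Sigma_n\wr G)$-equivariant; choosing a finite $G$-set $A$ of cardinality $n$ corresponds to a homomorphism $G\to\Sigma_n\wr G$ and hence determines a class in $\underline{\pi}_0(E^{\wedge n}_{h\Sigma_n})(G)$. The operation $P^A$ is then defined by post-composing this with the structure map $\mathbb{G}E\to E$. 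Setting up this identification precisely requires invoking a concrete point-set model of the free commutative monad on positively cofibrant globally equivariant spectra.

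Next I would verify the axioms of a global power functor one at a time. The normalisation $P^{\{\ast\}}(x)=x$ and $P^{\emptyset}(x)=1$ follows from the unit $\eta\colon E\to\mathbb{G}E$ and the unital summand $\mathbb{S}\hookrightarrow\mathbb{G}E$; the multiplicativity $P^A(x\cdot y)=P^A(x)\cdot P^A(y)$ follows from the compatibility of the smash power with the multiplication on $E$, together with the fact that the structure map is a morphism of ring spectra; compatibility with restrictions and transfers is naturality in $G$, combined with the way $\mathbb{P}$ encodes induction along subgroup inclusions of the symmetric groups. The composition formula, expressing $P^A$ after $P^B$ in terms of powers indexed by a $G$-set built from $A$ and $B$, will be a direct consequence of the monadic associativity $\mu\colon\mathbb{G}\mathbb{G}E\to\mathbb{G}E$: the double free commutative algebra precisely parametrises composites of power operations, and $\mu$ implements the expected combinatorial formula.

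The main obstacle I expect is the combinatorial bookkeeping in the identification of $\underline{\pi}_0(\mathbb{G}E)(G)$, which translates between the internal description via $E^{\wedge n}_{h\Sigma_n}$ and the external indexing by finite $G$-sets. Here one must confirm that positive cofibrancy ensures the derived symmetric power agrees with the strict homotopy orbits (with no spurious contributions from $\Sigma_n$-fixed data), and verify that conjugacy classes of homomorphisms $G\to\Sigma_n\wr H$ correspond bijectively to isomorphism classes of finite $G$-sets of cardinality $n$ with stabilisers in $H$. Once this identification is in place, each axiom of a global power functor reduces to a naturality or associativity statement about the monad $\mathbb{G}$, and no further homotopy-theoretic input is needed.
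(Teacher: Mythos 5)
There is a genuine gap. Your central plan is to compute $\underline{\pi}_0(\mathbb GE)$ explicitly and interpret the structure map as a power functor structure, but that computation is only available when $E$ is connective. For non-connective $E$ (say $KU$), the derived symmetric power $\mathbb G^mE$ has $\pi_0$ receiving contributions from negative-degree homotopy via K\"unneth, so $\underline{\pi}_0(\mathbb G^mE)$ is not governed by $\underline{\pi}_0(E)$ alone and there is no clean "free receptacle" description. The paper does not compute $\underline{\pi}_0(\mathbb GE)$. Instead it constructs an \emph{external} power operation $\hat P^m\colon \pi_0^G(X)\to\pi_0^{\Sigma_m\wr G}(\mathbb G^m X)$ for arbitrary $X$ (Construction \ref{constr:external_power_operations}), reinterprets it via the representability isomorphism $\pi_0^G(X)\cong\GH(\Sigma^\infty_+ B_{\gl}G, X)$ and the identification $\mathbb G^m(\Sigma^\infty_+B_{\gl}G)\cong\Sigma^\infty_+B_{\gl}(\Sigma_m\wr G)$ (Construction \ref{constr:external_operations_using_G}, Lemma \ref{lemma:comparison_external_operations}), sets $P^m=\underline{\pi}_0(\zeta_m)\circ\mathbb G^m$, and then verifies the global power functor axioms by naturality on the representing spectra, using the explicit monad transformations $\varphi_{i,j}\colon\mathbb P^i\wedge\mathbb P^j\to\mathbb P^{i+j}$ and the $G_\infty$-algebra square \eqref{eq:levelwise_algebra_structure}. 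None of this requires knowing $\underline{\pi}_0(\mathbb GE)$ in closed form. (The identification of $\underline{\pi}_0\mathbb G$ with the free global power functor, which is your main tool, is \cite[Theorem 5.4.11]{Schwede_2018} and is invoked in the paper only for connective Moore spectra in the proof of Theorem \ref{thm:equivalence_G_Moore_spectra_A_power_algebras}, precisely because connectivity holds there.)

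Separately, several of your bookkeeping steps are off. A class in $\pi_0^G(E)$ is represented by a map $\Sigma^\infty_+B_{\gl}G\to E$ in $\GH$, not by a map $\mathbb S\to E$. A finite $G$-set of cardinality $n$ determines a conjugacy class of homomorphisms $G\to\Sigma_n$, not $G\to\Sigma_n\wr G$; the wreath product enters as the \emph{target} of Schwede's power operation $P^m\colon R(G)\to R(\Sigma_m\wr G)$ (Definition 5.1.6 of \cite{Schwede_2018}), and the $G$-set--indexed operations $P^A$ are secondary, obtained by restricting along $G\to\Sigma_n\times G\to\Sigma_n\wr G$. Your putative description of $\underline{\pi}_0(\mathbb GE)(G)$ as ``formal expressions $P^A(x)$'' is also not what the free global power functor looks like: that free object has elements built from transfers and products of powers, so the description needs the full generators-and-relations structure if you want to use it. To repair the proof you should either restrict to connective $E$ and carefully invoke \cite[Theorem 5.4.11]{Schwede_2018} together with a check that $\underline{\pi}_0$ is a monad functor, or switch to the paper's external operation and representability argument, which works unconditionally.
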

To prove this proposition, we construct external power operations $\pi_0^G(E)\to \pi_0^{\Sigma_m\wr G}(\mathbb GE)$, which exist for any global spectrum $E$. In presence of a $G_\infty$-structure, the power operations on the homotopy groups are then obtained by postcomposition with the multiplication.\\
Any ultra-commutative ring spectrum induces a $G_\infty$-ring structure on its homotopy type. However, there are also examples of $G_\infty$-ring spectra which are not induced from any strictly commutative multiplication. These are constructed by means of an adjunction
\[ \outlineadjunction{G_\infty\textup{-Rings}}{H_\infty\textup{-Rings},}{U}{R} \]
where $U$ is the forgetful functor from $G_\infty$-rings to $H_\infty$-rings and $R$ does not change the underlying $H_\infty$-ring spectrum. Applying the right adjoint $R$ to an $H_\infty$-ring spectrum $E$ which cannot be rigidified to a strict commutative ring spectrum, we see that $RE$ is not the homotopy type of an ultra-commutative ring spectrum.

In the second part of this paper, we analyse the structure of a $G_\infty$-multiplication on Moore spectra. We obtain the following result, which characterizes the $G_\infty$-ring structures on global Moore spectra via purely algebraic data:
\begin{thm*}[Theorem \ref{thm:equivalence_G_Moore_spectra_A_power_algebras}]
The functor
\[\underline{\pi}_0\colon G_\infty\textit{-Moore}^{\textup{torsion-free}} \to {\mathcal GlPow}_{\mathrm{left}}^{\textup{torsion-free}}\]
is an equivalence of categories between the homotopy category of $G_\infty$-Moore spectra for countable torsion-free rings and the category of countable torsion-free left-induced global power functors.
\end{thm*}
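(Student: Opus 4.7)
The plan is to construct an explicit quasi-inverse, sending a countable torsion-free left-induced global power functor $A$ with underlying ring $R$ to a $G_\infty$-structure on the Moore spectrum $MR$. The central algebraic input is an identification of $\underline{\pi}_0(\mathbb G MR)$ with the free left-induced global power functor on $R$; once this is in hand, both essential surjectivity and fully faithfulness reduce to formal translations between topological and algebraic data.

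The first step is the computation of $\underline{\pi}_0(\mathbb G MR)$. Here I would exploit the torsion-freeness and countability of $R$ to resolve it by a filtered system of finitely generated free abelian groups, and then analyse the derived symmetric powers of $MR$ levelwise. Torsion-freeness is crucial: it guarantees that no higher Tor-terms appear when passing from the underived to the derived symmetric powers, so $\underline{\pi}_0(\mathbb G MR)$ receives the expected algebraic description as the free left-induced global power functor on $R$. Countability enters by making the relevant filtered colimit argument manageable and by ensuring that the $\Ext^1$-terms obstructing control of homotopy classes of maps vanish as required.

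With this identification, I would use a universal-coefficient-type result in $\GH$ — whose $\Ext^1$-term vanishes because the target is a Moore spectrum on a torsion-free ring — to see that morphisms $\mathbb G MR \to MR$ in $\GH$ biject with morphisms of global power functors from the free left-induced global power functor on $R$ to $R$. The associativity and unit axioms for a $G_\infty$-structure $\mu\colon \mathbb G MR \to MR$ then correspond, by the universal property of the free object, to endowing $R$ with the structure of a left-induced global power functor. This proves essential surjectivity, and the compatibility of the constructions in both directions yields that the quasi-inverse is indeed inverse on objects.

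Fully faithfulness follows by the same principle applied to morphisms: a map $MR \to MS$ in $\GH$ is determined by a ring map $R \to S$ via the universal coefficient statement, and this map is a $G_\infty$-morphism precisely when it respects the power operations, i.e., defines a morphism of left-induced global power functors. The main obstacle will be the computation of $\underline{\pi}_0(\mathbb G MR)$: controlling the derived symmetric powers of a Moore spectrum finely enough to identify the free left-induced global power functor requires both hypotheses — torsion-free and countable — in essential ways, and carrying out this analysis while keeping track of all the global equivariant structure will be the technical heart of the proof.
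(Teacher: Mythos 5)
Your overall strategy --- compute $\underline{\pi}_0(\mathbb G\mathbb SB)$, show that maps out of $\mathbb G\mathbb SB$ in $\GH$ are detected on $\underline{\pi}_0$, and translate the $G_\infty$-axioms into algebraic power-functor axioms --- is indeed the paper's strategy. However, several inaccuracies in your outline would derail an actual execution. You repeatedly identify $\underline{\pi}_0(\mathbb G\mathbb SB)$ with ``the free left-induced global power functor on $R$'', but it is in fact the free global power functor $F(\mathbb A\otimes B)$ on the \emph{global functor} $\mathbb A\otimes B$ (via \cite[Theorem 5.4.11]{Schwede_2018}), and $F(\mathbb A\otimes B)$ is decisively \emph{not} left-induced --- if it were, morphisms out of it would be determined by their value at the trivial group, which would discard all the power operation data you are trying to encode. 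Relatedly, the representability result gives a bijection $\GH(\mathbb G\mathbb SB, \mathbb SB)\cong \GF(F(\mathbb A\otimes B), \mathbb A\otimes B)$, i.e.\ morphisms of global \emph{functors}, not of global power functors. Turning this into the desired comparison of structures needs two further nontrivial steps you pass over: that $\underline{\pi}_0$ is a monad functor between $\mathbb G$ (restricted to connective spectra) and the free-power-functor monad $F$ on $\GF$ (a mate argument), and that $F$-algebras coincide with global power functors --- which requires Beck's monadicity theorem, since $\mathcal GlPow$ is constructed via a comonad on Green functors rather than a priori as algebras over a monad on $\GF$.

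You also misattribute the role of the torsion-free hypothesis. It is not invoked to ``avoid higher Tor-terms in passing from underived to derived symmetric powers''; the identification of $\underline{\pi}_0(\mathbb G\mathbb SB)$ with $F(\mathbb A\otimes B)$ is quoted directly from \cite[Theorem 5.4.11]{Schwede_2018} and is not where the hypothesis enters. Torsion-freeness is used first for Lazard's theorem --- writing $B$ as a filtered colimit of finitely generated free abelian groups, so that $\mathbb SB$ can be modelled as a sequential homotopy colimit of wedges of sphere spectra --- and second for the vanishing $\Ext^1_{\mathbb Z}(A, B/2)=0$ that makes Moore spectra functorial (Lemma~\ref{lemma:functorial_Moore_spectra_free_groups}). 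Countability then provides a cofinal sequential subsystem so that a Milnor exact sequence applies, and the $\lim^1$-term vanishes by a Mittag--Leffler argument exploiting the finiteness of $\pi_1^G(\mathbb S)$ for finite $G$ --- a mechanism different from the $\Ext^1$-vanishing your outline invokes.
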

Here, a left induced global power functor is one where the multiplication map $\mathbb A\otimes R(e)\to R$ is an isomorphism of global Green functors, where $\mathbb A$ denotes the Burnside ring global power functor. The restriction to torsion-free rings is necessary, since already the existence of multiplications on Moore spectra in the presence of torsion is a subtle question. Countability is a mild technical assumption we use in order to construct such Moore spectra. The above theorem shows that whenever multiplications on Moore spectra are tractable, then also the power operations are completely determined by algebraic power operations on the represented ring. This relation can be used to prove that neither the Moore spectra $\mathbb S(\mathbb Z/p)$ nor $\mathbb S(\mathbb Z[i])$ can be endowed with a $G_\infty$-ring structure. This is a shadow of the classical results that these spectra do not support an $A_\infty$- or $E_\infty$-structure respectively. However, the $G_\infty$-result can be obtained by elementary calculations of the power operations in the Burnside ring.

Moreover, the theory of left-induced global power functors is closely linked to the theory of $\beta$-rings. In fact, we prove the following theorem, which allows to induce $\beta$-ring structures from global power operations.
\begin{thm*}[Theorem \ref{thm:powered_algebras_yield_tau_rings}]
The assignment $(G, R)\mapsto R(G)$ extends to a functor
\[ \ev\colon \textup{Rep}^{\operatorname{op}} \times {\mathcal GlPow}_{\mathbb A\textup{-defl}} \to \beta\textup{-Rings}, \]
which sends a conjugacy class of a morphism of compact Lie groups to the corresponding restriction.
\end{thm*}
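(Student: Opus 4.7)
The plan is to construct the $\beta$-ring operations on $R(G)$ by combining the power operations encoded in the global power functor $R$ with restrictions along classifying homomorphisms, and then to verify the $\beta$-ring axioms from the axioms of a global power functor. The key observation is that a transitive $G$-set $G/H$ of cardinality $n$ determines a conjugacy class of homomorphisms $G\to \Sigma_n\wr H\hookrightarrow \Sigma_n\wr G$ classifying the action, and the external power operation $P^n\colon R(G)\to R(\Sigma_n\wr G)$ available in any global power functor can be pulled back along such a map to land back in $R(G)$.

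Concretely, for a finite $G$-set $S$ of cardinality $n$ I choose a classifying homomorphism $\phi_S\colon G\to \Sigma_n\wr G$ and define
\[ \beta_S(x) := \phi_S^{\ast} P^n(x) \in R(G). \]
Well-definedness on conjugacy classes, and hence on isomorphism classes of $G$-sets, follows from the $G$-invariance built into a global power functor. Additivity on disjoint unions $S \sqcup T$ then extends this to an assignment $\mathbb A(G)\to \End_{\mathrm{Sets}}(R(G))$, $a\mapsto \beta_a$. The hypothesis that $R$ lies in $\mathcal GlPow_{\mathbb A\textup{-defl}}$ enters precisely here: it guarantees that the formal combinations behave as expected and that the resulting operations depend only on $a$, not on its presentation as a virtual $G$-set.

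The $\beta$-ring axioms split into four families of identities, each of which should correspond to one of the defining axioms of a global power functor. The behaviour of $\beta_{S\sqcup T}$ comes from the additivity formula relating $P^{m+n}$ to $P^m\otimes P^n$ via the inclusion $(\Sigma_m\wr G)\times(\Sigma_n\wr G)\hookrightarrow \Sigma_{m+n}\wr G$; the composition identity $\beta_{S\times T}=\beta_S\circ \beta_T$ reflects the iteration rule $P^m\circ P^n \sim P^{mn}$ together with the natural embedding $\Sigma_m\wr \Sigma_n\hookrightarrow \Sigma_{mn}$; the Frobenius identity for $\beta_S(xy)$ is the multiplicativity of power operations; and the sum formula $\beta_S(x+y) = \sum \beta_U(x)\beta_V(x)$-type decomposition follows from the restriction of $P^n(x+y)$ along the block-diagonal inclusions $\Sigma_i\times \Sigma_{n-i}\hookrightarrow \Sigma_n$ after taking wreath products with $G$. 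Functoriality in $\mathrm{Rep}^{\operatorname{op}}$ then amounts to the compatibility $f^{\ast}(\beta_a(x)) = \beta_{f^{\ast}a}(f^{\ast}x)$ for a group homomorphism $f\colon H\to G$, which is immediate from the naturality of power operations under restriction; functoriality in $\mathcal GlPow_{\mathbb A\textup{-defl}}$ is clear since morphisms of global power functors commute with the $P^n$ and with restrictions.

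I expect the main obstacle to be the verification of the composition law $\beta_{S\times T}=\beta_S\circ\beta_T$ and the sum formula, since both require a careful identification of the relevant classifying homomorphisms after iteration and a conjugacy-class level analysis of subgroups of $\Sigma_{mn}\wr G$. The $\mathbb A$-deflationary condition should be essential here in reducing the universal check to the Burnside ring global power functor $\mathbb A$ itself, where these identities can be verified directly on finite $G$-sets by counting arguments. Once the universal case is settled, naturality promotes the identities to an arbitrary $R$ in $\mathcal GlPow_{\mathbb A\textup{-defl}}$.
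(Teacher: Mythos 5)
Your construction targets the wrong objects: it produces operations indexed by finite $G$-sets, i.e.\ elements of $\mathbb A(G)$, whereas a $\beta$-ring structure on $R(G)$ is a ring homomorphism $\vartheta\colon \mathbf B\to \Map(R(G),R(G))$ with $\mathbf B = \bigoplus_m \mathbb A(\Sigma_m)$, the Burnside rings of the \emph{symmetric groups}. These two indexing sets are unrelated in general. Concretely, for $G=e$ your recipe yields only $\beta_S(x)=\phi_S^*P^n(x)=x^n$ since the only classifying homomorphism $e\to \Sigma_n$ is trivial, but the $\beta$-ring structure on $\mathbb Z=\mathbb A(e)$ must include, for instance, $\vartheta(\Sigma_n/H)(k)=\lvert [k]^n/H\rvert$ for every $H\subset\Sigma_n$, which is not a monomial. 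More generally, restriction along a classifying map $\phi_S\colon G\to\Sigma_n\wr G$ recovers the exponential operations $X\mapsto X^S$ (equivalently, iterated norms), which are genuine but different structure; the $\beta$-operation $\vartheta(\Sigma_n/H)(X)=X^n/H$ cannot be written as such a restriction.

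This is the reason the $\mathbb A$-deflation pairing is a datum, not merely a hypothesis that ``makes the bookkeeping work.'' The paper's argument uses it directly and unavoidably: one defines $D_G\colon \exp(R;G)\to\Map(\mathbf B, R(G))$ by $x\mapsto \bigl(y\mapsto \sum_n\langle (\delta_n^G)^*x_n, y_n\rangle\bigr)$, shows (using the deflation axioms) that each $D_G(x)$ is a \emph{ring} homomorphism on $\mathbf B$, sets $\bar\vartheta_G = D_G\circ P$, and takes the adjoint $\vartheta\colon \mathbf B\to\Map(R(G),R(G))$. Here $(\delta_n^G)^*$ is a restriction, but the passage from $R(\Sigma_n\times G)$ \emph{back to} $R(G)$ is the pairing $\langle\_,\_\rangle_{\Sigma_n,G}$, which is a deflation-type map that simply does not exist in a bare global power functor. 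Your sketch never uses this pairing, so it cannot produce the $\beta$-operations. Also, the paper does not reduce to the universal case $R=\mathbb A$ and bootstrap by naturality; the $\beta$-ring axioms are verified in general from the axioms of the pairing in Definition \ref{def:global_power_functor_with_pairing} (the calculations are the same as in the classical sources cited in Proposition \ref{prop:pre_tau_rings_from_powered_algebras}, but performed abstractly), and functoriality in $\textup{Rep}^{\operatorname{op}}$ and in $\mathcal GlPow_{\mathbb A\textup{-defl}}$ is established separately in Proposition \ref{prop:restrictions_induce_tau_ring_morphism}.
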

Here, $\textup{Rep}$ is the category of compact Lie groups and conjugacy classes of continuous homomorphisms between these, and ${\mathcal GlPow}_{\mathbb A\textup{-defl}}$ denotes the category of global power functors equipped with deflation maps $R(K\times G)\otimes \mathbb A(K)\to R(G)$. This includes all left-induced global power functors. In particular, this shows that for a Moore spectrum $\mathbb SB$ that supports a $G_\infty$-ring structure, all equivariant homotopy groups $\pi_0^G(\mathbb SB)\cong \mathbb A(G)\otimes B$ come endowed with the structure of a $\beta$-ring. Moreover, this condition also includes (equivariant) stable cohomotopy $\pi^0_G(X)$ for a based space $X$, hence we reprove the fact that stable cohomotopy can be equipped with the structure of a $\beta$-ring from \cite{Guillot_2006}.

The above theorem shows that the notion of a global power functor with $\mathbb A$-deflations captures the structure of a ring supporting $\beta$-ring structures at all compact Lie groups at once. Hence, it provides a unifying point of view to the theory of $\beta$-rings, which has proven rather hard to understand.

\paragraph{Structure}

In Section 1, we recall orthogonal spectra as a model for global homotopy theory, and the multiplicative aspects leading to power operations. Moreover, we give a construction of the external power operations on the equivariant homotopy groups of any spectrum.\\
In Section 2, we define $G_\infty$-ring spectra and see that they support power operations on their homotopy groups. We also compare this new notion to classical $H_\infty$-ring spectra by means of an adjunction featuring the forgetful functor, and provide a homotopical comparison of the derived symmetric algebra monad $\mathbb G=L\mathbb P$ and extended symmetric powers $\Sigma^\infty_+(E_{\gl}\Sigma_m)\wedge_{\Sigma_m} X^{\wedge m}$.\\
In the last section, we analyse $G_\infty$-ring Moore spectra. On the topological side, we prove that power operations on $\mathbb A\otimes B$ are equivalent to a $G_\infty$-ring structure on $\mathbb SB$ for torsion-free $B$. On the algebraic side, we prove that such power operations give rise to $\beta$-ring structures on $\mathbb A(G)\otimes B$ for all compact Lie groups G. This comparison also yields $\beta$-operations on stable cohomotopy groups $\pi^0(X)$.\\
In the appendix,  we collect results from the theory of monads used throughout our work. In particular, we study under which 2- and double categorical functors monads and monad functors are preserved. We utilize these results when constructing the adjunction between $G_\infty$- and $H_\infty$-ring spectra.

\paragraph{Acknowledgements}

This paper is a revised version of a Master's thesis written at the University of Bonn. I would like to thank my supervisor Stefan Schwede for suggesting the topic of $G_\infty$-ring spectra and $\beta$-rings to me and for his constant support and encouragement.\\
I would also like to thank Markus Hausmann for helpful remarks, Irakli Patchkoria for answering questions on Moore spectra and Jack Davies for interesting conversations on global power functors. I also thank the anonymous referee for helpful comments, leading to several improvements of the exposition.

\section{Power operations on ultra-commutative ring spectra}

In this chapter, we give an introduction to power operations on the global homotopy groups of an ultra-commutative ring spectrum, and construct external power operations for any orthogonal spectrum. We work throughout this article in the context of global homotopy theory, where the adjective `global' indicates that we study equivariant spectra for all compact Lie groups at once. As a model, we use the model category of global orthogonal spectra provided by Stefan Schwede in \cite{Schwede_2018}, and we use this work as a general referencing point for the foundations of global homotopy theory.

The category of orthogonal spectra is endowed with a symmetric monoidal structure, and commutative monoids are called ultra-commutative ring spectra. These spectra support power operations on their global homotopy groups. Such power operations are important additional structure, as emphasized most prominently in the work of Hill, Hopkins and Ravenel on the non-existence of elements of Kervaire invariant one in \cite{HHR_2016}. We recall the multiplicative aspects of global homotopy theory and the formalism of power operations.

Moreover, we introduce external power operations in Construction \ref{constr:external_power_operations}. These are defined on the homotopy groups of any orthogonal spectrum $X$, but only take values in the homotopy groups of the symmetric powers $\mathbb P^mX$. For an ultra-commutative ring spectrum, the structure morphism $\mathbb PX\to X$ then recovers the usual power operations. These external power operations are used in Section \ref{section:G_infty_rings} to define power operations on the homotopy groups of any $G_\infty$-ring spectrum.

\subsubsection*{Orthogonal spectra and homotopy groups}

We assume familiarity with the context of global orthogonal spectra and ultra-commutative ring spectra, as established in \cite[Chapters 3-5]{Schwede_2018}. We quickly collect the relevant notions.

We denote by $\mathcal Sp$ the category of orthogonal spectra, see also \cite{MMSS_2001}. For any compact Lie group $G$ and integer $k$, we can associate to an orthogonal spectrum $X$ its $k$-th $G$-equivariant homotopy group $\pi_k^G(X)$ \cite[3.1.11]{Schwede_2018}. A morphism $f\colon X\to Y$ of orthogonal spectra is called a {\itshape global equivalence} if it induces isomorphisms $f_\ast \colon \pi_k^G(X)\to \pi_k^G(Y)$ for all compact Lie groups $G$ and all $k$. The resulting homotopy category obtained by inverting global equivalences is called the {\itshape global homotopy category} $\GH$.

The collection of homotopy groups $\underline{\pi}_0(X)=\{\pi_0^G(X)\}_G$ for any orthogonal spectrum $X$ comes equipped with {\itshape restriction maps} $\alpha^\ast \colon \pi_0^K(X) \to \pi_0^G(X)$ for any continuous homomorphism $\alpha\colon G\to K$ of compact Lie groups \cite[Construction 3.1.15]{Schwede_2018}, and with {\itshape transfer maps} $\tr_H^G\colon \pi_0^H(X) \to \pi_0^G(X)$ for any closed subgroup $H\subset G$ \cite[Construction 3.2.22]{Schwede_2018}, which are trivial if the Weyl group $W_GH$ is infinite. These morphisms endow $\underline{\pi}_0(X)$ with the structure of a {\itshape global functor} \cite[Definition 4.2.2]{Schwede_2018}, and the category of global functors is denoted $\GF$. It is defined as the category of additive functors from the global Burnside category $\mathbf A$ \cite[Construction 4.2.1]{Schwede_2018} to abelian groups. Here the Burnside category has as objects the compact Lie groups and the morphisms are generated by restrictions and transfers \cite[Proposition 4.2.5]{Schwede_2018}. Composition inside $\mathbf A$ contains the information about the compositions of transfers and restrictions, such as the {\itshape double coset formula} \cite[Theorem 3.4.9]{Schwede_2018}.

Note that the notion of a global functor is a global version of a Mackey functor for a fixed compact Lie group $G$. The adjective `global' refers to the fact that we allow restrictions along arbitrary group homomorphisms, not just inclusions. There are various different global versions of Mackey functors, and we refer to \cite[Remark 4.2.16]{Schwede_2018} for a discussion of the different definitions.

\begin{remark}\label{remark:global_classifying_spaces}
The global homotopy groups for a fixed compact Lie group $G$ can be considered as a functor
\[ \GH \to \mathcal Sets,\, X \mapsto \pi_0^G(X).\]
This functor is representable by the suspension spectrum of the global classifying space $B_{\gl}G$ by \cite[Theorem 4.4.3 i)]{Schwede_2018}. The orthogonal space $B_{\gl}G$ is a generalization of the classical space $BG$, and we recall its construction now:

For two inner product spaces $V$ and $W$, we denote with $\mathbf L(V,W)$ the space of linear isometric embeddings from $V$ into $W$. Let $G$ be a compact Lie group. We define, for any $G$-representation $V$, the orthogonal $G$-space $\mathbf L_V= \mathbf L(V,\_)$ with the right $G$-action by precomposition with the $G$-action on $V$, and the orthogonal space $\mathbf L_{G,V}= \mathbf L(V,\_)/G$. By \cite[Proposition 1.1.26]{Schwede_2018}, the global homotopy types of $\mathbf L_V$ and $\mathbf L_{G,V}$ are independent of the choice of the $G$-representation $V$ as long as $V$ is faithful. We then denote for any faithful $V$ the orthogonal $G$-spaces $E_{\gl} G= \mathbf L_V$ and $B_{\gl}G= \mathbf L_{G,V}$.

We have a stable tautological class $e_G\in \pi_0^G(\Sigma^\infty_+ B_{\gl}G)$ as defined in \cite[4.1.12]{Schwede_2018}, and the pair $(\Sigma^\infty_+ B_{\gl}G, e_G)$ represents $\pi_0^G$ in the sense that
\begin{align}
\begin{split}\label{eq:representability_of_pi_0}
\GH( \Sigma^\infty_+ B_{\gl}G, X) & \to \pi_0^G(X)\\
[f]& \mapsto f_{\ast}(e_G)
\end{split}
\end{align}
is a bijection for every orthogonal spectrum $X$.

We can moreover describe the suspension spectrum of $B_{\gl}G$ as follows (see \cite[3.1.2]{Schwede_2018}): Choose a faithful $G$-representation $V$, such that $L_{G,V}$ represents $B_{\gl}G$. Then, for any inner product space $W$, we define a homeomorphism 
\begin{equation}\label{eq:untwisting_iso}
S^W\wedge \mathbf L(V,W)_+ \cong \mathbf O(V,W) \wedge S^V,
\end{equation}
called the untwisting isomorphism, using that we can trivialize the orthogonal complement bundle over $\mathbf L(V,W)$ with an additional copy of $V$. These homeomorphisms descend to $G$-orbits and assemble for varying $W$ into an isomorphism
\begin{equation}\label{eq:suspension_spectrum_of_BG}
\Sigma^\infty_+ B_{\gl}G \to \mathbf O(V,\_) \wedge_G S^V =: F_{G,V}S^V.
\end{equation}
\end{remark}

\subsubsection*{Global Green and power functors}

We now give more details about the multiplicative structure on orthogonal spectra and homotopy groups.\\
The category of orthogonal spectra has a symmetric monoidal structure using the smash product as defined in \cite[Definition 3.5.1]{Schwede_2018}. Using this symmetric monoidal structure, we can define a symmetric algebra monad $\mathbb PX= \bigvee_{m\geq 0}\mathbb P^mX$ with $\mathbb P^mX = X^{\wedge m}/\Sigma_m$ for orthogonal spectra $X$ and a notion of commutative monoids in this category.

\begin{definition}
	An ultra-commutative ring spectrum is a commutative monoid in the category $\mathcal Sp$ of orthogonal spectra. We write $\ucom$ for the category of ultra-commutative ring spectra and multiplicative maps.
\end{definition}

Note that the categories of $\mathbb P$-algebras and of ultra-commutative ring spectra are isomorphic.\\
This strict multiplication induces a rich structure on the homotopy groups of ultra-commutative ring spectra. Besides endowing the homotopy groups with the structure of a commutative monoid in the category of global functors, it also induces power operations. We now give a short recollection on how these multiplications and power operations are constructed.

The category of global functors has a symmetric monoidal structure, called the box product, arising as a Day convolution product, compare \cite[Construction 4.2.27]{Schwede_2018}.

\begin{definition}\label{def:green_functor}
	A global Green functor is a commutative monoid in the category $\GF$ endowed with the symmetric monoidal structure provided by the box product. A morphism of global Green functors is a morphism of global functors compatible with the multiplication.
\end{definition}

Explicitly, the multiplication map $R\square R\to R$ of a global Green functor is equivalent both to a family of multiplication maps 
\[ \times\colon R(G)\times R(K) \to R(G\times K)\]
for all compact Lie groups $G$ and $K$, and to a family of diagonal products 
\begin{equation}\label{eq:diagonal_product_Green_functor}
\cdot\colon R(G) \times R(G) \to R(G)
\end{equation}
for all compact Lie groups $G$. These multiplications have to satisfy the properties explained after \cite[Definition 5.1.3]{Schwede_2018}. The relationship between these formulations via restrictions along diagonal and projections is elaborated upon in \cite[Remark 4.2.20]{Schwede_2018}.

Classically, it is well known that the equivariant homotopy groups of a homotopy commutative $G$-ring spectrum support the structure of a Green functor. The same is true here.

\begin{prop}\label{prop:homotopy_for_ring_spectrum_Green_functor}
	There is an external multiplication map 
	\begin{equation}\label{eq:external_pairing_homotopy_groups}
	\boxtimes\colon \pi_0^G(X)\times \pi_0^K(Y)\to \pi_0^{G\times K}(X\wedge Y),
	\end{equation}
	defined for any two orthogonal spectra $X$ and $Y$. Upon composition with the induced map of the multiplication $E\wedge E\to E$ for a homotopy commutative ring spectrum $E$, this defines the structure of a global Green functor on $\underline{\pi}_0(E)$.
\end{prop}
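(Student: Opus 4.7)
The plan is to construct the external pairing via the representability isomorphism \eqref{eq:representability_of_pi_0}. Given classes $x\in \pi_0^G(X)$ and $y\in \pi_0^K(Y)$, let $\hat x\colon \Sigma^\infty_+ B_{\gl}G \to X$ and $\hat y\colon \Sigma^\infty_+ B_{\gl}K\to Y$ be the corresponding morphisms in $\GH$. Fixing faithful representations $V$ of $G$ and $W$ of $K$, the sum $V\oplus W$ is faithful for $G\times K$, and combining the untwisting isomorphism \eqref{eq:untwisting_iso} with \eqref{eq:suspension_spectrum_of_BG} yields natural identifications in $\GH$:
\[ \Sigma^\infty_+ B_{\gl}G \wedge \Sigma^\infty_+ B_{\gl}K \cong F_{G,V}S^V \wedge F_{K,W}S^W \cong F_{G\times K,V\oplus W}S^{V\oplus W} \cong \Sigma^\infty_+ B_{\gl}(G\times K). \]
Transporting the tautological class $e_{G\times K}$ across this equivalence gives a distinguished external class that I denote $e_G\boxtimes e_K$, and I set $x\boxtimes y := (\hat x\wedge \hat y)_*(e_G\boxtimes e_K)$. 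Independence of the chosen faithful representations uses the global uniqueness of $B_{\gl}(-)$ recalled in Remark \ref{remark:global_classifying_spaces}, and naturality in $X,Y$ is immediate from the functoriality of the smash product.

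For a homotopy commutative ring spectrum $E$ with multiplication $\mu\colon E\wedge E\to E$ and unit $\eta\colon \mathbb S\to E$, the external product on $\underline\pi_0(E)$ is $\mu_*\circ \boxtimes$, and the diagonal product \eqref{eq:diagonal_product_Green_functor} arises by further restriction along $\Delta\colon G\to G\times G$. Associativity, commutativity and unitality follow from the symmetric monoidal structure of $(\mathcal Sp,\wedge,\mathbb S)$ and the homotopy associativity, commutativity and unitality of $\mu$, once the corresponding identities are established for $\boxtimes$ on the tautological classes; this last reduction amounts to comparing the chosen faithful representations that appear in the various triple products of groups, again via the global uniqueness result.

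The substantive step, and the main obstacle I anticipate, is verifying that $\boxtimes$ descends to a morphism of global functors $\underline{\pi}_0(X)\square \underline{\pi}_0(Y)\to \underline{\pi}_0(X\wedge Y)$. Compatibility with restriction along a homomorphism $\alpha\colon G'\to G$ is formal from the covariance of $B_{\gl}(-)$ in group homomorphisms and the description of restrictions on $\pi_0^{(-)}$ via precomposition with $B_{\gl}\alpha$. Compatibility with transfers is more delicate and reduces to the universal Frobenius-type identity
\[ \tr_{H\times K}^{G\times K}(e_H\boxtimes e_K) \;=\; \tr_H^G(e_H)\boxtimes e_K \]
in $\pi_0^{G\times K}(\Sigma^\infty_+ B_{\gl}G \wedge \Sigma^\infty_+ B_{\gl}K)$ for a closed subgroup $H\leq G$ with finite Weyl group. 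I would prove this by unwinding the Thom--Pontryagin collapse construction of transfers in \cite[Construction 3.2.22]{Schwede_2018}, using that a tubular neighborhood of $G/H \hookrightarrow G$ smashed with $B_{\gl}K$ provides the corresponding data for the $(H\times K)$-transfer in $G\times K$.
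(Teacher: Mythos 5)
Your proposal is sound in outline, but it takes a genuinely different, and heavier, route than the paper does. The paper simply refers to \cite[Construction 4.1.20]{Schwede_2018} for the definition of $\boxtimes$ and to \cite[Theorem 4.1.22]{Schwede_2018} for its properties, which together with homotopy commutativity of $\mu$ give the Green functor structure. Schwede's construction is purely point-set: given representatives $f\colon S^V\to X(V)$ and $g\colon S^W\to Y(W)$, one forms
\[
S^{V\oplus W}\cong S^V\wedge S^W \xrightarrow{f\wedge g} X(V)\wedge Y(W)\xrightarrow{i_{V,W}} (X\wedge Y)(V\oplus W),
\]
a $(G\times K)$-map defining $f\boxtimes g\in\pi_0^{G\times K}(X\wedge Y)$. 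This requires no cofibrancy assumptions and lands in the point-set smash $X\wedge Y$ exactly as the statement asks. Your representability approach has two drawbacks in comparison. First, it is logically more expensive: representability of $\pi_0^G$ by $\Sigma^\infty_+B_{\gl}G$ is \cite[Theorem 4.4.3]{Schwede_2018}, which is proved \emph{after} and using the external product, the Wirthm\"uller/tom Dieck machinery, etc., so building $\boxtimes$ on top of it inverts the logical order of Schwede's development. Second, $\hat x\wedge\hat y$ is a morphism in $\GH$, so $(\hat x\wedge\hat y)_*$ produces a class in $\pi_0^{G\times K}$ of the \emph{derived} smash; to land in $\pi_0^{G\times K}(X\wedge Y)$ as stated for arbitrary orthogonal spectra you would need to compose with a comparison map $X^{\mathrm c}\wedge Y^{\mathrm c}\to X\wedge Y$, which you do not mention. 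For the application to $\underline{\pi}_0(E)$ this is harmless since the multiplication is anyway a morphism in $\GH$, but it falls short of the exact claim. Your instinct that the Frobenius reciprocity
\[
\tr_{H\times K}^{G\times K}(e_H\boxtimes e_K)=\tr_H^G(e_H)\boxtimes e_K
\]
is the substantive content is correct; that is precisely one of the identities verified in \cite[Theorem 4.1.22]{Schwede_2018}, and your reduction of the general transfer compatibility to this tautological identity via pushforward is valid. In sum: your route works modulo the unverified Frobenius identity and the derived/point-set gap, but the paper's route (direct citation of the point-set construction) is both more elementary and more faithful to the stated generality.
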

\begin{proof}
	This statement is already contained in the discussion of multiplications on global functors from \cite{Schwede_2018}. The definition of the external multiplication map is given in \cite[Construction 4.1.20]{Schwede_2018}, and its properties are listed in \cite[Theorem 4.1.22]{Schwede_2018}. These, together with the properties of the multiplication on $E$, imply that $\underline{\pi}_0(E)$ with this multiplication is indeed a global Green functor.
\end{proof}

\begin{remark}\label{remark:recall_wreath_products}
To define the power operations induced by an ultra-commutative ring spectrum, we recall the wreath product $\Sigma_m\wr G$ of the symmetric group $\Sigma_m$ on $m$ letters with a group $G$. We refer to \cite[Construction 2.2.3]{Schwede_2018} for details. The wreath product is defined as the semidirect product $\Sigma_m\wr G= \Sigma_m \ltimes G^m$ with respect to the permutation action of the symmetric group on the factors of $G^m$. This has a natural action on the $m$-th power $A^m$ of a $G$-set $A$, given by the $G$-action on each factor and the permutation action of $\Sigma_m$ on the factors. In particular, if $V$ is a $G$-representation, then $V^m$ is a $\Sigma_m\wr G$-representation, which is faithful if $V$ is a non-zero faithful $G$-representation.
\end{remark}

Now we can define the power operations as follows: Let $E$ be an ultra-commutative ring spectrum, then we define for $f\colon S^V\to E(V)$ in $\pi_0^G(E)$, where $V$ is some $G$-representation, the $(\Sigma_m\wr G)$-map 
\begin{equation}\label{eq:power_operations_for_ucom}
P^m(f)\colon S^{V^m}\cong (S^V)^{\wedge m} \nameto{f^m} E(V)^{\wedge m} \nameto{\mu_{V,\ldots, V}} E(V^m),
\end{equation}
which represents an element in $\pi_0^{\Sigma_m\wr G} (E)$. The map $\mu_{V,\ldots ,V}$ is the value of the $m$-fold multiplication map $E^{\wedge m}\to E$ on the inner product space $V^m$. This assignment defines a morphism
\[P^m\colon \pi_0^G(E) \to \pi_0^{\Sigma_m\wr G} (E)\]
for all $m\geq 1$.\\
These power operations satisfy equivariant versions of the properties of powers in a ring, and compatibility results with the restriction and transfer maps present on the global homotopy groups. These properties are condensed into the notion of a global power functor, defined in \cite[Definition 5.1.6]{Schwede_2018}.

\begin{thm}\label{thm:power_operations_for_ucom}
For an ultra-commutative ring spectrum $E$, the global homotopy groups $\underline{\pi}_0(E)$ together with the operations 
\[P^m\colon \pi_0^G(E) \to \pi_0^{\Sigma_m\wr G} (E)\]
as defined in (\ref{eq:power_operations_for_ucom}) form a global power functor.
\end{thm}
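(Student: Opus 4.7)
The plan is to verify each of the axioms of a global power functor (Schwede, Definition 5.1.6) for the homotopy groups of $E$ equipped with the operations $P^m$. The Green functor structure is already in place by Proposition \ref{prop:homotopy_for_ring_spectrum_Green_functor}, so the remaining work is to check the axioms relating the $P^m$ to the restriction and transfer maps, to the Green functor multiplication, and the iteration and additivity formulas. My main tool throughout will be to represent classes by maps $f\colon S^V\to E(V)$ for $G$-representations $V$, and then to reduce each axiom to a manipulation of iterated smash products of such maps together with the strict associativity and commutativity of the multiplication of $E$.

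First I would treat the easy axioms. Normalization $P^1=\mathrm{id}$ is immediate from the definition (\ref{eq:power_operations_for_ucom}) since the unary multiplication $E^{\wedge 1}\to E$ is the identity. Compatibility with restriction along a continuous homomorphism $\alpha\colon H\to G$ reduces to the fact that $\alpha^*f\colon S^{\alpha^*V}\to E(\alpha^*V)$ is just $f$ with action restricted, so $P^m(\alpha^*f)$ is manifestly the restriction of $P^m(f)$ along $\Sigma_m\wr\alpha\colon \Sigma_m\wr H\to\Sigma_m\wr G$. The compatibility $P^m(1)=1$ and the external multiplicativity formula $P^m(x\boxtimes y)=P^m(x)\boxtimes P^m(y)$ reduce, via the associativity and commutativity coherences of the smash product, to the commutativity of $E^{\wedge 2m}\to E^{\wedge m}\wedge E^{\wedge m}\to E\wedge E\to E$ with the rearrangement into $(E\wedge E)^{\wedge m}\to E^{\wedge m}\to E$; this is exactly what strict commutativity of the monoid $E$ affords. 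The iteration formula $P^n\circ P^m=P^{nm}$ (up to the canonical inclusion $\Sigma_n\wr (\Sigma_m\wr G)\hookrightarrow \Sigma_{nm}\wr G$) follows by iterating the definition and comparing the $(nm)$-fold multiplication with its two possible factorizations, again using associativity.

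The two substantial steps are the additivity formula and the transfer formula. For additivity, one represents $x+y$ by an inductively constructed map $S^V\to E(V)$ whose associated $m$-th power map $(S^V)^{\wedge m}\to E(V^m)$ decomposes, via the distributivity of the multiplication of $E$ over the $\Sigma_m$-action on $V^m$, into a sum indexed by the $\Sigma_m$-orbits of subsets $S\subset\{1,\dots,m\}$; each orbit contributes a transfer from the stabilizer $\Sigma_i\times\Sigma_{m-i}\wr G$ of an $i$-element subset. Unwinding this and comparing to the target of the additivity axiom produces the required identity. For the transfer formula on $P^m(\tr_H^G(x))$, one uses the description (\ref{eq:suspension_spectrum_of_BG}) of $\Sigma^\infty_+ B_{\gl}G$ and models the transfer by a stable map built from the Thom collapse for the embedding $G/H\hookrightarrow V$; the $m$-th external power of this Thom collapse decomposes according to the $\Sigma_m$-orbits of $(G/H)^m$, and the double coset formula for $\Sigma_m\wr G\supset \Sigma_m\wr H$ identifies each orbit with a summand of the target, giving the required formula.

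The main obstacle will be the additivity axiom together with the transfer compatibility, which together encode essentially all the combinatorics of wreath products and require a careful bookkeeping of orbit decompositions of $\{1,\dots,m\}$ and of $(G/H)^m$. In practice I would isolate these two verifications and carry them out at the level of representatives $S^V\to E(V)$, using the untwisting isomorphism (\ref{eq:untwisting_iso}) to pass between the twisted and untwisted descriptions of $\Sigma^\infty_+ B_{\gl}G$; this is the step where the strictness of the commutative multiplication on $E$ (as opposed to a merely homotopy commutative multiplication) is essential, because one must compare maps out of $E^{\wedge m}$ on the nose rather than up to homotopy. All of the other axioms are then either formal or reduce to direct manipulations of the defining formula (\ref{eq:power_operations_for_ucom}).
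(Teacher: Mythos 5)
The paper defers the proof entirely to \cite[Theorem 5.1.11]{Schwede_2018}, so there is no in-text argument to compare against line by line. Your outline is a reasonable sketch of what a direct proof requires, and most of what you write matches the standard verification: normalization, restriction compatibility, unitality, external multiplicativity, and iteration are indeed settled by manipulating representatives $S^V\to E(V)$ together with the strict associativity and commutativity of the monoid $E$; and additivity is indeed proven by decomposing $\{1,\dots,m\}$ into $\Sigma_m$-orbits of subsets, producing the binomial transfer formula.

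Your description of the transfer compatibility, however, contains a genuine confusion. The axiom $P^m\circ\tr_H^G = \tr_{\Sigma_m\wr H}^{\Sigma_m\wr G}\circ P^m$ does not arise from a decomposition of $(G/H)^m$ into multiple $\Sigma_m$-orbits, and the double coset formula does not enter. The relevant fact is the opposite: when $(G/H)^m$ is given the natural $\Sigma_m\wr G$-action (permutation of factors plus the componentwise $G^m$-action, not merely permutation), it is a \emph{single} transitive orbit, isomorphic to $(\Sigma_m\wr G)/(\Sigma_m\wr H)$ via the stabilizer of the diagonal point $(eH,\dots,eH)$, which is exactly $\Sigma_m\wr H$. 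Carrying the Thom--Pontryagin construction for the embedding $G/H\hookrightarrow V$ through the $m$-th power therefore produces precisely the Thom--Pontryagin construction for $(\Sigma_m\wr G)/(\Sigma_m\wr H)\hookrightarrow V^m$, and the result is the single-term transfer identity, not a sum. A genuine orbit decomposition with multiple summands appears in the additivity axiom (and the double coset formula appears in verifying the Green functor structure), which you appear to be conflating with transfer compatibility. The rest of your plan is sound, but as written the transfer step would not proceed as you describe.
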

For the proof we refer to \cite[Theorem 5.1.11]{Schwede_2018}.

\subsubsection*{External power operations}

We see that a strict multiplication on an orthogonal spectrum gives rise to power operations on the global homotopy groups. We now ask the question whether we need the full structure of an ultra-commutative ring spectrum to obtain these power operations. In classical homotopy theory, we have the notion of $H_\infty$-ring spectra from \cite{BMMS_1986}, which is only defined in the stable homotopy category. Such $H_\infty$-ring spectra also define power operations on their homotopy and cohomology groups. We define a global analogue in this paper. To facilitate the proof that the global homotopy groups of a $G_\infty$-ring spectrum support power operations, we introduce external power operations.

The multiplication on the homotopy groups of a homotopy commutative ring spectrum can be constructed by defining an external multiplication as described in \eqref{eq:external_pairing_homotopy_groups}, which exists for any two orthogonal spectra. Upon composition with the homotopy multiplication, this gives the structure of a global Green functor on the homotopy groups. We now define a similar external power operation for any orthogonal spectrum.

\begin{construction}\label{constr:external_power_operations}
	Let $X$ be an orthogonal spectrum. We set $\mathbb P^mX= X^{\wedge m}/ \Sigma_m$ and define maps 
	\[ \hat{P}^m\colon \pi_0^G(X)\to \pi_0^{\Sigma_m\wr G} ( \mathbb P^m X)\]
	for every compact Lie group $G$ and $m\geq 1$ as follows:
	For a $G$-representation $V$ and a $G$-equivariant map $f\colon S^V\to X(V)$ representing an element in $\pi_0^G(X)$, we set
	\[\hat{P}^m (f) \colon S^{V^m}\cong (S^V)^{\wedge m} \nameto{f^m} X(V)^{\wedge m} \nameto{i_{V,\ldots ,V}} X^{\wedge m} (V^m) \nameto{\pr(V^m)} (\mathbb P^m X)(V^m),\]
	where the morphism $i_{V,\ldots,V}$ is the iteration of the universal bimorphism from the definition of the smash product (see \cite[Definition 3.5.1]{Schwede_2018}), and $\pr\colon X^{\wedge m}\to X^{\wedge m}/\Sigma_m$ is the projection.\\
	We claim that this map is $\Sigma_m\wr G$-equivariant: To see this, let \[(\sigma;g_\ast)=(\sigma;g_1,\ldots, g_m)\in \Sigma_m\wr G\]
	be an element of the wreath product. Then we consider the following diagram, where $\sigma\cdot (\_\,)$ signifies the $\Sigma_m$-action by permutation:
	\[\begin{tikzcd}
	(S^V)^{\wedge m} \arrow[r, "f^{\wedge m}"] \arrow[d, "g_\ast"] \arrow[dd, bend right=50, "{(\sigma; g_\ast)}", swap]
	& X(V)^{\wedge m} \arrow[r, "i_{V, \ldots, V}"] \arrow[d, "g_\ast"]
	& X^{\wedge m} (V^m) \arrow[r] \arrow[d, "g_\ast"]
	& (\mathbb P^m X) (V^m) \arrow[d, "g_\ast"] \arrow[dd, bend left=50, shift left = 4, "{(\sigma; g_\ast)}"]\\
	(S^V)^{\wedge m} \arrow[r, "f^{\wedge m}"] \arrow[d, "\sigma"]
	& X(V)^{\wedge m} \arrow[r, "i_{V, \ldots, V}"] \arrow[d, "\sigma"]
	& X^{\wedge m} (V^m) \arrow[r] \arrow[d, "\sigma"]
	& (\mathbb P^m X) (V^m) \arrow[d, "\sigma"]	\\
	S^{\sigma\cdot V^m} =\sigma\cdot (S^V)^{\wedge m} \arrow[r, "f^{\wedge m}"]
	& \sigma\cdot X(V)^{\wedge m} \arrow[r, "i_{V, \ldots, V}"]
	& (\sigma \cdot X^{\wedge m}) (\sigma\cdot V^m) \arrow[r]
	& (\mathbb P^m X) (\sigma\cdot V^m)
	\end{tikzcd}\]
	In this diagram, the upper left square commutes by equivariance of $f$, the other squares on the top commute as the horizontal map $i_{V, \ldots, V}$ is an $m$-morphism of spectra, and $\pr$ also is a morphism of spectra. The squares on the bottom row commute by the symmetry property of both the direct sum of inner product spaces and the smash product of spectra, and as we exactly quotient out the permutation action on $X^{\wedge m}$ in the passage to $\mathbb P^m X$. Thus this diagram is commutative and proves that the morphism $\hat{P}^m (f)$ is $\Sigma_m\wr G$-equivariant, hence defines an element in $\pi_0^{\Sigma_m\wr G}(\mathbb P^m X)$.\\
	These maps $\hat{P}^m$ are called external power operations. Note that the quotient $X^{\wedge m}\to \mathbb P^m(X)$ is necessary for this definition, since on $X^{\wedge m}$ we have to consider the $\Sigma_m$-action by permuting the factors.
	
	These external operations fit into a commutative diagram
	\begin{equation}\label{diagram:external_power_operations_as_retract}
	\begin{tikzcd}
	\pi_0^G(X) \arrow[r, "(\incl_1)_\ast"] \arrow[d, "\hat P^m", swap] 
		& \pi_0^G(\mathbb PX) \arrow[r, "(\pr_1)_\ast"] \arrow[d, "P^m", swap]
		& \pi_0^G(X) \arrow[d, "\hat{P}^m"]\\
	\pi_0^{\Sigma_m\wr G} (\mathbb P^mX) \arrow[r, "(\incl_m)_\ast", swap]
		& \pi_0^{\Sigma_m\wr G} (\mathbb PX) \arrow[r, "(\pr_m)_\ast", swap]
		& \pi_0^{\Sigma_m\wr G} (\mathbb P^mX),
	\end{tikzcd}
	\end{equation}
	where $\incl_m\colon \mathbb P^mX\to \mathbb PX$ is the inclusion as the wedge summand indexed by $m$ and $\pr_m$ is the projection onto the wedge summand indexed by $m$. Moreover, we use that $\mathbb PX$ as an ultra-commutative ring spectrum has power operations on its homotopy groups. This diagram exhibits $\hat P^m$ as a retract of $P^m$.
\end{construction}

Note that by definition, we obtain the power operation of an ultra-commutative ring spectrum $E$ by composing with the map induced by the multiplication $\mathbb P^mE\to E$. However, we also can consider a weaker type of structure which also allows to internalize these external power operations. This leads to the notion of a $G_\infty$-ring spectrum.

\section{\texorpdfstring{$G_\infty$}{G-infinity}-ring spectra and their properties}

In this chapter, we give the definition of $G_\infty$-ring spectra in  Definition \ref{def:G_infty_ring_spectra}. This notion is a homotopical version of structured ring spectra, with structure morphisms only defined in the global homotopy category. This structured multiplication allows us to construct power operations on the equivariant homotopy groups of a $G_\infty$-ring spectrum in Construction \ref{constr:power_op_for_G_infty_spectra}.

The notion of $G_\infty$-ring spectra is a global generalization of the non-equivariant notion of an $H_\infty$-ring spectrum from \cite[Definition I.3.1]{BMMS_1986}. In Section \ref{section:adjunctions_between_infinity_ring_spectra} we construct an adjunction between $G_\infty$- and $H_\infty$-ring spectra. The left adjoint is a forgetful functor from the globally equivariant $G_\infty$-ring spectrum to the non-equivariant $H_\infty$-ring spectrum. The right adjoint exhibits a way to obtain a $G_\infty$-ring spectrum from an $H_\infty$-ring spectrum, thought of as a ``global Borel construction''. This also gives a way to generate examples of $G_\infty$-ring spectra which do not come as the homotopy types of ultra-commutative ring spectra, see Remark \ref{remark:G_infty_is_not_ucom}. For this, we use the non-equivariant examples of Noel in \cite{Noel_2014} and Lawson in \cite{Lawson_2015} of $H_\infty$-ring spectra which do not rigidify to commutative ring spectra.

In Section \ref{section:homotopical_properties_P}, we compare the derived symmetric powers to a global version of the extended powers $D_mX= (E\Sigma_m)_+\wedge_{\Sigma_m}X^{\wedge m}$ in Theorem \ref{thm:comparison_symmetric_extended_powers}. This can be used to give an alternative description of $G_\infty$-ring spectra, which is closer to the original definition from \cite{BMMS_1986}.

\subsection{Definition of \texorpdfstring{$G_\infty$}{G-infinity}-ring spectra and their power operations}\label{section:G_infty_rings}

Recall that the multiplication maps $\mathbb P^mE\to E$ of an ultra-commutative ring spectrum can be used to define internal power operations on the homotopy groups of $E$ from the external power operations defined in Construction \ref{constr:external_power_operations}. But all that is really needed are such maps on the homotopy groups, hence we define the corresponding structure on the level of the global homotopy category $\GH$. To do so, we make use of the positive global model structures on $\mathcal Sp$ and $\ucom$, which are constructed in \cite[Proposition 4.3.33 and Theorem 5.4.3]{Schwede_2018}.

\begin{lemma}\label{lemma:P_defines_Quillen_adjunction}
The functors $\adjunction{\mathcal Sp}{\ucom}{\mathbb P}{U}$ form a Quillen adjoint functor pair, with respect to the positive global model structures on both sides.
\end{lemma}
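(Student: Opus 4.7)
The plan is to establish the adjunction itself and then verify the Quillen condition by checking that the right adjoint $U$ preserves fibrations and acyclic fibrations in the positive global model structures.

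For the adjunction, I would use that the category of ultra-commutative ring spectra is, as noted just after the definition, isomorphic to the category of algebras over the monad $\mathbb P$. A $\mathbb P$-algebra structure on an orthogonal spectrum is precisely a commutative monoid structure with respect to the smash product. Hence $\mathbb P \dashv U$ is an instance of the standard free-forgetful adjunction between a monadic category and its base.

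For the Quillen property, I would appeal directly to the construction of the positive global model structure on $\ucom$ in \cite[Theorem 5.4.3]{Schwede_2018}. That model structure is built as a transferred (right-induced) model structure along the adjunction $\mathbb P \dashv U$: a morphism $f$ of ultra-commutative ring spectra is declared to be a weak equivalence, respectively a fibration, if and only if $Uf$ is a positive global weak equivalence, respectively fibration, in $\mathcal Sp$. With this characterization of the model structure on $\ucom$, the forgetful functor $U$ tautologically preserves fibrations and acyclic fibrations, which is one of the equivalent conditions for $(\mathbb P, U)$ to be a Quillen adjoint pair.

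The real content, therefore, does not lie in this lemma but in the existence of the transferred model structure on $\ucom$, which is what Schwede's Theorem 5.4.3 supplies. The hard step there is the usual obstacle for transferred model structures: one must verify the small object argument for pushouts along maps of the form $\mathbb P i$, where $i$ is a generating (acyclic) cofibration of $\mathcal Sp$, which hinges on the fact that the positive global model structure on $\mathcal Sp$ avoids the free $\Sigma_m$-action issues that would otherwise obstruct the transfer. Granting this construction, the Quillen adjunction claim is immediate from the definitions.
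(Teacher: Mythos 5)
Your proposal is correct and follows the same route as the paper: establish the adjunction, then observe that $U$ preserves fibrations and acyclic fibrations because the model structure on $\ucom$ from \cite[Theorem 5.4.3]{Schwede_2018} is defined by declaring equivalences and fibrations to be those maps whose underlying maps are such, i.e.\ it is transferred along $\mathbb P\dashv U$. Your additional remarks about the content of the transfer theorem are accurate but orthogonal to the lemma itself.
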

\begin{proof}
It is clear that these functors are adjoint to one another. To prove that this is a Quillen adjunction, it suffices to show that the right adjoint $U$ preserves both fibrations and acyclic fibrations. This is directly evident from the characterization of global equivalences and positive global fibrations of ultra-commutative ring spectra by their underlying maps.
\end{proof}

In fact, the model structure on $\ucom$ is transferred from the positive model structure on $\mathcal Sp$ along this adjunction.

Now, every Quillen adjunction defines an adjunction on the homotopy categories, see \cite[Lemma 1.3.10]{Hovey_1999}, hence we get an adjunction
\[ \outlineadjunction{\GH}{\Ho(\ucom).}{L_{\gl}\mathbb P}{\Ho(U)} \]
Note that $U$ is already homotopical, so it can be derived without a fibrant replacement. From this adjunction, we obtain the monad
\begin{equation}\label{eq:monad_G_on_GH}
\mathbb G = \Ho(U)\circ L_{\gl} \mathbb P\colon \GH\to \GH.
\end{equation}

\begin{definition}\label{def:G_infty_ring_spectra}
A $G_\infty$-ring spectrum is an algebra over the monad $\mathbb G$.
\end{definition}

\begin{example}
As we obtain the notion of $G_\infty$-ring spectra as algebras over a derived monad $\mathbb G=L_{\gl} \mathbb P$, we see that algebras over the point-set monad $\mathbb P$, i.e. ultra-commutative ring spectra, also induce a $G_\infty$-ring structure on their global homotopy type. This already gives a broad class of examples, which encompasses the sphere spectrum $\mathbb S$, Eilenberg-Mac Lane spectra $HR$ for a global power functor $R$ as constructed in \cite[Theorem 5.4.14]{Schwede_2018}, and the global versions of Thom and $K$-theory spectra from \cite[Chapter 6]{Schwede_2018}.\\
These examples as homotopy types of ultra-commutative ring spectra however do not provide all $G_\infty$-ring spectra: In Theorems \ref{thm:Example_Noel} and \ref{thm:Example_Lawson}, we provide examples of $G_\infty$-ring spectra which are not the homotopy type of an ultra-commutative ring spectrum such that the $G_\infty$-ring structure is induced by the ultra-commutative multiplication.
\end{example}

\begin{remark}
We also note that the definition of $G_\infty$-ring spectra is not the same as that of a homotopy commutative ring spectrum in $\GH$. This can be seen from the fact that a homotopy commutative ring spectrum does not support power operations on its homotopy groups, whereas Proposition \ref{prop:power_operations_on_G_infty_spectra} proves the existence of equivariant power operations for $G_\infty$-ring spectra. For the same reason, $G_\infty$-ring spectra posses more structure than $H_\infty$-rings internal to the global homotopy category $\GH$. The difference lies in the fact that the derived symmetric power $\mathbb G^mX$ can be represented by the global extended power $\Sigma^\infty_+ E_{\gl}\Sigma_m \wedge_{\Sigma_m} X^{\wedge m}$ as shown in Theorem \ref{thm:comparison_symmetric_extended_powers}. In contrast, for an $H_\infty$-ring spectrum, the non-equivariant extended powers $\Sigma^\infty_+ E\Sigma_m \wedge_{\Sigma_m} X^{\wedge m}$ would be used.
\end{remark}

Our aim is to define power operations on the homotopy groups of a $G_\infty$-ring spectrum. Since the definition of $G_\infty$-ring spectra is internal to the global homotopy category, we also rephrase the external power operations in terms of the representability of the homotopy groups in $\GH$. To do this, we derive the levels of $\mathbb P$ separately with respect to the positive global model structures from \cite[4.3.33 and 5.4.3]{Schwede_2018}.

\begin{lemma}\label{lemma:P^m_is_derivable}
	Let $f\colon X\to Y$ be a global equivalence between positively cofibrant spectra, and let $m\geq 0$. Then $\mathbb P^mf\colon \mathbb P^mX\to \mathbb P^mY$ is a global equivalence in $\mathcal Sp$.
\end{lemma}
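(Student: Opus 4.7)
The plan is to avoid a direct analysis of the $\Sigma_m$-action on $X^{\wedge m}$ by bootstrapping from Lemma \ref{lemma:P_defines_Quillen_adjunction}, which already makes the whole symmetric algebra functor $\mathbb P$ left Quillen with respect to the positive global model structures. Ken Brown's lemma then immediately yields that $\mathbb P f\colon \mathbb PX\to \mathbb PY$ is a weak equivalence in $\ucom$, and since the proof of that lemma established that weak equivalences in $\ucom$ are detected on underlying orthogonal spectra, $\mathbb P f$ is in particular a global equivalence in $\mathcal Sp$. The remaining task is to extract from this the corresponding statement for each individual wedge summand $\mathbb P^m$.

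For this I would exhibit $\mathbb P^m f$ as a retract of $\mathbb P f$ in the arrow category of orthogonal spectra, using the same inclusion/projection data already appearing in diagram \eqref{diagram:external_power_operations_as_retract}. The wedge decomposition $\mathbb P X = \bigvee_{n\geq 0} \mathbb P^n X$ holds strictly in $\mathcal Sp$, and both the inclusion $\incl_m\colon \mathbb P^m X \to \mathbb P X$ of the $m$-th wedge summand and the projection $\pr_m\colon \mathbb P X \to \mathbb P^m X$ collapsing all other summands to the basepoint are morphisms of orthogonal spectra, natural in $X$, with $\pr_m\circ \incl_m = \Id$. They assemble into the commutative retract diagram
\[\begin{tikzcd}
\mathbb P^m X \arrow[r, "\incl_m"] \arrow[d, "\mathbb P^m f"'] & \mathbb P X \arrow[r, "\pr_m"] \arrow[d, "\mathbb P f"] & \mathbb P^m X \arrow[d, "\mathbb P^m f"] \\
\mathbb P^m Y \arrow[r, "\incl_m"'] & \mathbb P Y \arrow[r, "\pr_m"'] & \mathbb P^m Y.
\end{tikzcd}\]
To conclude, global equivalences are defined by isomorphisms on every $\pi_k^G$, and isomorphisms of abelian groups are closed under retracts in the arrow category; hence so are global equivalences. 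Therefore $\mathbb P^m f$ is a global equivalence. The edge case $m=0$ is vacuous, since $\mathbb P^0X = \mathbb S = \mathbb P^0Y$ and $\mathbb P^0 f = \Id_{\mathbb S}$.

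The genuinely substantive input in this strategy is not the argument above, but the validity of Lemma \ref{lemma:P_defines_Quillen_adjunction}, which encodes the whole subtlety of why the positive global model structure on $\mathcal Sp$ is engineered to make the $\Sigma_m$-orbit constructions $X^{\wedge m}/\Sigma_m$ homotopically well-behaved. In other words, the usual concern of whether the $\Sigma_m$-action on $X^{\wedge m}$ is sufficiently free for the orbit quotient to agree with the homotopy orbit has already been absorbed into the assertion that $\mathbb P$ is left Quillen. Once that adjunction is in hand, the graded version collapses to the purely formal property that weak equivalences are stable under retracts, and there is no remaining obstacle to deal with.
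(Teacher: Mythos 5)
Your proof is correct and takes essentially the same route as the paper: invoke Lemma \ref{lemma:P_defines_Quillen_adjunction} and Ken Brown's lemma to get that $\mathbb P f$ is a global equivalence of underlying spectra, then exhibit $\mathbb P^m f$ as a retract of $\mathbb P f$ via $\incl_m$ and $\pr_m$ and use closure of global equivalences under retracts. The only cosmetic difference is that you spell out why retracts preserve global equivalences (reduction to retracts of isomorphisms on $\pi_k^G$), which the paper leaves implicit.
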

\begin{proof}
	We follow the argument given at the end of the proof of \cite[Theorem 5.4.12]{Schwede_2018}. By \ref{lemma:P_defines_Quillen_adjunction}, we know that the functor $\mathbb P\colon \mathcal Sp\to \ucom$ is left Quillen. By Ken Brown's lemma \cite[Lemma 1.1.12]{Hovey_1999}, $\mathbb Pf\colon \mathbb PX\to \mathbb PY$ is a global equivalence of ultra-commutative ring spectra, hence by definition a global equivalence of the underlying spectra. But the transformations $\mathbb P^m \nameto{\incl_m} \mathbb P= \bigvee_{m\geq0} \mathbb P^m \nameto{\pr_m} \mathbb P^m$ exhibit $\mathbb P^mf$ as a retract of $\mathbb Pf$, thus also this morphism is a global equivalence. 
\end{proof}
\begin{remark}\label{remark:P^m_also_preserves_cofibrancy}
	This lemma is enough to conclude that $\mathbb P^m\colon \mathcal Sp\to \mathcal Sp$ admits a left derived functor 
	\[\mathbb G^m\colon \GH\to \GH.\]
	However, one can indeed show more: for any $m>0$, the functor $\mathbb P^m\colon \mathcal Sp\to \mathcal Sp$ preserves positive cofibrations and acyclic positive cofibrations between positively cofibrant spectra. This uses \cite[Theorem 22]{Gorchinskiy_Guletski_2016}, that the positive cofibrations are symmetrizable \cite[Theorem 5.4.1]{Schwede_2018} and that smashing with positively cofibrant spectra preserves weak equivalences \cite[Theorem 4.3.27]{Schwede_2018}.
\end{remark}

We now calculate the value of the functor $\mathbb G$ on the sphere spectrum and on the representing spectra $\Sigma^\infty_+ B_{\gl}G$ for the global homotopy groups. We use this to define the external power operations intrinsically in $\GH$.

\begin{example}\label{example:G_on_classifying_spaces}
	We calculate the value of $\mathbb G$ on the sphere spectrum $\mathbb S$:\\
	Since $\mathbb S$ is not positively cofibrant, we need to positively replace it. For this, consider the map 
	\[\lambda_{\Sigma_1, \mathbb R, 0}\colon F_{\Sigma_1, \mathbb R}S^1 \to F_{\Sigma_1, 0}=\mathbb S\]
	from \cite[4.1.28]{Schwede_2018}. This map is a global equivalence by \cite[Theorem 4.1.29]{Schwede_2018}, as $0$ is a faithful representation of the trivial group $\Sigma_1$. Moreover, the spectrum $F_{\Sigma_1, \mathbb R}S^1= \mathbf O(\mathbb R,\_)\wedge S^1$ is positively cofibrant, hence this map $\lambda_{\Sigma_1, \mathbb R, 0}$ can be chosen as a positively cofibrant replacement. Then, we have that $\mathbb G^m \mathbb S$ is represented by
	\[\mathbb P^m (F_{\Sigma_1, \mathbb R}S^1) \cong \mathbf O(\mathbb R^m, \_)\wedge_{\Sigma_m} S^m = F_{\Sigma_m, \mathbb R^m}S^m.\]
	Hence, by the description of the global classifying spaces via semifree orthogonal spectra in \eqref{eq:suspension_spectrum_of_BG} and since $\mathbb R^m$ is a faithful $\Sigma_m$-representation, we see that
	\[\mathbb G^m\mathbb S\cong \Sigma^\infty_+ B_{\gl}\Sigma_m.\]
	
	More generally, we can calculate $\mathbb G^m (\Sigma^\infty_+ B_{\gl}G)$ for any compact Lie group, with the previous calculation a special case for $G=e$, using the identification $\Sigma^\infty_+ B_{\gl}e\cong \mathbb S$. For this calculation, choose a non-zero faithful $G$-representation $V$. Then we can write 
	\[\Sigma^\infty_+ B_{\gl}G \cong F_{G,V}S^V\]
	as in \eqref{eq:suspension_spectrum_of_BG}. Now, the spectrum $F_{G,V}S^V$ is positively cofibrant, and we calculate
	\[\mathbb P^m (F_{G,V}S^V) \cong F_{G^m, V^m}S^{V^m} /\Sigma_m \cong F_{\Sigma_m\wr G, V^m} S^{V^m},\]
	where for the last identification, we used that the permutation action of $\Sigma_m$ and the action of $G^m$ on $V^m$ assemble into the natural action of $\Sigma_m\wr G$. Then $V^m$ is a faithful $\Sigma_m\wr G$-representation, hence we see that 
	\begin{equation}\label{eq:G_on_classifying_spaces}
	\mathbb G^m (\Sigma^\infty_+ B_{\gl}G) \cong \Sigma^\infty_+ B_{\gl} (\Sigma_m \wr G).
	\end{equation}
\end{example}

\begin{construction}\label{constr:external_operations_using_G}
	We now give another description of the external power operation, using our calculation of $\mathbb G^m$ on the representing spectra for $\pi_0^G$.\\
	Let $f\in \pi_0^G(X)$ be an element of the homotopy groups of a global homotopy type $X$. By the representability result \eqref{eq:representability_of_pi_0}, we can represent $f$ by a map $f\colon \Sigma^\infty_+ B_{\gl}G \to X$ in the global homotopy category. Then, we define for $m\geq 1$
	\[ \mathbb G^m(f)\colon \Sigma^\infty_+ B_{\gl}(\Sigma_m\wr G) \cong \mathbb G^m (\Sigma^\infty_+ B_{\gl}G)\nameto{\mathbb G^m f} \mathbb G^m X,\]
	and this morphism represents an element in $\pi_0^{\Sigma_m\wr G}(\mathbb G^mX)$. Thus we define the external operations as the effect of the functor $\mathbb G^m$ on the homotopy groups $\pi_0^G(X) \cong \GH(\Sigma^\infty_+ B_{\gl}G, X)$, and obtain maps
	\[\mathbb G^m\colon \pi_0^G(X) \to \pi_0^{\Sigma_m\wr G}(\mathbb G^m X). \]
\end{construction}
\begin{lemma}\label{lemma:comparison_external_operations}
	For a positively cofibrant spectrum $X$, the two external operations $\hat{P}^m$ and $\mathbb G^m\colon \pi_0^G(X)\to \pi_0^{\Sigma_m\wr G}(\mathbb P^mX)$ agree.
\end{lemma}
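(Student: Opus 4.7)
The plan is to compare the two operations by representing a class $f \in \pi_0^G(X)$ via a semifree spectrum, applying $\mathbb P^m$ to the resulting spectrum map, and identifying the outcome with the point-set construction $\hat P^m(f)$ through the semifree adjunction.

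First, I would fix a non-zero faithful $G$-representation $V$ and represent $f$ by a $G$-equivariant map $f\colon S^V \to X(V)$. Via the semifree-forgetful adjunction, $f$ corresponds to a morphism of orthogonal spectra $\tilde f\colon F_{G,V}S^V \to X$, and under the untwisting isomorphism \eqref{eq:suspension_spectrum_of_BG} this is precisely the map representing $f$ in $\GH(\Sigma^\infty_+ B_{\gl}G, X) \cong \pi_0^G(X)$. Because $V \neq 0$, the spectrum $F_{G,V}S^V$ is positively cofibrant, and $X$ is positively cofibrant by hypothesis, so Lemma \ref{lemma:P^m_is_derivable} (together with Remark \ref{remark:P^m_also_preserves_cofibrancy}) guarantees that $\mathbb G^m \tilde f$ is modeled by the actual point-set map $\mathbb P^m \tilde f$, with no further cofibrant replacement needed.

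Next, I would apply $\mathbb P^m$ to $\tilde f$. Using the computation from Example \ref{example:G_on_classifying_spaces}, namely
\[ \mathbb P^m F_{G,V}S^V \cong F_{\Sigma_m\wr G,\, V^m}S^{V^m} \cong \Sigma^\infty_+ B_{\gl}(\Sigma_m\wr G), \]
the map $\mathbb P^m \tilde f\colon \mathbb P^m F_{G,V}S^V \to \mathbb P^m X$ represents, by definition of the external operation in Construction \ref{constr:external_operations_using_G}, exactly the class $\mathbb G^m(f) \in \pi_0^{\Sigma_m\wr G}(\mathbb P^m X)$. The remaining task is therefore to compute the $\Sigma_m\wr G$-equivariant adjoint of $\mathbb P^m \tilde f$ at level $V^m$ and compare it with the explicit formula for $\hat P^m(f)$.

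For this unwinding, I would note that the adjoint of $\tilde f$ at level $V$ recovers $f$, so evaluating $\mathbb P^m \tilde f = (\tilde f^{\wedge m})/\Sigma_m$ at $V^m$ and precomposing with the tautological class $S^{V^m} \to (\mathbb P^m F_{G,V}S^V)(V^m)$ yields precisely the composite
\[ S^{V^m} \cong (S^V)^{\wedge m} \xrightarrow{f^{\wedge m}} X(V)^{\wedge m} \xrightarrow{i_{V,\ldots,V}} X^{\wedge m}(V^m) \xrightarrow{\pr(V^m)} (\mathbb P^m X)(V^m), \]
which is the definition of $\hat P^m(f)$. The only potentially delicate step is checking that the identification $\mathbb P^m F_{G,V}S^V \cong F_{\Sigma_m\wr G,V^m}S^{V^m}$ from Example \ref{example:G_on_classifying_spaces} sends the tautological class $e_{\Sigma_m\wr G}$ on the right to the $m$-th smash power of the tautological class $e_G$ on the left; this is a naturality statement about the universal bimorphism defining the smash product, and follows from the functoriality of the $F_{G,V}$ construction on pairs $(G,V)$ under direct sum and the product group structure of $\Sigma_m\wr G$ acting on $V^m$. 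This compatibility of tautological classes is the only point that requires genuine bookkeeping; once it is in place, both operations agree tautologically.
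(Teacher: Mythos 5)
Your proof is correct and follows essentially the same route as the paper's: both represent $f$ via the semifree spectrum $F_{G,V}S^V\cong\Sigma^\infty_+ B_{\gl}G$, apply $\mathbb P^m$, and reduce everything to checking that the tautological class $e_{\Sigma_m\wr G}$ corresponds to $\hat P^m(e_G)$ under the identification $\mathbb P^m F_{G,V}S^V\cong F_{\Sigma_m\wr G,V^m}S^{V^m}$. The only stylistic difference is that the paper invokes naturality of $\hat P^m$ explicitly while you trace through the semifree adjunction directly, which is the same argument in different clothing.
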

\begin{proof}
	Let $f\in \GH(\Sigma^\infty_+ B_{\gl}G, X)$, then by \eqref{eq:representability_of_pi_0} the corresponding class in $\pi_0^G(X)$ is $f_\ast(e_G)$. Concretely, let $f\colon S^V \wedge (B_{\gl}G)_+(V) \to X(V)$ for a non-zero faithful $G$-representation $V$. We can always represent $f$ on a faithful $G$-representation, since we can embed any $G$-representation into a faithful one. Then, the tautological class in $\pi_0^G(\Sigma^\infty_+ B_{\gl}G)$ is
	\[e_G\colon S^V \nameto{\_\wedge \Id_V} S^V\wedge (B_{\gl}G)_+(V) = S^V \wedge \mathbf L(V,V)_+/G,\]
	and the tautological class for $\Sigma_m\wr G$ is 
	\[ e_{\Sigma_m\wr G} \colon S^{V^m} \nameto{\_\wedge \Id_{V^m}} S^{V^m}\wedge \mathbf L(V^m,V^m)_+/(\Sigma_m\wr G).\]
	Note that this element agrees with
	\[\hat{P}^m e_G\colon S^{V^m} \nameto{(\_\wedge \Id_{V})^m} S^{V^m}\wedge (\mathbf L(V,V)_+/G)^m \nameto{\pr} S^{V^m}\wedge \mathbf L(V^m,V^m)_+/(\Sigma_m\wr G).\]
	We now compare $\hat{P}^m( f_\ast (e_G))$ and $(\mathbb G^m f)_\ast(e_{\Sigma_m\wr G})$. Since $X$ is positively cofibrant, we can write $\mathbb P^m$ instead of $\mathbb G^m$. Then by naturality of the external power operations, we obtain 
	\[ \hat P^m(f_\ast (e_G)) = \mathbb P^m(f)_\ast (\hat P^m(e_G)) = \mathbb P^m (f)_\ast (e_{\Sigma_m\wr G}). \]
	Hence, the two operations agree.
\end{proof}

We now prove that a $G_\infty$-ring spectrum structure indeed gives rise to power operations on the homotopy groups.

\begin{construction}\label{constr:power_op_for_G_infty_spectra}
	Let $E$ be a $G_\infty$-ring spectrum. We consider the multiplication map $\mathbb G E\to E$ in the homotopy category. Since we can derive $\mathbb P$ levelwise, this decomposes into maps $\zeta_m\colon \mathbb G^mE\to E$. Thus, we define the power operations as
	\[P^m= \underline{\pi}_0(\zeta_m)\circ \mathbb G^m\colon \pi_0^G(E)\to \pi_0^{\Sigma_m\wr G} (\mathbb G^mE) \to \pi_0^{\Sigma_m\wr G}(E).\]
\end{construction}

\begin{prop}\label{prop:power_operations_on_G_infty_spectra}
Let $E$ be a $G_\infty$-ring spectrum with structure map $\zeta\colon \mathbb GE\to E$. Then the operations $P^m$ defined in Construction \ref{constr:power_op_for_G_infty_spectra}, together with the multiplication given by $\zeta_2$, define a structure of a global power functor on $\underline{\pi}_0(E)$.
\end{prop}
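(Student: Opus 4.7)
The plan is to reduce each axiom of a global power functor on $\underline{\pi}_0(E)$ to the corresponding axiom on $\underline{\pi}_0(\mathbb G E)$, which already holds by Theorem \ref{thm:power_operations_for_ucom}: indeed $\mathbb G E$ is represented by $\mathbb P E^c$ for a positive cofibrant replacement $E^c$ of $E$, and this is an ultra-commutative ring spectrum. I denote the resulting power operations on $\underline{\pi}_0(\mathbb G E)$ by $P^m_{\mathbb G E}$.

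First I would identify the external operations. Using Lemma \ref{lemma:comparison_external_operations} and the retract diagram \eqref{diagram:external_power_operations_as_retract}, for any spectrum $X$ the external operation factors as $\mathbb G^m = (\pr_m)_\ast \circ P^m_{\mathbb G X} \circ (\eta_X)_\ast$, where $\eta_X\colon X \to \mathbb G X$ is the monad unit. Since the image of $P^m_{\mathbb G E}\circ (\eta_E)_\ast$ lies in the wedge summand $\pi_0^{\Sigma_m \wr G}(\mathbb G^m E)$, and since under the decomposition $\mathbb G E = \bigvee_m \mathbb G^m E$ one has $\zeta\circ \incl_m = \zeta_m$, the internal operation on $E$ rewrites as
\[ P^m_E \;=\; \zeta_\ast \circ P^m_{\mathbb G E} \circ (\eta_E)_\ast. \]

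Next I observe that both $(\mathbb G E, \mu_E)$ and $(E, \zeta)$ are $\mathbb G$-algebras in $\GH$, that $\zeta$ is a morphism of $\mathbb G$-algebras by the monad associativity axiom $\zeta \circ \mu_E = \zeta \circ \mathbb G \zeta$, and that $(\eta_E)_\ast$ is a split monomorphism of global functors by the unit axiom $\zeta\circ \eta_E = \Id_E$. Axioms involving only restrictions, transfers, and a single power operation—that is, $P^0 = 1$, $P^1 = \Id$, additivity, and compatibility with $\alpha^\ast$ and $\tr_H^G$—then transport directly: restrictions, transfers and the external operations $\mathbb G^m$ are natural in the orthogonal spectrum and hence commute with both $\zeta_\ast$ and $(\eta_E)_\ast$, so the corresponding identity on $\underline{\pi}_0(\mathbb G E)$, sandwiched by $\zeta_\ast\circ (-)\circ (\eta_E)_\ast$, yields the identity on $\underline{\pi}_0(E)$.

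The main obstacle will be the compatibility between the Green multiplication on $\underline{\pi}_0(E)$ (defined by $\zeta_2$) and the higher $P^m_E$. For multiplicativity one uses that both $\eta_E(x\cdot y)$ and $\eta_E(x)\cdot \eta_E(y)$ in $\pi_0(\mathbb G E)$ map under $\zeta_\ast$ to $x\cdot y$, the latter by the very definition of the Green multiplication on $\underline{\pi}_0(E)$; pushing the identity $P^m_{\mathbb G E}(\eta_E(x)\cdot \eta_E(y)) = P^m_{\mathbb G E}(\eta_E(x))\cdot P^m_{\mathbb G E}(\eta_E(y))$ from Theorem \ref{thm:power_operations_for_ucom} forward along $\zeta_\ast$ then yields $P^m_E(x\cdot y) = P^m_E(x)\cdot P^m_E(y)$. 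The double-power identity is treated analogously, by recognising $P^m_E\circ P^n_E$ as the image under $\zeta_\ast$ of an iterated power in $\pi_0(\mathbb G \mathbb G E)$ and invoking the monad associativity to collapse it to $P^{mn}_{\mathbb G E}$ up to the expected restriction. The bookkeeping of wedge summands in these last two steps is what makes this part the most delicate, but no input beyond Theorem \ref{thm:power_operations_for_ucom} and the monad algebra axioms is required.
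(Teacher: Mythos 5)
Your approach is genuinely different from the paper's. The paper represents a class $x\in\pi_0^G(E)$ by a map $f\colon\Sigma^\infty_+B_{\gl}G\to E$, uses the identification $\mathbb G^m(\Sigma^\infty_+B_{\gl}G)\cong\Sigma^\infty_+B_{\gl}(\Sigma_m\wr G)$, and then verifies the global power functor relations directly on these universal examples using the commuting squares \eqref{eq:levelwise_algebra_structure} arising from the $\mathbb G$-algebra structure. You instead pull everything back to $\underline{\pi}_0(\mathbb G E)$, which already is a global power functor because $\mathbb G E$ is represented by the ultra-commutative ring spectrum $\mathbb P E^c$, and transport along the retraction $\zeta_\ast\colon\underline{\pi}_0(\mathbb G E)\to\underline{\pi}_0(E)$ with section $(\eta_E)_\ast$. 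That is a legitimate and arguably more conceptual strategy, using the same essential inputs (Theorem \ref{thm:power_operations_for_ucom}, the monad algebra axioms, naturality of the external operations) but packaged differently.

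There is, however, a gap at the crux of the multiplicativity and double-power steps. As written, you push the identity $P^m_{\mathbb G E}(\eta_E(x)\cdot\eta_E(y)) = P^m_{\mathbb G E}(\eta_E(x))\cdot P^m_{\mathbb G E}(\eta_E(y))$ forward along $\zeta_\ast$, and the right-hand side indeed becomes $P^m_E(x)\cdot P^m_E(y)$ since $\zeta_\ast$ is a Green functor morphism. But for the left-hand side to become $P^m_E(x\cdot y)=\zeta_\ast P^m_{\mathbb G E}(\eta_E(x\cdot y))$, you need $\zeta_\ast P^m_{\mathbb G E}(\eta_E(x)\cdot\eta_E(y))=\zeta_\ast P^m_{\mathbb G E}(\eta_E(x\cdot y))$, and these two arguments of $P^m_{\mathbb G E}$ are genuinely different elements of $\pi_0^G(\mathbb G E)$ (one in the $\mathbb G^2$ summand, one in the $\mathbb G^1$ summand). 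Knowing that both map to $x\cdot y$ under $\zeta_\ast$ is not by itself sufficient. What is actually needed is the intertwining relation $\zeta_\ast\circ P^m_{\mathbb G E}=P^m_E\circ\zeta_\ast$, valid on all of $\pi_0(\mathbb G E)$ and not just on the image of $\eta_E$. You do record the ingredient — $\zeta$ is a $\mathbb G$-algebra morphism by associativity — but never draw the consequence. The short derivation is: if $f\colon(A,\theta_A)\to(B,\theta_B)$ is a $\mathbb G$-algebra map in $\GH$, then by naturality of the external operation and by restricting $\theta_B\circ\mathbb G f = f\circ\theta_A$ to the $m$-th wedge summand, one gets $(\theta_{B,m})_\ast\hat P^m(f_\ast x)=(\theta_{B,m}\circ\mathbb G^m f)_\ast\hat P^m(x)=(f\circ\theta_{A,m})_\ast\hat P^m(x)$, i.e.\ $P^m_B\circ f_\ast=f_\ast\circ P^m_A$. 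Once you state and use this, your pushforward argument for multiplicativity — and likewise the double-power identity — becomes correct; the same intertwining also streamlines the transport of the transfer and restriction compatibilities. I would recommend making this lemma explicit before invoking the pushforward, rather than leaving it under the heading of ``bookkeeping.''
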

\begin{proof}
The map $\zeta_2\colon \mathbb G^2 E\to E$ together with the unit map makes $E$ into an homotopy commutative ring spectrum, hence $\underline{\pi}_0(E)$ is a global Green functor by Proposition \ref{prop:homotopy_for_ring_spectrum_Green_functor}. Moreover, we need to check the relations for the global power operations as listed in \cite[Definition 5.1.6]{Schwede_2018}. For this, we use the same naturality as in the proof of Lemma \ref{lemma:comparison_external_operations} and check on the representing spectra $\Sigma^\infty_+ B_{\gl} G$. Since the arguments are all similar, we focus on one property here:

Let $i, j\geq 0$, and $x\in \pi_0^G(E)$. We need to check that $\Phi_{i,j}^\ast (P^{i+j}(x))= P^i(x) \times P^j(x)$ holds in $\pi_0^{\Sigma_i\wr G\times \Sigma_j\wr G}(E)$. Here, the map $\Phi_{i,j}\colon \Sigma_i\wr G\times \Sigma_j\wr G\to \Sigma_{i+j}\wr G$ is given by juxtaposition of permutations. Let $x$ be represented by a map $f\colon \Sigma^\infty_+ B_{\gl}G \to E$ in $\GH$. Then the  power operations on $x$ are given as $P^m (x)=\underline{\pi}_0(\zeta_m) (\mathbb G^m f)_\ast (e_{\Sigma_m \wr G})$.

We now consider the maps $\varphi_{i,j}\colon \mathbb P^iX\wedge \mathbb P^jX \to \mathbb P^{i+j}X$, given for any $X$ by forming orbits along $\Phi_{i,j}$ in $X^{\wedge i+j}/(\Sigma_i\times \Sigma_j) \to X^{\wedge i+j}/(\Sigma_{i+j})$. For $X= \Sigma^\infty_+ B_{\gl} G$, this map represents the restriction $\Phi_{i,j}^\ast\colon \pi_0^{\Sigma_{i+j}\wr G} \to \pi_0^{\Sigma_i\wr G\times \Sigma_j\wr G}$, meaning that $(\varphi_{i,j})_\ast (e_{\Sigma_i\wr G} \times e_{\Sigma_j\wr G}) = \Phi_{i,j}^\ast(e_{\Sigma_{i+j}\wr G})$. Moreover, $\varphi_{i,j}\colon \mathbb P^i\wedge \mathbb P^j \to \mathbb P^{i+j}$ features in the monad structure of the functor $\mathbb P$, hence also in the derived monad structure for $\mathbb G$. The fact that $\zeta$ defines a $G_\infty$-structure hence shows that we have the commutative square 
\begin{equation}\label{eq:levelwise_algebra_structure}
\begin{tikzcd}
\mathbb G^iE\wedge^L \mathbb G^j E \arrow[r, "\zeta_i \wedge \zeta_j"] \arrow[d, "\varphi_{i,j}", swap] 
	& E\wedge^L E \arrow[d, "\mu"]\\
\mathbb G^{i+j} E \arrow[r, "\zeta_{i+j}", swap]
	& E.
\end{tikzcd}
\end{equation}
Here, $\mu$ is the homotopy multiplication induced by $\zeta_2$. This square is also used in the original definition of $H_\infty$-ring spectra in \cite[Definition I.3.1]{BMMS_1986}. In total, we calculate
\begin{align*}
\underline{\pi}_0(\zeta_{i+j}) (\mathbb G^{i+j} f) \Phi_{i,j}^\ast (e_{\Sigma_{i+j} \wr G}) = & 
	\underline{\pi}_0(\zeta_{i+j}) (\mathbb G^{i+j} f) (\varphi_{i,j})_\ast (e_{\Sigma_i \wr G} \times e_{\Sigma_j \wr G})\\
= & \underline{\pi}_0(\zeta_{i+j}) (\varphi_{i,j})_\ast ( \mathbb G^i f\wedge \mathbb G^jf)_\ast (e_{\Sigma_i \wr G} \times e_{\Sigma_j \wr G})\\
= & \underline{\pi}_0(\zeta_i) (\mathbb G^i f)_\ast (e_{\Sigma_i \wr G}) \times \underline{\pi}_0(\zeta_j) (\mathbb G^j f)_\ast (e_{\Sigma_j \wr G}).
\end{align*}
This proves one of the relations of a global power functor, the other ones follow similarly by considering $\mathbb G^k\mathbb G^m (\Sigma^\infty_+ B_{\gl} G)$, $\mathbb G^m(\Sigma^\infty_+ B_{\gl} G\wedge \Sigma^\infty_+ B_{\gl} K)$ and $\mathbb G^m(\Sigma^\infty_+ B_{\gl} G\vee \Sigma^\infty_+ B_{\gl} G)$.
\end{proof}

\begin{remark}\label{remark:power_operations_on_cohomology}
As an application of this description of the external power operations, we also define external cohomology operations, and show that a $G_\infty$-structure can be used to internalize these operations. These internal cohomology operations are also constructed for an ultra-commutative ring spectrum in \cite[Remark 5.1.14]{Schwede_2018}.\\
Let $X$ be an orthogonal spectrum and $A$ be a cofibrant based $G$-space. We define an orthogonal space $\mathbf L_{G,V}A = \mathbf L(V,\_)\wedge_G A$ for any $G$-representation $V$, similar to the construction in Remark \ref{remark:global_classifying_spaces}. Then we define the $G$-equivariant $X$-cohomology of $A$ as
\[ X^0_G(A) = [\Sigma^\infty_+ \mathbf L_{G,V}A, X], \]
where $[\_,\_]$ denotes the morphisms in $\GH$, and $V$ is any faithful $G$-representation. Then, external power operations on this $X$-cohomology are defined by 
\begin{align*}
\hat P^m\colon X^0_G(A) = [\Sigma^\infty_+ \mathbf L_{G,V}A, X] \nameto{\mathbb G^m} & [\mathbb G^m\Sigma^\infty_+ \mathbf L_{G,V}A, \mathbb G^m X]\\
= & [\Sigma^\infty_+ \mathbf L_{\Sigma_m\wr G,V^m}A^m, \mathbb G^m X]= (\mathbb G^mX)^0_{\Sigma_m\wr G}(A^m).
\end{align*}
Here, we used a relative version of the calculations in \ref{example:G_on_classifying_spaces} to calculate 
\[\mathbb G^m\Sigma^\infty_+ \mathbf L_{G,V}A\cong\Sigma^\infty_+ \mathbf L_{\Sigma_m\wr G,V^m}A^m.\]
Using a $G_\infty$-ring structure on $X$, given by morphisms $\zeta_m\colon \mathbb G^mX\to X$, we can internalize these operations to 
\[P^m\colon  X^0_G(A) \nameto{\hat P^m} (\mathbb G^mX)^0_{\Sigma_m\wr G} (A^m)\nameto{(\zeta_m)_\ast} X^0_{\Sigma_m\wr G} (A^m). \]
In \cite[Remark 5.1.14]{Schwede_2018}, it is shown that these power operations forget to the classical power operations $X^0(A) \to X^0(B\Sigma_m\times A)$ on the non-equivariant $X$-cohomology of $A$ upon postcomposition with the diagonal on $A$. These are the power operations induced by an $H_\infty$-structure on $X$ in \cite[Definition I.4.1]{BMMS_1986}
\end{remark}

\subsection{An adjunction between \texorpdfstring{$G_\infty$}{G-infinity}- and \texorpdfstring{$H_\infty$}{H-infinity}-ring spectra}\label{section:adjunctions_between_infinity_ring_spectra}

In this section, we compare the notion of $G_\infty$-ring spectra to the classical notion of $H_\infty$-ring spectra. This is accomplished by lifting the adjunction 
\[\outlineadjunction{\GH}{\SH}{U}{R}\]
to structured ring spectra, where $U$ is the forgetful functor and $R$ its right adjoint. This adjunction is exhibited in \cite[Theorem 4.5.1]{Schwede_2018}.

\begin{remark}\label{remark:properties_of_Stolz_model_structure}
In this chapter, we use for the stable homotopy category the positive stable model structure defined by Stolz in \cite[Chapter 1.3]{Stolz_2011}. There, it is called the $\mathbb S$-model structure. This model structure has the following two desirable properties, which we use in order to analyse the derivability of the symmetric product functor $\mathbb P$ in Lemma \ref{lemma:composites_of_P_left_derivable}:
\begin{enumerate}[\itshape i)]
	\item The cofibrations and positive acyclic cofibrations are symmetrizable \cite[Definition 3]{Gorchinskiy_Guletski_2016}: If $f\colon X\to Y$ is a cofibration, then for all $n\geq 1$ the iterated pushout product map
	\[f^{\square n}/\Sigma_n \colon Q^n(f)/\Sigma_n \to \mathbb P^n(Y) \]
	is a cofibration. Here, $Q^n(f)$ is the colimit over the punctured cube diagram
	\begin{align*} 
		\{0\to 1\}^n\setminus \{1,\ldots, 1\} & \to \mathcal Sp\\
		(i_1,\ldots,i_n) & \mapsto Z_{i_1}\wedge \ldots \wedge Z_{i_n}\\
		id\wedge \ldots \wedge (0\to 1) \wedge \ldots \wedge id & \mapsto  id\wedge \ldots \wedge f \wedge \ldots \wedge id.
	\end{align*}
	In this definition, we set
	\[Z_i= \begin{dcases*}
	X & if $i=0$\\
	Y & if $i=1$.
	\end{dcases*} \]
	The corresponding property also holds for the positive acyclic cofibrations. That these properties hold for the stable model structure constructed by Stolz follows from observing that the cofibrations agree with those of the global model structure constructed by Schwede in \cite[Chapter 4.3]{Schwede_2018}, where symmetrizability is verified in \cite[Theorem 5.4.1]{Schwede_2018}. For the acyclic cofibrations, a similar calculation can be carried out.
	\item Cofibrant objects are flat: If $X$ is cofibrant in the stable model structure, then $X\wedge \_ $ preserves stable equivalences \cite[Proposition 1.3.11]{Stolz_2011}.
\end{enumerate}
These properties are required in order to apply the results from \cite{Gorchinskiy_Guletski_2016}, in particular Theorem 25, which states that the functor $\mathbb P^n$ preserves weak equivalences between positively cofibrant objects.
\end{remark}

\subsubsection{Lifting the forgetful functor \texorpdfstring{$\GH\to \SH$}{GH to SH} to structured ring spectra}

We first recall the classical definition of $H_\infty$-ring spectra: As defined in \cite[I, Definition 3.1]{BMMS_1986}, an $H_\infty$-ring spectrum $X$ is defined by maps 
\[\xi_m\colon D_m X\to X,\]
where $D_mX = (E\Sigma_m)_+ \wedge_{\Sigma_m} X^{\wedge m}$. These maps are required to satisfy compatibility conditions such as the one used in \eqref{eq:levelwise_algebra_structure}. Note that this formulation uses the modern smash product, which was not yet available in the original definition. Unravelling the definitions in \cite{BMMS_1986} however gives this formulation. In contrast, our definition of $G_\infty$-ring spectra uses a modern point-set category of spectra to obtain the monad $\mathbb G$, and defines $G_\infty$-ring spectra as algebras over this monad. As this definition is more conceptual and allows us to use the results from Appendix \ref{section:monads_under_lax_functors}, we also formulate the notion of $H_\infty$-ring spectra in this way.\\
For this, note that the adjunction 
\[ \outlineadjunction{\mathcal Sp}{\Com}{\mathbb P}{U_{\Com}}\]
is also a Quillen adjunction with respect to the positive stable model structures defined by Stolz in \cite[Proposition 1.3.10 and Theorem 1.3.28]{Stolz_2011}. Thus we obtain a derived adjunction 
\[ \outlineadjunction[huge]{\SH}{\Ho^{\st}(\Com).}{L_{\st}\mathbb P}{\Ho(U_{\Com})}\]

\begin{definition}\label{def:H_infty_ring_spectra}
An $H_\infty$-ring spectrum is an algebra over the monad $\mathbb H=\Ho(U_{\Com})\circ L_{\st}\mathbb P$.
\end{definition}
By abuse of notation, we will also denote $\mathbb H$ as $L_{\st}\mathbb P$, since it is the left derived functor of $\mathbb P\colon \mathcal Sp\to \mathcal Sp$. In the same way, we denote $\mathbb G$ by $L_{\gl}\mathbb P$.

That our definition using $L_{\st}\mathbb P$ agrees with the original definition follows from the following statement, after the necessary translations regarding the different models for spectra:
\begin{lemma}\label{lemma:extended_versus_symmetric_powers_stable}
Let $X$ be a positive stably cofibrant orthogonal spectrum. Then the map
\[p\colon D_mX= (E\Sigma_m)_+ \wedge_{\Sigma_m} X^{\wedge m} \to X^{\wedge m} /\Sigma_m = \mathbb P^mX \]
that collapses $E\Sigma_m$ is a stable weak equivalence.
\end{lemma}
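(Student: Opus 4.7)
Plan: The strategy is to reduce, via cellular induction, to the case of a semifree generator $X = F_V A$ with $V \neq 0$, and then to observe that for such $X$ the map $p$ is a levelwise (hence stable) equivalence because $\Sigma_m$ acts freely on the relevant linear isometry spaces. This is the standard "positivity makes $\Sigma_m$-actions free" pattern in the homotopy theory of symmetric spectra.

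First I would set up the induction. Both functors $D_m(-)$ and $\mathbb{P}^m(-)$ preserve colimits in their argument, and by symmetrizability together with flatness of cofibrant objects (Remark \ref{remark:properties_of_Stolz_model_structure}), the pushout-product behaviour of $\mathbb{P}^m$ on generating positive cofibrations is controlled as in Theorem 25 of \cite{Gorchinskiy_Guletski_2016}. The analogous statement for $D_m$ follows from the same properties combined with the fact that $(E\Sigma_m)_+$ is a free $\Sigma_m$-CW complex, so that smashing with $(E\Sigma_m)_+$ and then taking $\Sigma_m$-orbits preserves the relevant weak equivalences. A standard cellular induction along the positive generating cofibrations of the Stolz model structure then reduces the claim to checking it on semifree spectra $X = F_V A$, with $V$ a nonzero inner product space and $A$ a cofibrant based space.

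For such $X$ one computes directly that $X^{\wedge m} \cong F_{V^{\oplus m}}(A^{\wedge m})$, with $\Sigma_m$ acting by the simultaneous permutation of the summands of $V^{\oplus m}$ and the factors of $A^{\wedge m}$. Evaluated on an inner product space $W$, this becomes $\mathbf O(V^{\oplus m}, W) \wedge A^{\wedge m}$. Since $V \neq 0$, the representation $V^{\oplus m}$ is faithful, so $\Sigma_m$ acts freely on $\mathbf L(V^{\oplus m}, W)$ whenever this space is nonempty. At each level $W$ the map $p$ thus identifies with
\[ (E\Sigma_m)_+ \wedge_{\Sigma_m} \bigl(\mathbf O(V^{\oplus m}, W) \wedge A^{\wedge m}\bigr) \longrightarrow \mathbf O(V^{\oplus m}, W) \wedge_{\Sigma_m} A^{\wedge m}, \]
which is a weak equivalence because $\Sigma_m$ acts freely on $\mathbf L(V^{\oplus m}, W)$ when nonempty (so collapsing $E\Sigma_m$ to a point is an orbit-preserving equivalence), while both sides are trivial when the isometry space is empty.

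The main obstacle is the first step: verifying that the cellular induction goes through, i.e., that the class of $X$ for which $p$ is an equivalence is closed under the pushouts and transfinite compositions appearing in building positively cofibrant spectra from their generators. This is exactly the point at which symmetrizability and the flatness of cofibrant objects in Stolz's model structure are indispensable, since they control the homotopical behaviour of $\mathbb{P}^m$ and $D_m$ on pushout-products. Once this is in place, the explicit computation on semifree spectra completes the argument.
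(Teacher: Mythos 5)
Your proposal reconstructs the cellular induction that the paper cites (Stolz's Lemma 1.3.17, following BMMS pp.\ 36--37), so this is the same approach rather than a different one. The heart of your argument is correct: positivity of $V$ forces the $\Sigma_m$-permutation action on $\mathbf L(V^{\oplus m}, W)$ to be free (an isometric embedding $\phi$ with $\phi\circ\sigma=\phi$ forces $\sigma=\mathrm{id}$ once $V\neq 0$), hence free away from the basepoint on $\mathbf O(V^{\oplus m}, W)\wedge A^{\wedge m}$, so that collapsing $E\Sigma_m$ is a levelwise weak equivalence there.

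One claim, however, is false and should be deleted: ``Both functors $D_m(-)$ and $\mathbb P^m(-)$ preserve colimits in their argument.'' Both functors are homogeneous of degree $m$ and do not even preserve coproducts; for instance $\mathbb P^m(X\vee Y)\cong\bigvee_{i+j=m}\mathbb P^i X\wedge\mathbb P^j Y$, and similarly $D_m(X\vee Y)\simeq\bigvee_{i+j=m}D_i X\wedge D_j Y$. The cellular induction works not because these functors commute with pushouts, but because for a positive cofibration $f\colon X\to Y$ both $\mathbb P^m Y$ and $D_m Y$ carry natural filtrations relative to $\mathbb P^m X$ and $D_m X$ whose layers are smash products of powers of the source and the cofibre of $f$; the symmetrizability of positive cofibrations and flatness of cofibrant objects in Stolz's $\mathbb S$-model structure are exactly what makes those layers homotopically well-behaved, as you correctly invoke in the next sentence. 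Relatedly, the statement that the induction ``reduces to checking it on semifree spectra $X=F_VA$'' is imprecise: a positively cofibrant spectrum is generally not semifree. The induction starts from the zero spectrum, and the computation you perform on $F_VA$ is the verification that the comparison map on the filtration layers (built from the cells $F_V(\partial D^n)_+\to F_V(D^n)_+$ with $V\neq 0$) is an equivalence, which is where the positivity hypothesis enters.
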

\begin{proof}
Since we work in the stable model structure constructed by Stolz, this is the statement of \cite[Lemma 1.3.17]{Stolz_2011}, where a cellular induction along the lines of \cite[p. 36-37]{BMMS_1986} is carried out. The analogous statement for the more commonly used projective model structure by Mandell-May-Schwede-Shipley is \cite[Lemma 15.5]{MMSS_2001}.
\end{proof}

Using this definition of $H_\infty$-ring spectra, we show that the underlying stable homotopy type of a $G_\infty$-ring spectrum is an $H_\infty$-spectrum. To do this, we show that the derived functor $U\colon \GH\to \SH$ is a monad functor in the sense of \ref{def:lax_monad_functor}. We deduce this formally from a variant of the fact that taking the homotopy category of a model category is a pseudo-2-functor \cite[1.4.2f]{Hovey_1999}:

We consider the 2-category $(\textup{Model}, \textup{left})$ of model categories and left Quillen functors. Then, \cite[1.4.3]{Hovey_1999} shows that taking homotopy categories and left derived functors is a pseudo 2-functor $L\colon (\textup{Model}, \textup{left}) \to \textup{Cat}$. Hence, by Corollary \ref{corollary:lax_functor_preserves_monads} the functor $L$ preserves monads and monad morphisms.

However, the functor $\mathbb P\colon \mathcal Sp\to \mathcal Sp$ is not left Quillen, but merely left derivable, i.e. it sends weak equivalences between cofibrant objects to weak equivalences, in both the stable and the global positive model structure. Moreover, all compositions $\mathbb P^{\circ k}\colon \mathcal Sp \to \mathcal Sp$ can be derived:

\begin{lemma}\label{lemma:composites_of_P_left_derivable}
Let $X$ be a positively cofibrant spectrum in either the stable or global model structure, and let $A= \bigvee_I \mathbb S$ be a wedge of sphere spectra. Then $\mathbb P(A\vee X)\cong B\vee Y$, where $B=\bigvee_J \mathbb S$ is a wedge of spheres which only depends on $A$, and where $Y$ is a positively cofibrant spectrum. Moreover, if $f\colon X\to X^\prime$ is a weak equivalence between positively cofibrant spectra, then also $\mathbb P(id\vee f)$ is a weak equivalence of the form $id\vee g\colon B\vee Y\to B\vee Y^\prime$.\\
In particular, for any $k\geq 1$, the functor $\mathbb P^{\circ k}\colon \mathcal Sp\to \mathcal Sp$ sends weak equivalences between positively cofibrant spectra to weak equivalences.
\end{lemma}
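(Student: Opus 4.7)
The plan is to exploit that $\mathbb P\colon \mathcal Sp \to \ucom$ is left adjoint to the forgetful functor, and hence preserves coproducts. Since the coproduct in $\ucom$ is the smash product, this yields the natural isomorphism $\mathbb P(A \vee X) \cong \mathbb P(A) \wedge \mathbb P(X)$, which refines via the grading on $\mathbb P$ to the multinomial-type decomposition
\[\mathbb P^m(A \vee X) \cong \bigvee_{i+j=m} \mathbb P^i(A) \wedge \mathbb P^j(X),\]
verifiable directly by analyzing the $\Sigma_m$-orbits on the wedge summands of $(A \vee X)^{\wedge m}$ according to the number of $A$- and $X$-factors. Splitting off the summands with $j=0$ gives $\mathbb P(A \vee X) \cong B \vee Y$, where $B := \mathbb P(A)$ depends only on $A$ and $Y := \bigvee_{i \geq 0,\, j \geq 1} \mathbb P^i(A) \wedge \mathbb P^j(X)$.

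To verify the stated forms, I would analyze the $\Sigma_i$-orbit decomposition of $A^{\wedge i} = \bigvee_{f\colon [i] \to I} \mathbb S^{\wedge i}$ (using $A = \bigvee_I \mathbb S$) in order to identify $B = \bigvee_i \mathbb P^i(A)$ as a wedge of sphere spectra $\bigvee_J \mathbb S$, with $J$ depending combinatorially only on $I$. For $Y$, Remark \ref{remark:P^m_also_preserves_cofibrancy} ensures that each $\mathbb P^j(X)$ with $j \geq 1$ is positively cofibrant; smashing with the wedge of spheres $\mathbb P^i(A)$ reduces to a wedge of copies of $\mathbb P^j(X)$, and wedges of positively cofibrant spectra remain positively cofibrant. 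For the weak equivalence statement, the decomposition is natural in $X$, so $\mathbb P(\mathrm{id}_A \vee f)$ restricts to the identity on $B$ and to $\bigvee_{i,j} \mathrm{id}_{\mathbb P^i A} \wedge \mathbb P^j(f)$ on $Y$; each $\mathbb P^j(f)$ is a weak equivalence between positively cofibrant spectra by Lemma \ref{lemma:P^m_is_derivable}, so the wedge on $Y$ is as well.

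Finally, the ``in particular'' statement follows by induction on $k$: the case $k=1$ is Lemma \ref{lemma:P^m_is_derivable} applied to each wedge summand of $\mathbb P = \bigvee_m \mathbb P^m$, and for $k \geq 2$ the inductive hypothesis supplies a decomposition $\mathbb P^{\circ(k-1)}(X) \cong B_{k-1} \vee Y_{k-1}$ of the required form, so the main claim applied with $A = B_{k-1}$ and input $Y_{k-1}$ yields $\mathbb P^{\circ k}(X) \cong B_k \vee Y_k$ together with the desired weak equivalence on the positively cofibrant summand. The main obstacle I anticipate is the precise identification of $B$ as a genuine wedge of sphere spectra: it requires careful combinatorial bookkeeping of the $\Sigma_i$-orbits on $A^{\wedge i}$, keeping track of which orbits contribute free copies of $\mathbb S$ and how these assemble coherently as $i$ varies, with the cofibrancy of $Y$ and the weak-equivalence statements then being formal consequences of this identification.
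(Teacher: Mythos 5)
Your proposal is correct and follows essentially the same approach as the paper: split $\mathbb P(A\vee X)\cong \mathbb P(A)\wedge\mathbb P(X)$ via the coproduct-to-smash property, isolate $B=\mathbb P(A)$ and $Y=\mathbb P(A)\wedge\mathbb P_{>0}(X)$, show $B$ is a wedge of spheres and $Y$ is positively cofibrant, and induct for the iterated statement. The one place you do extra work is in identifying $B$ as a wedge of spheres — the paper avoids the $\Sigma_i$-orbit bookkeeping you flag as an obstacle by directly computing $\mathbb P\bigl(\bigvee_I\mathbb S\bigr)\cong\bigwedge_I\mathbb P(\mathbb S)\cong\bigwedge_I\bigl(\bigvee_{m\geq 0}\mathbb S\bigr)$, which distributes to a wedge of spheres immediately (both routes quietly rely on the symmetry isomorphism acting trivially on $\mathbb S^{\wedge m}$).
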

\begin{proof}
We write 
\[\mathbb P(A\vee X) \cong  \mathbb P(A) \wedge \mathbb P(X)
\cong \mathbb P(A)\wedge (\mathbb S \vee  \mathbb P_{>0} (X)) 
\cong \mathbb P(A) \vee (\mathbb P(A) \wedge \mathbb P_{>0} (X))\]
Now, we see that 
\[\mathbb P(A) =  \mathbb P\left(\bigvee_I \mathbb S \right) \cong \bigwedge_I(\mathbb P \mathbb S) \cong \bigwedge_I \left(\bigvee_{i\geq 0} \mathbb S\right) \cong \bigvee_{\mathbb N^I}\left( \bigwedge_I \mathbb S\right) \cong \bigvee_{\mathbb N^I} \mathbb S\]
is a wedge of spheres. Moreover, the spectrum $\mathbb P_{>0} X$ is positively cofibrant by applying \cite[Corollary 10]{Gorchinskiy_Guletski_2016} to the positive model structures, and hence also $\mathbb P(A)\wedge \mathbb P_{>0} X$ is positively cofibrant. This proves the first assertion, putting $B=\mathbb P(A)$ and $Y= \mathbb P(A)\wedge \mathbb P_{>0} X$. If $ f\colon X\to X^\prime$ is a weak equivalence between positively cofibrant spectra, so are $\mathbb P_{>0}(f)$ and $\mathbb P(A)\wedge \mathbb P_{>0} (f)$ by the observations in Remark \ref{remark:properties_of_Stolz_model_structure}. This proves the second part of the lemma, since $\mathbb P(id\vee f)= id_{\mathbb P(A)} \vee(\mathbb P(A)\wedge \mathbb P_{>0} (f))$.\\
In total, this proves the conclusion that $\mathbb P^{\circ k}$ preserves weak equivalences between positively cofibrant spectra by induction.
\end{proof}

Now we generalize the statement of \cite[1.4.3]{Hovey_1999} to encompass all left derivable functors. There are two problems: the class of left derivable functors is not closed under composition, and if $F$ and $G$ are composable left derivable functors such that $GF$ also is left derivable, the natural transformation $LG\circ LF \to L(GF)$ might not be invertible. However, we obtain the following result:

\begin{prop}\label{prop:left_deriving_is_partial_lax_functor}
Let $(\textup{Model},\textup{all})$ be the $2$-category of model categories and all functors and natural transformations, and let $\mathcal{LD}er_1$ denote the class of all left derivable functors and $\mathcal{LD}er_2$ the class of all natural transformations between left derivable functors. Then the assignment 
\begin{align*}
L \colon (\textup{Model}, \mathcal{LD}er_1, \mathcal{LD}er_2)\to \textup{Cat}\\
\mathcal C\mapsto \Ho(\mathcal C), F\mapsto LF, \eta\mapsto L\eta 
\end{align*}
comes equipped with the following structure:
\begin{enumerate}[i)]
\item A unitality isomorphism $\alpha_{\mathcal C}\colon id_{\Ho(\mathcal C)} \to L(id_{\mathcal C})$ for any model category $\mathcal C$.
\item A natural transformation $\mu_{G,F}\colon LG\circ LF \to L(GF)$ for any pair of left derivable functors $F\colon \mathcal C\to \mathcal D$, $G\colon \mathcal D\to \mathcal E$ such that $GF$ is also left derivable.
\end{enumerate}
These satisfy the properties of a lax $2$-functor from Definition \ref{def:lax_functors_2_cat} where they are defined.\\
Moreover, if $F\colon \mathcal C\to \mathcal D$ is left derivable and $U\colon \mathcal D\to \mathcal E$ is homotopical, then $UF$ is left derivable and $\mu_{U, F}$ is invertible.
\end{prop}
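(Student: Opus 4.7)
The plan is to build the derived functor $LF$, and all the coherence 2-cells, starting from a functorial cofibrant replacement. First I would fix, for each model category $\mathcal C$ appearing, a cofibrant replacement functor $Q_{\mathcal C}$ together with a natural weak equivalence $q_{\mathcal C}\colon Q_{\mathcal C}\Rightarrow \mathrm{id}_{\mathcal C}$. For a left derivable functor $F\colon \mathcal C\to \mathcal D$, I define $LF\colon \Ho(\mathcal C)\to \Ho(\mathcal D)$ as the unique functor through which $\gamma_{\mathcal D}\circ F\circ Q_{\mathcal C}$ factors over the localization $\gamma_{\mathcal C}\colon \mathcal C\to \Ho(\mathcal C)$; this factorization exists precisely because $F$ sends weak equivalences between cofibrant objects to weak equivalences. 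A natural transformation $\eta\colon F\Rightarrow F'$ between left derivable functors is sent to the 2-cell induced by $\gamma_{\mathcal D}\cdot(\eta\ast Q_{\mathcal C})$, and the unit $\alpha_{\mathcal C}\colon \mathrm{id}_{\Ho(\mathcal C)}\Rightarrow L(\mathrm{id}_{\mathcal C})$ is the natural isomorphism with value at $\gamma_{\mathcal C}X$ the inverse of $\gamma_{\mathcal C}(q_{\mathcal C,X})$ in $\Ho(\mathcal C)$.

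Next I would construct the composition 2-cell. Given composable left derivable functors $F\colon \mathcal C\to \mathcal D$, $G\colon \mathcal D\to \mathcal E$ with $GF$ left derivable, the composite $LG\circ LF$ is by unwinding the defining universal property the factorization of $\gamma_{\mathcal E}\circ G\circ Q_{\mathcal D}\circ F\circ Q_{\mathcal C}$ through $\gamma_{\mathcal C}$, while $L(GF)$ is the factorization of $\gamma_{\mathcal E}\circ G\circ F\circ Q_{\mathcal C}$. Whiskering $q_{\mathcal D}$ appropriately yields a natural transformation $G\ast q_{\mathcal D}\ast FQ_{\mathcal C}\colon GQ_{\mathcal D}FQ_{\mathcal C}\Rightarrow GFQ_{\mathcal C}$, and its image under $\gamma_{\mathcal E}$ descends through $\gamma_{\mathcal C}$ to the required 2-cell $\mu_{G,F}\colon LG\circ LF\Rightarrow L(GF)$.

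Verifying the axioms of a lax 2-functor is then a matter of unwinding the definitions. Associativity, applied to three composable derivable functors $F, G, H$ such that all pairwise and the triple composites are left derivable, reduces to the equality of two natural transformations of the form $HGQ_{\mathcal D}FQ_{\mathcal C}\Rightarrow HGFQ_{\mathcal C}$ (and two versions involving $Q_{\mathcal E}$ inserted); after whiskering out, both sides equal the horizontal composites built from $q_{\mathcal D}$ and $q_{\mathcal E}$, and they agree by the interchange law and the naturality of $q$. The unit laws reduce by construction to the defining identities $q_{\mathcal C}\ast Q_{\mathcal C}=Q_{\mathcal C}\ast q_{\mathcal C}$ after localization. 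For the final assertion, if $U\colon \mathcal D\to \mathcal E$ is homotopical then $UF$ carries weak equivalences between cofibrant objects to weak equivalences, hence is left derivable; moreover $U\ast q_{\mathcal D}$ is a pointwise weak equivalence, so its image in $\Ho(\mathcal E)$ is a natural isomorphism, whence $\mu_{U,F}$, being built from this natural transformation, is invertible.

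The principal obstacle will be the bookkeeping with universal factorizations. Since $LF$ is only characterized up to unique isomorphism by a universal property, every identification such as ``$LG\circ LF$ equals the factorization of $\gamma_{\mathcal E}\circ G\circ Q_{\mathcal D}\circ F\circ Q_{\mathcal C}$'' is mediated by an instance of $q$ that must be tracked carefully; each individual step is formal, but ensuring that all these identifications cohere in the verification of associativity (and explain why the 2-functor is lax rather than pseudo on the nose) is where the argument requires the most care.
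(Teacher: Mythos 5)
Your proof follows the same route the paper takes, namely spelling out the construction behind the terse citation of Hovey's Lemma 1.4.3: fix a functorial cofibrant replacement $(Q,q)$, define $LF$ by factoring $\gamma_{\mathcal D}FQ_{\mathcal C}$ through $\gamma_{\mathcal C}$, take $\alpha_{\mathcal C}$ to be induced by $q_{\mathcal C}$, and build $\mu_{G,F}$ by whiskering $q_{\mathcal D}$. Your identification of where and why $\mu_{G,F}$ fails to be invertible when $F$ is merely left derivable (so $FQ_{\mathcal C}X$ need not be cofibrant), and the argument that $\mu_{U,F}$ is invertible when $U$ is homotopical, are exactly right.

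One sentence in your sketch is false as stated and hides the only nonformal step: the natural transformations $q_{\mathcal C}\ast Q_{\mathcal C}$ and $Q_{\mathcal C}\ast q_{\mathcal C}\colon Q_{\mathcal C}^2\Rightarrow Q_{\mathcal C}$ are \emph{not} equal in $\mathcal C$; they are merely left homotopic componentwise (both lift $q_{\mathcal C,X}\circ q_{\mathcal C, Q_{\mathcal C}X}$ along the trivial fibration $q_{\mathcal C,X}$, with cofibrant common source). The left unit law requires $\gamma_{\mathcal D}F(q_{\mathcal C,Q_{\mathcal C}X})=\gamma_{\mathcal D}F(Q_{\mathcal C}q_{\mathcal C,X})$ in $\Ho(\mathcal D)$, so you need the further observation that $\gamma_{\mathcal D}F$ identifies left-homotopic maps between cofibrant objects whenever $F$ is left derivable. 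This does hold — given a cylinder homotopy $H\colon \mathrm{Cyl}(A)\to B$ with $f=Hi_0$, $g=Hi_1$, the sections $i_0,i_1$ are weak equivalences between cofibrant objects with $pi_0=pi_1=\mathrm{id}$, so $\gamma_{\mathcal D}Fi_0=\gamma_{\mathcal D}(Fp)^{-1}=\gamma_{\mathcal D}Fi_1$ and hence $\gamma_{\mathcal D}Ff=\gamma_{\mathcal D}Fg$ — but this argument should be made explicit, since it is precisely the place where one uses more than formal $2$-categorical bookkeeping. Your associativity check, by contrast, genuinely does follow on the nose from naturality of $q_{\mathcal E}$ before ever passing to the homotopy category, so the interchange-plus-naturality remark there is accurate.
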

\begin{proof}
The proof is the same as that of \cite[1.4.3]{Hovey_1999}, where we weaken the requirement of being left Quillen to sending weak equivalences between cofibrant objects to weak equivalences.
\end{proof}

We consider the following commutative diagram
\begin{equation}\label{diagram:U_is_monad_functor}
\begin{tikzcd}
\mathcal Sp^{\gl} \arrow[r, "U"] \arrow[d, "\mathbb P_{\gl}", swap] & \mathcal Sp^{\st} \arrow[d, "\mathbb P_{\st}"]\\
\mathcal Sp^{\gl} \arrow[r, "U", swap] & \mathcal Sp^{\st},
\end{tikzcd}
\end{equation}
which exhibits the functor $U$ as a monad functor between $(\mathbb P_{\gl}, \mu_{\gl}, \eta_{\gl})$ and $(\mathbb P_{\st}, \mu_{\st}, \eta_{\st})$. Moreover, since $U$ is homotopical, it guarantees that all composites $\mathbb P^{\circ i}\circ U\circ \mathbb P^{\circ j}= U\circ \mathbb P^{\circ i+j}$ are left derivable. Proposition \ref{prop:left_deriving_is_partial_lax_functor} allows us to conclude that taking homotopy categories and left derived functors preserves the monads $\mathbb P$ on $\mathcal Sp^{\gl}$ and $\mathcal Sp^{\st}$ as well as the functor $U$ between them.

\begin{prop}
The left derived functors $L\mathbb P_{\st}$ and $L\mathbb P_{\gl}$ have the structure of monads via the natural transformations
\[ L\mu_{\gl}\circ \mu_{\mathbb P_{\gl}, \mathbb P_{\gl}}, \, L\eta_{\gl} \circ \alpha_{\mathcal Sp^{\gl}} \]
and the analogous transformations for $L\mathbb P_{\st}$.\\
Moreover, the derived functor $\Ho(U)\colon \Ho(\mathcal Sp^{\gl}) \to \Ho(\mathcal Sp^{\st})$ has the structure of a monad functor between $L\mathbb P_{\gl}$ and $L\mathbb P_{\st}$ via the transformation $\mu_{U, \mathbb P_{\gl}}\inverse \circ \mu_{\mathbb P_{\st}, U} $.
\end{prop}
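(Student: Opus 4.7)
The plan is to import the point-set monad structure of $\mathbb{P}_{\gl}$ and $\mathbb{P}_{\st}$, together with the monad functor structure on $U$ witnessed by diagram~(\ref{diagram:U_is_monad_functor}), through the partial lax $2$-functor $L$ provided by Proposition~\ref{prop:left_deriving_is_partial_lax_functor}. The general principle, made precise in Corollary~\ref{corollary:lax_functor_preserves_monads}, is that lax $2$-functors preserve monads and monad morphisms, so the only thing to verify is that all the structural composites needed actually live in the domain of definition of $L$ and that the relevant cells commute.

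First I would check left derivability of all the iterates of $\mathbb{P}$ that feature in the monad axioms. For the monad structure on $L\mathbb{P}_{\gl}$ we need $\mathbb{P}_{\gl}$, $\mathbb{P}_{\gl}^{\circ 2}$, and $\mathbb{P}_{\gl}^{\circ 3}$ to be left derivable, so that the comparison maps $\mu_{\mathbb{P}_{\gl},\mathbb{P}_{\gl}}$ and its iterated analogues are defined; Lemma~\ref{lemma:composites_of_P_left_derivable} provides exactly this, and the same lemma applied to the positive stable model structure handles $\mathbb{P}_{\st}$. With these comparison maps in hand, I would assemble the multiplication $L\mu_{\gl}\circ \mu_{\mathbb{P}_{\gl},\mathbb{P}_{\gl}}\colon L\mathbb{P}_{\gl}\circ L\mathbb{P}_{\gl}\to L\mathbb{P}_{\gl}$ and the unit $L\eta_{\gl}\circ \alpha_{\mathcal Sp^{\gl}}\colon \mathrm{Id}\to L\mathbb{P}_{\gl}$; associativity and unitality for these unfold into diagrams whose outer boundary is obtained by applying $L$ to the (strictly commuting) monad axioms for $\mu_{\gl}$ and $\eta_{\gl}$, while the inner cells are instances of the lax hexagon and unit coherences listed in Proposition~\ref{prop:left_deriving_is_partial_lax_functor}. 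The same argument yields the monad structure on $L\mathbb{P}_{\st}$.

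For the monad functor structure on $\Ho(U)$ I would exploit the commuting square~(\ref{diagram:U_is_monad_functor}), which asserts the equality $U\circ \mathbb{P}_{\gl}=\mathbb{P}_{\st}\circ U$ of functors $\mathcal Sp^{\gl}\to \mathcal Sp^{\st}$. Since $U$ is homotopical, the last clause of Proposition~\ref{prop:left_deriving_is_partial_lax_functor} guarantees that $U\circ\mathbb{P}_{\gl}$ is left derivable and that the comparison
\[ \mu_{U,\mathbb{P}_{\gl}}\colon LU\circ L\mathbb{P}_{\gl}\longrightarrow L(U\circ\mathbb{P}_{\gl}) \]
is invertible; the left derivability of $\mathbb{P}_{\st}\circ U$ and the existence of the lax comparison $\mu_{\mathbb{P}_{\st},U}$ are automatic given the identification $\mathbb{P}_{\st}\circ U=U\circ \mathbb{P}_{\gl}$. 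Composing them produces the candidate monad functor structure
\[ \mu_{U,\mathbb{P}_{\gl}}^{-1}\circ \mu_{\mathbb{P}_{\st},U}\colon L\mathbb{P}_{\st}\circ \Ho(U)\longrightarrow \Ho(U)\circ L\mathbb{P}_{\gl}. \]

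The final step, which I expect to be the main source of bookkeeping, is verifying that this natural transformation satisfies the unit and multiplication axioms of a monad functor with respect to the monads constructed in the previous paragraph. As before, each axiom decomposes into an outer boundary, obtained by applying $L$ to the point-set monad functor axioms satisfied by $U$ in diagram~(\ref{diagram:U_is_monad_functor}), together with inner cells coming from the hexagon and unitality axioms of the partial lax structure; the only delicate point is tracking the direction of $\mu_{U,\mathbb{P}_{\gl}}$ versus its inverse when pasting. Once this diagram chase is completed, the proposition follows purely formally from Proposition~\ref{prop:left_deriving_is_partial_lax_functor}, Lemma~\ref{lemma:composites_of_P_left_derivable} and the homotopicality of $U$.
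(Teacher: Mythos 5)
Your proposal is correct and follows essentially the same strategy as the paper: both invoke Corollary~\ref{corollary:lax_functor_preserves_monads} via the partial lax $2$-functor of Proposition~\ref{prop:left_deriving_is_partial_lax_functor}, supply the needed derivability of composites by Lemma~\ref{lemma:composites_of_P_left_derivable}, and read off the monad functor structure from the strictly commuting square~(\ref{diagram:U_is_monad_functor}) together with the invertibility of $\mu_{U,\mathbb P_{\gl}}$ (since $U$ is homotopical). Your write-up is simply a more explicit unpacking of the paper's terse verification that the required structure morphisms exist for all composites $\mathbb P_{\st}^{\circ i}\circ U\circ \mathbb P_{\gl}^{\circ j}$ with $i+j\leq 3$.
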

\begin{proof}
This is the statement of Corollary \ref{corollary:lax_functor_preserves_monads}. To apply this corollary as stated, we would need to have a lax 2-functor $L\colon \textup{Model} \to\textup{Cat}$ encompassing all left derivable functors. However, it suffices that we have the required structure morphisms for all composites $\mathbb P_{\st}^{\circ i} \circ U\circ \mathbb P_{\gl}^{\circ j}$ with $i+j\leq 3$. This is the case by Lemma \ref{lemma:composites_of_P_left_derivable} and the commutative square \ref{diagram:U_is_monad_functor}.
\end{proof}

Since monad functors lift to functors on the categories of algebras, we have proven the following:
\begin{prop}\label{prop:U_lifts_to_G_infty}
The functor $U$ lifts to a functor from the category of $G_\infty$-ring spectra to the category of $H_\infty$-ring spectra.\\
Explicitly, let $X$ be a $G_\infty$-ring spectrum with structure map $h\colon L\mathbb P_{\gl}X\to X$. Then the map $h\circ \mu_{U, \mathbb P_{\gl}}\inverse \circ \mu_{\mathbb P_{\st}, U}\colon (L_{\st} \mathbb P)UX \to UX$ defines an $H_\infty$-ring structure on the stable homotopy type $UX$. Moreover, for a $G_\infty$-ring morphism $f\colon X\to Y$ in $\GH$ between two $G_\infty$-ring spectra, the map $U(f)\in \SH(UX, UY)$ is an $H_\infty$-ring map.
\end{prop}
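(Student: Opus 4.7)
The strategy is to invoke the standard categorical fact that a monad functor induces a functor between the associated Eilenberg--Moore categories of algebras. The entire work of proving that $\Ho(U)$ intertwines the monads $L\mathbb P_{\gl}$ and $L\mathbb P_{\st}$ has been done in the preceding proposition, so this result is essentially a formal corollary. The plan is to make the reduction to that general principle explicit and then unravel the induced structure morphism to match the formula displayed in the statement.

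First, I would recall the elementary categorical fact (which is contained in the appendix on monads): given a monad functor $(\bar U, \sigma)\colon (\mathcal A, T)\to (\mathcal B, S)$, with structural natural transformation $\sigma\colon S\bar U\to \bar U T$, any $T$-algebra $(A, h\colon TA\to A)$ gives rise to an $S$-algebra on $\bar U A$ with structure map $\bar U(h)\circ \sigma_A\colon S\bar U A\to \bar U A$, and this assignment is functorial (morphisms of $T$-algebras are sent to morphisms of $S$-algebras). Applied to the monad functor $(\Ho(U), \mu_{U,\mathbb P_{\gl}}^{-1}\circ \mu_{\mathbb P_{\st}, U})$ from the previous proposition, this directly yields a lift of $\Ho(U)$ to a functor from the Eilenberg--Moore category of $L\mathbb P_{\gl}$-algebras (i.e.\ $G_\infty$-ring spectra) to that of $L\mathbb P_{\st}$-algebras (i.e.\ $H_\infty$-ring spectra).

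Second, I would read off the explicit formula: for a $G_\infty$-ring spectrum $X$ with structure map $h\colon L\mathbb P_{\gl}X\to X$, the induced $H_\infty$-ring structure map on $UX$ is
\[ (L_{\st}\mathbb P) UX \xrightarrow{\mu_{U,\mathbb P_{\gl}}^{-1}\circ\, \mu_{\mathbb P_{\st}, U}} U(L\mathbb P_{\gl}X) \xrightarrow{U(h)} UX, \]
which is exactly the map displayed in the statement (using that $U$ is homotopical, so $\Ho(U)(h) = U(h)$ under the identifications provided by the unitality isomorphisms of $L$). For a morphism $f\colon X\to Y$ of $G_\infty$-ring spectra, the same general principle ensures $U(f)$ commutes with the induced structure maps, making it an $H_\infty$-ring map.

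The only subtlety is that the general categorical statement requires an honest monad functor, and our case only provides this up to the unitality and composition isomorphisms supplied by the partial lax $2$-functor $L$ of Proposition \ref{prop:left_deriving_is_partial_lax_functor}. This is handled exactly as in the preceding proposition: all composites of $\mathbb P_{\gl}$, $\mathbb P_{\st}$ and $U$ appearing in the Eilenberg--Moore axioms (associativity and unitality of the algebra structure) involve at most three applications of $\mathbb P$, and for these the relevant comparison transformations $\mu_{G,F}$ are invertible by Lemma \ref{lemma:composites_of_P_left_derivable} and the homotopicality of $U$. Thus the standard Eilenberg--Moore argument carries over without modification, and the main (and only) potential obstacle — the coherence bookkeeping — is already absorbed into the appendix and the previous proposition.
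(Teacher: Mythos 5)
Your proposal is correct and uses the same approach as the paper: the paper derives this proposition solely from the one-line observation that monad functors lift to functors between categories of algebras, applied to the monad-functor structure on $\Ho(U)$ established in the preceding proposition. Your closing discussion of the partial lax $2$-functor really belongs to the verification of that preceding proposition; once $\Ho(U)$ is an honest monad functor in $\mathrm{Cat}$, the Eilenberg--Moore lifting giving the formula $U(h)\circ\mu_{U,\mathbb P_{\gl}}^{-1}\circ\mu_{\mathbb P_{\st},U}$ is formally immediate and requires no further coherence bookkeeping.
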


Using this result, we get the commutative diagram
\begin{equation}\label{diagram:structured_ring_spectra}
\begin{tikzcd}
\Ho^{\gl} (\ucom) \rar \dar & (G_\infty \textrm{-ring spectra}) \dar \\
\Ho^{\st} (\Com) \rar & (H_\infty \textrm{-ring spectra})
\end{tikzcd}
\end{equation}
of homotopy categories of structured ring spectra, where all functors are forgetful ones.

\subsubsection{Lifting the right adjoint \texorpdfstring{$\SH\to \GH$}{SH to GH} to structured ring spectra}

In this section, we study whether the forgetful functor $U$ from the category of $G_\infty$-ring spectra to $H_\infty$-ring spectra from \eqref{diagram:structured_ring_spectra} has adjoints. The corresponding question for the homotopy categories $\SH$ and $\GH$ is investigated in \cite[Chapter 4.5]{Schwede_2018}, and we use these results to obtain a right adjoint to the forgetful functor. This gives us a way to define $G_\infty$-ring spectra from $H_\infty$-ring spectra, and we use this to give examples of $G_\infty$-ring spectra which do not come from ultra-commutative ring spectra. 

We first recall the right adjoint $R\colon \SH\to \GH$ to the forgetful functor from \cite[Construction 4.5.21]{Schwede_2018}.

\begin{construction}\label{constr:point_set_lift_of_right_adjoint}
We define the functor
\[b\colon \mathcal Sp\to \mathcal Sp\]
as a ``global Borel construction'': For an orthogonal spectrum $X$ and an inner product space $V$, we set 
\[(bX)(V)= \map(\mathbf L(V, \mathbb R^\infty), X(V)),\]
with structure morphisms defined as in \cite[4.5.21]{Schwede_2018}.\\
We also define a natural transformation $i\colon \Id\to b$ via the map
\[
i_X(V)\colon X(V) \to \map(\mathbf L(V, \mathbb R^\infty), X(V)), x \mapsto \const_x.
\]
The morphism $i_X(V)$ is a non-equivariant homotopy equivalence for all inner product spaces $V$, as the space $\mathbf L(V, \mathbb R^\infty)$ is contractible. Hence the induced morphism $i_X\colon X\to bX$ is invertible in the stable homotopy category.\\
Moreover, $b$ comes equipped with a lax symmetric monoidal structure for the smash product of orthogonal spectra, such that $i\colon \Id\to b$ is a monoidal transformation. Thus, we obtain the following:
\end{construction}

\begin{corollary}\label{corollary:b_preserves_P_algebras}
	The functor $b\colon \mathcal Sp\to \mathcal Sp$ defines a monad endofunctor in the sense of Definition \ref{def:lax_monad_functor} of the symmetric algebra monad $\mathbb P$ on $\mathcal Sp$.\\
	Moreover, the transformation $i\colon \Id\to b$ is a monadic transformation.
\end{corollary}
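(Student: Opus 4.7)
The plan is to deduce the claim from the general principle that a lax symmetric monoidal endofunctor of a symmetric monoidal category $(\mathcal C,\wedge,\mathbb S)$ automatically induces a monad endofunctor of the symmetric algebra monad $\mathbb P$, and that a monoidal transformation between such functors induces a monadic transformation. Once this principle is established, both assertions of the corollary are immediate from the fact that Construction \ref{constr:point_set_lift_of_right_adjoint} equips $b$ with a lax symmetric monoidal structure and $i\colon\Id\to b$ with a monoidal transformation structure.

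I would first describe the structure map $\lambda\colon \mathbb P b\to b\mathbb P$ explicitly. Given the lax symmetric monoidal structure maps $\varphi_{X,Y}\colon bX\wedge bY\to b(X\wedge Y)$ and $\varphi_0\colon \mathbb S\to b\mathbb S$, iteration yields natural maps $\varphi^{(m)}\colon (bX)^{\wedge m}\to b(X^{\wedge m})$ for each $m\geq 0$. Symmetry of $\varphi$ ensures that these maps are equivariant for the $\Sigma_m$-action by permutation of the smash factors, so they descend to $\lambda^m\colon \mathbb P^m(bX)=(bX)^{\wedge m}/\Sigma_m \to b(X^{\wedge m}/\Sigma_m)=b(\mathbb P^m X)$. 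Summing over $m$ yields the desired natural transformation $\lambda\colon \mathbb P b\to b\mathbb P$.

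Next, I would verify that $\lambda$ is compatible with the unit $\eta\colon \Id\to \mathbb P$ and the multiplication $\mu\colon \mathbb P\mathbb P\to \mathbb P$ of the symmetric algebra monad, thus making $b$ a monad endofunctor in the sense of Definition \ref{def:lax_monad_functor}. The unit compatibility (i.e.\ $\lambda\circ\eta b = b\eta$) is just the observation that the $m=1$ component of $\lambda$ is the identity on $bX$ together with the fact that $\mathbb P X$ contains $X$ as the summand in degree one. The multiplication compatibility reduces to comparing two ways of iterating the monoidal structure maps, and follows from the associativity and unitality axioms of the lax symmetric monoidal structure on $b$. Conceptually, these diagrams are instances of the general fact that the free commutative monoid construction is functorial with respect to lax symmetric monoidal functors, sending them to monad endofunctors.

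For the second assertion, I would observe that the identity functor carries its canonical strict symmetric monoidal structure, and that by Construction \ref{constr:point_set_lift_of_right_adjoint} the transformation $i\colon \Id\to b$ is monoidal in the sense that it intertwines the trivial structure maps on $\Id$ with $\varphi$ and $\varphi_0$ on $b$. The monadic transformation axiom then amounts to the commutativity of the square
\[
\begin{tikzcd}
\mathbb P X \arrow[r,"\mathbb P i_X"] \arrow[d,"i_{\mathbb P X}",swap] & \mathbb P b X \arrow[d,"\lambda_X"]\\
b\mathbb P X \arrow[r,equal] & b\mathbb P X,
\end{tikzcd}
\]
which reduces levelwise to the compatibility of $i$ with $\varphi^{(m)}$, exactly the monoidality condition. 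I expect the main obstacle to be purely bookkeeping: tracking the iterated monoidal structure maps through the quotient by $\Sigma_m$ and verifying the associativity pentagon carefully enough to invoke the general principle. No genuinely new input beyond Construction \ref{constr:point_set_lift_of_right_adjoint} is needed.
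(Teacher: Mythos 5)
Your plan is correct and is exactly the (implicit) argument the paper uses: the text preceding the corollary asserts that $b$ carries a lax symmetric monoidal structure with $i\colon\Id\to b$ a monoidal transformation, and then the corollary is stated without further proof as an immediate consequence of the general principle that a lax symmetric monoidal endofunctor induces a monad endofunctor of the free commutative monoid monad (with monoidal transformations giving monadic ones). You have simply unpacked that principle — constructing $\lambda^m$ via $\Sigma_m$-equivariance of the iterated structure maps $\varphi^{(m)}$, assembling $\lambda\colon \mathbb P b\to b\mathbb P$ by postcomposing with $b(\incl_m)$ and the canonical map $\bigvee_m b(\cdot)\to b(\bigvee_m\cdot)$, and checking the two diagrams of Definition~\ref{def:lax_monad_functor} plus the monadic-transformation square for $i$ — which is the content the paper leaves to the reader.
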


By \cite[Propositon 4.5.22]{Schwede_2018}, the functor $b$ represents the right adjoint to the forgetful functor $U\colon \GH\to \SH$ on stable $\Omega$-spectra. We modify this statement to hold on positive $\Omega$-spectra, since we need positive model structures for the study of commutative ring spectra. Recall that for obtaining the stable homotopy category, we use the stable $\mathbb S$-model structure constructed by Stolz in \cite[Proposition 1.3.10]{Stolz_2011}, in order to achieve derivability of the symmetric powers in Lemma \ref{lemma:composites_of_P_left_derivable}. Note that this model structure has fewer fibrant objects than the projective positive stable model structure from \cite[Theorem 14.2]{MMSS_2001}, so all fibrant objects are in particular positive $\Omega$-spectra.

\begin{prop}\label{prop:b_represents_right_adjoint_positively}
Let $X$ be a positive orthogonal $\Omega$-spectrum.
\begin{enumerate}[i)]
	\item Then $bX$ is a positive global $\Omega$-spectrum whose homotopy type lies in the image of the right adjoint $R$.
	\item For every orthogonal spectrum $A$, the two homomorphisms 
	\[ \GH(A, bX) \nameto{U} \SH(A, bX) \nameto{(i_X)_\ast\inverse} \SH(A, X)\]
	are isomorphisms. In particular, the counit of the adjunction between $U$ and $R$ is given by $i_X\inverse\colon bX\to X\in \SH(bX, X)$.
\end{enumerate}
\end{prop}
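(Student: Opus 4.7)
The plan is to adapt Schwede's Proposition 4.5.22 from the stable $\Omega$-spectrum setting to the positive $\Omega$-spectrum setting. The key structural observation is that the positive global $\Omega$-spectrum condition differs from its stable counterpart only by omitting the requirement at $V=0$; since the Borel-type construction $b$ does not mix levels, whatever verification Schwede's proof carries out level by level goes through so long as the only $V$'s used are non-zero. Thus most of the work is to track carefully which level information is actually invoked.

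For part (i), I would first check that $bX$ is a positive global $\Omega$-spectrum. Concretely, for every compact Lie group $G$, every non-zero $G$-representation $V$, and every $G$-representation $W$, I need the adjoint structure map $\sigma \colon (bX)(V) \to \Omega^W (bX)(V\oplus W)$ to be a $G$-weak equivalence. This is deduced as in \cite{Schwede_2018}: when $V$ is a faithful $G$-representation (which we may arrange, since $V$ is any non-zero representation and we may first restrict to $H\leq G$), the space $\mathbf L(V,\mathbb R^\infty)$ is a model for $E_{\gl}G$, so that $(bX)(V)^H$ computes a Borel-type derived mapping space into $X(V)$. The positive $\Omega$-spectrum condition on $X$ at the non-zero level $V$ (and at $V\oplus W$) together with homotopy invariance of mapping out of a contractible space then produces the required equivalence. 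For the second clause, the verification that the homotopy type of $bX$ lies in the image of $R$ is unchanged from \cite[Construction 4.5.21]{Schwede_2018}, since that argument only uses the structure of $b$ at positive levels.

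For part (ii), the second map $(i_X)_\ast^{-1}\colon \SH(A, bX)\to \SH(A, X)$ is a bijection because $i_X\colon X\to bX$ is a stable equivalence by Construction \ref{constr:point_set_lift_of_right_adjoint}, each $i_X(V)$ being a non-equivariant homotopy equivalence. For the first map, I would combine (i) with the adjunction of \cite[Theorem 4.5.1]{Schwede_2018}: since the homotopy type of $bX$ lies in the image of $R$, the unit-counit triangle identities give a natural bijection $\GH(A, bX)\cong \SH(UA, bX)$ realized precisely by the forgetful functor $U$. The identification of the counit as $i_X^{-1}$ then follows from the standard fact that under an $(U, R)$-adjunction the counit corresponds to the identity of the right-adjoint object; tracing this correspondence through the isomorphism of (ii) forces the counit to be $i_X^{-1}$.

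The main obstacle is the verification in part (i) that $bX$ is a global $\Omega$-spectrum. The subtle point is that we only assume $X$ satisfies the positive stable $\Omega$-spectrum condition (so essentially non-equivariant information), yet we must conclude an equivariant condition on $bX$ for all compact Lie groups simultaneously. This is where the $G$-contractibility of $\mathbf L(V,\mathbb R^\infty)$ for faithful $V$ is essential: it ensures that the fixed points of the mapping space on the right-hand side of the adjoint structure map see only the underlying homotopy type of $X$, for every subgroup of every $G$ simultaneously, matching the input we have.
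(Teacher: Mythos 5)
Your overall plan matches the paper's: adapt Schwede's Proposition 4.5.22 to positive $\Omega$-spectra, noting that the relevant checks are level-wise. For part (i), first clause, and for part (ii), your reasoning is essentially the paper's.

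There is, however, a genuine gap in your treatment of the second clause of (i) — that the homotopy type of $bX$ lies in the image of $R$. You claim this "is unchanged from \cite[Construction 4.5.21]{Schwede_2018}, since that argument only uses the structure of $b$ at positive levels." (Note also that 4.5.21 is only the \emph{construction} of $b$; the actual verification is in the proof of Proposition 4.5.22.) The issue is that the right-inducedness argument in Schwede's proof relies on $bX$ being a \emph{genuine} global $\Omega$-spectrum — including at level $0$ — and when $X$ is only a positive $\Omega$-spectrum, $bX$ is only a positive global $\Omega$-spectrum, so that argument does not apply verbatim. The paper's proof supplies the missing step: one replaces $bX$ by the globally equivalent $\Omega\sh(bX)$, and uses that the shift of a positive $\Omega$-spectrum is an $\Omega$-spectrum (at all levels, including $0$), so that $\Omega\sh(bX)$ is a genuine global $\Omega$-spectrum to which Schwede's right-inducedness argument applies. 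Without this replacement you are not entitled to quote Schwede's conclusion, so you should either add this step or give an independent argument that positive global $\Omega$-spectra suffice for the right-inducedness criterion you are invoking.
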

\begin{proof}
The proof is completely analogous to the proof of \cite[Proposition 4.5.22]{Schwede_2018}. The cited proof that $bX$ is a global $\Omega$-spectrum works level-wise, hence if $X$ is only a positive $\Omega$-spectrum, $bX$ is a positive global $\Omega$-spectrum. That $bX$ is right induced from the stable homotopy category can be seen by replacing $bX$ by the globally equivalent $\Omega \sh (bX)$ and using that the shift of a positive $\Omega$-spectrum is a $\Omega$-spectrum.\\
Moreover, $ii)$ is a formal consequence of $i)$, as indicated in the proof of \cite[4.5.22]{Schwede_2018}.
\end{proof}

Moreover, we check that $b$ is right derivable.

\begin{lemma}\label{lemma:b_preserves_equivalences}
Let $f\colon X\to Y$ be a stable equivalence between (positive) $\Omega$-spectra. Then $b(f)$ is a global equivalence.
\end{lemma}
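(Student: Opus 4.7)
The plan is to verify that $b(f)$ induces an isomorphism on every equivariant homotopy group $\pi_k^G$, making essential use of Proposition \ref{prop:b_represents_right_adjoint_positively}, which ensures that $bX$ and $bY$ are positive global $\Omega$-spectra whenever $X,Y$ are positive $\Omega$-spectra. Fixing a compact Lie group $G$ and a non-zero faithful $G$-representation $V$, this lets me compute
\[ \pi_0^G(bX) \;\cong\; [S^V,(bX)(V)]^G_* \;=\; [S^V,\map(\mathbf L(V,\mathbb R^\infty),X(V))]^G_*, \]
and after applying the smash-mapping adjunction this becomes $\pi_0\,\map^G\!\big(\mathbf L(V,\mathbb R^\infty),\,\Omega^V X(V)\big)$, with $G$ acting on $\Omega^V X(V)$ diagonally and on $\mathbf L(V,\mathbb R^\infty)$ by precomposition with its action on $V$.

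Next, since $f$ is a stable equivalence between positive $\Omega$-spectra, each map $f(V)\colon X(V)\to Y(V)$ with $V\neq 0$ is a non-equivariant weak equivalence of underlying pointed spaces, so the same is true of $\Omega^V f(V)$ as a map of $G$-spaces forgetting the action. Because $V$ is faithful, $G$ acts freely on each finite level $\mathbf L(V,\mathbb R^n)$, and thus $\mathbf L(V,\mathbb R^\infty)=\colim_n \mathbf L(V,\mathbb R^n)$ is a filtered colimit of free contractible $G$-CW complexes. The key step is then that $\map^G\!\big(\mathbf L(V,\mathbb R^\infty),\,\cdot\,\big)$ sends non-equivariant weak equivalences between $G$-spaces to weak equivalences; one proves this by identifying equivariant maps out of a free $G$-CW complex with sections of the associated Borel bundle, invoking the standard homotopy invariance of section spaces, and checking compatibility with the filtered colimit in the source. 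Applying this to $\Omega^V f(V)$ gives the isomorphism $\pi_0^G(b(f))$, and the same reasoning, applied with $V$ replaced by $V\oplus\mathbb R^k$ (or combined with the fact that $b$ commutes with levelwise loops), handles the remaining degrees.

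The main obstacle I anticipate is the homotopy-invariance statement for $\map^G\!\big(\mathbf L(V,\mathbb R^\infty),\,\cdot\,\big)$: the freeness of the action coming from faithfulness of $V$ is indispensable, and one has to take some care at the filtered colimit because $\mathbf L(V,\mathbb R^\infty)$ is not compact, so one has to control mapping spaces out of the colimit in terms of mapping spaces out of the finite stages. Everything else is an essentially formal unwinding of the definition of $b$ and of $\pi_0^G$ on positive $\Omega$-spectra.
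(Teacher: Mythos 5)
Your argument is correct and rests on the same key input as the paper: $\mathbf L(V, \mathbb R^\infty)$ is a free cofibrant $G$-space for faithful $V$, so mapping out of it carries non-equivariant weak equivalences to $G$-weak equivalences (in particular the filtered-colimit worry you raise evaporates, since $\mathbf L(V, \mathbb R^\infty)$ is itself a free $G$-CW complex, not merely a colimit of them). The paper reaches the conclusion more directly by observing that $f$, being a stable equivalence between (positive) $\Omega$-spectra, is a (positive) level equivalence, so that $b(f)$ is a (positive) level equivalence and hence a global equivalence — bypassing both the appeal to Proposition \ref{prop:b_represents_right_adjoint_positively} and the explicit identification of $\pi_k^G(bX)$ as homotopy groups of a mapping space.
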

\begin{proof}
Since $f$ is a stable equivalence between (positive) $\Omega$-spectra, it is a (positive) level equivalence. Since the $G$-space $\mathbf L(V, \mathbb R^\infty)$ is $G$-cofibrant and free for any faithful $G$-representation $V$, mapping out of it takes weak equivalences to $G$-weak equivalences. Thus, $b$ takes (positive) level equivalences to (positive) level equivalences and thus to global equivalences. Hence, $b(f)$ is a global equivalence.
\end{proof}

Using this, we describe the unit of the adjunction $\adjunction{\GH}{\SH}{U}{R}$ in a similar way to the results for the counit in Proposition \ref{prop:b_represents_right_adjoint_positively} $ii)$.

\begin{lemma}\label{lemma:unit_of_forgetful_cofree_adjunction}
Let $X$ and $Y$ be (positive) $\Omega$-spectra, and assume that $X$ is moreover (positively) cofibrant. Then, the composition 
\[ \SH(X,Y) \nameto{b} \GH(bX, bY)\nameto{(i_X)^\ast} \GH(X, bY) \]
is a bijection inverse to 
\[ \GH(X, bY) \nameto{U} \SH(X, bY) \nameto{(i_Y)_\ast\inverse} \SH(X,Y). \]
\end{lemma}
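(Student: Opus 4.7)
The plan is to leverage Proposition \ref{prop:b_represents_right_adjoint_positively}(ii), which already establishes that the second composite
\[
\beta\colon \GH(X, bY) \xrightarrow{U} \SH(X, bY) \xrightarrow{(i_Y)_\ast^{-1}} \SH(X, Y)
\]
is a bijection, since it is precisely the adjunction isomorphism $\GH(X, RY) \cong \SH(UX, Y)$ described there. So to prove the lemma it suffices to verify that the first composite $\alpha(f) = b(f) \circ i_X$ is a left inverse to $\beta$: the bijectivity of $\beta$ will then force $\alpha = \beta^{-1}$, giving both the bijectivity of $\alpha$ and the inverse relationship simultaneously.

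The identity $\beta \circ \alpha = \operatorname{id}$ should follow from a single naturality computation. Unpacking definitions, $\beta(\alpha(f)) = i_Y^{-1} \circ U(b(f) \circ i_X) = i_Y^{-1} \circ b(f) \circ i_X$ in $\SH(X, Y)$. The naturality square for $i \colon \operatorname{Id} \to b$ applied to a point-set representative of $f$ reads $b(f) \circ i_X = i_Y \circ f$ in $\mathcal Sp$, and this commutativity descends to $\SH$. Substituting yields $\beta(\alpha(f)) = i_Y^{-1} \circ i_Y \circ f = f$, where we use that $i_Y$ is a stable equivalence because $\mathbf L(V, \mathbb R^\infty)$ is non-equivariantly contractible.

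The main subtlety, though mild, is ensuring that both the functor $b$ and the naturality of $i$ genuinely act on morphisms in the homotopy categories $\SH$ and $\GH$, not merely at the point-set level. Here I would invoke the hypotheses: since $X$ is (positively) cofibrant and $Y$ is a (positive) $\Omega$-spectrum, every element of $\SH(X, Y)$ is represented by an honest morphism of orthogonal spectra, uniquely up to homotopy. Applying the point-set functor $b$ then yields a morphism $bX \to bY$ in $\mathcal Sp$; by Lemma \ref{lemma:b_preserves_equivalences} combined with Proposition \ref{prop:b_represents_right_adjoint_positively}(i), this descends to a well-defined morphism in $\GH$, giving meaning to the arrow $b \colon \SH(X, Y) \to \GH(bX, bY)$ in the statement. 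Once this is justified, the naturality square for $i$ passes simultaneously to $\SH$ and to $\GH$, and the two-line computation above completes the proof.
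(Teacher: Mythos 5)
Your proof is correct and follows essentially the same route as the paper: both use the bijectivity of $\beta = (i_Y)_\ast^{-1}\circ U$ from Proposition~\ref{prop:b_represents_right_adjoint_positively}\emph{(ii)} and then reduce to showing $\beta\circ\alpha=\mathrm{id}$, which is exactly the commutativity of the square the paper writes down and which follows from naturality of $i\colon\Id\to b$. Your extra paragraph spelling out why the point-set functor $b$ descends to a well-defined map $\SH(X,Y)\to\GH(bX,bY)$ (via cofibrancy of $X$, fibrancy of $Y$, and Lemma~\ref{lemma:b_preserves_equivalences}) makes explicit a technicality the paper leaves implicit, but does not change the argument.
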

\begin{proof}
We consider the diagram
\[\begin{tikzcd}
\GH (X, bY) \arrow[d, "U", swap] & \GH(bX, bY) \arrow[l, "(i_X)^\ast", swap] \\
\SH (X, bY) & \SH(X, Y) \arrow[l, "(i_Y)_\ast"] \arrow[u, "b", swap].
\end{tikzcd}\]
As both $(i_Y)_\ast\inverse$ and $U$ are bijective, it suffices to show that this diagram is commutative, i.e. that $(i_Y)_\ast = U\circ (i_X)^\ast \circ b$. This is a consequence of the fact that $i\colon \Id \to b$ is natural.
\end{proof}

Now, we set up the double categorical context we use to prove that the above adjunction lifts to $G_\infty$- and $H_\infty$-ring spectra.\\
We first consider the double category $\mathbf{Model}$ of model categories, left Quillen functors as vertical morphisms, right Quillen functors as horizontal morphisms and all natural transformations as 2-cells. Then, \cite[Theorem 7.6]{Shulman_2011} shows that taking the homotopy category and derived functors defines a pseudo double functor into the double category $\mathbf{Sq}(\textup{Cat})$ of categories, functors as horizontal and vertical morphisms and natural transformations as 2-cells.\\
In our context, however, neither the symmetric algebra monads $\mathbb P_{\gl}$ and $\mathbb P_{\st}$ nor $b$ are Quillen functors, but merely derivable. Hence, as in Proposition \ref{prop:left_deriving_is_partial_lax_functor}, we restrict to the classes of left and right derivable functors respectively, and obtain the following result:

\begin{prop}\label{prop:deriving_is_lax_oplax_partial_functor}
Let $(\mathbf{Model}, \textup{all}, \textup{all})$ denote the double category of model categories and all functors as horizontal and vertical morphisms, and natural transformations as 2-cells. Let $\mathcal{LD}er$ be the class of left derivable functors, $\mathcal{RD}er$ denote the class of right derivable functors and $\mathcal Der_2$ denote the class of natural transformations of the form $FG \to KH$ with $G,\, K$ right derivable and $F,\, H$ left derivable. Then the assignment 
\begin{align*}
\Ho \colon (\mathbf{Model}, \mathcal{LD}er, \mathcal{RD}er, \mathcal Der_2)  \to \mathbf{Sq}(\textup{Cat})\\
\mathcal C\mapsto \Ho(\mathcal C), F\mapsto LF, G\mapsto RG, \eta\mapsto \Ho(\eta)
\end{align*}
comes equipped with the following structure:
\begin{enumerate}[i)]
	\item Unitality isomorphisms $\alpha_{\mathcal C}^v\colon id_{\Ho(\mathcal C)} \to L(id_{\mathcal C})$ and $\alpha_{\mathcal C}^h\colon R(id_{\mathcal C}) \to id_{\Ho(\mathcal C)} $ for any model category $\mathcal C$.
	\item A natural transformation $\mu_{G,F}^v\colon LG\circ LF \to L(GF)$ for any pair of left derivable functors $F\colon \mathcal C\to \mathcal D$, $G\colon \mathcal D\to \mathcal E$ such that $GF$ is also left derivable.
	\item A natural transformation $\mu_{G,F}^h\colon R(GF)\to RG\circ RF$ for any pair of right derivable functors $F\colon \mathcal C\to \mathcal D$, $G\colon \mathcal D\to \mathcal E$ such that $GF$ is also right derivable.
\end{enumerate}
These satisfy the properties of a lax-oplax double functor from Definition \ref{def:lax_oplax_double_functor} where they are defined. \\
Moreover, if $F\colon \mathcal C\to \mathcal D$ and $G\colon \mathcal D\to \mathcal E$ are right derivable and either $F$ is right Quillen or $G$ is homotopical, then $GF$ is right derivable and $\mu_{G,F}^h$ is invertible.
\end{prop}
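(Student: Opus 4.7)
The plan is to extend the argument of Proposition \ref{prop:left_deriving_is_partial_lax_functor} -- which handles the vertical (left derivable) direction -- by adding the dual horizontal (right derivable) structure and then checking that the two together satisfy the compatibility axioms of a lax-oplax double functor. The strategy closely mirrors Shulman's proof of Theorem 7.6 in \cite{Shulman_2011}, with the same modification already used in Proposition \ref{prop:left_deriving_is_partial_lax_functor}: in place of Quillen functors one admits all functors sending weak equivalences between (co)fibrant objects to weak equivalences, at the price that the compositors are no longer invertible and that the whole structure is only partially defined -- namely on the classes $\mathcal{LD}er$, $\mathcal{RD}er$ and $\mathcal Der_2$ where all composites remain derivable.

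For the horizontal structure, I would fix a functorial fibrant replacement $r_{\mathcal C}\colon \mathrm{Id} \to R_{\mathcal C}$ in each model category $\mathcal C$ and define $RF(X) := F(r_{\mathcal C}X)$ for every right derivable $F\colon \mathcal C \to \mathcal D$. Given composable right derivable functors $F\colon \mathcal C\to \mathcal D$ and $G\colon \mathcal D\to \mathcal E$ whose composition is also right derivable, the compositor
\[ \mu^h_{G,F}\colon R(GF) \to RG\circ RF \]
is defined at $X$ by applying $G$ to the fibrant replacement map $F(r_{\mathcal C}X)\to r_{\mathcal D}(F(r_{\mathcal C}X))$ in $\mathcal D$. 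The unitality isomorphism $\alpha^h_{\mathcal C}\colon R(\mathrm{id}_{\mathcal C})\to \mathrm{id}_{\Ho(\mathcal C)}$ is obtained from the fibrant replacement map, which is a weak equivalence and thus inverts in $\Ho(\mathcal C)$. The 2-cell assignment on $\mathcal Der_2$ is defined analogously, combining fibrant replacement on the right with cofibrant replacement on the left.

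The coherence axioms of a lax-oplax double functor split into the vertical coherences (already handled in Proposition \ref{prop:left_deriving_is_partial_lax_functor}), the dual horizontal coherences, and the interchange between horizontal compositors, vertical compositors and globular 2-cells. Each of these reduces to a universal-property chase on the corresponding derived functor, following \cite[Theorem 7.6]{Shulman_2011} essentially verbatim. The only modification is bookkeeping: one carries along the hypothesis that every composite appearing remains derivable, so that the relevant derived functors exist.

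For the final invertibility statement I would consider the map $\mu^h_{G,F,X} = G(F(r_{\mathcal C}X) \to r_{\mathcal D}(F(r_{\mathcal C}X)))$. If $G$ is homotopical, then it preserves the weak equivalence $F(r_{\mathcal C}X) \to r_{\mathcal D}(F(r_{\mathcal C}X))$ unconditionally, so $\mu^h_{G,F,X}$ is a weak equivalence and hence an isomorphism in $\Ho(\mathcal E)$. If instead $F$ is right Quillen, then $F(r_{\mathcal C}X)$ is already fibrant in $\mathcal D$, so the fibrant replacement map in $\mathcal D$ is a weak equivalence between fibrant objects, and right derivability of $G$ (which by definition preserves weak equivalences between fibrant objects) once more forces $\mu^h_{G,F,X}$ to be an isomorphism. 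The main obstacle in the whole proof is not homotopy-theoretic but purely bookkeeping: making sure that every coherence cell is asserted only where both its source and its target functors actually lie in the derivable classes.
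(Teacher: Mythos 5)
Your proposal matches the paper's proof essentially verbatim: you follow \cite[Theorem 7.6]{Shulman_2011} with the Quillen hypothesis weakened to derivability, and your description of $\mu^h_{G,F,X}$ as $G$ applied to the fibrant replacement map of $F(r_{\mathcal C}X)$ is exactly the paper's $Gp_{FP}\colon (GF)P\to GPFP$. Your two-case argument for the final invertibility statement ($G$ homotopical, resp.\ $F$ right Quillen so $FP$ is fibrant and $G$ right derivable preserves weak equivalences between fibrants) is the paper's argument word for word.
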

\begin{proof}
The proof is the same as for \cite[Theorem 7.6]{Shulman_2011}, where we weaken the requirements from being Quillen to being derivable. The last statement about invertibility of $\mu_{G,F}^h$ follows from the description of this transformation as $(GF)P \nameto{Gp_{FP}} GPFP$, where $p\colon id\to P$ denotes a functorial fibrant replacement. If $G$ is homotopical, it sends the weak equivalence $p_{FP}$ to a weak equivalence. If $F$ is right Quillen, $FP$ is fibrant and thus $p_{FP}$ is a weak equivalence between fibrant objects. Since $G$ is right derivable, it then sends $p_{FP}$ to a weak equivalence.
\end{proof}

Now, we have all the ingredients to prove that we have an adjunction between $G_\infty$-ring spectra and $H_\infty$-ring spectra.

\begin{thm}\label{thm:right_adjoint_for_G_infty}
Let $R\colon \SH\to \GH$ denote the right adjoint to $U\colon \GH\to \SH$.
	\begin{enumerate}[i)]
		\item The functor $R\colon \SH\to \GH $ induces a functor $\hat R$ from the category of $H_\infty$-ring spectra to the category of $G_\infty$-ring spectra.
		\item The functor $\hat R$ is right adjoint to the forgetful functor $U$, with adjunction unit lifted from $I\colon id_{\GH} \to RU$ and adjunction counit lifted from $J\inverse\colon UR \to id_{\SH}$, where both $I$ and $J$ are obtained from deriving $i\colon id\to b$.
	\end{enumerate}
\end{thm}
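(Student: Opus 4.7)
The plan is to lift the point-set data behind $R$ and the unit/counit to categories of algebras via the double-categorical framework of Proposition~\ref{prop:deriving_is_lax_oplax_partial_functor}, exploiting that $b\colon \mathcal Sp\to \mathcal Sp$ is already a monad endofunctor of $\mathbb P$ and that $i\colon \Id\to b$ is a monadic transformation (Corollary~\ref{corollary:b_preserves_P_algebras}). The key insight is that the strictness of $b$ and $i$ at the point-set level makes the lifting to algebras a purely formal consequence of the appendix, once one has organised the derivation correctly.

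For part i), I would regard $b$ as a horizontal morphism in the double category $(\mathbf{Model}, \mathcal{LD}er, \mathcal{RD}er, \mathcal Der_2)$, going from $\mathcal Sp$ with the stable model structure to $\mathcal Sp$ with the global model structure; it is right derivable by Lemma~\ref{lemma:b_preserves_equivalences} and represents $R$ by Proposition~\ref{prop:b_represents_right_adjoint_positively}. The structure transformation witnessing $b$ as a monad endofunctor of $\mathbb P$, together with the vertical monads $\mathbb P_{\gl}$ and $\mathbb P_{\st}$, assembles into the data required to apply the lax-oplax double functor $\Ho$ from Proposition~\ref{prop:deriving_is_lax_oplax_partial_functor}. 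Derivability of the composites $\mathbb P^{\circ i}\circ b\circ \mathbb P^{\circ j}$ of low arity that appear in the monad functor axioms follows from Lemma~\ref{lemma:composites_of_P_left_derivable} and the homotopicity of the identity $\mathcal Sp^{\gl}\to \mathcal Sp^{\st}$, while invertibility of the relevant horizontal comparison transformations $\mu^h$ comes from the last clause of Proposition~\ref{prop:deriving_is_lax_oplax_partial_functor}, using that $b$ preserves positive $\Omega$-spectra. Invoking the appendix result that lax-oplax double functors preserve monad functors (the double-categorical analogue of Corollary~\ref{corollary:lax_functor_preserves_monads}), we conclude that $R$ carries the structure of a monad functor from $L_{\st}\mathbb P$ to $L_{\gl}\mathbb P$, hence lifts to the desired functor $\hat R$, sending an $H_\infty$-structure $\xi\colon L_{\st}\mathbb P\, X\to X$ to $L_{\gl}\mathbb P\, RX \to R\, L_{\st}\mathbb P\, X \to RX$.

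For part ii), I would proceed analogously with the monadic transformation $i\colon \Id\to b$. Applying $\Ho$ to $i$ in the two natural ways, once against the identity $\mathcal Sp^{\gl}\to \mathcal Sp^{\st}$ (which derives to $U$) and once against the identity $\mathcal Sp^{\st}\to \mathcal Sp^{\st}$, yields the unit $I\colon \Id_{\GH}\to RU$ and the inverse of the counit $J\colon \Id_{\SH}\to UR$ of the underlying adjunction $U\dashv R$; that these do furnish the adjunction at the level of homotopy categories is precisely the content of Proposition~\ref{prop:b_represents_right_adjoint_positively}~ii) and Lemma~\ref{lemma:unit_of_forgetful_cofree_adjunction}. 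Since $i$ is monadic at the point-set level, the appendix result that (lax-oplax) double functors preserve monadic transformations implies that $I$ and $J\inverse$ are monadic transformations between $U$ and $\hat R$. They therefore descend to natural transformations between $\Id$ and $\hat RU$ on $G_\infty$-ring spectra, respectively between $U\hat R$ and $\Id$ on $H_\infty$-ring spectra.

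It remains only to verify the triangle identities for the lifted pair. But the forgetful functors from $G_\infty$- and $H_\infty$-ring spectra to $\GH$ and $\SH$ are faithful, so the triangle identities for $(\hat R, U, I, J\inverse)$ reduce to the triangle identities for $(R, U, I, J\inverse)$ already established on the homotopy categories. The principal obstacle I foresee is purely bookkeeping in the first step: one must verify the derivability and invertibility hypotheses of Proposition~\ref{prop:deriving_is_lax_oplax_partial_functor} for every composite of $\mathbb P_{\gl}$, $\mathbb P_{\st}$, $U$ and $b$ of arity at most three that appears in the monad functor diagrams, and one must align the direction of the lax-versus-oplax structure on the derived monad functor with the conventions of the appendix, so that the formal preservation results apply cleanly.
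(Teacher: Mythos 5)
Your proposal takes essentially the same route as the paper: derive $b$ as a horizontal monad functor via Propositions \ref{prop:deriving_is_lax_oplax_partial_functor} and \ref{prop:lax_double_functor_preserves_monads} to obtain $\hat R$, lift $i$ to monadic transformations $I$ and $J\inverse$, and transfer the triangle identities from $\GH$ and $\SH$ using faithfulness of the forgetful functors from the algebra categories. The one point you elide is that $I = \mu^h_{b,u}\circ \Ho(i)\circ (\alpha^h_{\mathcal Sp^{\gl}})\inverse$ (and likewise $J$) is a \emph{composite}, so its monadicity does not follow just from preservation of monadic transformations; the paper separately invokes Lemma \ref{lemma:oplax_structure_map_is_monadic} to ensure the structure cells $\mu^h_{b,u}$ and $\alpha^h$ are themselves monadic.
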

\begin{proof}
\begin{enumerate}[\itshape i)]
\item We have seen that $\mathbb P_{\gl}^{\circ i}$ and $\mathbb P_{\st}^{\circ j}$ are left derivable for all $i,j\geq 0$ and that $b$ is right derivable, and moreover that $b$ has the structure of a monad morphism between $\mathbb P_{\st}$ and $\mathbb P_{\gl}$ by Corollary \ref{corollary:b_preserves_P_algebras}. Hence the above Proposition \ref{prop:deriving_is_lax_oplax_partial_functor} suffices to invoke Proposition \ref{prop:lax_double_functor_preserves_monads} to conclude that the right derived functor $R=Rb\colon \SH\to \GH$ is a monad functor between $L_{\st}\mathbb P$ and $L_{\gl}\mathbb P$. Hence, it lifts to a functor $\hat{R}\colon (H_\infty\textup{-Rings})\to (G_\infty\textup{-Rings})$.
\item We know that both $R= R(b)$ and $U= R(u)$ for the forgetful functor $u\colon \mathcal Sp_{\gl} \to \mathcal Sp_{\st}$ are monad functors. Hence both compositions $RU$ and $UR$ are monad functors.\\
Moreover, we note that both compositions $ub$ and $bu$ are right derivable, since $u$ is homotopical and sends global $\Omega$-spectra to non-equivariant $\Omega$-spectra, on which $b$ is homotopical by Lemma \ref{lemma:b_preserves_equivalences}. Moreover, they are monad functors as composites of monad functors, and hence so are the derived functors $R(ub)$ and $R(bu)$. We start by constructing $I$.\\
We define
\[I= \mu_{b,u}^h\circ \Ho(i) \circ (\alpha_{\mathcal Sp_{\gl}}^h)\inverse\colon id_{\GH} \to R(id_{\mathcal Sp_{\gl}}) \to R(bu) \to Rb\circ Ru= RU.\]
Since $i$ is a monadic transformation by Corollary \ref{corollary:b_preserves_P_algebras} and any lax-oplax double functor preserves these by Proposition \ref{prop:lax_double_functor_preserves_monads}, we see that $\Ho(i)$ is monadic. Moreover, by Lemma \ref{lemma:oplax_structure_map_is_monadic}, both $\mu_{b,u}^h$ and $\alpha_{\mathcal Sp_{\gl}}^h$ are monadic, and thus also $(\alpha_{\mathcal Sp_{\gl}}^h)\inverse$. In total, $I$ is a monadic transformation as a composition of such.

Analogously, we define $J= \mu_{u,b}^h\circ \Ho(i) \circ (\alpha_{\mathcal Sp_{\st}}^h)\inverse$, this also is a monadic transformation by the same arguments. Moreover, $J$ is invertible, since $\mu_{u,b}^h$ is by Proposition \ref{prop:deriving_is_lax_oplax_partial_functor}, $(\alpha_{\mathcal Sp_{\st}}^h)\inverse$ by definition and $\Ho(i)$ is invertible in the stable homotopy category since $i$ is a stable equivalence. Thus also $J\inverse \colon UR \to id_{\SH}$ is a monadic transformation.

Hence, the two transformations $I\colon id_{\GH}\to RU$ and $J\inverse \colon UR\to id_{\SH}$ lift to the categories of algebras. Moreover, Lemma \ref{lemma:unit_of_forgetful_cofree_adjunction} shows that for any homotopy types $X\in \GH$ and $Y\in \SH$, the morphisms 
\[ \SH(UX, Y)\nameto{R} \GH(RUX, RY) \nameto{I^\ast} \GH(X, RY)\]
and 
\[ \GH(X, RY) \nameto{U} \SH(UX, URY) \nameto{J\inverse_\ast} \SH(UX, Y)\]
are inverse isomorphisms, and thus $I$ and $J\inverse$ are unit and counit of an adjunction. As the forgetful functor from the categories of algebras to the base category is faithful, this property lifts to prove that the lifts of $I$ and $J\inverse$ are unit and counit of an adjunction
\[ \adjunction{G_\infty\textup{-Rings}}{H_\infty\textup{-Rings}}{U}{\hat{R}}.\qedhere\]
\end{enumerate}
\end{proof}

As an application of this result, we use the right adjoint to give examples of $G_\infty$-ring spectra which are not obtained as the homotopy type of an ultra-commutative ring spectrum.

\begin{remark}\label{remark:G_infty_is_not_ucom}
Let $X\in \SH$ be an $H_\infty$-ring spectrum. Then we consider the induced $G_\infty$-ring structure on the global homotopy type $RX$. This induces an $H_\infty$-ring structure on $U(RX)$. The map $J_X\colon X\to U(RX)$, defined in the proof of Theorem \ref{thm:right_adjoint_for_G_infty}, is a stable equivalence and by monadicity of the transformation an isomorphism of $H_\infty$-ring spectra.

Assume now that there is an ultra-commutative ring spectrum $Y$ such that the homotopy type of $Y$ is $RX$, and such that the $G_\infty$-ring structure is induced from the structure map $\mathbb PY\to Y$ of $Y$. Then, the $H_\infty$-ring structure on $U(RX)$ is induced by the commutative multiplication on $UY$. But the $H_\infty$-ring spectrum $U(RX)$ is equivalent to $X$. Hence, if we find an ultra-commutative representative $Y$ for the $G_\infty$-ring spectrum $RX$, then $UY$ is a commutative ring spectrum which induces the $H_\infty$-ring structure on $X$.
\end{remark}
Thus, in order to provide examples of $G_\infty$-ring spectra that are not induced by ultra-commutative ring spectra, it is enough to consider this question non-equivariantly, where counterexamples are already exhibited in the papers \cite{Noel_2014} and \cite{Lawson_2015}.

\begin{thm}[{\cite[Theorem 1.2]{Noel_2014}}]\label{thm:Example_Noel}
Let $s_k\in H^{2k}(BU; \mathbb Z_{(2)})$ be a primitive generator. Define the space $KL_k$ as the homotopy fibre
\[KL_k\nameto{i_k} BU_{(2)} \nameto{4s_k} K(\mathbb Z_{(2)}, 2k)\]
and consider the suspension spectra
\[\Sigma^\infty_+ KL_k\nameto{\Sigma^\infty_+ i_k} \Sigma^\infty_+ BU_{(2)} \nameto{\Sigma^\infty_+ 4s_k} \Sigma^\infty_+ K(\mathbb Z_{(2)}, 2k)\]
Then, for any $k$, the spectrum $\Sigma^\infty_+ KL_k$ admits the structure of an $H_\infty$-ring spectrum, and $\Sigma^\infty_+ i_k$ is an $H_\infty$-ring map. Moreover, for $k= 15$, the $H_\infty$-ring structure of $\Sigma^\infty_+ KL_{15}$ is not induced by an $E_\infty$-ring spectrum.
\end{thm}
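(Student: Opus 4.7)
The plan is to leverage the infinite loop space structure on $BU$ and obstruction theory for infinite structures on spectra. Since $BU$ is an infinite loop space (by Bott periodicity), $\Sigma^\infty_+ BU_{(2)}$ is canonically an $E_\infty$-ring spectrum, hence also an $H_\infty$-ring spectrum. Similarly, $\Sigma^\infty_+ K(\mathbb Z_{(2)}, 2k)$ inherits an $E_\infty$-ring structure from the identification $K(\mathbb Z_{(2)}, 2k) \simeq \Omega^\infty \Sigma^{2k} H\mathbb Z_{(2)}$ together with the $E_\infty$-structure on $H\mathbb Z_{(2)}$. The first task is to upgrade $4s_k$ to an $H_\infty$-ring map.

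The primitive class $s_k \in H^{2k}(BU;\mathbb Z_{(2)})$ defines a map of infinite loop spaces up to additive obstructions, but to be an $H_\infty$-ring map it must intertwine the extended power functors $D_n$ for all $n$. The obstruction to being an $H_\infty$-ring map is organized in a Postnikov-like tower of cohomological obstructions living in groups of the form $H^{*}(B\Sigma_n \times (BU_{(2)})^{\wedge n}; \mathbb Z_{(2)})$, which receive contributions from the power operations of $H\mathbb Z_{(2)}$. The key computation, which I would carry out first, is to identify these obstructions for $s_k$ with certain $2$-torsion classes (arising from the Steenrod algebra action on $H^*(B\Sigma_n;\mathbb F_2)$); multiplication by $4$ then annihilates them, producing an $H_\infty$-refinement of $4s_k$.

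Next, I would use that the category of $H_\infty$-ring spectra admits homotopy fibers of $H_\infty$-ring maps: given an $H_\infty$-ring map $f\colon X\to Y$, its fiber $F$ inherits an $H_\infty$-structure via the composite $D_n F \to D_n X \to X$, which lands in $F$ because $f\circ(D_nX\to X)$ factors through the null homotopy of $D_nf$ relative to the basepoint of $D_nY$. Applied to $4s_k$, this produces the desired $H_\infty$-ring structure on $\Sigma^\infty_+ KL_k$ and makes $\Sigma^\infty_+ i_k$ an $H_\infty$-ring map automatically.

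The hard part, and the main obstacle, is the non-existence of an $E_\infty$-refinement for $k=15$. The strategy is to identify an invariant that vanishes on $E_\infty$-ring spectra but is nonzero on $\Sigma^\infty_+ KL_{15}$. Concretely, one computes a secondary Dyer-Lashof-type operation, or equivalently a nontrivial obstruction class in topological André-Quillen cohomology $\mathrm{TAQ}^{*}(\Sigma^\infty_+ KL_{15})$, that is forced to be zero for any $E_\infty$-ring. The specific value $k=15$ enters through a numerical coincidence in the Adem relations for Dyer-Lashof operations at the prime $2$: the class $4s_{15}$ becomes detectable by a particular secondary operation on $\pi_{30}$ or $H_{30}$, whose vanishing is equivalent to the existence of an $E_\infty$-structure, and one verifies by direct computation on the cohomology of $BU_{(2)}$ and $K(\mathbb Z_{(2)}, 30)$ that this secondary class survives. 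This computational input is the heart of the argument and requires a detailed analysis of the Nishida relations between Steenrod operations and Dyer-Lashof operations in degree $30$ at the prime $2$.
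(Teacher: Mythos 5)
This theorem is quoted verbatim from \cite[Theorem 1.2]{Noel_2014} and the paper provides no proof of its own; the reader is referred to Noel's article. So the only meaningful comparison is against Noel's original argument, and there your sketch has a genuine structural error.

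The central problem is your third step. You propose to realize $\Sigma^\infty_+ KL_k$ as a homotopy fiber in the category of $H_\infty$-ring spectra of the map $\Sigma^\infty_+ 4s_k$. That is not what $\Sigma^\infty_+ KL_k$ is. The fiber sequence $KL_k \to BU_{(2)} \to K(\mathbb Z_{(2)}, 2k)$ lives in \emph{spaces}; applying $\Sigma^\infty_+$ does not preserve fiber sequences. Concretely, $\Sigma^\infty_+ 4s_k$ is an isomorphism on $\pi_0$ (both sides have $\pi_0\cong\mathbb Z$ and the map sends basepoint to basepoint), so its fiber as a spectrum has trivial $\pi_0$, whereas $\pi_0(\Sigma^\infty_+ KL_k)\cong\mathbb Z$. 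These are genuinely different spectra, and the Eilenberg--Moore spectral sequence makes the discrepancy between $H_\ast(\Sigma^\infty_+ KL_k)$ and the homology of the fiber spectrum systematic, not accidental. Even setting that aside, the homotopy fiber of a map of unital ring spectra does not carry a compatible unit, so it cannot inherit an $H_\infty$-ring structure by the mechanism you describe: a unit $\mathbb S\to F$ would have to factor the unit of $X$ through a null composite to $Y$, which is impossible when $f$ is unital and $Y$ is nontrivial. The factorization of $D_n F \to D_n X\to X\to Y$ through a nullhomotopy that you invoke is also not canonical; it depends on a choice of null data, and one must check the $H_\infty$-coherences are compatible with those choices, which you do not address. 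What Noel actually does is work at the level of $H_\infty$-\emph{spaces}: $KL_k$ is an $H$-space as the fiber of a primitive (hence $H$-)map, and the real work is to show that this $H$-space structure is compatible with extended power constructions so that $\Sigma^\infty_+ KL_k$ becomes an $H_\infty$-ring spectrum, with $\Sigma^\infty_+ i_k$ an $H_\infty$-map.

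The remaining steps of your sketch are too imprecise to salvage this. Your account of why the factor of $4$ helps (``$2$-torsion obstructions'') gestures at the right phenomenon but does not identify where the obstructions live or why exactly two factors of $2$ suffice; Noel's analysis of this hinges on an explicit compatibility between transfers/power operations on $\Sigma^\infty_+ K(\mathbb Z_{(2)}, 2k)$ and the primitivity of $s_k$, not on a generic ``Postnikov-like tower.'' And your description of the $k=15$ non-rigidification as a ``secondary Dyer--Lashof'' or ``TAQ'' obstruction in degree $30$ is offered without any mechanism; the claim that a particular numerical coincidence in Adem/Nishida relations picks out $k=15$ is asserted but not derived. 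As written, the sketch would not produce the claimed $H_\infty$-structure, let alone the negative statement for $k=15$.
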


As the homotopy category of $E_\infty$-ring spectra is equivalent to the homotopy category of commutative ring spectra, for example by \cite[Chapters II.3 and 4]{EKMM_1997}, this indeed gives rise to an example of a $G_\infty$-ring spectrum whose structure is not induced from an ultra-commutative ring spectrum.

\begin{thm}[{\cite[Theorem 1]{Lawson_2015}}]\label{thm:Example_Lawson}
Let $R_k$ be a wedge of Eilenberg-Mac Lane spectra such that $\pi_\ast R_k$ is isomorphic to the graded ring $\mathbb F_2[x]/(x^3)$, where $|x|=-2^k$. Then $R_k$ has an $H_\infty$-ring structure, and for $k>3$, these structures are not induced from commutative ring spectra.
\end{thm}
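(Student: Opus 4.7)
The plan is as follows. Since $R_k$ is required to be a wedge of Eilenberg-Mac Lane spectra realizing the truncated polynomial ring $\mathbb F_2[x]/(x^3)$ with $|x|=-2^k$, the underlying spectrum is forced up to equivalence to be
\[ R_k \simeq H\mathbb F_2 \vee \Sigma^{-2^k} H\mathbb F_2 \vee \Sigma^{-2^{k+1}} H\mathbb F_2. \]
The multiplication on $\pi_\ast$ is entirely determined by the relation $x^3=0$, and because $R_k$ is a module over the commutative ring spectrum $H\mathbb F_2$, this multiplication is realized by a strictly commutative ring structure in $H\mathbb F_2$-modules. In particular $R_k$ is at least a homotopy commutative ring spectrum.

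To put an $H_\infty$-structure on $R_k$, I would use the classical dictionary for $H\mathbb F_2$-modules: an $H_\infty$-structure on such a spectrum is controlled by a choice of Dyer-Lashof operations $Q^s$ on the homotopy groups, compatible with the multiplication via the Cartan formula and subject to the Adem relations. Since $\pi_\ast R_k$ is supported in only three degrees, the coherence relations reduce to a small set of conditions on the values $Q^s(x)$, and one checks by direct calculation that a consistent assignment exists (most $Q^s(x)$ being zero, and a few being multiples of $x^2$). Assembling these data into a system of operadic action maps $D_n R_k \to R_k$ yields the desired $H_\infty$-structure.

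The main obstacle, and the technical heart of the theorem, will be the non-existence of a commutative ring structure on $R_k$ realizing the chosen $H_\infty$-operations when $k>3$. My approach would be obstruction-theoretic via secondary Dyer-Lashof operations: an $E_\infty$-structure (equivalent after rigidification to a strictly commutative multiplication) forces the $Q^s$ to interact coherently not merely through the Adem relations but through a system of higher-order homotopies, each of which gives rise to a secondary operation defined on Adem kernels that must vanish in the presence of an $E_\infty$-lift. I would isolate one particular Adem relation that holds trivially on the generator $x\in \pi_{-2^k}R_k$ for degree reasons, extract the corresponding secondary operation $\Psi$, and compute $\Psi(x)$ in the prescribed $H_\infty$-structure; an analysis of the $2$-adic valuations entering the Adem expansion should then force $\Psi(x) = x^2 \neq 0$ precisely when $k>3$, yielding the required contradiction. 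The delicate part is the explicit identification of $\Psi$ and the tracking of all coherence homotopies needed to evaluate it, which is the content of Lawson's analysis in \cite{Lawson_2015}.
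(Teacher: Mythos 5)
The paper does not prove this theorem: it is quoted verbatim from \cite{Lawson_2015} and immediately followed by ``the reader is referred to the cited articles.'' So there is no in-paper argument to compare against; whatever you write here is really a summary of Lawson's proof, not a replacement for the paper's.

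As a summary, your sketch is a fair account of the general shape of Lawson's argument. The explicit wedge decomposition $R_k \simeq H\mathbb F_2 \vee \Sigma^{-2^k} H\mathbb F_2 \vee \Sigma^{-2^{k+1}} H\mathbb F_2$ is correct, and the broad strategy of (a) constructing the $H_\infty$-structure from an admissible assignment of Dyer--Lashof operations, and (b) ruling out an $E_\infty$ rigidification by evaluating a secondary power operation on the bottom generator, is indeed the right picture. Two cautions, though. First, your claim that an $H_\infty$-structure on an $H\mathbb F_2$-algebra ``is controlled by a choice of Dyer--Lashof operations on homotopy groups'' is not a theorem in general; $H_\infty$-structures are families of maps $D_m X\to X$ in the homotopy category subject to homotopy-commutative diagrams, and extracting a clean algebraic classification requires the homotopy of $X$ to be sparse enough that no higher filtration terms intervene --- which it is here, but that sparseness argument is part of what needs to be said, not assumed. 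Second, and more significantly, the paragraph on non-existence is entirely programmatic: you name a class of secondary Dyer--Lashof operations, assert that one of them must vanish on an $E_\infty$-ring and then assert it is nonzero in the prescribed $H_\infty$-structure when $k>3$, but you compute nothing and you do not identify which Adem relation gives the obstruction or why the bound is $k>3$. You acknowledge this openly (``the content of Lawson's analysis''), so as a reading guide this is acceptable, but as a proof it has the same content as the paper's bare citation: the key identification and computation are missing.
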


For more details on these examples, the reader is referred to the cited articles.

\subsection{Homotopical analysis of the extended powers}\label{section:homotopical_properties_P}
In this section, we generalize the analysis of the symmetric powers $\mathbb P^mX$ classically provided by Lemma \ref{lemma:extended_versus_symmetric_powers_stable}, comparing them to the extended powers $(E\Sigma_m)_+ \wedge_{\Sigma_m} X^{\wedge m}$, to the global context. Thus, we connect our definition of $G_\infty$-ring spectra using the derived monad $\mathbb G$ to the original definition of $H_\infty$-ring spectra using the extended powers. As in the $G$-equivariant version of Lemma \ref{lemma:extended_versus_symmetric_powers_stable} given in \cite[Proposition B.117]{HHR_2016}, we need to replace the universal space $E\Sigma_m$ with an appropriate global object. The correct analogue is the global universal space $E_{\gl}\Sigma_m$ defined in Remark \ref{remark:global_classifying_spaces}.

Recall that the global universal space $E_{\gl}\Sigma_m$ is constructed as $\mathbf L_V$, where $V$ is a faithful $\Sigma_m$-representation. Then, its suspension spectrum can be described by the untwisting isomorphism
\begin{equation}\label{eq:suspension_spectrum_of_EG}
\Sigma^\infty_+ \mathbf L_V\to \mathbf O(V, \_) \wedge S^V = F_V S^V
\end{equation}
as defined in \eqref{eq:untwisting_iso}. This isomorphism $\Sigma^\infty_+ \mathbf L_V \to F_V S^V$ induces the isomorphism $\Sigma^\infty_+ \mathbf L_{G,V} \cong F_{G,V} S^V$ from \eqref{eq:suspension_spectrum_of_BG} on $G$-orbits.

We also consider the morphism
\begin{equation}\label{eq:lambda_projection}
\lambda_{G,V,W}\colon F_{G, V\oplus W} S^V\to F_{G,W}
\end{equation}
defined in \cite[4.1.28]{Schwede_2018} for any compact Lie group $G$ and $G$-representations $V$ and $W$ with $W$ faithful. At an inner product space $U$, the map $\lambda_{G,V,W}$ is represented as 
\begin{align*}
\mathbf O(V\oplus W, U) \wedge_G S^V & \to \mathbf O(W, U)/G \\
[(u, \varphi), t] & \mapsto [u + \varphi(t), \varphi\circ i_2],
\end{align*}
where $i_2\colon W\to V\oplus W$ is the inclusion as the second factor. The map $\lambda_{G,V,W}$ is a global equivalence by \cite[Theorem 4.1.29]{Schwede_2018}.

\begin{thm}\label{thm:comparison_symmetric_extended_powers}
Let $X$ be a positively cofibrant orthogonal spectrum, and $n\geq 1$. Then the map 
\[q=q^X_n\colon \Sigma^\infty_+ \mathbf L_{\mathbb R^n} \wedge_{\Sigma_n} X^{\wedge n} \to X^{\wedge n}/\Sigma_n = \mathbb P^n X\]
that collapses $\Sigma^\infty_+ \mathbf L_{\mathbb R^n}$ to $\mathbb S= \Sigma^\infty_+ \ast$ is a global equivalence. 
\end{thm}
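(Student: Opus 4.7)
The plan is to mimic the cell-induction proof of the classical comparison (cf. \cite[Lemma 15.5]{MMSS_2001} or \cite[Lemma 1.3.17]{Stolz_2011}), but with the non-equivariant universal space $E\Sigma_n$ replaced throughout by its global refinement $E_{\gl}\Sigma_n = \mathbf L_{\mathbb R^n}$. Concretely, I would first check the statement on semifree spectra of the form $X = F_{G,V}S^V$, and then propagate it along the cellular structure of the positive global model structure using symmetrizability.

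For the base case, take $X = F_{G,V}S^V$ with $V$ a non-zero faithful $G$-representation. Applying the untwisting identification $\Sigma^\infty_+ \mathbf L_{\mathbb R^n} \cong F_{\mathbb R^n}S^{\mathbb R^n}$ from \eqref{eq:suspension_spectrum_of_EG} together with the multiplicativity $F_V A \wedge F_{V'}B \cong F_{V\oplus V'}(A\wedge B)$ for semifree spectra, the same computation as in Example \ref{example:G_on_classifying_spaces} identifies
\[\Sigma^\infty_+ \mathbf L_{\mathbb R^n} \wedge_{\Sigma_n} X^{\wedge n} \cong F_{\Sigma_n\wr G,\, \mathbb R^n\oplus V^n}S^{\mathbb R^n\oplus V^n}, \quad \mathbb P^n X \cong F_{\Sigma_n\wr G,\, V^n}S^{V^n}.\]
Under these identifications, $q$ becomes $\lambda_{\Sigma_n\wr G,\, \mathbb R^n,\, V^n}$ from \eqref{eq:lambda_projection}. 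Since $V$ is non-zero and faithful, $V^n$ is a faithful $\Sigma_n\wr G$-representation (Remark \ref{remark:recall_wreath_products}), so \cite[Theorem 4.1.29]{Schwede_2018} yields that this $\lambda$ is a global equivalence.

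For the induction step, I would leverage the following two derivability properties, valid for all $n\geq 1$: the functor $F^R(X) = \mathbb P^nX$ preserves positive cofibrations, positive acyclic cofibrations, and global equivalences between positively cofibrant spectra (Lemma \ref{lemma:P^m_is_derivable} and Remark \ref{remark:P^m_also_preserves_cofibrancy}); and the functor $F^L(X) = \Sigma^\infty_+ \mathbf L_{\mathbb R^n} \wedge_{\Sigma_n} X^{\wedge n}$ has the same preservation properties, since $\Sigma^\infty_+ \mathbf L_{\mathbb R^n}$ is itself a positively cofibrant $\Sigma_n$-spectrum whose underlying $\Sigma_n$-action is free away from the basepoint. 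Given a pushout attaching a generating positive cofibration, one then filters $F^R(X)$ and $F^L(X)$ by the standard $n$-fold filtration (the $k$-th stage corresponds to monomials using at most $k$ new cells), whose associated graded pieces are, on both sides, wedges of smash products of semifree spectra handled by the base case. Naturality of $q$ identifies the two filtrations term by term, and the base case provides the equivalence on each associated graded. Transfinite composition and retracts then extend the conclusion from cellular to all positively cofibrant spectra.

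The main obstacle is precisely the cofibrant filtration bookkeeping in the inductive step: one must verify that $F^L$ respects the symmetric-power filtration in the same way as $F^R$, and that the map $q$ commutes with the filtration quotients, so that the global equivalence on each graded piece lifts to a global equivalence at the top. This is the global analogue of the argument on \cite[pp.\ 36--37]{BMMS_1986} and is enabled by the symmetrizability of positive cofibrations in the global model structure \cite[Theorem 5.4.1]{Schwede_2018} together with the flatness statement of \cite[Theorem 4.3.27]{Schwede_2018}, exactly as in Remark \ref{remark:P^m_also_preserves_cofibrancy}; once these tools are in place the argument proceeds along standard lines.
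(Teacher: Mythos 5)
Your cellular-induction route is a genuinely different approach from the paper's; the paper decomposes $\Sigma^\infty_+\mathbf{L}_{\mathbb{R}^n}\cong F_{\mathbb{R}^n}S^n\cong(F_{\mathbb{R}}S^1)^{\wedge n}$ $\Sigma_n$-equivariantly to rewrite $q$ itself as $\mathbb{P}^n(\lambda_{\Sigma_1,\mathbb{R},0}\wedge X)\colon\mathbb{P}^n(F_{\Sigma_1,\mathbb{R}}S^1\wedge X)\to\mathbb{P}^nX$, after which the theorem is immediate from flatness of positively cofibrant spectra (\cite[Theorem 4.3.27]{Schwede_2018}) and Lemma \ref{lemma:P^m_is_derivable}, with no induction at all. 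The remark following the theorem in the paper explicitly acknowledges your cellular alternative, which has the virtue of being usable \emph{before} the model structure on ultra-commutative ring spectra is established (as in \cite{EKMM_1997}, \cite{MMSS_2001}, \cite{Stolz_2011}). However, since you invoke Lemma \ref{lemma:P^m_is_derivable} and Remark \ref{remark:P^m_also_preserves_cofibrancy} inside your inductive step, you have already conceded that input, and once it is on the table the paper's rewrite renders the entire cell induction superfluous.

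Two technical slips are worth flagging. First, the base-case identification does not type-check: $\lambda_{\Sigma_n\wr G,\mathbb{R}^n,V^n}$ from \eqref{eq:lambda_projection} has source $F_{\Sigma_n\wr G,\mathbb{R}^n\oplus V^n}S^{\mathbb{R}^n}$ and target $F_{\Sigma_n\wr G,V^n}$, whereas under your identifications $q$ must go from $F_{\Sigma_n\wr G,\mathbb{R}^n\oplus V^n}S^{\mathbb{R}^n\oplus V^n}$ to $F_{\Sigma_n\wr G,V^n}S^{V^n}$. The map you actually obtain is the one induced on suspension spectra by the restriction $\mathbf{L}_{\Sigma_n\wr G,\mathbb{R}^n\oplus V^n}\to\mathbf{L}_{\Sigma_n\wr G,V^n}$ of global classifying spaces; it is a global equivalence because both representations are faithful, by \cite[Proposition 1.1.26]{Schwede_2018} rather than by \cite[Theorem 4.1.29]{Schwede_2018}. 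Second, the comparison of the symmetric-power filtrations on $F^L$ and $F^R$ under a cell attachment is genuinely nontrivial: the associated graded pieces on the $F^L$ side involve $(\Sigma_{n-k}\times\Sigma_k)$-orbits of $\mathbf{L}_{\mathbb{R}^n}$, and one must check that collapsing these matches, level by level, the graded pieces of $\mathbb{P}^n$. You correctly flag this bookkeeping as the main obstacle, but ``proceeds along standard lines'' glosses over exactly where the real work in the classical references lies.
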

\begin{proof}
For the proof, we use a $\Sigma_n$-equivariant decomposition 
\[\Sigma^\infty_+ \mathbf L_{\mathbb R^n} \cong F_{\mathbb R^n}S^n \xleftarrow{j} (F_{\mathbb R}S^1)^{\wedge n}.\]
The isomorphism $j$ arises from the homeomorphism $(S^1)^{\wedge n}\cong S^n$ and the isomorphism $(F_{\mathbb R})^{\wedge n} \cong F_{\mathbb R^n}$ from \cite[Remark C.11]{Schwede_2018}. This decomposition is $\Sigma_n$-equivariant, since both of the involved maps are symmetric. Explicitly, this isomorphism is given at inner product spaces $U_1,\ldots , U_n$ as
\begin{align*}
(F_{\mathbb R}S^1)(U_1) \wedge \ldots \wedge (F_{\mathbb R}S^1) (U_n) & \nameto{j_{U_1, \ldots, U_n}} (F_{\mathbb R^n}S^n) (U_1\oplus \ldots \oplus U_n)\\
[(u_1, \varphi_1), t_1] \wedge \ldots \wedge [(u_n, \varphi_n), t_n] & \mapsto [(u_1 \oplus \ldots \oplus u_n, \varphi_1\oplus \ldots \oplus \varphi_n), t_1\wedge \ldots \wedge t_n].
\end{align*}

Using this decomposition, we can rewrite the domain of the morphism $q$ as follows: 
\begin{align*}
\Sigma^\infty_+ \mathbf L_{\mathbb R^n} \wedge_{\Sigma_n} X^{\wedge n} \cong & F_{\mathbb R^n}S^n \wedge_{\Sigma_n} X^{\wedge n} \\
\cong & (F_{\mathbb R}S^1)^{\wedge n} \wedge_{\Sigma_n} X^{\wedge n}\\
\cong & (F_{\mathbb R}S^1 \wedge X)^{\wedge n}/\Sigma_n
\end{align*}
We claim that under this translation, the morphism $q$ corresponds to the morphism
\[ (\lambda_{\Sigma_1, \mathbb R, 0}\wedge X)^{\wedge n}/\Sigma_n \colon (F_{\Sigma_1, \mathbb R}S^1\wedge X)^{\wedge n}/\Sigma_n \to (F_{\Sigma_1, 0}\wedge X)^{\wedge n}/\Sigma_n\]
from \eqref{eq:lambda_projection}. Note that $\Sigma_1=e$ is the trivial group, and hence $0$ is a faithful $\Sigma_1$-representation. Moreover, $F_{\Sigma_1, 0}= \mathbf O(0, \_)/\Sigma_1 \cong \mathbb S$ is the sphere spectrum.
To prove this claim, we consider the diagram
\begin{equation}\label{diagram:comparison_lambda_q}
\begin{tikzcd}
(F_{\mathbb R}S^1)^{\wedge n} \wedge_{\Sigma_n} X^{\wedge n} \arrow[r] \arrow[d, "j\wedge_{\Sigma_n} X^{\wedge n}", swap] & (F_{\mathbb R}S^1 \wedge X)^{\wedge n}/\Sigma_n \arrow[dd, "(\lambda_{\Sigma_1, \mathbb R, 0}\wedge X)^{\wedge n}/\Sigma_n"]\\
F_{\mathbb R^n}S^n \wedge_{\Sigma_n} X^{\wedge n} \arrow[d, "\textrm{untwisting}", swap] \arrow[rd, "\lambda_{\Sigma_1, \mathbb R^n, 0}\wedge X^{\wedge n}" near start] \\
\Sigma^\infty_+ \mathbf L_{\mathbb R^n}\wedge_{\Sigma_n} X^{\wedge n} \arrow[r, "q", swap] & X^{\wedge n}/\Sigma_n.
\end{tikzcd}
\end{equation}
We first consider the upper diagram. Let $U_1, \ldots, U_n$ be inner product spaces, and consider the diagram
\[\begin{tikzcd}[column sep= 5.5em]
(F_{\mathbb R}S^1) (U_1) \wedge \ldots \wedge (F_{\mathbb R}S^1)(U_n) \arrow[r, "\lambda_{\mathbb R, 0}(U_1)\wedge \ldots \wedge \lambda_{\mathbb R, 0}(U_n)"] \arrow[d, "j_{U_1, \ldots, U_n}", swap]
& \mathbf O(0, U_1)\wedge \ldots \wedge \mathbf O(0, U_n) \arrow[d, "\vcong", "\oplus" swap] \\
(F_{\mathbb R^n}S^n) (U_1\oplus \ldots \oplus U_n) \arrow[r, "\lambda_{\mathbb R^n, 0}(U_1\oplus \ldots \oplus U_n)", swap] 
& \mathbf O(0, U_1\oplus \ldots \oplus U_n) \cong S^{U_1\oplus \ldots \oplus U_n}
\end{tikzcd}\]
Evaluating this on an element yields
\[\begin{tikzcd}[column sep = small]
{[(u_1, \varphi_1), t_1]\wedge \ldots \wedge [(u_n, \varphi_n), t_n]} \arrow[r, maps to] \arrow[d, maps to]
	& {[u_1+\varphi_1(t_1)] \wedge \ldots \wedge [u_n+\varphi_n(t_n)]} \arrow[d, maps to]\\
{[(u_1\oplus \ldots \oplus u_n, \varphi_1\oplus \ldots \oplus \varphi_n), t_1\wedge \ldots \wedge t_n]} \arrow[r, maps to]
	& {[(u_1\oplus \ldots \oplus u_n) + (\varphi_1(t_1)\oplus \ldots \oplus \varphi_n(t_n))]}
\end{tikzcd}\]
By applying $(\_)\wedge_{\Sigma_n} X^{\wedge n}$ to this diagram, we see that the upper half of \eqref{diagram:comparison_lambda_q} commutes. For the second half of the Diagram \eqref{diagram:comparison_lambda_q}, let $U$ be an inner product space. We need to consider the left diagram
\[\begin{tikzcd}
\mathbf O(\mathbb R^n, U) \wedge S^n \arrow[rd, "\lambda_{\Sigma_1, \mathbb R^n, 0}(U)"] \arrow[d, "\textrm{untwisting}", swap] \\
S^U\wedge \mathbf L(\mathbb R^n, U) \arrow[r, "q(U)", swap] & \mathbf O(0, U) = S^U
\end{tikzcd}
\hspace{1em}
\begin{tikzcd}
{[(u, \varphi), t]} \arrow[rd, maps to] \arrow[d, maps to]\\
{[u+\varphi(t), \varphi]} \arrow[r, maps to] & {[u+\varphi(t)].}
\end{tikzcd}\]
On elements, this takes the right form.

Thus, also this part of the Diagram \eqref{diagram:comparison_lambda_q} commutes. Hence, we have translated the statement of the theorem into the claim that the map
\[\mathbb P^n(\lambda_{\Sigma_1, \mathbb R, 0}\wedge X)= (\lambda_{\Sigma_1, \mathbb R, 0} \wedge X)^{\wedge n} /\Sigma_n \colon \mathbb P^n (F_{\Sigma_1, \mathbb R} S^1\wedge X) \to \mathbb P^nX\]
is a global equivalence. But by \cite[Theorem 4.1.29]{Schwede_2018}, the morphism $\lambda_{\Sigma_1, \mathbb R, 0}$ is a global equivalence. As the spectrum $X$ is positively cofibrant, smashing with $X$ preserves global equivalences by \cite[Theorem 4.3.27]{Schwede_2018}. Moreover, we know by Lemma \ref{lemma:P^m_is_derivable} that $\mathbb P^n$ sends global equivalences between positively cofibrant spectra to global equivalences. As both $X$ and $F_{\Sigma_1, \mathbb R}S^1= F_{\mathbb R}S^1$ are positively cofibrant, this proves that $\mathbb P^n(\lambda_{\Sigma_1, \mathbb R, 0}\wedge X)$ is a global equivalence, and hence also $q$ is.
\end{proof}

\begin{remark}
Here, we used a decomposition of the global classifying space $\Sigma^\infty_+ \mathbf L_{\mathbb R^n}$ and the fact that we already know that $\mathbb P^n$ preserves global equivalences by Lemma \ref{lemma:P^m_is_derivable} to give an easy proof of the theorem. Our proof thus relies on the fact that we already have a model structure on commutative ring spectra, following the approach of White in \cite{White_2017} via analysing the symmetric powers $\mathbb P^n$. More classically, in \cite{EKMM_1997}, \cite{MMSS_2001} and \cite{Stolz_2011}, the theorems analogous to Theorem \ref{thm:comparison_symmetric_extended_powers} are used to provide the model structure on commutative ring spectra. In these sources, the proof of the above theorem is done by a cellular induction, see for example the proof of \cite[Theorem III.5.1]{EKMM_1997}. A similar proof can also be done in our context, using the above calculations for the induction start and then using \cite[Theorem 22]{Gorchinskiy_Guletski_2016} for the induction over the cell attachments.
\end{remark}

\section{Power operations on Moore spectra and \texorpdfstring{$\beta$}{beta}-rings}\label{section:Moore_spectra_and_beta_rings}

In this part of the article, we study $G_\infty$-structures on global Moore spectra $\mathbb SB$ for torsion-free commutative rings $B$. We show that $G_\infty$-ring structures on $\mathbb SB$ provide $\beta$-ring structures on all equivariant homotopy groups. The reason that we study Moore spectra is first of all that they are (almost) completely determined by the underlying algebra of the ring $B$, so we can translate the topological structure of being a $G_\infty$-ring spectrum into an algebraic structure on $B$. Moreover, a Moore spectrum can be thought of as an extension of coefficients of the sphere spectrum and hence has relevance to talking about cohomology theories with coefficients in a ring $B$. Hence, providing power operations in the Moore spectrum $\mathbb SB$ is a first step to providing power operations on these extended cohomology theories.

It is known classically that torsion in the ring $B$ obstructs the existence of a highly structured multiplication on the Moore spectrum $\mathbb SB$. We can also show that these obstructions occur in the algebra of global power functors. Hence, we restrict our analysis to the class of torsion-free rings, where these phenomena are not visible.

In Section \ref{section:Moore_spectra}, we study the topological side of the situation and construct from a global power structure on the homotopy groups of a Moore spectrum a $G_\infty$-ring structure. In Theorem \ref{thm:equivalence_G_Moore_spectra_A_power_algebras}, we arrive at an equivalence between the homotopy category of Moore spectra for torsion-free rings equipped with a $G_\infty$-structure and the category of corresponding global power functors. Using one direction of this relationship, which does not need the torsion-freeness assumption, we also obtain easy arguments that the Moore spectra for $\mathbb Z/n$ and $\mathbb Z[i]$ cannot support $G_\infty$-ring structures. This is a shadow of the classical facts that these spectra cannot support an $A_\infty$- or $E_\infty$-structure respectively.

Then, in an algebraic Section \ref{section:beta_rings}, we study for which rings there can be global power operations on the homotopy groups of $\mathbb SB$. For this, we link the power operations on global functors of the form $\mathbb A\otimes B$ to $\beta$-ring structures on $\mathbb A(G)\otimes B$. In particular, we give a new perspective on these objects, using a well-structured theory of global power operations to obtain $\beta$-rings. This approach is also used to obtain the $\beta$-ring structure of stable cohomotopy.

A similar analysis has already been done by Julia Singer in her PhD-thesis \cite[Chapter 2.4]{Singer_2007} in the case of $H_\infty$-structures. There, the representation ring functor $R$ takes the role of the Burnside ring. Our treatment generalizes the results to a global context.

In this chapter, all rings $B$ are commutative and unital.

\subsection{\texorpdfstring{$G_\infty$}{G-infinity}-structure on Moore spectra for global power functors}\label{section:Moore_spectra}

In this section, we construct $G_\infty$-ring structures on Moore spectra from power operations on their homotopy groups. It is well known that the existence of a multiplication on Moore spectra is a subtle question, which traces back to the fact that the cone of a map is only well defined in the homotopy category up to non-canonical isomorphism. This distinguishes them notably from Eilenberg-MacLane spectra, which are unique in a more rigorous way and hence are better-behaved for algebraic manipulations. However, there are classes of rings which work better than others, in particular, torsion-free rings support multiplications on their Moore spectra. We thus restrict our attention to the subcategory of torsion-free rings in the following chapter. The main examples of Moore spectra for torsion rings that do not support commutative multiplications are $\mathbb S(\mathbb Z/2)$, which does not admit a unital multiplication, and $\mathbb S(\mathbb Z/3)$, where Massey products obstruct the associativity of the multiplication. A remark about the latter phenomenon can be found in \cite[Example 3.3]{Angeltveit_2008}, and details for the first can be found in \cite[Theorem 1.1]{Araki_1965}. In fact, no Moore spectrum $\mathbb S(\mathbb Z/n)$ can have an $A_n$-ring structure. That no Moore spectrum $\mathbb S(\mathbb Z/n)$ can be endowed with a $G_\infty$-structure can be seen in Corollary \ref{corollary:non_existence_of_power_ops_on_Z_mod_n}, where we show that $\mathbb A\otimes\mathbb Z/n$ does not admit the structure of a global power functor. In a similar way, we also observe that the Moore spectrum for the Gaussian integers $\mathbb S(\mathbb Z[i])$ does not support a $G_\infty$-ring structure. This is a shadow of the result that this spectrum also cannot be endowed with an $E_\infty$-multiplication. Note, however, that the non-existence results for $G_\infty$-ring structures for $\mathbb S(\mathbb Z/p)$ and $\mathbb S(\mathbb Z[i])$ are obtained by easy algebraic calculations rather than by higher obstructions. In this way, the global viewpoint simplifies the calculations.

For countable torsion-free rings $B$ however, these problems vanish, and we are able to describe $G_\infty$-ring structures on Moore spectra completely algebraically. Theorem \ref{thm:equivalence_G_Moore_spectra_A_power_algebras} provides an equivalence of $G_\infty$-ring structures on $\mathbb SB$ and global power functor structures on $\mathbb A\otimes B$.

In this chapter, we use the triangulated structure of the categories $\SH$ and $\GH$. As a reference for these triangulated structures we use \cite[Chapter 4.4]{Schwede_2018}.\\

Recall from \cite[Theorem 4.5.1]{Schwede_2018} that the forgetful functor $U\colon \GH\to \SH$ has both adjoints, and that the left adjoint is the left derived functor of the identity functor $\Id\colon \mathcal Sp \to \mathcal Sp$. As such, it can be represented by assigning to a non-equivariant spectrum $X$ a non-equivariant cofibrant replacement $QX$, considered as a global homotopy type. By construction of the model structures, in fact any non-equivariant stable equivalence between cofibrant spectra is a global equivalence. We call the objects in the image of the left adjoint {\itshape left induced homotopy types}, and a spectrum {\itshape left induced (from the trivial group)} if its homotopy type is. When we talk about Moore spectra for rings, we require that all their homology is not only concentrated in degree 0, but also determined by non-equivariant data. Hence we make the following definition:

\begin{definition}\label{def:Moore_spectra}
	Let $B$ be a ring. A (global) Moore spectrum for $B$ is a connective spectrum $X$, left induced from the trivial group, such that $H_0^e(X) \cong \pi_0^e(X)\cong B$ and $H_\ast^e(X)=0$ for all $\ast\neq 0$. We denote a Moore spectrum for the ring $B$ by $\mathbb SB$.
\end{definition}

Since we study power operations on the homotopy groups of a Moore spectrum, we need to calculate $\underline{\pi}_0(\mathbb SB)$ as a global functor. This can be done in terms of $B$, using that $\mathbb SB$ is connective and left induced.

\begin{prop}\label{prop:homotopy_groups_of_left_induced_spectrum}
	Let $X$ be a connective spectrum left induced from the trivial group. Then the exterior product
	\[ \boxtimes\colon \pi_0^G(\mathbb S) \otimes \pi_0^e(X) \to \pi_0^{G\times e} (\mathbb S\wedge X) \cong \pi_0^G(X)\]
	is an isomorphism of abelian groups. As the group $G$ varies, these assemble into an isomorphism 
	\[ \underline{\pi}_0(\mathbb S) \otimes \pi_0^e(X) \to \underline{\pi}_0(X)\]
	of global functors. If $X$ is a homotopy ring spectrum, then this is an isomorphism of global Green functors.
\end{prop}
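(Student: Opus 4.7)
The plan is to reduce the statement to the cofibre sequence
\[
\bigvee_I \mathbb S \to \bigvee_J \mathbb S \to Y_1
\]
arising from a free resolution of $\pi_0^e(X)$. The key preliminary observation is that connective left-induced spectra are \emph{globally} connective, i.e.\ $\pi_k^G(X) = 0$ for every compact Lie group $G$ and every $k < 0$. This follows because the sphere spectrum $\mathbb S$ is globally connective (the equivariant stable stems of $\mathbb S$ vanish in negative degrees, cf.\ Schwede's computations in \cite{Schwede_2018}), and the class of globally connective spectra is closed under arbitrary wedges and cofibres, hence contains $LY$ for every connective non-equivariant spectrum $Y$.

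Next, write $X \simeq LY$ for a connective non-equivariant CW-spectrum $Y$, and let $Y_1 \hookrightarrow Y$ be the 2-skeleton, fitting into the above cofibre sequence so that the bottom realises a free presentation $\bigoplus_I \mathbb Z \to \bigoplus_J \mathbb Z \to \pi_0^e(Y) \to 0$. The non-equivariant cofibre $Y/Y_1$ is 2-connective, so $L(Y/Y_1) \simeq \Sigma^2 Z'$ for some connective left-induced $Z'$; by the preliminary observation, $\pi_0^G(L(Y/Y_1)) = \pi_1^G(L(Y/Y_1)) = 0$. The long exact sequence attached to the cofibre sequence $LY_1 \to X \to L(Y/Y_1)$ then yields an isomorphism $\underline{\pi}_0(LY_1) \xrightarrow{\cong} \underline{\pi}_0(X)$ of global functors, so it remains to compute $\underline{\pi}_0(LY_1)$.

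Both functors $F_1(X) = \underline{\pi}_0(\mathbb S) \otimes \pi_0^e(X)$ and $F_2(X) = \underline{\pi}_0(X)$ commute with arbitrary wedges on connective left-induced spectra (for $F_1$ this is formal, for $F_2$ it uses compactness of $\Sigma^\infty_+ B_{\gl} G$ in $\GH$), and both are right exact on cofibre sequences of connective left-induced spectra (for $F_2$ this uses the vanishing of $\pi_{-1}^G$ established above, for $F_1$ it uses right exactness of $\pi_0^e$ and of the tensor product). They also agree on $X = \mathbb S$, where the external product reduces to the unit isomorphism $\underline{\pi}_0(\mathbb S) \otimes \mathbb Z \cong \underline{\pi}_0(\mathbb S)$. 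Applying $F_1$ and $F_2$ to the cofibre sequence $\bigvee_I \mathbb S \to \bigvee_J \mathbb S \to LY_1$ and using naturality of the external product $\boxtimes$ produces a commutative diagram of right exact sequences whose two left-hand entries are isomorphisms, forcing $\underline{\pi}_0(\mathbb S) \otimes \pi_0^e(X) \to \underline{\pi}_0(X)$ to be an isomorphism as well. The final claim about Green functors follows from the compatibility of $\boxtimes$ with the smash product established in Proposition \ref{prop:homotopy_for_ring_spectrum_Green_functor}, so that the constructed isomorphism automatically respects the Green functor structures. The main obstacle is securing global connectivity of $\mathbb S$, a nontrivial input from Schwede's foundational work which cleanly delivers the right exactness on both sides.
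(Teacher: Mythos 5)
Your proof is correct, and the engine is the same as in the paper: the base case $X=\mathbb S$, right-exactness of both sides of $\boxtimes$ on a cofibre sequence of connective spectra (using $\pi_{-1}^G=0$), and a four/five-lemma. The packaging differs, though. The paper argues abstractly: it shows that the class of connective left-induced spectra for which $\boxtimes$ is an isomorphism is closed under coproducts and cones (the 5-lemma step), then cites a generation result ([Schwede, Prop.~4.4.13]) to conclude that the class contains every connective left-induced spectrum because $\mathbb S$ is a compact weak generator. You instead reduce concretely: you take a CW model $Y$ of $X$, split off the $2$-connective part $Y/Y_1$, show that this part does not contribute to $\pi_0^G$ or $\pi_1^G$ after left-inducing (here your preliminary observation on global connectivity does real work), and then apply the 5-lemma exactly once to the explicit two-stage presentation $\bigvee_I\mathbb S\to\bigvee_J\mathbb S\to LY_1$. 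One advantage of your route is that it makes explicit the hypothesis the paper uses silently, namely that a non-equivariantly connective left-induced spectrum is \emph{globally} connective, so that $\pi_{-1}^G(X)=0$; the paper writes this vanishing into its long exact sequence without comment. Another is that you avoid reliance on the generation machinery entirely, at the cost of a slightly longer reduction and the extra input of global connectivity of $\mathbb S$ (which you correctly flag as the nontrivial ingredient, and which follows e.g.\ from the tom Dieck splitting). The paper's version is shorter and slots into the standard ``compact weak generator'' framework; yours is more self-contained and arguably more transparent about where connectivity enters.
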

\begin{proof}
	We first observe that the proposition is true in the case $X=\mathbb S$, since in this case $\pi_0^e(\mathbb S)\cong \mathbb Z$, and the exterior product is the multiplication map 
	\[ \pi_0^G(\mathbb S)\otimes \mathbb Z \to \pi_0^G(\mathbb S),\]
	which is an isomorphism. Moreover, we note that the class of spectra $X$ for which the exterior product map is an isomorphism is closed under coproducts, since $\boxtimes$ is additive in the spectrum $X$. It is also closed under cones as defined before \cite[Proposition 4.4.13]{Schwede_2018}, by the following argument:\\
	Suppose we have a distinguished triangle
	\[ X\to Y\to Z\to X[1]\]
	of connective spectra in the stable homotopy category, where for $X$ and $Y$, the map 
	\[\boxtimes\colon \pi_0^G(\mathbb S) \otimes \pi_0^e(\_) \to \pi_0^G(\_)\]
	is an isomorphism. Then the left-induced triangle is also distinguished, and we obtain a long exact sequence in homotopy groups
	\[ \pi_0^G(X) \to \pi_0^G(Y) \to \pi_0^G(Z) \to \pi_{-1}^G(X) = 0\]
	for all compact Lie groups $G$. As the exterior product is natural, we obtain the commutative diagram
	\[\begin{tikzcd}
	\pi_0^G(\mathbb S)\otimes \pi_0^e(X) \arrow[r] \arrow[d, "\boxtimes", swap] &\pi_0^G(\mathbb S)\otimes \pi_0^e(Y) \arrow[r] \arrow[d, "\boxtimes", swap] & \pi_0^G(\mathbb S)\otimes \pi_0^e(Z) \arrow[r] \arrow[d, "\boxtimes", swap] & 0\\
	\pi_0^G(X) \arrow[r] & \pi_0^G (Y) \arrow[r] & \pi_0^G(Z) \arrow[r] & 0
	\end{tikzcd}\]
	with exact rows, where the two left vertical maps are isomorphisms. Then by the 5-lemma, also for $Z$ the exterior product is an isomorphism.\\
	Then by \cite[Proposition 4.4.13]{Schwede_2018}, respectively its non-equivariant analogue, the map $\boxtimes$ is an isomorphism for all connective left-induced spectra, since the sphere spectrum is a compact weak generator of $\SH$.
	
	Now, by the properties of the external product \cite[Theorem 4.1.22]{Schwede_2018}, the map $\boxtimes$ is a morphism of global functors, and levelwise an isomorphism, hence it is an isomorphism of global functors. Moreover, if $X$ is a homotopy ring spectrum, then $\boxtimes$ is a map of global Green functors. This proves the proposition.
\end{proof}

\begin{remark}\label{remark:identification_genuine_fixed_points_of_left_induced_spectrum}
	The above statement can also be deduced from an identification of the genuine fixed points of a left induced spectrum. Let $X$ be a non-equivariant spectrum and denote by $LX$ the corresponding left induced spectrum. Then we obtain
	\[ (LX)^G \simeq \mathbb S^G\wedge X, \]
	where $(\_)^G$ denotes the genuine fixed point functor. This relation can easily be seen using an $\infty$-categorical approach: Here, the functor $(\_)\mapsto (L\_)^G$ preserves colimits, hence we are able to calculate $(LX)^G$ by evaluating the functor on the sphere spectrum and smashing with $X$. From this formula, we deduce the above proposition by observing that $\pi_0^G(X)= \pi_0(X^G)$ and that $\pi_0$ is strict monoidal on connective spectra.
\end{remark}

\begin{corollary}\label{corollary:G_structure_induces_power_algebra}
	Let $X$ be a left induced spectrum. If $X$ supports the structure of a $G_\infty$-ring spectrum, then it induces the structure of a global power functor on $\underline{\pi}_0(X)\cong \mathbb A\otimes \pi_0^e(X)$.
\end{corollary}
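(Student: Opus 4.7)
The plan is to obtain this corollary as an immediate combination of the two preceding results. From the assumed $G_\infty$-ring structure $\zeta\colon \mathbb G X\to X$ on $X$, Proposition \ref{prop:power_operations_on_G_infty_spectra} directly produces the power operations $P^m\colon \pi_0^G(X)\to \pi_0^{\Sigma_m\wr G}(X)$ and the multiplication induced by $\zeta_2$, endowing $\underline{\pi}_0(X)$ with the structure of a global power functor. In particular, $X$ becomes a homotopy commutative ring spectrum via $\zeta_2\colon \mathbb G^2 X\to X$ together with the unit.

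Next, since $X$ is left induced from the trivial group (and connective, which is implicit in asserting the stated isomorphism, and anyway the setting of interest for Moore spectra), I apply Proposition \ref{prop:homotopy_groups_of_left_induced_spectrum}. Because $X$ is a homotopy commutative ring spectrum, the exterior product yields an isomorphism of global Green functors
\[ \underline{\pi}_0(X)\cong \underline{\pi}_0(\mathbb S)\otimes \pi_0^e(X)=\mathbb A\otimes \pi_0^e(X), \]
where $\mathbb A=\underline{\pi}_0(\mathbb S)$ is the Burnside ring global power functor. Transporting the global power functor structure across this isomorphism gives the desired structure on $\mathbb A\otimes \pi_0^e(X)$.

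No substantial obstacle is expected: the corollary is essentially a formal combination of the existence of power operations for $G_\infty$-ring spectra (Proposition \ref{prop:power_operations_on_G_infty_spectra}) and the identification of the zeroth homotopy global functor of a connective left-induced spectrum as an extension of scalars of $\mathbb A$ (Proposition \ref{prop:homotopy_groups_of_left_induced_spectrum}). The only point worth spelling out is that the Green functor identification of Proposition \ref{prop:homotopy_groups_of_left_induced_spectrum} is compatible with transport of structure, so that the power operations produced on $\underline{\pi}_0(X)$ can indeed be read off on $\mathbb A\otimes \pi_0^e(X)$.
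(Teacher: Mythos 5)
Your proposal is correct and matches what the paper intends: the corollary is stated without an explicit proof precisely because it is the immediate combination of Proposition \ref{prop:power_operations_on_G_infty_spectra} (a $G_\infty$-ring structure induces a global power functor structure on $\underline{\pi}_0(X)$) and Proposition \ref{prop:homotopy_groups_of_left_induced_spectrum} (the Green functor identification $\underline{\pi}_0(X)\cong\mathbb A\otimes\pi_0^e(X)$ for a connective left-induced homotopy ring spectrum). You also correctly flag that the corollary's statement implicitly assumes connectivity, which the displayed isomorphism requires.
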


Thus, we take particular interest in global power functors of the form $\mathbb A\otimes B$ in the following. These can also be described as global power functors $R$ where the multiplication map $\mathbb A\otimes R(e) \to R$, arising from the $\mathbb A$-module structure of $R$, is an isomorphism of global Green functors. We call such global power functors {\itshape left-induced} and call the full subcategory of such global power functors $\mathcal GlPow_{\mathrm{left}}$. Any morphism between such left-induced global power functors is already determined by the morphism at the trivial group $e$.

Note that a priori, $\mathbb A\otimes B$ only has the structure of a global Green functor for a ring $B$. The existence of power operations is additional structure. In Section \ref{section:beta_rings}, we exhibit a relationship between global power functor structures on $\mathbb A\otimes B$ and $\beta$-ring structures on $\mathbb A(G)\otimes B$. Moreover, we can show that for certain rings, such power operations cannot exist:

\begin{corollary}\label{corollary:non_existence_of_power_ops_on_Z_mod_n}
	The Moore spectra $\mathbb S(\mathbb Z/n)$ for $n\in \mathbb N$ and $\mathbb S(\mathbb Z[i])$ do not support a $\mathbb G_\infty$-ring structure.
\end{corollary}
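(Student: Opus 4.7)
The plan is to invoke Corollary \ref{corollary:G_structure_induces_power_algebra} to reduce the statement to a purely algebraic question: if $\mathbb S B$ admits a $G_\infty$-ring structure, then $\underline{\pi}_0(\mathbb S B) \cong \mathbb A \otimes B$ inherits a global power functor structure. It therefore suffices to show that neither $\mathbb A \otimes \mathbb Z/n$ (for $n \geq 2$) nor $\mathbb A \otimes \mathbb Z[i]$ admits such a structure, which I would accomplish by deriving an algebraic contradiction from the axioms of \cite[Definition 5.1.6]{Schwede_2018}.

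For $\mathbb A \otimes \mathbb Z/n$ I would use only the additivity axiom
\[
P^m(a+b) \;=\; \sum_{i+j=m} \operatorname{tr}^{\Sigma_m}_{\Sigma_i \times \Sigma_j}\bigl(P^i(a) \times P^j(b)\bigr).
\]
Iterating starting from $P^m(1) = 1$ and using the cross-product identity $[\Sigma_i/H] \times [\Sigma_j/K] = [(\Sigma_i \times \Sigma_j)/(H \times K)]$ for transitive coset spaces in the Burnside category, a quick induction on $k$ yields
\[
P^m(k \cdot 1) \;=\; \sum_{\substack{m_1 + \cdots + m_k = m \\ m_i \geq 0}} \bigl[\Sigma_m/(\Sigma_{m_1} \times \cdots \times \Sigma_{m_k})\bigr]
\]
inside $\mathbb A(\Sigma_m)$. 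Setting $m = k = n$, the stabilizer $\Sigma_{m_1} \times \cdots \times \Sigma_{m_n}$ is trivial precisely when every $m_i \in \{0, 1\}$, and combined with $\sum m_i = n$ this forces the unique composition $(1, \ldots, 1)$. Thus the basis element $[\Sigma_n/e]$ appears with coefficient exactly $1$ in $P^n(n \cdot 1)$; since $\{[\Sigma_n/H]\}_{(H)}$ is a $\mathbb Z$-basis of $\mathbb A(\Sigma_n)$, this coefficient remains nonzero modulo $n$, so $P^n(n \cdot 1) \neq 0$ in $\mathbb A(\Sigma_n) \otimes \mathbb Z/n$. But $n \cdot 1 = 0$ in $\mathbb Z/n$ forces $P^n(n \cdot 1) = P^n(0) = 0$, a contradiction.

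For $\mathbb A \otimes \mathbb Z[i]$ I would instead combine additivity with multiplicativity at $m = 2$. Writing $t = [\Sigma_2/e]$, so that $\mathbb A(\Sigma_2) \cong \mathbb Z[t]/(t^2 - 2t)$, the additivity formula applied to $1 + (-1) = 0$ together with $P^1 = \mathrm{id}$ yields $P^2(-1) = t - 1$. Writing $P^2(i) = a + b t$ with $a, b \in \mathbb Z[i]$, the multiplicativity axiom $P^2(i)^2 = P^2(i^2) = P^2(-1)$ unfolds to
\[
a^2 + 2b(a+b)\, t \;=\; -1 + t,
\]
so $a^2 = -1$ and $2b(a+b) = 1$ in $\mathbb Z[i]$. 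The first equation forces $a = \pm i$; the second then demands that $2$ divide $1$ in $\mathbb Z[i]$, which is impossible since $2$ has norm $4$ and hence is not a unit.

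The main point requiring care is the bookkeeping in the iterated additivity computation and matching the transfer conventions of \cite[Definition 5.1.6]{Schwede_2018}, in particular the identification of external products of coset spaces at successive inductive steps. Once the displayed formulas are in place, both obstructions are visible directly in the explicit $\mathbb Z$-basis structure of the Burnside rings $\mathbb A(\Sigma_n)$ and $\mathbb A(\Sigma_2)$, and no further topological input is needed.
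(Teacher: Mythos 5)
Your proof is correct and follows the same overall strategy as the paper: invoke Corollary \ref{corollary:G_structure_induces_power_algebra} to reduce to showing $\mathbb A\otimes \mathbb Z/n$ and $\mathbb A\otimes\mathbb Z[i]$ admit no global power functor structure, then derive algebraic contradictions. The $\mathbb Z[i]$ argument is essentially identical to the paper's (extract $P^2(-1)=t-1$ and observe $2$ is not a unit in $\mathbb Z[i]$).

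For $\mathbb Z/n$ your computation differs from the paper's in a pleasant way. The paper takes a prime $p\mid n$, identifies $P^p(n)$ with the $\Sigma_p$-set $[n]^p$, and computes the coefficient of $[\Sigma_p/e]$ to be $\binom{n}{p}$; it then needs a small $p$-adic valuation argument to see that $\binom{n}{p}$ is not divisible by $n$. You instead use $m=n$: after deriving the closed formula $P^m(k\cdot 1)=\sum_{m_1+\cdots+m_k=m}[\Sigma_m/(\Sigma_{m_1}\times\cdots\times\Sigma_{m_k})]$ from iterated additivity, the coefficient of $[\Sigma_n/e]$ in $P^n(n\cdot 1)$ is exactly $1$, which is visibly nonzero modulo $n$ for $n\geq 2$. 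This trades a slightly larger symmetric group for a cleaner coefficient and eliminates the number-theoretic step entirely. Your derivation of the formula from the additivity axiom is also more self-contained than the paper's appeal to \cite[Example 5.3.1]{Schwede_2018}, though the two of course agree. The only minor point worth noting is that the corollary's ``for $n\in\mathbb N$'' should be read as $n\geq 2$, which you make explicit; the paper leaves this implicit (its argument requires a prime factor of $n$).
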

\begin{proof}
	We claim that the global functor $\mathbb A\otimes \mathbb Z/n$ does not support a global power functor structure. Suppose otherwise, then we in particular obtain power operations
	\[ P^m\colon \mathbb Z/n \to \mathbb A(\Sigma_m) \otimes \mathbb Z/n\]
	for all $m\geq 0$. Note that since $\mathbb Z/n$ is additively cyclic, if a power operation exists, it is induced by the power operations on $\mathbb A$ upon taking the quotient by $n$, using additivity. Hence, we can provide formulas for the power operations in terms of powers of finite sets. We claim that if $p$ is any prime factor of $n$, then the power operation $P^p\colon \mathbb Z\to \mathbb A(\Sigma_p)$ does not descend to $\mathbb Z/n\to \mathbb A(\Sigma_p)\otimes \mathbb Z/n$.
	
	To check this, we calculate $P^p(n) \in \mathbb A(\Sigma_p)$. We observe by the explicit description of the power operations on the Burnside ring of a finite group in \cite[Example 5.3.1]{Schwede_2018} that the element $P^p(n)\in \mathbb A(\Sigma_p)$ is represented by the $\Sigma_p$-set $[n]^p$ with the permutation action, where we denote by $[n]$ the set $\{1,\ldots n\}$ with no group actions. We decompose this $\Sigma_p$-set as a disjoint union of $\Sigma_p$-orbits.
	
	In particular, we consider the free orbits in $[n]^p$. Such orbits are in bijection with equivalence classes of points $(k_1, \ldots, k_p)$ with pairwise different $k_i$, up to reordering. There are $\left(\!\myatop{n}{p}\!\right)$ such classes, hence this is the coefficient of $\Sigma_p/e$ in $[n]^p$. Since we have
	\[ \left(\!\myatop{n}{p}\!\right)= \frac{n\cdot \ldots \cdot (n-p+1)}{p\cdot \ldots \cdot 1}\]
	and in the numerator, only $n$ is divisible by $p$, we see that $\left(\!\myatop{n}{p}\!\right)$ is divisible by $\frac{n}{p}$, but not by $n$. Thus the element $P^p(n) \in \mathbb A(\Sigma_p)$ is not divisible by $n$. Hence we do not have power operations on the global functors $\mathbb A\otimes \mathbb Z/n$ for any $n$.
	
	For the case $\mathbb Z[i]$, we use the relation $(P^m(i))^2= P^m(-1)$. Then, we observe that $P^2(-1) = t-1$, where we denote $t= \tr_e^{\Sigma_2}\in \mathbb A(\Sigma_2)$. This is not a square in $\mathbb A(\Sigma_2)\otimes \mathbb Z[i]$, as we can calculate 
	\[ (at+b)^2 = 2a (a+b) t+b^2 \]
	for $a,b\in \mathbb Z[i]$, and 2 is not invertible in $\mathbb Z[i]$.
\end{proof}
Note that $\mathbb A\otimes \mathbb Z/n$ does however support truncated power operations, namely maps $P^k$ for all $k<p$, where $p$ is the smallest prime divisor of $n$. This can be shown by similar computations as above for all stabilizers of points in $[n]^p$. Such truncated power operations are also an object of research, for example in \cite{Blumberg_2018}, and relate to the fact that $\mathbb S(\mathbb Z/p)$ has an $A_{p-1}$-multiplication. Moreover, we observe that $P^2(-1)= t-1$ becomes a square after inverting $2$ in $\mathbb A\otimes \mathbb Z[i]$. In fact, there is a strict commutative model for the Moore spectrum of $\mathbb Z[i, \frac{1}{2}]$.\\

We now study the reverse direction in the case that $X$ is a global Moore spectrum. We aim to define a $G_\infty$-ring structure on $\mathbb SB$ from power operations on its homotopy groups. As already mentioned, we restrict to torsion-free rings in the following.

We first show that the ring structure of $B$ induces the structure of a homotopy ring spectrum on $\mathbb SB$. For this, we show that we can test properties of morphisms between Moore spectra on the homotopy groups. This lemma should be well-known, but since the author is not aware of a proof in the literature, it is included here for completeness' sake.

\begin{lemma}\label{lemma:functorial_Moore_spectra_free_groups}
	The functor 
	\[\pi_0^e\colon \textit{Moore}^{\textup{torsion-free}}\to \Ab^{\textup{torsion-free}}\]
	between the homotopy category of Moore spectra of torsion-free abelian groups and  the category of torsion-free abelian groups is fully-faithful, and hence an equivalence of categories.
\end{lemma}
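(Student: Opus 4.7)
The plan is to model any torsion-free Moore spectrum as a cofiber of wedges of sphere spectra arising from a free resolution, and then read off the Hom sets from this presentation. Since every Moore spectrum is left induced, the adjunction $L\dashv U$ of \cite[Theorem 4.5.1]{Schwede_2018} yields natural identifications $\GH(\mathbb SA, \mathbb SB) \cong \SH(\mathbb SA, \mathbb SB)$, because $L$ is the derived identity functor and hence fully faithful ($UL\simeq \Id$ on $\SH$). I would therefore carry out all computations in $\SH$.

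Given a torsion-free $A$, I would pick a short exact sequence $0 \to F_1 \xrightarrow{d} F_0 \to A \to 0$ with $F_0, F_1$ free abelian. Compactness of $\mathbb S \in \SH$ gives $\SH(\mathbb SF_1, \mathbb SF_0) \cong \Hom(F_1, F_0)$, so $d$ lifts to a morphism $\tilde d\colon \mathbb SF_1\to \mathbb SF_0$ between sphere wedges. I would then form the cofiber $X_A = \Cone(\tilde d)$; the long exact homology sequence shows that $H_\ast^e(X_A)$ is concentrated in degree zero with value $A$, while $X_A$ is connective and left induced as a cofiber of such. Hence $X_A$ is a Moore spectrum for $A$, which proves essential surjectivity.

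For fully faithfulness, I would apply $\SH(\_, \mathbb SB)$ to the cofiber sequence $\mathbb SF_1\xrightarrow{\tilde d}\mathbb SF_0\to X_A\to \Sigma \mathbb SF_1$. The key observation is that $\SH(\Sigma\mathbb SF_1, \mathbb SB) \cong \prod_{I_1}\pi_{-1}(\mathbb SB) = 0$ by connectivity of $\mathbb SB$, so that the long exact sequence collapses to
\[ \SH(X_A, \mathbb SB) \cong \ker\!\bigl(\Hom(F_0, B) \xrightarrow{d^\ast} \Hom(F_1, B)\bigr) \cong \Hom(A, B), \]
the last identification coming from exactness of the resolution. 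Unravelling the construction shows that this bijection is precisely the map induced by $\pi_0^e$.

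The main subtlety is that this computation is performed for the specific model $X_A$, whereas the lemma deals with an arbitrary Moore spectrum $\mathbb SA$. To close this gap, I would apply the preceding step with $\mathbb SB$ replaced by an arbitrary Moore spectrum $\mathbb SA$ for $A$, and select the map $f\colon X_A\to \mathbb SA$ representing $\Id_A \in \Hom(A,A)$. Since $H_\ast^e$ is concentrated in degree zero on both sides and $f$ induces the identity on $H_0^e=\pi_0^e$, it is a homology equivalence, hence a stable equivalence by Whitehead's theorem for connective spectra. Therefore any Moore spectrum is canonically isomorphic in $\SH$ to the working model $X_A$, and the fully faithful property transfers from $X_A$ to $\mathbb SA$.
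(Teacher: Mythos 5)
There is a genuine gap at the key step, stemming from a sign/index error. When you apply $\SH(\_, \mathbb SB)$ to the cofiber sequence $\mathbb SF_1\xrightarrow{\tilde d}\mathbb SF_0\to X_A\to \Sigma\mathbb SF_1$, the term $\SH(\Sigma\mathbb SF_1,\mathbb SB)$ is $\prod_{I_1}[\Sigma\mathbb S,\mathbb SB]\cong\prod_{I_1}\pi_1(\mathbb SB)$, not $\prod_{I_1}\pi_{-1}(\mathbb SB)$: suspending the \emph{source} shifts the degree up, not down. Connectivity of $\mathbb SB$ therefore gives you nothing here, and in fact $\pi_1(\mathbb SB)$ is generically nonzero — a computation using the cofiber presentation of $\mathbb SB$ and $\pi_1(\mathbb S)\cong\mathbb Z/2$ shows $\pi_1(\mathbb SB)\cong B\otimes\mathbb Z/2$.

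As a result, the long exact sequence does not simply collapse to the claimed kernel. Rather, the piece to the left of $\SH(X_A,\mathbb SB)$ is controlled by the cokernel of $\Hom(F_0,\pi_1(\mathbb SB))\to\Hom(F_1,\pi_1(\mathbb SB))$, i.e.\ by $\Ext^1_{\mathbb Z}(A,\,B\otimes\mathbb Z/2)$, and you still need to show this vanishes. This is exactly where the torsion-freeness of $A$ has to enter: $\Ext^1_{\mathbb Z}(A,B/2)$ is $2$-torsion because $B/2$ is, and it is also $2$-divisible because $0\to A\xrightarrow{2}A\to A/2\to 0$ is exact (using that $A$ is torsion-free) and $\Ext^2_{\mathbb Z}=0$, so it must vanish. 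Your proof never uses torsion-freeness of the \emph{source}, which is a signal that something has been silently dropped. The final Whitehead-theorem step for transferring from the specific model $X_A$ to an arbitrary $\mathbb SA$ is fine, but the central computation of $\SH(X_A,\mathbb SB)$ needs the $\pi_1$/$\Ext$ analysis filled in.
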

\begin{proof}
	Let $A$ and $B$ be torsion-free groups. We have to calculate the group of morphisms $[\mathbb SA, \mathbb SB]\coloneqq \GH(\mathbb SA, \mathbb SB)$. Note that in fact $\GH(\mathbb SA, \mathbb SB) \cong \SH(\mathbb SA, \mathbb SB)$, since Moore spectra are left induced. To calculate this group of morphisms, we consider a free resolution
	\begin{equation}\label{eq:free_resolution_of_ring}
	0\to \mathbb Z^{\oplus I}\to \mathbb Z^{\oplus J}\to A \to 0
	\end{equation}
	of $A$. Then we take as a model of the Moore spectrum for $A$ the cone
	\[ \bigvee_I \mathbb S \to \bigvee_J \mathbb S\to \mathbb SA.\]
	Using this distinguished triangle and mapping into $\mathbb SB$, we obtain an exact sequence
	\[ \left[ \bigvee_J \Sigma \mathbb S, \mathbb SB\right] \to \left[\bigvee_I \Sigma \mathbb S, \mathbb SB\right] \to [\mathbb SA, \mathbb SB] \to \left[\bigvee_J \mathbb S, \mathbb SB\right]\to \left[\bigvee_I \mathbb S, \mathbb SB\right].\]
	Since the homotopy classes are additive under wedges, we can write the above sequence as
	\[ \Hom (\mathbb Z^{\oplus J}, \pi_1(\mathbb SB)) \to \Hom(\mathbb Z^{\oplus I}, \pi_1(\mathbb SB)) \to [\mathbb SA, \mathbb SB] \to \Hom (\mathbb Z^{\oplus J}, B)\to \Hom(\mathbb Z^{\oplus I}, B). \]
	As $\Hom$ is left exact and the sequence \eqref{eq:free_resolution_of_ring} is exact, we see that the kernel of the rightmost map is $\Hom(A,B)$. Moreover, the cokernel of the leftmost map is isomorphic to $\Ext^1_{\mathbb Z} (A,\pi_1(\mathbb SB))$. We now calculate $\pi_1(\mathbb SB)$, using a free resolution
	\[ 0\to \mathbb Z^{\oplus I^\prime} \to \mathbb Z^{\oplus J^\prime} \to B\to 0,\]
	which gives a cofibre sequence in the global homotopy category. The associated long exact sequence in homotopy groups gives
	\[ \bigoplus_{I^\prime} \mathbb Z/2 \to \bigoplus_{J^\prime} \mathbb Z/2 \to \pi_1(\mathbb SB) \to \bigoplus_{I^\prime} \mathbb Z \to \bigoplus_{J^\prime} \mathbb Z, \]
	using $\pi_1(\mathbb S)\cong \mathbb Z/2$. As the rightmost map is injective and tensor product is right-exact, we obtain $\pi_1(\mathbb SB)\cong B\otimes \mathbb Z/2$.
	
	Since $\Ext_{\mathbb Z}^1$ is additive, we see that $\Ext_{\mathbb Z}^1(A, B/2)$ is 2-torsion. Moreover, the sequence $0\to A\nameto{2} A\to A/2 \to 0$ is exact, as $A$ is torsion-free, and the long exact sequence of $\Ext$-functors yields that
	\[ \Ext_{\mathbb Z}^1(A, B/2) \nameto{2} \Ext_{\mathbb Z}^1(A, B/2) \to \Ext_{\mathbb Z}^2(A/2, B/2) = 0\]
	is exact. Thus $\Ext_{\mathbb Z}^1(A, B/2)$ is also 2-divisible and hence vanishes.
\end{proof}

To consider ring structures on Moore spectra, we need to calculate whether $\mathbb SB\wedge \mathbb SB$ is again a Moore spectrum. This follows from the following:

\begin{prop}\label{prop:product_of_Moore_spectra}
	Let $A$ and $B$ be commutative rings whose underlying abelian group is torsion-free. Then the external product 
	\[ H_\ast(\mathbb SA,\mathbb Z) \otimes H_\ast(\mathbb SB,\mathbb Z) \to H_\ast (\mathbb SA \wedge\mathbb SB,\mathbb Z) \]
	in homology is an isomorphism. Hence $\mathbb SA\wedge \mathbb SB$ is a Moore spectrum for $A\otimes B$.
\end{prop}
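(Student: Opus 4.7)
The plan is to reduce the statement to an elementary long-exact-sequence computation, by modelling $\mathbb SA$ as the cone of a map between wedges of sphere spectra arising from a length-1 free resolution of $A$. Concretely, I would choose a short exact sequence
\[ 0 \to \textstyle\bigoplus_I \mathbb Z \to \bigoplus_J \mathbb Z \to A \to 0 \]
and realize $\mathbb SA$ as the cofibre of the induced map $\bigvee_I \mathbb S \to \bigvee_J \mathbb S$, just as in the proof of Lemma \ref{lemma:functorial_Moore_spectra_free_groups}. Smashing this distinguished triangle with $\mathbb SB$ and using that $\mathbb SB$ is cofibrant (so the smash product represents the derived one) gives a distinguished triangle
\[ \textstyle\bigvee_I \mathbb SB \to \bigvee_J \mathbb SB \to \mathbb SA \wedge \mathbb SB. \]

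Next I would take integral homology and use that $H_\ast(\mathbb SB;\mathbb Z)$ is concentrated in degree $0$ and equals $B$, so $H_\ast(\bigvee_I \mathbb SB;\mathbb Z)$ is $\bigoplus_I B$ in degree $0$ and trivial elsewhere. The resulting long exact sequence immediately forces $H_k(\mathbb SA \wedge \mathbb SB;\mathbb Z) = 0$ for $k \geq 2$, while in low degrees it reads
\[ 0 \to H_1(\mathbb SA \wedge \mathbb SB;\mathbb Z) \to \textstyle\bigoplus_I B \to \bigoplus_J B \to H_0(\mathbb SA \wedge \mathbb SB;\mathbb Z) \to 0. \]
The middle map is the injection $\bigoplus_I \mathbb Z \hookrightarrow \bigoplus_J \mathbb Z$ tensored with $B$. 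Since $B$ is torsion-free over $\mathbb Z$ (hence flat), this remains injective, so $H_1 = 0$, and right-exactness of $-\otimes B$ identifies the cokernel with $A \otimes B$. By naturality of the external homology pairing applied to the projection $\bigvee_J \mathbb S \to \mathbb SA$ smashed with $\mathbb SB$, the resulting isomorphism $A \otimes B \cong H_0(\mathbb SA \wedge \mathbb SB)$ coincides with the external product $H_0(\mathbb SA) \otimes H_0(\mathbb SB) \to H_0(\mathbb SA \wedge \mathbb SB)$, which is the first claim.

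For the second claim, that $\mathbb SA \wedge \mathbb SB$ is a Moore spectrum for $A \otimes B$ in the sense of Definition \ref{def:Moore_spectra}, I would separately verify the three defining conditions: (i) connectivity, which is immediate since the smash product of two connective spectra is connective; (ii) left-inducedness from the trivial group, which follows because the left adjoint $\SH \to \GH$ to the forgetful functor is the left derived identity and is strong monoidal, so the smash product of two left-induced spectra is again left-induced (alternatively, both $\mathbb SA$ and $\mathbb SB$ can be taken as non-equivariantly cofibrant, and then their point-set smash product represents the left-induced homotopy type); and (iii) $\pi_0^e(\mathbb SA \wedge \mathbb SB) \cong A \otimes B$ together with vanishing of higher integral homology, which follows from the homology calculation above combined with the Hurewicz theorem for connective spectra.

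The genuinely substantive step is the vanishing of $H_1$, which rests on flatness of the torsion-free $\mathbb Z$-module $B$; everything else is formal manipulation of a distinguished triangle and standard facts about Moore spectra. A minor bookkeeping point that requires attention is verifying that the map produced by the long exact sequence really is the external product, but this is a routine naturality square and presents no essential difficulty.
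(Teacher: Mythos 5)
Your argument is correct, but it takes a genuinely different route from the paper's. The paper simply invokes the K\"unneth theorem for the integral homology of a smash product of spectra (citing Adams), observing that $H_*(\mathbb SA)$ and $H_*(\mathbb SB)$ are flat as torsion-free abelian groups, which forces the Tor term in the K\"unneth sequence to vanish. Your approach re-derives this special case from scratch: you model $\mathbb SA$ as the cone of a map between wedges of spheres arising from a free resolution of $A$, smash with $\mathbb SB$, and run the resulting long exact sequence in integral homology, where torsion-freeness (hence flatness) of $B$ is what kills $H_1$ and right-exactness of $-\otimes B$ identifies $H_0$ with $A\otimes B$. The two arguments buy different things: the paper's is a one-line reduction to a cited theorem, while yours is self-contained and makes visible exactly where the torsion-freeness hypothesis enters (in fact your proof only uses torsion-freeness of one factor, which is all the K\"unneth theorem needs as well). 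You are also more careful about the second clause of the statement, separately verifying connectivity, left-inducedness (via strong monoidality of the left adjoint $\SH\to\GH$, or equivalently by choosing cofibrant models so that the point-set smash is cofibrant), and the $\pi_0^e$ identification via Hurewicz, whereas the paper leaves these routine checks implicit. One small bookkeeping point you flagged --- that the isomorphism produced by the long exact sequence really is the external product --- does require the naturality square you mention, together with the observation that both the external product map and the quotient $\bigoplus_J B\to H_0(\mathbb SA\wedge\mathbb SB)$ from the long exact sequence have the same kernel (the image of $\bigoplus_I B$); you gesture at this correctly, and it does go through.
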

\begin{proof}
	This follows from the Künneth-theorem, see for example \cite[KT1 and Note 12]{Adams_1969}. The homology groups $H_\ast (\mathbb SA)$ and $H_\ast(\mathbb SB)$ are flat over $\mathbb Z$, since they are torsion-free. This proves that $H_\ast (\mathbb SA\wedge \mathbb SB)$ is $A\otimes B$ concentrated in degree $0$.
\end{proof}

\begin{corollary}\label{corollary:functorial_Moore_ring_spectra}
	The equivalence from Lemma \ref{lemma:functorial_Moore_spectra_free_groups} induces an equivalence 
	\[ \pi_0^e \colon \textit{Moore}^{\textup{torsion-free}}_{\textup{Rings}}\to \textup{Rings}^{\textup{torsion-free}}\]
	between the homotopy category of commutative homotopy ring Moore spectra for torsion-free rings and the category of torsion-free commutative rings.
\end{corollary}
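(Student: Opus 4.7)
The plan is to upgrade the equivalence
\[\pi_0^e\colon \textit{Moore}^{\textup{torsion-free}}\to \Ab^{\textup{torsion-free}}\]
of Lemma \ref{lemma:functorial_Moore_spectra_free_groups} to a symmetric monoidal equivalence, and then pass to commutative monoids on both sides. Proposition \ref{prop:product_of_Moore_spectra} shows that the smash product $\mathbb SA\wedge \mathbb SB$ of two torsion-free Moore spectra is again a Moore spectrum for the torsion-free ring $A\otimes B$, so $\wedge$ restricts to an endofunctor on $\textit{Moore}^{\textup{torsion-free}}$ with unit $\mathbb S=\mathbb S\mathbb Z$. The external product isomorphism $\pi_0^e(\mathbb SA)\otimes \pi_0^e(\mathbb SB) \xrightarrow{\boxtimes} \pi_0^e(\mathbb SA\wedge \mathbb SB)$ from Proposition \ref{prop:product_of_Moore_spectra} together with the isomorphism $\mathbb Z\cong \pi_0^e(\mathbb S)$ equip $\pi_0^e$ with the data of a lax monoidal functor whose structure maps are isomorphisms.

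Next, I would verify the symmetric monoidal coherence diagrams. Since Lemma \ref{lemma:functorial_Moore_spectra_free_groups} establishes that $\pi_0^e$ is fully faithful on torsion-free Moore spectra, any diagram of morphisms in $\textit{Moore}^{\textup{torsion-free}}$ commutes if and only if it commutes after applying $\pi_0^e$. The associator, symmetry, and left/right unitors of $(\textit{Moore}^{\textup{torsion-free}}, \wedge, \mathbb S)$ are inherited from those of the smash product in $\GH$, and under $\pi_0^e$ combined with the external product they correspond to the analogous structure maps of the symmetric monoidal category of torsion-free abelian groups by naturality of $\boxtimes$ recorded in \cite[Theorem 4.1.22]{Schwede_2018}. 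Hence the coherences hold automatically, and $\pi_0^e$ becomes a symmetric monoidal equivalence.

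Finally, applying the 2-functor of commutative monoids to a symmetric monoidal equivalence yields an equivalence of categories. Commutative monoids in $(\textit{Moore}^{\textup{torsion-free}}, \wedge, \mathbb S)$ are by definition commutative homotopy ring Moore spectra with torsion-free underlying abelian group, while commutative monoids in $(\Ab^{\textup{torsion-free}}, \otimes, \mathbb Z)$ are torsion-free commutative rings. On morphisms, a map compatible with the multiplication and unit on one side corresponds via $\pi_0^e$ to a ring homomorphism on the other, once again by fully faithfulness and the identification of the multiplication and unit maps above. This gives the desired equivalence.

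The main technical point is the verification of the symmetric monoidal coherences, but I expect this to require no real computation: the fully faithfulness of $\pi_0^e$ on torsion-free Moore spectra reduces every coherence diagram to its image in torsion-free abelian groups, where coherence is standard. The only mild subtlety is checking that the external product is itself coherent as a natural transformation, which is part of the formalism of multiplications on global functors collected in \cite[Theorem 4.1.22]{Schwede_2018}.
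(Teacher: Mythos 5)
Your argument is correct and is, in substance, the implicit proof the paper intends: pass from the additive equivalence of Lemma \ref{lemma:functorial_Moore_spectra_free_groups} and the multiplicativity of Proposition \ref{prop:product_of_Moore_spectra} to an equivalence of categories of commutative monoids. The one place your reasoning is slightly misplaced is the role of fully faithfulness in verifying the coherences of the strong monoidal structure on $\pi_0^e$: those diagrams live in $\Ab^{\textup{torsion-free}}$, not in $\textit{Moore}^{\textup{torsion-free}}$, so fully faithfulness of $\pi_0^e$ does not directly reduce them to their preimages. The actual justification you need (and do cite) is the compatibility of $\boxtimes$ with associativity, commutativity, and unitality from \cite[Theorem 4.1.22]{Schwede_2018}; once that is in hand, fully faithfulness plays its role only in showing the induced functor on commutative monoids is fully faithful and essentially surjective. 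With that clarification, the proof goes through.
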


We now follow the same approach in order to put a $G_\infty$-ring structure on $\mathbb SB$. Note that $\mathbb G(\mathbb SB)$ is not a Moore spectrum, since it is not left induced. However, we still can calculate the group $[\mathbb G(\mathbb SB), \mathbb SB]$ in terms of the homotopy groups of $\mathbb SB$. The proof of the following lemma uses representability of the equivariant homotopy groups for the case that $B$ is finitely generated and free. In order to reduce to this case, we assume countability of the group $B$.

\begin{lemma}\label{lemma:G_of_Moore_spectra_is_algebraic}
	Let $B$ be a countable torsion-free abelian group, and let $Y$ be an orthogonal spectrum such that there is an isomorphism $\underline{\pi}_1(Y)\cong R\otimes A$ of global functors, where $A$ is a commutative ring and $R$ is a global functor such that for any finite group $G$, $R(G)$ is finite. Then for any of the spectra $X= \mathbb G(\mathbb SB)$ or $X=\mathbb G(\mathbb G(\mathbb SB))$, the morphism 
	\begin{equation}\label{eq:homotopy_groups_induced_map}
	\underline{\pi}_0\colon \GH(X,Y) \to \GF (\underline{\pi}_0(X), \underline{\pi}_0(Y))
	\end{equation}
	is an isomorphism.
\end{lemma}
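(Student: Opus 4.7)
The plan is to present $\mathbb SB$ as a sequential homotopy colimit of Moore spectra for finitely generated free abelian groups, to verify the statement in this base case by direct representability, and to pass to the colimit via the Milnor $\lim^1$ sequence, where the finiteness hypothesis on $R$ is used to ensure vanishing of the $\lim^1$-term.

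First, since $B$ is countable and torsion-free, I would write $B = \colim_n B_n$ as a sequential colimit of its finitely generated subgroups $B_n$; each $B_n$ is then finitely generated torsion-free, hence free. Modelling $\mathbb SB_n$ as a positively cofibrant finite wedge $\bigvee^{k_n}\mathbb S$ with transition maps positive cofibrations in $\mathcal Sp$, one has $\mathbb SB \simeq \colim_n \mathbb SB_n$ in $\GH$. Since $\mathbb P^m$ commutes with filtered colimits in $\mathcal Sp$ and preserves global equivalences and positive cofibrations between positively cofibrant spectra (Remark \ref{remark:P^m_also_preserves_cofibrancy}), the derived monad $\mathbb G$ commutes with these sequential homotopy colimits, giving $\mathbb G\mathbb SB \simeq \colim_n \mathbb G\mathbb SB_n$ and iteratively $\mathbb G\mathbb G\mathbb SB \simeq \colim_n \mathbb G\mathbb G\mathbb SB_n$ in $\GH$.

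Second, for $B_n \cong \mathbb Z^{k_n}$ the identities $\mathbb P(X\vee X')\cong \mathbb PX\wedge \mathbb PX'$, the calculation $\mathbb G\mathbb S \simeq \bigvee_m \Sigma^\infty_+ B_{\gl}\Sigma_m$ of Example \ref{example:G_on_classifying_spaces}, and $\Sigma^\infty_+ B_{\gl}G\wedge \Sigma^\infty_+ B_{\gl}G'\simeq \Sigma^\infty_+ B_{\gl}(G\times G')$ yield
\[ \mathbb G\mathbb SB_n \;\simeq\; \bigvee_{\vec m \in \mathbb N^{k_n}} \Sigma^\infty_+ B_{\gl}\bigl(\Sigma_{m_1}\times \cdots \times \Sigma_{m_{k_n}}\bigr); \]
iterating \eqref{eq:G_on_classifying_spaces} gives an analogous countable wedge decomposition of $\mathbb G\mathbb G\mathbb SB_n$ with summands $\Sigma^\infty_+ B_{\gl}H$ for finite groups $H$ built from iterated wreath products of symmetric groups. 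For any wedge $W = \bigvee_\alpha \Sigma^\infty_+ B_{\gl}H_\alpha$ of this form, representability \eqref{eq:representability_of_pi_0} gives $\GH(W,Y) \cong \prod_\alpha \pi_0^{H_\alpha}(Y)$, while $\underline{\pi}_0(W)\cong \bigoplus_\alpha \mathbf A(H_\alpha, \_)$ together with the Yoneda lemma gives $\GF(\underline{\pi}_0 W, \underline{\pi}_0 Y)\cong \prod_\alpha \pi_0^{H_\alpha}(Y)$; a direct comparison of definitions shows that \eqref{eq:homotopy_groups_induced_map} realizes this bijection.

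Third, to pass to the colimit I would apply the Milnor short exact sequence on the topological side,
\[ 0 \to {\lim_n}\!{}^1 [\Sigma\mathbb G\mathbb SB_n, Y] \to \GH(\mathbb G\mathbb SB, Y) \to \lim_n [\mathbb G\mathbb SB_n, Y] \to 0, \]
and note that on the algebraic side, since $\GF$ is abelian and $\underline{\pi}_0$ commutes with filtered homotopy colimits, $\GF(\underline{\pi}_0 \mathbb G\mathbb SB, \underline{\pi}_0 Y) \cong \lim_n \GF(\underline{\pi}_0 \mathbb G\mathbb SB_n, \underline{\pi}_0 Y)$ with no $\lim^1$ correction. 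The second step identifies the two $\lim$-terms, so the proof reduces to showing that the $\lim^1$-term vanishes. This is the main obstacle: the tower in question is a tower of countable products of groups of the form $R(H_{n,\alpha}) \otimes A$ with $H_{n,\alpha}$ finite, and the hypothesis that $R(H)$ is finite for finite $H$ is precisely what is needed to establish a Mittag-Leffler property factor-wise, after carefully tracking the combinatorics of the wedge inclusions $\mathbb G\mathbb SB_n \to \mathbb G\mathbb SB_{n+1}$ induced by $B_n \hookrightarrow B_{n+1}$. The same argument then applies verbatim to $X = \mathbb G\mathbb G\mathbb SB$, with the additional layer of wreath products incorporated into the bookkeeping of wedge indices.
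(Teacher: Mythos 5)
Your proposal follows essentially the same route as the paper's proof: present $\mathbb{S}B$ as a sequential homotopy colimit of Moore spectra for finitely generated free abelian groups, use the explicit wedge decomposition of $\mathbb{G}^m$ on such spectra together with representability to handle the base case, and pass to the colimit via a Milnor $\lim^1$ sequence whose error term vanishes by a Mittag--Leffler argument using finiteness of $R(G)$. The only minor divergences are cosmetic: the paper invokes Lazard's theorem and then extracts a cofinal sequence, while you directly use finitely generated subgroups of $B$ (which works since they are automatically free for torsion-free $B$). One imprecision worth noting: you describe the transition maps $\mathbb{G}\mathbb{S}B_n \to \mathbb{G}\mathbb{S}B_{n+1}$ as "wedge inclusions," but since $B_n \hookrightarrow B_{n+1}$ need not be a split injection (the quotient can have torsion), the induced maps are general maps between wedges rather than summand inclusions; this is precisely why the Mittag--Leffler argument using finiteness of $R$ is genuinely needed rather than being a formality. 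Your sketch acknowledges this combinatorial step as "the main obstacle," which is accurate, and the paper resolves it by decomposing the $\lim^1$ term as a product over the total degree $m$ and observing that each factor is a tower of groups of the form $A \otimes (\text{finite group})$, for which images stabilize.
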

\begin{proof}
	We start by considering $X=\mathbb G(\mathbb SB)$, and first consider the case that $B$ is free. Then we choose a basis $(x_i)_{i\in I}$ of $B$, such that $\mathbb SB\cong \bigvee_{i\in I}\mathbb S$. We calculate
	\begin{align}\label{eq:G_of_Moore_spectrum}
	\begin{split}
	\mathbb G^m (\mathbb SB) = & \mathbb G^m\left(\bigvee_{i\in I} \mathbb S\langle x_i\rangle \right)\\
	= & \bigvee_{(m)} \bigwedge_{\myatop{i\in I}{m_i\neq 0}} \mathbb G^{m_i} \mathbb S=\bigvee_{(m)} \bigwedge_{\myatop{i\in I}{m_i\neq 0}} \Sigma^\infty_+ B_{\gl} \Sigma_{m_i} \\
	= & \bigvee_{(m)} \Sigma^\infty_+ B_{\gl} \left(\bigtimes_{\myatop{i\in I}{m_i\neq 0}} \Sigma_{m_i}\right),
	\end{split}
	\end{align}
	where $(m)= (m_i)_{i\in I}$ runs through all partitions $m= \sum_{i\in I,\, m_i\neq 0} m_i$.\\
	By representability of the homotopy groups $\pi_0^G$ by $\Sigma^\infty_+ B_{\gl} G$ from \eqref{eq:representability_of_pi_0}, the map $\underline{\pi}_0$ \eqref{eq:homotopy_groups_induced_map} is an isomorphism for the spectra 
	\[X=\Sigma^\infty_+ B_{\gl} \left(\bigtimes_{\myatop{i\in I}{m_i\neq 0}} \Sigma_{m_i}\right).\]
	As both domain and codomain of the map \eqref{eq:homotopy_groups_induced_map} are additive under wedges, also for $\mathbb G(\mathbb SB)$ the morphism $\underline{\pi}_0$ is an isomorphism.
	
	Let now $B$ be a general torsion-free abelian group. By \cite{Lazard_1964}, any flat module $M$ is isomorphic to a directed colimit of finitely generated free modules, generated on finite sets of elements in $M$. Since over $\mathbb Z$, flat and torsion-free are equivalent, we can write $B\cong\colim_{i\in I} B_i$ as a directed colimit of finitely generated free $\mathbb Z$-modules. Moreover, since $B$ is countable, we find a cofinal sequential system in the directed indexing system, so that we can write $B$ as a sequential colimit of finitely generated free modules $B_n$ with $n\in \mathbb N$. To see that this is the case, we consider the directed system $(\varphi_A\colon F_A\to B)_{A\in \mathcal S(B)}$ for the directed set $\mathcal S(B)$ of finite subsets of $B$. Here, $F_A$ is the free module on $A$. Then, $\mathcal S(B)$ is countable. We choose a bijection $s\colon\mathbb N \to \mathcal S(B)$ and iteratively define $B_n$ as follows: We set $B_1= F_{s(1)}$. Once we have defined $B_n= F_{A_n}$, we take an element $A_{n+1}$ such that both $A_n$ and $s(n+1)$ map to $A_{n+1}$ in the directed set $\mathcal S(B)$ and define $B_{n+1}= F_{A_{n+1}}$. This clearly defines a cofinal sequential subset of $\mathcal S(B)$.
	
	We can then lift this sequential system to a cofibrant system $\mathbb SB_n$ of Moore spectra, such that the colimit $\colim_n \mathbb SB_n$ models the homotopy colimit. Since homology commutes with sequential homotopy colimits, we see that $\mathbb SB\cong \colim_n \mathbb SB_n$ is a model for the Moore spectrum of $B$. Since sequential colimits of ultra-commutative ring spectra are calculated on the underlying spectra, also the functor $\mathbb P\colon \mathcal Sp\to \mathcal Sp$ commutes with sequential colimits, and thus $\mathbb G(\mathbb SB) \cong \colim_{n} \mathbb G(\mathbb SB_n)$. Then, we obtain the Milnor exact sequence
	\[ 0 \to {\lim_n}^1 \GH(\mathbb G(\mathbb SB_n), \Omega Y) \to \GH (\mathbb G(\mathbb SB), Y) \to \lim_n \GH(\mathbb G(\mathbb SB_n), Y) \to 0.\]
	By the above arguments for free modules $B$, we see that the right hand object is isomorphic to 
	\[ \lim_n \GH(\mathbb G(\mathbb SB_n), Y) \cong \lim_n \Hom_{\GF} (\underline{\pi}_0(\mathbb G(\mathbb SB_n)), \underline{\pi}_0(Y)) \cong \Hom_{\GF} (\underline{\pi}_0(\mathbb G(\mathbb SB)), \underline{\pi}_0(Y)).\]
	Similarly, the left hand object is isomorphic to
	\[ {\lim_n}^1 \GH(\mathbb G(\mathbb SB_n),\Omega Y) \cong {\lim_n}^1 \Hom_{\GF} (\underline{\pi}_0(\mathbb G(\mathbb SB_n)), \underline{\pi}_1(Y)).\]
	By the calculations \eqref{eq:G_of_Moore_spectrum}, this last term is isomorphic to 
	\[ {\lim_n}^1 \prod_{(m^n)} \pi_1^{\Sigma_{(m^n)}}(Y), \]
	where the product is over all tupels of natural numbers $m_i^n\geq 0$ indexed on a basis $I_n$ of $B_n$, and we denote $\Sigma_{(m^n)} = \bigtimes_{i\in I_n} \Sigma_{m_i^n}$. Now this $\lim^1$-term decomposes as the product
	\[ \prod_{m\geq 0} \left[ \ {\lim_n}^1 \prod_{\sum_{I_n} m_i^n=m} \pi_1^{\Sigma_{(m^n)}} (Y)\ \right] \cong \prod_{m\geq 0} \left[ \ {\lim_n}^1 \left(A \otimes \prod_{\sum_{I_n} m_i^n=m} R(\Sigma_{(m^n)})\right) \ \right]. \]
	In each of the individual $\lim^1$-terms, the inverse system consists only of tensor products of finite groups with $A$, since $I_n$ is finite for every $n$ and $R(G)$ is finite for any finite group $G$. Thus, these systems satisfy the Mittag-Leffler condition and thus the $\lim^1$-term vanishes.\\
	This proves that the morphism $\underline{\pi}_0$ is an isomorphism.
	
	The arguments for $\mathbb G(\mathbb G(\mathbb SB))$ are completely analogous.
\end{proof}

We now check that the assumptions on $\underline{\pi}_1(Y)$ are satisfied in the case of a Moore spectrum $\mathbb SB$. 

\begin{lemma}
	Let $B$ be an abelian group and $\mathbb SB$ be a Moore spectrum for $B$. Then 
	\begin{enumerate}[\itshape i)]
		\item there is an isomorphism $\underline{\pi}_1 (\mathbb SB) \cong \underline{\pi}_1(\mathbb S)\otimes B$ of global functors, given by $\boxtimes$.
		\item for any finite group $G$, the homotopy group $\pi_1^G(\mathbb S)$ is finite.
	\end{enumerate}
\end{lemma}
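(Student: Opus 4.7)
The plan is to handle part (i) by reducing the Moore spectrum to a cofibre of wedges of sphere spectra via a free presentation of $B$, and part (ii) by invoking the tom Dieck splitting. For (i), I would choose a free resolution $0 \to F_1 \to F_0 \to B \to 0$ of abelian groups and lift it to a distinguished triangle $\mathbb SF_1 \to \mathbb SF_0 \to \mathbb SB$ of Moore spectra in $\GH$. Each $\mathbb SF_i$ is a wedge of sphere spectra indexed by a basis of $F_i$. For such wedges, additivity of $\pi_n^G$ over coproducts together with the unit isomorphism for $\mathbb S$ means that the external product $\boxtimes$ supplies natural isomorphisms $\pi_n^G(\mathbb S) \otimes F_i \xrightarrow{\cong} \pi_n^G(\mathbb SF_i)$ in every degree $n$. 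Naturality of $\boxtimes$ in the spectrum variable from \cite[Theorem 4.1.22]{Schwede_2018} then makes the long exact sequence for the distinguished triangle a ladder against the tensor products of the resolution; extracting the five-term segment around $\pi_1^G(\mathbb SB)$ gives an exact sequence
\[ \pi_1^G(\mathbb S) \otimes F_1 \to \pi_1^G(\mathbb S) \otimes F_0 \to \pi_1^G(\mathbb SB) \to \mathbb A(G) \otimes F_1 \to \mathbb A(G) \otimes F_0. \]
Right-exactness of the tensor product identifies the cokernel of the first map with $\pi_1^G(\mathbb S) \otimes B$, while the kernel of the last map is $\mathrm{Tor}_1^{\mathbb Z}(\mathbb A(G), B)$. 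The latter vanishes because $\mathbb A(G)$ is the free abelian group on conjugacy classes of closed subgroups of $G$ with finite Weyl group, hence flat over $\mathbb Z$. This forces $\boxtimes \colon \pi_1^G(\mathbb S) \otimes B \to \pi_1^G(\mathbb SB)$ to be an isomorphism at each $G$, and naturality in $G$ upgrades this to an isomorphism of global functors.

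For part (ii), I would apply the tom Dieck splitting, which for a finite group $G$ gives
\[ \pi_n^G(\mathbb S) \cong \bigoplus_{(H) \leq G} \pi_n^s(BW_GH_+), \]
summing over conjugacy classes of subgroups of $G$. For finite $G$ the sum has finitely many terms and each Weyl group $W_GH$ is finite, so it suffices to observe that $\pi_1^s(BW_+)$ is finite whenever $W$ is a finite group. This can be read off the Atiyah--Hirzebruch spectral sequence $H_p(BW_+; \pi_q^s(\mathbb S)) \Rightarrow \pi_{p+q}^s(BW_+)$: in total degree one, both $H_0(BW_+; \mathbb Z/2) = \mathbb Z/2$ and $H_1(BW_+; \mathbb Z) = \mathbb Z \oplus W^{\mathrm{ab}}/[\cdot]$ are finitely generated, and Serre's finiteness of the positive stable stems of $\mathbb S$ together with the fact that the integral homology of $BW$ is torsion and finitely generated in positive degrees forces all contributing entries, and hence $\pi_1^s(BW_+)$, to be finite.

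The only bookkeeping obstacle in (i) is verifying that the isomorphism produced on cokernels via the five-term ladder is genuinely the external product $\boxtimes$, rather than some abstract isomorphism extracted by diagram chase. This is immediate once one observes that the vertical identifications in the two leftmost columns of the ladder are themselves given by $\boxtimes$, so commutativity forces the induced map on cokernels to be $\boxtimes$ as well. No deeper technical difficulty is anticipated.
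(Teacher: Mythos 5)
Your proof of part \emph{(i)} is essentially identical to the paper's: both pick a free presentation $0\to\mathbb Z^{\oplus I}\to\mathbb Z^{\oplus J}\to B\to 0$, realize $\mathbb SB$ as the cone of the corresponding map of wedges of spheres, take the long exact sequence in $\underline{\pi}_*$, use freeness (hence flatness) of $\mathbb A(G)$ to deduce that $\bigoplus_I\mathbb A(G)\to\bigoplus_J\mathbb A(G)$ is injective, and then conclude by right-exactness of $\underline{\pi}_1(\mathbb S)\otimes(-)$. Your phrasing via the vanishing of $\mathrm{Tor}_1^{\mathbb Z}(\mathbb A(G),B)$ is the same observation the paper expresses as ``the second row is left exact.'' The naturality remark at the end is correct and matches the care the paper takes in stating that $\boxtimes$ is the map.

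For part \emph{(ii)} you take a genuinely different route after the tom Dieck splitting. The paper splits $\Sigma^\infty_+ BW \simeq \mathbb S \vee \widetilde{\Sigma}^\infty_+ BW$ and computes $\pi_1(\Sigma^\infty_+ BW)\cong\mathbb Z/2\oplus W^{\mathrm{ab}}$ explicitly (via $\pi_1(\mathbb S)\cong\mathbb Z/2$ and $\pi_1^{\mathrm{st}}(BW)\cong W^{\mathrm{ab}}$). You instead feed the splitting into the Atiyah--Hirzebruch spectral sequence and appeal to Serre finiteness of the positive stable stems and finiteness of $H_*(BW;\mathbb Z)$ in positive degrees. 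Both work; the paper's version has the virtue of producing the explicit answer (and is the form one would want in order to prove, say, that the assumptions of Lemma~\ref{lemma:G_of_Moore_spectra_is_algebraic} are satisfied), while yours establishes only the finiteness statement actually needed here and is a touch heavier machinery for what is a two-entry $E^2$-page.

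One slip: you write ``$H_1(BW_+;\mathbb Z)=\mathbb Z\oplus W^{\mathrm{ab}}/[\cdot]$,'' but $H_1(BW_+;\mathbb Z)\cong H_1(BW;\mathbb Z)\cong W^{\mathrm{ab}}$; there is no $\mathbb Z$ summand (disjoint basepoints contribute to $H_0$ only). As written, a $\mathbb Z$ summand would undermine the finiteness conclusion. Your subsequent sentence --- that $H_*(BW;\mathbb Z)$ is torsion and finitely generated in positive degrees --- is the correct fact and is what actually closes the argument, so this is a local bookkeeping error rather than a genuine gap, but it should be corrected.
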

\begin{proof}
	Let $0\to \mathbb Z^{\oplus I} \to \mathbb Z^{\oplus J} \to B\to 0$ be a free resolution of $B$. Then, we construct the Moore spectrum $\mathbb SB$ as the mapping cone in 
	\[ \bigvee_I \mathbb S\to \bigvee_J \mathbb S \to \mathbb SB. \]
	Thus, the long exact sequence of homotopy groups becomes
	\begin{align*}
	\ldots \to &\bigoplus_I\underline{\pi}_1(\mathbb S) \to \bigoplus_J \underline{\pi}_1(\mathbb S) \to \underline{\pi}_1 (\mathbb SB) \to \\
	\to & \bigoplus_I\underline{\pi}_0(\mathbb S) \to \bigoplus_J \underline{\pi}_0(\mathbb S) \to \underline{\pi}_0 (\mathbb SB).
	\end{align*}
	Now, by freeness of $\pi_0^G (\mathbb S)\cong \mathbb A(G)$, we know that the second row of this sequence is left exact, so the first row is right exact. Moreover, tensoring with $\underline{\pi}_1(\mathbb S)$ is right exact, hence applying this to the sequence $0\to \mathbb Z^{\oplus I} \to \mathbb Z^{\oplus J} \to B\to 0$ proves $\underline{\pi}_1(\mathbb SB) \cong \underline{\pi}_1(\mathbb S)\otimes B$.
	
	For the second statement, we use the tom Dieck splitting \cite[Satz 2]{tomDieck_1975} and the Adams isomorphism \cite[Theorem 5.4]{Adams_1984}, which decompose for any compact Lie group $G$ the homotopy groups as 
	\[\pi_1^G(\mathbb S) \cong \bigoplus_{(H)\subset G} \pi_1^{W_G H} (E W_GH_+\wedge \mathbb S^H)\cong \bigoplus_{(H)\subset G} \pi_1(\Sigma^{\infty}_+ B W_GH). \]
	Here, the sum runs over conjugacy classes of closed subgroups of $G$, and $W_GH$ denotes the Weyl group of $H$ in $G$. Now the based suspension spectrum of $BW_GH$ splits stably as $\mathbb S\vee \widetilde{\Sigma}^{\infty}_+ BW_GH$, a sum of the sphere spectrum and the reduced suspension spectrum of $BW_GH$. Thus, we find
	\[ \pi_1(\Sigma^{\infty}_+ B W_GH) \cong \pi_1(\mathbb S) \oplus \pi^{\st}_1(BW_GH)\cong \mathbb Z/2\oplus \pi_0(W_GH)^{\ab}.\]
	Hence, $\pi_1^G(\mathbb S)$ is finite for any finite group $G$.  
\end{proof}

\begin{thm}\label{thm:equivalence_G_Moore_spectra_A_power_algebras}
	The functor
	\[\underline{\pi}_0\colon G_\infty\textit{-Moore}^{\textup{torsion-free}} \to {\mathcal GlPow}_{\mathrm{left}}^{\textup{torsion-free}}\]
	is an equivalence of categories between the homotopy category of $G_\infty$-Moore spectra for countable torsion-free commutative rings and the category of countable torsion-free left-induced global power functors $R$.
\end{thm}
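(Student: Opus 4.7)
My plan is to prove the equivalence in three standard steps — essential surjectivity, faithfulness, and fullness — reducing each question about morphisms in $\GH$ to an algebraic question via Lemma \ref{lemma:G_of_Moore_spectra_is_algebraic} and Lemma \ref{lemma:functorial_Moore_spectra_free_groups}. For essential surjectivity: given a countable torsion-free left-induced global power functor $R$, I would set $B = R(e)$ and choose a Moore spectrum $\mathbb SB$, so that $\underline{\pi}_0(\mathbb SB) \cong \mathbb A \otimes B \cong R$ by Proposition \ref{prop:homotopy_groups_of_left_induced_spectrum}. A key auxiliary point is that the calculation \eqref{eq:G_of_Moore_spectrum}, combined with the representability \eqref{eq:representability_of_pi_0} of $\pi_0^G$ by $\Sigma^\infty_+ B_{\gl} G$, identifies $\underline{\pi}_0(\mathbb G\mathbb SB)$ with the free left-induced global power functor on $B$; the Green multiplication and power operations of $R$ therefore assemble into a morphism of global functors $\underline{\pi}_0(\mathbb G\mathbb SB) \to \underline{\pi}_0(\mathbb SB)$. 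Since $\underline{\pi}_1(\mathbb SB) \cong \underline{\pi}_1(\mathbb S) \otimes B$ satisfies the finiteness hypothesis of Lemma \ref{lemma:G_of_Moore_spectra_is_algebraic} (as $\pi_1^G(\mathbb S)$ is finite for finite $G$), this morphism lifts uniquely to some $\zeta\colon \mathbb G\mathbb SB \to \mathbb SB$ in $\GH$. The unit axiom for $\zeta$ is an equality of self-maps of $\mathbb SB$ and can be checked on $\pi_0^e$ by Lemma \ref{lemma:functorial_Moore_spectra_free_groups}; the associativity axiom is an equality of two morphisms $\mathbb G\mathbb G\mathbb SB \to \mathbb SB$ and can be checked on $\underline{\pi}_0$ by the second case of Lemma \ref{lemma:G_of_Moore_spectra_is_algebraic}, where it becomes the associativity of $R$ as a global power functor. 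That the power operations on $\underline{\pi}_0(\mathbb SB)$ extracted from $\zeta$ via Construction \ref{constr:power_op_for_G_infty_spectra} reproduce the given ones on $R$ is a routine diagram chase.

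For faithfulness and fullness: any morphism $f\colon \mathbb SA \to \mathbb SB$ in $\GH$ is determined by $\pi_0^e(f)$ by Lemma \ref{lemma:functorial_Moore_spectra_free_groups}, so $\underline{\pi}_0$ is faithful. For fullness, given a morphism $\phi\colon \mathbb A\otimes A \to \mathbb A\otimes B$ of global power functors, I would use Lemma \ref{lemma:functorial_Moore_spectra_free_groups} to lift $\phi_e\colon A \to B$ to some $f\colon \mathbb SA \to \mathbb SB$ in $\GH$. Both $\underline{\pi}_0(f)$ and $\phi$ are morphisms between left-induced global functors and are hence determined by their values at the trivial group, so they agree everywhere. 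Compatibility of $f$ with the $G_\infty$-structure maps $\zeta_A,\zeta_B$ is an equality of two morphisms $\mathbb G\mathbb SA \to \mathbb SB$ in $\GH$, which Lemma \ref{lemma:G_of_Moore_spectra_is_algebraic} again allows us to detect on $\underline{\pi}_0$; there it holds exactly because $\phi$ respects the power operations.

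The hardest piece is the identification of $\underline{\pi}_0(\mathbb G\mathbb SB)$ with the free left-induced global power functor on $B$. Once this is in place, the algebraic power functor structure on $\mathbb A\otimes B$ translates directly into the datum of a global functor morphism out of $\underline{\pi}_0(\mathbb G\mathbb SB)$, and Lemma \ref{lemma:G_of_Moore_spectra_is_algebraic} produces its topological realization; everything else is then a formal consequence of the two lemmas. Without this translation, one cannot even formulate essential surjectivity in a way that makes Lemma \ref{lemma:G_of_Moore_spectra_is_algebraic} applicable.
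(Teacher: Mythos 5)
Your overall strategy matches the paper's: reduce everything to algebra via Lemma \ref{lemma:G_of_Moore_spectra_is_algebraic} and Lemma \ref{lemma:functorial_Moore_spectra_free_groups}, and your treatment of faithfulness and fullness is essentially the paper's. However, the middle portion of your essential-surjectivity argument elides two non-trivial steps that the paper has to establish explicitly, and neither is ``a formal consequence of the two lemmas'' as you claim.

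First, you pass from ``$R$ is a global power functor'' to ``the Green multiplication and power operations assemble into a global functor morphism $\tau\colon \underline{\pi}_0(\mathbb G\mathbb SB) \to \underline{\pi}_0(\mathbb SB)$'' and later assert that the $F$-algebra associativity for $\tau$ ``becomes the associativity of $R$ as a global power functor.'' This is only valid if one knows that a global power functor structure on $\mathbb A\otimes B$ is the same data as an algebra structure over the free-global-power-functor monad $F = UF$ on $\GF$. That identification is not automatic: the paper establishes that the composite adjunction $\adjunction{\mathcal GlPow}{\GF}{U}{F}$ is monadic by invoking Beck's monadicity theorem, combining the monadicity of $\Green$ over $\GF$ with the comonadicity of $\mathcal GlPow$ over $\Green$ from \cite[Theorem 5.2.13]{Schwede_2018}. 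Without this, ``$\tau$ satisfies the $F$-algebra diagrams'' is not the same assertion as ``$R$ is a global power functor.''

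Second, and more critically, your claim that the $\mathbb G$-algebra unit and associativity axioms for $\zeta$ ``become'' the $F$-algebra axioms for $\tau$ after applying $\underline{\pi}_0$ requires knowing that $\underline{\pi}_0\colon \GH_{\geq 0}\to \GF$ carries the monad structure maps of $\mathbb G$ to those of $F$ (and carries $\mathbb G\zeta$ to $F\tau$). That is, $\underline{\pi}_0$ must be a monad functor between $\mathbb G$ and $F$. This is genuinely work: the paper produces the needed invertible comparison transformation $\rho\colon F\circ\underline{\pi}_0 \Rightarrow \underline{\pi}_0\circ L_{\gl}\mathbb P$ as a mate of the commuting square in \eqref{diagram:homotopy_groups_are_monad_functor_for_power_operations}, using freeness and \cite[Theorem 5.4.11]{Schwede_2018}. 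Injectivity of $\underline{\pi}_0$ on the relevant hom-sets (Lemmas \ref{lemma:G_of_Moore_spectra_is_algebraic} and \ref{lemma:functorial_Moore_spectra_free_groups}) lets you \emph{detect} equality, but only the monad-functor structure tells you that the two morphisms you are comparing on the topological side actually have the $F$-algebra diagrams as their images. You need to supply this step.

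A minor terminological point: $\underline{\pi}_0(\mathbb G\mathbb SB)$ should be identified with the free global power functor $F(\mathbb A\otimes B)$ on the global \emph{functor} $\mathbb A\otimes B$ (citing \cite[Theorem 5.4.11]{Schwede_2018}); it is \emph{not} a left-induced global power functor, so ``free left-induced global power functor on $B$'' is the wrong object. Deriving the identification from \eqref{eq:G_of_Moore_spectrum} plus representability works for free $B$, but extending to general countable torsion-free $B$ via colimits would then need a separate argument, which the citation bypasses.
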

\begin{proof}
	We first prove that the functor $\underline{\pi}_0$ from the theorem is essentially surjective. Thus, let $B= R(e)$ for a torsion-free left-induced global power functor $R\cong \mathbb A\otimes B$. We have to define a $G_\infty$-multiplication $\zeta\colon \mathbb G(\mathbb SB)\to \mathbb SB$ on $\mathbb SB$.
	
	By Lemma \ref{lemma:G_of_Moore_spectra_is_algebraic}, we know that the map 
	\[ \underline{\pi}_0\colon \GH(\mathbb G(\mathbb SB), \mathbb SB)\to \GF(\underline{\pi}_0(\mathbb G(\mathbb SB)), \underline{\pi}_0(\mathbb SB)) \]
	is an isomorphism. We have calculated the homotopy groups global functor of $\mathbb SB$ in Proposition \ref{prop:homotopy_groups_of_left_induced_spectrum} to be $\mathbb A\otimes B$. Moreover, by \cite[Theorem 5.4.11]{Schwede_2018}, we have that $\underline{\pi}_0(\mathbb G(\mathbb SB))$ is the free global power functor on the global functor $\underline{\pi}_0(\mathbb SB) \cong \mathbb A\otimes B$. We denote this free global power functor by $F(\mathbb A\otimes B)$.
	
	This free global power functor is part of a diagram
	\[ \begin{tikzcd}
	{\mathcal GlPow} \arrow[r, "U", shift left] \arrow[rr, bend left = 30, "U", shift left] 
	& \Green \arrow[r, "U", shift left] \arrow[l, "C", shift left]
	& \GF \arrow[l, "\mathbb P", shift left] \arrow[ll, bend left = 30, "F", shift left] 
	\end{tikzcd}\]
	of adjunctions, where all functors labelled $U$ are forgetful functors and right adjoints, $\mathbb P$ is the symmetric algebra functor for the box product of global functors and $C$ is the free global power functor for a global Green functor constructed in \cite[Proposition 5.2.21]{Schwede_2018}. We now claim that the composite adjunction featuring $F$ is monadic. For this, we use Beck's monadicity theorem, see for example \cite[VI.7 Theorem 1]{MacLane_1971}.
	
	Let $R\rightrightarrows S$ be a pair of parallel arrows in ${\mathcal GlPow}$ that has a split coequalizer in $\GF$. Then by monadicity of the adjunction $\adjunction{\Green}{\GF}{U}{\mathbb P}$, the coequalizer in $\GF$ has the unique structure of a global Green functor such that it is a coequalizer of $R\rightrightarrows S$ in $\Green$. Since ${\mathcal GlPow}$ is also comonadic over $\Green$ by \cite[Theorem 5.2.13]{Schwede_2018}, colimits in ${\mathcal GlPow}$ are created by $U\colon {\mathcal GlPow}\to \Green$. Thus, Beck's monadicity theorem shows that the adjunction $\adjunction{\mathcal GlPow}{\GF}{U}{F}$ is monadic. We denote the associated monad $UF$ also by $F$.
	
	Hence, the power functor structure on $R \cong \mathbb A\otimes B$ is equivalent to a morphism $\tau\colon F(\mathbb A\otimes B) \to \mathbb A\otimes B$, satisfying the compatibility conditions with the monad structure on $F$. Since $\underline{\pi}_0$ \eqref{eq:homotopy_groups_induced_map} is an isomorphism for $\mathbb G(\mathbb SB)$ and $\mathbb SB$, this morphism $\tau\colon F(\mathbb A\otimes B)\to \mathbb A\otimes B$ is the image of a unique morphism $\zeta\colon \mathbb G(\mathbb SB)\to \mathbb SB$ under the functor $\underline{\pi}_0$. We claim that $\zeta$ endows $\mathbb SB$ with the structure of a $G_\infty$-ring spectrum.
	
	For this, we check that the functor $\underline{\pi}_0\colon \GH_{\geq 0}\to \GF$ sends the monad diagrams for $\mathbb G$ to those of $F$. Here, the subscript $\geq 0$ denotes the full subcategory on the connective spectra. For this, we consider the diagrams 
	\begin{equation}\label{diagram:homotopy_groups_are_monad_functor_for_power_operations}
	\begin{tikzcd}
	\GH_{\geq 0} \arrow[r, "L_{\gl}\mathbb P"] \arrow[d, "\underline{\pi}_0", swap] & \Ho(\ucom)_{\geq 0}\arrow[d, "\underline{\pi}_0"]\\
	\GF \arrow[r, "F", swap] & \mathcal GlPow
	\end{tikzcd}
	\textup{ and }
	\begin{tikzcd}
	\Ho(\ucom)_{\geq 0} \arrow[r, "U"] \arrow[d, "\underline{\pi}_0", swap] & \GH_{\geq 0}\arrow[d, "\underline{\pi}_0"]\\
	\mathcal GlPow \arrow[r, "U", swap] & \GF.
	\end{tikzcd}
	\end{equation}
	The right diagram commutes, thus we get a natural transformation $\rho\colon F\circ \underline{\pi}_0 \to \underline{\pi}_0 \circ L_{\gl}\mathbb P$ in the left diagram as the mate of the right isomorphism (see \cite[Proposition 2.1]{Kelly_Street_1974} for the definition of mates). This transformation is thus defined by freeness of $F$, and \cite[Theorem 5.4.11]{Schwede_2018} proves that $\rho$ is a natural isomorphism. The pasting of the two diagrams in \eqref{diagram:homotopy_groups_are_monad_functor_for_power_operations}, using the inverse of the transformation $\rho$, then exhibits $\underline{\pi}_0\colon \GH_{\geq 0} \to \GF$ as a monad functor between the monads $\mathbb G$ and $F$. For the compatibility with the monad structure, naturality of mates is used.
	
	Thus, we see that the $G_\infty$-diagrams
	\[ \begin{tikzcd}
	\mathbb G(\mathbb G(\mathbb SB)) \arrow[r, "\mathbb G(\zeta)"] \arrow[d, "\mu", swap] & \mathbb G(\mathbb SB) \arrow[d, "\zeta"]\\
	\mathbb G(\mathbb SB) \arrow[r, "\zeta", swap] & \mathbb SB
	\end{tikzcd}
	\textup{ and }
	\begin{tikzcd}
	\mathbb SB \arrow[r, "\eta"] \arrow[rd, equals] & \mathbb G(\mathbb SB) \arrow[d, "\zeta"] \\
	& \mathbb SB
	\end{tikzcd}\]
	are sent under $\underline{\pi}_0$ to the corresponding diagrams for the monad $F$. Since $\tau$ defines the structure of an $F$-algebra on $\mathbb A\otimes B$ by assumption, the monad diagrams for $\mathbb A\otimes B$ commute. Using Lemma \ref{lemma:G_of_Moore_spectra_is_algebraic} for morphisms out of $\mathbb G(\mathbb G(\mathbb SB))$ and Lemma \ref{lemma:functorial_Moore_spectra_free_groups} for $\mathbb SB$, also the $G_\infty$-diagrams for $\mathbb SB$ commute. This proves that 
	\[ \underline{\pi}_0\colon G_\infty\textit{-Moore}^{\textup{torsion-free}} \to {\mathcal GlPow}_{\mathrm{left}}^{\textup{torsion-free}}\]
	is essentially surjective.
	
	To check that $\underline{\pi_0}$ is also fully faithful, we only need to check that the unique induced map $\mathbb SB\to \mathbb SC$ from Lemma \ref{lemma:functorial_Moore_spectra_free_groups} for a map $f\colon \mathbb A\otimes B\to \mathbb A\otimes C$ of left induced global power functors is a morphism of $G_\infty$-ring spectra. But this again follows from Lemma \ref{lemma:G_of_Moore_spectra_is_algebraic} by looking at the diagrams
	\[\left( \begin{tikzcd}
	\mathbb G(\mathbb SB) \arrow[r, "\zeta_B"] \arrow[d] & \mathbb SB\arrow[d] \\
	\mathbb G(\mathbb SC) \arrow[r, "\zeta_C", swap] & \mathbb SC
	\end{tikzcd}\right)
	\mapsto
	\left( \begin{tikzcd}
	F(\mathbb A\otimes B) \arrow[r, "\tau_B"] \arrow[d, "Ff"] & \mathbb A\otimes B\arrow[d, "f"] \\
	F(\mathbb A\otimes C) \arrow[r, "\tau_C", swap] & \mathbb A\otimes C
	\end{tikzcd}\right). \]
	Here, the right diagram commutes by assumptions on $f$, so also the left diagram commutes. In total, the functor 
	\[\underline{\pi_0}\colon G_\infty\textit{-Moore}^{\textup{torsion-free}} \to {\mathcal GlPow}_{\mathrm{left}}^{\textup{torsion-free}}\]
	is an equivalence of categories.
\end{proof}

\subsection{The relation to \texorpdfstring{$\beta$}{beta}-rings}\label{section:beta_rings}

We now connect the theory of global power functors to the theory of $\beta$-rings. For this, we use the perspective of global power functors as coalgebras over the comonad $\exp$ on the category of global Green functors, see \cite[Chapter 5.2]{Schwede_2018}. Thus, a global power functor $R$ comes with power operations $R(G)\to \exp(R;G) \subset \prod_{m\geq 0}R(\Sigma_m\wr G)$. On the other hand, a $\beta$-ring $A$ has power operations indexed by the Burnside rings of symmetric groups, given as maps $A\otimes \mathbb A(\Sigma_m)\to A$. In order to obtain such structure from the power operations on a global power functor, we assume that the global power functor $R$ comes equipped with {\itshape deflation maps} $R(K\times G)\times \mathbb A(K)\to R(G)$. These allow to dualize the power operations to obtain $\beta$-operations on the values $R(G)$.

It is a classical observation that the Burnside rings in fact support such deflations. Thus, we can apply the theory presented in this section to the left induced global power functors $\mathbb A\otimes B$, which by Theorem \ref{thm:equivalence_G_Moore_spectra_A_power_algebras} completely parametrize $G_\infty$-ring structures on Moore spectra $\mathbb SB$. Such structures are hence tied to $\beta$-ring structures on $\mathbb A(G)\otimes B$, extending the classical $\beta$-ring $\mathbb A(G)$.\\
Another example possessing the necessary deflations is given by stable cohomotopy $\underline{\pi}^0(X)$ for any based space $X$. Here, the deflations can be constructed by the theory of polynomial operations, as described in \cite{Vallejo_1990b} and \cite{Guillot_2006}. Thus, our approach of providing $\beta$-operations via global power operations and deflations gives back the classical examples of $\beta$-rings.

Our approach is loosely based on the discussion of $\tau$-rings in \cite[Section 4.2]{Ganter_2013}, which itself goes back to \cite{Hoffman_1979}.\\
Note that we could also consider deflations indexed by arbitrary global power functors $T$ instead of $\mathbb A$. For the representation ring global power functor, this yields the theory of $\lambda$-rings. However, there are not many other examples of global power functors supporting deflations, so we focus on the Burnside ring in this work.

\begin{remark}\label{remark:history_of_beta_rings}
We give a short overview over the history of the notion of $\beta$-rings.\\
The notion of a $\beta$-ring was first introduced by Rymer in \cite{Rymer_1977}, based on the question posed by Boorman in \cite{Boorman_1975} whether there is a theory of $\beta$-rings which formalizes the $\beta$-operations on the Burnside ring defined in this work. Rymer, however, did not define his operator ring structure on $\mathbf B= \bigoplus_{m\geq 0} \mathbb A(\Sigma_m)$ properly, a fact explained and amended by Ochoa in \cite{Ochoa_1988}. An explicit construction of the $\beta$-operations, using the language of polynomial operations, is given by Vallejo in \cite{Vallejo_1990}, and he extended the definition of a $\beta$-ring by a unitality condition in \cite{Vallejo_1993}.\\
Lastly, the survey article \cite{Guillot_2006} of Guillot provides more details on the history of $\beta$-rings and their connection to $\lambda$-rings, as well as showing that stable cohomotopy is an example of a $\beta$-ring. Moreover, there an additivity condition is added to the notion of $\beta$-rings. 
\end{remark}

A $\beta$-ring encodes power operations indexed by Burnside rings. This is formalized by a certain operator ring:

\begin{definition}\label{def:universal_Tau_ring}
We denote
\[ \mathbf B=\bigoplus_{m\geq 0} \mathbb A(\Sigma_m).\]
We endow this abelian group with a commutative multiplication via 
\[ x\cdot y= \tr_{\Sigma_k\times \Sigma_l}^{\Sigma_{k+l}} (x\times y)\]
for $x\in \mathbb A(\Sigma_k)$ and $y\in \mathbb A(\Sigma_l)$. This defines a ring structure on $\mathbf B$. Moreover, we define an operation $\ast$ on $\mathbf B$ as follows: Let $x\in \mathbb A(\Sigma_k)$ and $y_i\in \mathbb A(\Sigma_{l_i})$ for $1\leq i\leq n$. Then we define
\begin{equation}\label{eq:tau_ring_plethysm}
x\ast (y_1+ \ldots + y_n)= \sum_{(k)} \tr_{\bigtimes \Sigma_{k_i} \wr \Sigma_{l_i}}^{\Sigma_{(k)\cdot (l)}} \left( \bigtimes_{i=1}^n P^{k_i}(y_i) \cdot ((\bigtimes \Sigma_{k_i}\wr p_{\Sigma_{l_i}})^\ast \Phi_{(k)}^\ast )(x)\right),
\end{equation}
where the sum runs over all partitions $(k)=(k_i)_{i=1,\ldots, n}$ of $k$, we denote $(k)\cdot (l)\coloneqq \sum_{i=1}^{n} k_il_i$ and the transfer is along the monomorphisms $\Psi_{k_i, l_i}$ and $\Phi_{(k_il_i)_i}$ \cite[2.2.5-6]{Schwede_2018}. In the map $\bigtimes \Sigma_{k_i}\wr p_{\Sigma_{l_i}}\colon \bigtimes \Sigma_{k_i}\wr \Sigma_{l_i}\to \bigtimes \Sigma_{k_i}$, the product runs over $i=1$ to $n$ and the map $p_{\Sigma_{l_i}}\colon\Sigma_{l_i}\to e$ is the unique map to the trivial group.\\
The operation $\ast$ is additive in the first component and can be extended linearly to give a map $\ast \colon \mathbf B\times \mathbf B \to \mathbf B$.
\end{definition}
The operation $\ast$ is sometimes called plethysm, for example in \cite{Ochoa_1988} and in \cite{Hoffman_1979} for the representation ring instead of the Burnside ring. It makes $\mathbf B$ into an operator ring by \cite[Theorem 1.11]{Vallejo_1993}. We then define a $\beta$-ring as an {\itshape operator module} over $\mathbf B$. The following definition is given in \cite[Definition 1.12]{Vallejo_1993}. Note that here and in the following, we denote by $\Map$ the collection of maps of \emph{sets}. In contrast, $\Hom$ in the following denotes ring homomorphisms.

\begin{definition}\label{def:tau_rings}
A $\beta$-ring is a commutative ring $A$ together with a map $\vartheta \colon \mathbf B\to \Map(A, A)$ such that, for all $a\in A$ and $x,y\in \mathbf B$, the following relations hold:
\begin{enumerate}[\itshape i)]
	\item $\vartheta(x+y)=\vartheta(x)+\vartheta(y)$
	\item $\vartheta(x\cdot y)=\vartheta(x)\cdot \vartheta(y)$
	\item $\vartheta(x\ast y)=\vartheta(x)\circ \vartheta(y)$
	\item $\vartheta(1)(a)=1$ for the multiplicative unit $1\in \mathbb A(\Sigma_0)\subset \mathbf B$ on the left and $1\in A$ on the right
	\item $\vartheta(e) = id_{A}$, where $e=1\in \mathbb A(\Sigma_1)\subset \mathbf B$ is a unit for the operation $\ast$.
\end{enumerate}
In the first two statements, we use the pointwise ring structure on $\Map(A, A)$.
\end{definition}

For the construction of a $\beta$-ring structure on $R(G)$ for a global power functor $R$, we need an additional structure in the form of a pairing with the Burnside ring.

\begin{definition}\label{def:global_power_functor_with_pairing}
Let $R$ be a global power functor. Maps 
\[ \langle \_, \_ \rangle_{K,G}\colon R(K\times G) \times \mathbb A(K)\to R(G), \]
defined for all compact Lie groups $K$ and $G$, are called $\mathbb A$-deflations if the maps $\langle\_,\_\rangle_{K,G}$ are biadditive and satisfy 
\begin{enumerate}[{\itshape i)}]
	\item $\langle (K\times \alpha)^\ast r,x\rangle_{K,G} = \alpha^\ast \langle r, x\rangle_{K, L}$ for any continuous group homomorphism $\alpha\colon G\to L$.
	\item $\langle \tr_{L\times G}^{K\times G} r, x \rangle_{K,G}= \langle r, \res^K_L x\rangle_{L, G}$ for any closed subgroup $L\subset K$, and the reversed relation also holds.
	\item $\langle (\alpha\times G)^\ast r, \alpha^\ast x\rangle = \langle r, x\rangle$ for $r\in R(K\times G)$, $x\in \mathbb A(K)$ and a surjective group homomorphism $\alpha\colon L\to K$.
	\item $\langle r, 1\rangle = r$ for all $r\in R(G)$ and $1\in \mathbb A(e)$.
	\item $\langle r\cdot s, x\cdot y\rangle_{K,G}= \langle r, x\rangle_{K,G}\cdot \langle s, y\rangle_{K,G}$ for all $r,s\in R(K\times G)$ and $x,y\in \mathbb A(K)$.
	\item $\langle r\cdot \pr_K^\ast y, x\rangle_{K, G} = \langle r, y\cdot x\rangle_{K, G}$ for all $r\in R(K\times G)$ and $x,y\in \mathbb A(K)$.
	\item $\langle (\delta_n^G)^\ast P^n\langle r, y\rangle, x\rangle = \langle (\delta_n^{\Sigma_k, G})^\ast (P^n(r)\cdot (\Sigma_n\wr \pr_{\Sigma_k})^\ast P^n(y)), (\Sigma_n\wr p_{\Sigma_k})^\ast x\rangle$ for all $r\in R(\Sigma_k\times G)$, $y\in \mathbb A(\Sigma_k)$ and $x\in \mathbb A(\Sigma_n)$. Here, we considered the diagonal inclusion
	\begin{equation}\label{eq:diagonal_inclusion_wreath_product}
	\delta_n^G\colon \Sigma_n\times G\to \Sigma_n\wr G, \, (\sigma, g)\mapsto (\sigma; g,\ldots, g)
	\end{equation} 
	and the relative version $\delta_n^{\Sigma_k, G}\colon (\Sigma_n\wr \Sigma_k)\times G\to \Sigma_n\wr(\Sigma_k\times G)$.
\end{enumerate}
We denote by $\mathcal GlPow_{\mathbb A\textup{-defl}}$ the category of global power functors with $\mathbb A$-deflations and morphisms of global power functors compatible with the pairing $\langle\_,\_\rangle$.
\end{definition}

\begin{remark}\label{remark:connection_deflation_pairing_actual_deflations}
	In practice, such a deflation pairing on a global power functor often arises from actual deflations, i.e. maps $\varphi_\ast \colon R(G)\to R(K)$ for surjective group homomorphisms $\varphi\colon G\to K$. These satisfy certain relations, as exhibited in \cite{Bouc_2010} for finite groups, and packaged in \cite[Chapter 8]{Webb_2000} in the notion of a globally defined Mackey functor. The compatibility of the deflations with the power operations also explains the compatibility condition of the above deflation pairing with the power operations. \\
	In the following, we only use the deflation pairing on a global power functor $R$.
\end{remark}

\begin{lemma}
For the left-induced global power functor $\mathbb A\otimes C$ for a ring $C$, the composition
\[\mathbb A(K\times G)\otimes C\otimes \mathbb A(K)\nameto{\times} \mathbb A(K\times G\times K)\otimes C \nameto{\Delta_K^\ast} \mathbb A(K \times G)\otimes C \nameto{(\pr_G)_\ast} \mathbb A(G)\otimes C \]
defines an $\mathbb A$-deflation. Here $\Delta_K\colon K\times G\to K\times G\times K$ is the diagonal of $K$, and $(\pr_G)_\ast$ denotes the deflation along $\pr_G\colon K\times G\to G$ present in the Burnside rings.
\end{lemma}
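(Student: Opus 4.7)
The plan is to verify each of the seven axioms of Definition~\ref{def:global_power_functor_with_pairing} for the pairing, which we may write as
\[ \langle r\otimes c, x\rangle_{K,G}= (\pr_G)_\ast\Delta_K^\ast(r\times x)\otimes c \]
for $r\in\mathbb A(K\times G)$, $c\in C$ and $x\in\mathbb A(K)$. Biadditivity is immediate from the biadditivity of each constituent operation, and it suffices to check the axioms on pure tensors $r\otimes c$, so that $C$ plays no role and we may work with the pairing on $\mathbb A$ itself.

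Axioms (i)--(iv) follow from naturality considerations. For (i), each of the maps $\times$, $\Delta_K^\ast$ and $(\pr_G)_\ast$ is natural in the $G$-variable, so restriction along $\alpha\colon G\to L$ on the pair passes through to the outer $\alpha^\ast$. For (ii), one invokes the Frobenius-type compatibility of external products with transfers in the $K$-factor, combined with the fact that the Burnside-ring deflation $(\pr_G)_\ast$ commutes with restrictions in $G$; the reverse relation is dual. For (iii), the diagonal $\Delta_{(\cdot)}$ and the projection $\pr_G$ are natural with respect to surjective group homomorphisms in $K$, and the identification $(\alpha\times G)^\ast(r\times x)=(\alpha\times G\times\alpha)^\ast(r\times\alpha^\ast x)$ then yields the claim after composing with $\Delta_K^\ast$ and $(\pr_G)_\ast$. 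Axiom (iv) is immediate from $r\times 1=r$ under $\mathbb A(K\times G\times e)\cong\mathbb A(K\times G)$, followed by the trivial action of $\Delta_K^\ast$ and $(\pr_G)_\ast$ on the $K$-side unit.

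For the multiplicativity axioms (v) and (vi), one expands internal products as diagonal restrictions of external products, i.e.\ $r\cdot s=\Delta_{K\times G}^\ast(r\times s)$ and $x\cdot y=\Delta_K^\ast(x\times y)$. Both sides of (v) then unfold into expressions assembled from external products, diagonal restrictions and the deflation $(\pr_G)_\ast$, and coincide after reordering by the symmetry and associativity of $\times$ and the fact that diagonal restrictions commute with deflations along projections onto a complementary factor. Axiom (vi) follows similarly by noting that $\pr_K^\ast y$ equals $y\times 1_G\in\mathbb A(K\times G)$, so multiplying by $\pr_K^\ast y$ inside $\mathbb A(K\times G)$ before applying $\Delta_K^\ast(\_\times x)$ produces the same element as multiplying by $y\cdot x$ on the $\mathbb A(K)$ side.

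The main obstacle is axiom (vii), which encodes the compatibility of the pairing with the power operations $P^n$. The plan is to reduce to the explicit formula for $P^n$ on an external product in the Burnside ring: there is a natural identification $P^n(r\times x)$ with an external product $P^n(r)\times P^n(x)$ pulled back along the canonical isomorphism $\Sigma_n\wr(K\times G)\cong(\Sigma_n\wr K)\times_{\cdots}(\Sigma_n\wr G)$, combined with the fact that $P^n$ commutes with restrictions (so in particular with $\Delta_K^\ast$) and with the deflation $(\pr_G)_\ast$, which is part of the global power structure on $\mathbb A$. Tracing through the definitions of $\delta_n^G$ and $\delta_n^{\Sigma_k,G}$ and applying these compatibilities, both sides of (vii) collapse to the same element, namely the pair $\langle (\delta_n^{K,G})^\ast(P^n(r)\cdot \mathrm{external\ lift\ of\ }P^n(y)),\,\mathrm{lift\ of\ }x\rangle$. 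This step is the technical core; the remaining bookkeeping is the standard manipulation of wreath-product diagonals already exploited in the proof of Theorem~\ref{thm:power_operations_for_ucom}.
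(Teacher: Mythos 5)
The paper omits the verification altogether (``We omit the calculations of the properties of this pairing\ldots''), so there is no proof in the paper to compare against directly; the question is whether your outline would actually close the lemma. For axioms~(i)--(vi) your plan is sound: they are multilinear identities, checking them on pure tensors $r\otimes c$ is legitimate, and the reduction to the pairing on $\mathbb A$ alone together with naturality of $\times$, $\Delta_K^\ast$ and $(\pr_G)_\ast$ and the usual Frobenius/double-coset calculus is the right idea.

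Axiom~(vii) is where your plan has genuine gaps. First, the claim that ``$C$ plays no role'' fails there. The power operations on the given left-induced global power functor $R=\mathbb A\otimes C$ are part of the hypothesis, and they do not restrict to $P^n_{\mathbb A}$ on pure tensors: one has $P^n(a\otimes c)=\bigl(P^n_{\mathbb A}(a)\otimes 1\bigr)\cdot (\Sigma_n\wr p)^\ast P^n(1\otimes c)$, and the second factor genuinely involves the power-operation data on $C$. Since both sides of (vii) contain $P^n$ applied to elements of $R$, verifying (vii) cannot be done purely inside $\mathbb A$; you need to track the $C$-contribution through the whole identity. Relatedly, $P^n$ is not additive, so restricting to pure tensors is not a formal reduction for (vii) the way it is for (i)--(vi); one would have to show separately that the identity is preserved under sums, using the binomial expansion $P^n(r+s)=\sum_{i+j=n}\tr(P^i(r)\times P^j(s))$.

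Second, you invoke ``$P^n$ commutes with the deflation $(\pr_G)_\ast$, which is part of the global power structure on $\mathbb A$.'' This is not part of the global power structure: deflations on $\mathbb A$ are extra data (cf.\ Remark~\ref{remark:connection_deflation_pairing_actual_deflations}), and the compatibility of $P^n$ with them is precisely the kind of statement axiom~(vii) is formalizing, so quoting it as a built-in fact is essentially circular as written. The compatibility \emph{is} true for the Burnside ring (for a surjection $f\colon G\to K$ and a $G$-set $X$ one checks $(K\times_G X)^n\cong(\Sigma_n\wr K)\times_{\Sigma_n\wr G}X^n$ as $\Sigma_n\wr K$-sets), but that needs to be proved or cited explicitly, not asserted as a formal consequence of the global power axioms. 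Finally, the isomorphism you write as $\Sigma_n\wr(K\times G)\cong(\Sigma_n\wr K)\times_{\cdots}(\Sigma_n\wr G)$ must be read as the fibre product over $\Sigma_n$; the plain product is wrong, and what is actually used in the global power axioms is the restriction along the diagonal embedding $\Sigma_n\wr(K\times G)\hookrightarrow(\Sigma_n\wr K)\times(\Sigma_n\wr G)$. With these points repaired---in particular, writing the $R$-power operations explicitly in terms of $P^n_{\mathbb A}$ and the power operations on $C$, and isolating the Burnside-ring compatibility of $P^n$ with deflations as a lemma---your overall strategy can be made to work.
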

We omit the calculations of the properties of this pairing. We recall however the definition of the deflations, since they are the main part of this construction:

\begin{construction}\label{constr:deflations_for_Burnside_rings}
In the case of finite groups, the deflations in the Burnside ring global functor are described as follows: Let $f\colon G\to K$ be a group homomorphism, and $X$ be a finite $G$-set. Then we define $f_\ast (X) = K\times_G X=(K\times X)/G$, where we identify $(k\cdot f(g), x)$ with $(k,gx)$ for all $g\in G$, and consider $f_\ast(X)$ as a $K$-set by the multiplication on the $K$-factor. This defines a map $\mathbb A(G)\to \mathbb A(K)$. For the general case of a morphism $G\to K$ of compact Lie groups, this construction is generalized in \cite[Proposition 20]{tomDieck_1975} and \cite[Proposition IV.2.18]{tomDieck_1987}.
\end{construction}

\begin{remark}\label{remark:pairing_on_cohomotopy}
Another main example of a global functor with $\mathbb A$-deflations is stable cohomotopy $\underline{\pi}^0(X)$ for a based space $X$. By Remark \ref{remark:power_operations_on_cohomology}, we have restricted power operations on the equivariant stable cohomotopy defined by 
\[ \pi^0_G(X) \nameto{P^m} \pi^0_{\Sigma_m\wr G} (X^m) \nameto{(\delta_m^G)^\ast} \pi^0_{\Sigma_m\times G}(X^m) \nameto{\Delta^\ast} \pi^0_{\Sigma_m\times G} (X).\]
As explained below, these restricted power operations, using $\delta_m^G$, are also used to obtain the $\beta$-operations. Hence, the theory developed below is also applicable to equivariant stable cohomotopy.\\
Moreover, we obtain a pairing $\pi^0_{K\times G} (X)\times \mathbb A(K)\to \pi^0_G(X)$ by generalizing the definition for $\mathbb A$ to an arbitrary base. We again restrict to finite groups. We may define operations on stable cohomotopy by defining additive (or only polynomial) operations on the monoid $\Cov^+(X)$ of isomorphism classes of (equivariant) coverings over $X$. Non-equivariantly, this is \cite[Theorem 2.4]{Vallejo_1990b}, building on the observations by Segal in \cite{Segal_1974a}. The main observation for this is that stable cohomotopy can be seen as the group completion of the monoid of coverings over $X$, which is a consequence of the Barratt-Priddy-Quillen theorem \cite{Barratt_1971, Priddy_1971, Segal_1974}. The equivariant statement holds analogously, using the equivariant Barratt-Priddy-Quillen theorem established in \cite{Segal_1971, Guillou_May_2017, Barwick_2017}.

Hence, let $E\to X$ be a $(K\times G)$-equivariant covering of $X$ and $T$ be a finite $K$-set. Then we define $\langle E\to X, T\rangle$ to be $E\times_K T$, which is a $G$-equivariant covering. One can check that this is biadditive and induces a pairing $\pi^0_{K\times G} (X)\times \mathbb A(K)\to \pi^0_G(X)$ as required.
\end{remark}

Suppose now that $R$ is a global power functor with $\mathbb A$-deflations. We are ultimately interested in a pairing $R(\Sigma_m\wr G)\otimes \mathbb A(\Sigma_m) \to R(G)$, hence we use the morphism $\delta_m^G\colon \Sigma_m\times G\to \Sigma_m\wr G$ from \eqref{eq:diagonal_inclusion_wreath_product} to define
\[ R(\Sigma_m\wr G)\otimes \mathbb A(\Sigma_m)\nameto{(\delta_m^G)^\ast \otimes id} R(\Sigma_m\times G)\otimes \mathbb A(\Sigma_m) \nameto{\langle\_,\_\rangle_{\Sigma_m, G}} R(G). \]

Using these definitions, we can define for any global power functor $R$ with $\mathbb A$-deflations a morphism
\begin{align}
\begin{split}\label{eq:duality_morphism_tau_rings}
D_G\colon \exp(R; G) &\to \Map(\mathbf B, R(G))\\
x=(x_n)_{n\geq 0} & \mapsto \left( (y_n)\mapsto \sum_{n\geq 0} \langle (\delta_n^G)^\ast x_n, y_n\rangle \right).
\end{split}
\end{align}

\begin{prop}\label{prop:duality_morphism_lands_in_ring_morphisms}
Let $R$ be a global power functor with $\mathbb A$-deflations and $G$ be a compact Lie group. Then for any $x \in \exp(R;G)$, the morphism $D_G(x)$ is a ring homomorphism.
\end{prop}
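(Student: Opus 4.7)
The plan is to split the proof into three parts: additivity, unit preservation, and multiplicativity. Additivity is immediate, since the deflation pairing $\langle\_,\_\rangle$ is biadditive by assumption, and so is $D_G(x)(\_)$ term-by-term. For the unit of $\mathbf B$, which is $1\in \mathbb A(\Sigma_0)$, I would use that any $x\in \exp(R;G)$ must have $x_0 = 1\in R(G)$ (forced by the counit of the comonad $\exp$, reflecting the power-operation relation $P^0=1$); combined with $\Sigma_0=e$ and property (iv) of the deflation, this gives $D_G(x)(1) = \langle x_0, 1\rangle_{e,G} = x_0 = 1$.

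The core of the argument is multiplicativity. By bilinearity it suffices to check $D_G(x)(y\cdot z) = D_G(x)(y) \cdot D_G(x)(z)$ on homogeneous $y\in \mathbb A(\Sigma_k)$, $z\in \mathbb A(\Sigma_l)$, where $y\cdot z = \tr_{\Sigma_k\times \Sigma_l}^{\Sigma_{k+l}}(y\times z)\in \mathbb A(\Sigma_{k+l})$. Starting from
\[ D_G(x)(y\cdot z) = \langle (\delta_{k+l}^G)^\ast x_{k+l},\, \tr_{\Sigma_k\times \Sigma_l}^{\Sigma_{k+l}}(y\times z)\rangle_{\Sigma_{k+l},G}, \]
I would proceed through four rewritings using the axioms of an $\mathbb A$-deflation.

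First, apply the Frobenius reciprocity axiom (ii), in the reversed form, to move the transfer off the second argument and onto the first, producing a restriction of $(\delta_{k+l}^G)^\ast x_{k+l}$ from $\Sigma_{k+l}\times G$ to $\Sigma_k\times \Sigma_l\times G$. Second, use the factorization of group homomorphisms
\[ \delta_{k+l}^G\bigl|_{\Sigma_k\times \Sigma_l\times G} = \Phi_{k,l}\circ (\delta_k^G\times \delta_l^G)\circ (\Sigma_k\times \Delta_G\times \Sigma_l), \]
together with the defining "group-like" property $\Phi_{k,l}^\ast x_{k+l} = x_k\times x_l$ satisfied by elements of $\exp(R;G)$ (encoding the axiom $\Phi_{k,l}^\ast P^{k+l}(r) = P^k(r)\times P^l(r)$ from the proof of Proposition~\ref{prop:power_operations_on_G_infty_spectra}). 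This rewrites the first argument as the Green-functor product $\pr_1^\ast (\delta_k^G)^\ast x_k \cdot \pr_2^\ast (\delta_l^G)^\ast x_l$ in $R(\Sigma_k\times \Sigma_l\times G)$. Third, write $y\times z = \pr_1^\ast y \cdot \pr_2^\ast z$ in $\mathbb A(\Sigma_k\times \Sigma_l)$ and invoke the multiplicativity axiom (v) of the deflation to split the pairing into a product of two pairings indexed by $\Sigma_k\times \Sigma_l$. Fourth, the invariance axiom (iii) applied to the surjections $\pr_1\colon \Sigma_k\times \Sigma_l \to \Sigma_k$ and $\pr_2\colon \Sigma_k\times \Sigma_l \to \Sigma_l$ reduces each factor to $\langle (\delta_k^G)^\ast x_k, y\rangle_{\Sigma_k,G}$ and $\langle (\delta_l^G)^\ast x_l, z\rangle_{\Sigma_l,G}$ respectively, yielding exactly $D_G(x)(y)\cdot D_G(x)(z)$.

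The main obstacle is the second step, which rests on the group-like identity $\Phi_{k,l}^\ast x_{k+l} = x_k\times x_l$; this is not one of the listed deflation axioms but is a defining condition for $\exp(R;G)$ as the cofree coalgebra governing global power operations. Verifying that the factorization of $\delta_{k+l}^G$ is compatible with the external products on both sides, and tracking the diagonal $\Delta_G$ correctly (which contributes the Green-functor multiplication after pullback), is the only bookkeeping that requires care; the remaining steps are direct applications of the axioms.
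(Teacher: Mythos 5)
Your argument follows the paper's proof essentially step for step: additivity from biadditivity of $\langle\_,\_\rangle$, the unit via the $\Sigma_0$-term of the exponential sequence and axiom (iv), and multiplicativity via Frobenius reciprocity (ii), the factorization of $\delta_{k+l}^G$ restricted along $\Phi_{k,l}\times G$ together with the exponential identity $\Phi_{k,l}^\ast x_{k+l}=x_k\times x_l$, and finally axioms (v) and (iii) applied to the projections. One small correction: $x_0$ lives in $R(\Sigma_0\wr G)=R(e)$, not $R(G)$, so the unit computation should pass through $(\delta_0^G)^\ast = p_G^\ast$ before invoking axiom (iv), though the conclusion $D_G(x)(1)=1_{R(G)}$ is unaffected.
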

\begin{proof}
Additivity of the morphism $D_G(x)\colon \mathbf B \to R(G)$ is clear from biadditivity of the pairing $\langle \_,\_\rangle$. Moreover, let $1\in \mathbb A(\Sigma_0)$ be the multiplicative unit of $\mathbf B$. Then we calculate 
\[ D_G(x) (1) = \langle (\delta_0^G)^\ast x_0, 1\rangle =p_G^\ast(x_0) =1,\]
using that exponential sequences have as zeroth term the unit of $R(e)$ and that $\delta_0^G$ is the unique map $p_G\colon G\to e$.

Now, we check that the map $D_G(x)$ is multiplicative. Recall that the product on $\mathbf B$ is the transfer product from Definition \ref{def:universal_Tau_ring}. For two elements $y, z \in \mathbf B$, we have 
\begin{align*}
D_G(x) (y\cdot z) = & \sum_{n\geq 0} \Big\langle (\delta_n^G)^\ast x_n,\sum_{k+l=n} \tr_{\Sigma_k\times \Sigma_l}^{\Sigma_{k+l}} (y_k\times z_l) \Big\rangle \\
= & \sum_{k,l\geq 0} \langle \res^{\Sigma_{k+l}\times G}_{\Sigma_k\times \Sigma_l\times G} (\delta_{k+l}^G)^\ast x_{k+l}, y_k\times z_l\rangle\\
= & \sum_{k,l\geq 0} \langle \Delta_G^\ast ((\delta_k^G)^\ast x_k\times (\delta_l^G)^\ast x_l), y_k\times z_l\rangle \\
= & \sum_{k,l\geq 0} \langle (\delta_k^G)^\ast x_k, y_k\rangle \cdot \langle (\delta_l^G)^\ast x_l, z_l\rangle\\
= & D_G(x)(y) \cdot D_G(x)(z).
\end{align*}
Here, we use that the morphisms $\delta_{k+l}^G \circ (\Phi_{k,l}\times G)$ and $\Phi_{k,l}^G \circ (\delta_k^G\times \delta_l^G) \circ \Delta_G\colon \Sigma_k\times \Sigma_l \times G\to \Sigma_{k+l}\wr G$ agree and that the sequence $x$ is exponential.
\end{proof}

Using this morphism, we can now study how any global power functor $R$ with $\mathbb A$-deflations induces the structure of a $\beta$-ring on $R(G)$:

\begin{construction}\label{constr:structure_map_of_tau_rings}
Let $R$ be a global power functor with $\mathbb A$-deflation and let $G$ be a compact Lie group. Then we define 
\[ \bar\vartheta_G \colon R(G)\nameto{P} \exp(R; G) \nameto{D_G} \Hom_{\textup{Rings}} (\mathbf B, R(G)), \]
where $P$ denotes the power operation on $R$.\\
This map is adjoint to a map
\[ \vartheta \colon \mathbf B\to \Map(R(G),R(G)).\]
\end{construction}

\begin{prop}\label{prop:pre_tau_rings_from_powered_algebras}
The map $\vartheta$ makes $R(G)$ into a $\beta$-ring.
\end{prop}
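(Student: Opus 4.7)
My plan is to verify directly the five axioms of Definition \ref{def:tau_rings} for the map $\vartheta$ obtained by adjunction from $\bar\vartheta_G = D_G\circ P$, so that $\vartheta(z)(a) = D_G(P(a))(z)$ for $z\in \mathbf B$ and $a\in R(G)$. Axioms i), ii) and iv) will come for free from Proposition \ref{prop:duality_morphism_lands_in_ring_morphisms}: since $D_G(P(a))\colon \mathbf B\to R(G)$ is a ring homomorphism for every $a$, evaluating at a fixed $a$ gives pointwise additivity, pointwise multiplicativity, and $\vartheta(1)(a) = D_G(P(a))(1) = 1$.

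For axiom v), I would unfold $\vartheta(e)(a)$ using that $e = 1\in \mathbb A(\Sigma_1)\subset \mathbf B$ has all other homogeneous components zero. The sum defining $D_G$ then collapses to the single term $\langle (\delta_1^G)^\ast P^1(a), 1\rangle$. Since $\delta_1^G\colon G\to \Sigma_1\wr G$ is the identity and $P^1$ is the identity operation on any global power functor, this equals $\langle a, 1\rangle_{e,G}$, which is $a$ by axiom iv) of the $\mathbb A$-deflation pairing.

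The real content is axiom iii), $\vartheta(x\ast y) = \vartheta(x)\circ \vartheta(y)$. My strategy is to expand both sides and check that they match term by term. On the right one has
\[\vartheta(x)(\vartheta(y)(a)) \;=\; \sum_{k\geq 0}\, \bigl\langle (\delta_k^G)^\ast P^k\bigl(\textstyle\sum_{l\geq 0}\langle (\delta_l^G)^\ast P^l(a), y_l\rangle\bigr),\, x_k\bigr\rangle.\]
I would apply the global power functor axioms for $P^k$ of a sum (which express $P^k(u_1+\cdots+u_n)$ as a transfer from $\bigtimes \Sigma_{k_i}$ up to $\Sigma_k$ of external products $\boxtimes_i P^{k_i}(u_i)$), and then axiom vii) of the deflation pairing to pull each $P^{k_i}$ past the deflation $\langle (\delta_{l_i}^G)^\ast P^{l_i}(a), y_{l_i}\rangle$, producing a factor of $P^{k_i l_i}(a)$ inside a larger pairing and a factor of $P^{k_i}(y_{l_i})$. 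On the left, I would expand the plethysm formula \eqref{eq:tau_ring_plethysm} and move the outer transfer from the second argument of $\langle\_,\_\rangle$ into the first via axiom ii) of the pairing, combined with the identities $\delta_{(k)\cdot(l)}^G\circ (\Phi_{(k)\cdot(l)}\times G)$ and the behaviour of $(\Sigma_n\wr p_{\Sigma_k})^\ast$. Both sides should then reduce to the same double sum over partitions $(k_i)$ of $k$ of pairings of the form $\langle (\delta_{(k)\cdot(l)}^G)^\ast \tr(\boxtimes_i P^{k_il_i}(a)),\, \boxtimes_i P^{k_i}(y_{l_i})\cdot (\text{restriction of } x_k)\rangle$.

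The hard part will be purely combinatorial: keeping track of transfers along the $\Phi$-maps, restrictions along the diagonals $\delta_n^G$ and projections $p_G$, and matching the factors $P^{k_i}(y_{l_i})$ with the correct restriction of $x_k$ inside the wreath products $\bigtimes\Sigma_{k_i}\wr\Sigma_{l_i}$. Conceptually, however, the identity is forced: axiom vii) of Definition \ref{def:global_power_functor_with_pairing} is engineered precisely so that $a\mapsto D_G(P(a))$ turns the $\exp$-coalgebra structure on $R(G)$ (given by $P$) into a right action of $(\mathbf B,\ast)$, so the calculation is guaranteed to work out provided the pairing and the power operations are manipulated in the correct order.
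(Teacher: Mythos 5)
Your proposal takes the same route the paper gestures at: a direct verification of the $\beta$-ring axioms from the properties of the power operations and the $\mathbb{A}$-deflation pairing, with axiom iii) being the hard combinatorial step whose key ingredient you correctly identify as axiom vii) of the deflations combined with the additivity formula for $P^k$. The paper's own proof is in fact terser than yours --- it records only that the argument is a ``lengthy calculation'' and cites Rymer, Vallejo and Ochoa for the analogous computation when $R=\mathbb{A}$ --- so your explicit dispatch of axioms i), ii), iv), v) and your conceptual outline of iii) go somewhat beyond what is written there, though (as you acknowledge) neither you nor the paper actually carry out the plethysm bookkeeping.
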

\begin{proof}
This proof is a lengthy calculation, using the properties of the global power operations and the pairing $\langle\_, \_\rangle$. An essentially similar calculation in the case $R=\mathbb A$ has been carried out in \cite[Theorem 2]{Rymer_1977}, where the relations are only checked for some additive generators of $\mathbf B$, and in \cite[Corollary 1.16]{Vallejo_1993}, where these calculations are extended to the entirety of $\mathbf B$ using the theory of polynomial operations. Also, a similar calculation can be found in \cite{Ochoa_1988}.
\end{proof}

\begin{corollary}\label{corollary:beta_ring_structure_on_Burnside_ring_and_cohomotopy}
The rings $\mathbb A(G)$ and $\pi^0_G(X)$ for any based space $X$ carry the structure of $\beta$-rings.
\end{corollary}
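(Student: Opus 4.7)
The plan is to recognize this corollary as a direct application of Proposition \ref{prop:pre_tau_rings_from_powered_algebras} to the two examples $R = \mathbb{A}$ and $R = \underline{\pi}^0(X)$. Since the proposition takes a global power functor with $\mathbb{A}$-deflations as input and produces a $\beta$-ring structure on every value $R(G)$, all I need to do is exhibit the required $\mathbb{A}$-deflation structure on each of these two global power functors, and then invoke the proposition.

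For $\mathbb{A}(G)$, I would observe that $\mathbb{A}$ is tautologically left-induced: the unit map $\mathbb{A} \otimes \mathbb{Z} \to \mathbb{A}$ is an isomorphism of global power functors. Hence the unnamed lemma following Definition \ref{def:global_power_functor_with_pairing}, applied with $C = \mathbb{Z}$, endows $\mathbb{A}$ with an $\mathbb{A}$-deflation given by the composite
\[
\mathbb{A}(K \times G) \otimes \mathbb{A}(K) \xrightarrow{\times} \mathbb{A}(K \times G \times K) \xrightarrow{\Delta_K^{\ast}} \mathbb{A}(K \times G) \xrightarrow{(\mathrm{pr}_G)_{\ast}} \mathbb{A}(G),
\]
using the Burnside-ring deflations recalled in Construction \ref{constr:deflations_for_Burnside_rings}. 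Proposition \ref{prop:pre_tau_rings_from_powered_algebras} then equips every $\mathbb{A}(G)$ with a $\beta$-ring structure.

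For $\underline{\pi}^0(X)$, I would appeal to Remark \ref{remark:pairing_on_cohomotopy}, which already supplies both ingredients: the restricted equivariant power operations $P^m\colon \pi^0_G(X) \to \pi^0_{\Sigma_m \times G}(X)$ (pulled back along the diagonal $\Delta\colon X \to X^m$ and along $\delta_m^G$) and the pairing $\pi^0_{K\times G}(X) \otimes \mathbb{A}(K) \to \pi^0_G(X)$ built from the construction $(E \to X, T) \mapsto E \times_K T$ on equivariant coverings via the equivariant Barratt-Priddy-Quillen theorem. I would only need to spell out that these data together satisfy the axioms in Definitions of a global power functor and of an $\mathbb{A}$-deflation, after which Proposition \ref{prop:pre_tau_rings_from_powered_algebras} again produces the $\beta$-ring structure on $\pi^0_G(X)$.

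The main obstacle, such as it is, lies not in the logical structure of the corollary but in the verification that the cohomotopy pairing really satisfies all seven deflation axioms of Definition \ref{def:global_power_functor_with_pairing}; in particular, property \emph{vii}) couples the pairing with the power operations and requires the compatibility of the Barratt-Priddy-Quillen equivalence with the symmetric-monoidal structure on coverings. Since this is the content (in essence) of \cite{Guillot_2006}, I would cite that source for the detailed verification rather than redo it, so the corollary reads as a clean consequence of the general framework together with the two explicit deflation constructions already isolated in the preceding lemma and remark.
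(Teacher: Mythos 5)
Your proposal is correct and follows exactly the route the paper intends: invoke Proposition~\ref{prop:pre_tau_rings_from_powered_algebras}, supplying the $\mathbb A$-deflation on $\mathbb A = \mathbb A\otimes\mathbb Z$ via the lemma after Definition~\ref{def:global_power_functor_with_pairing} and Construction~\ref{constr:deflations_for_Burnside_rings}, and on $\underline{\pi}^0(X)$ via Remark~\ref{remark:pairing_on_cohomotopy}. You also correctly flag the one nontrivial verification (the seven deflation axioms for the cohomotopy pairing, especially property~\emph{vii})), which the paper itself defers to the existing literature.
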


\begin{corollary}
Let $C$ be a commutative ring. If $\mathbb A\otimes C$ supports the structure of a global power functor, then $\mathbb A(G)\otimes C$ inherits the structure of a $\beta$-ring.
\end{corollary}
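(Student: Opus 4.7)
The plan is to observe that the hypotheses of the immediately preceding Proposition \ref{prop:pre_tau_rings_from_powered_algebras} are met, and then invoke it. Concretely, I need to exhibit on $\mathbb{A}\otimes C$ both (a) a global power functor structure and (b) a compatible $\mathbb{A}$-deflation pairing, so that it becomes an object of $\mathcal{GlPow}_{\mathbb{A}\textup{-defl}}$.

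The global power functor structure is assumed by hypothesis. For the $\mathbb{A}$-deflation pairing, I would appeal to the Lemma just before Construction \ref{constr:deflations_for_Burnside_rings}, which constructs the pairing
\[ \mathbb{A}(K\times G)\otimes C\otimes \mathbb{A}(K) \xrightarrow{\times} \mathbb{A}(K\times G\times K)\otimes C \xrightarrow{\Delta_K^\ast} \mathbb{A}(K\times G)\otimes C \xrightarrow{(\pr_G)_\ast} \mathbb{A}(G)\otimes C \]
as an $\mathbb{A}$-deflation on $\mathbb{A}\otimes C$. Note that this construction only uses the global Green functor structure on $\mathbb{A}\otimes C$ together with the Burnside ring deflations recalled in Construction \ref{constr:deflations_for_Burnside_rings}; it does not depend on any power operations. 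So the pairing is available as soon as $\mathbb{A}\otimes C$ is considered as a global Green functor, which it always is.

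With both pieces of structure in place, Proposition \ref{prop:pre_tau_rings_from_powered_algebras} applies and endows $(\mathbb{A}\otimes C)(G)\cong \mathbb{A}(G)\otimes C$ with the structure of a $\beta$-ring via the map $\vartheta$ defined in Construction \ref{constr:structure_map_of_tau_rings}. There is essentially no obstacle: the work was done in the preceding proposition and lemma, and this corollary is just the specialization to left-induced global power functors. The only small thing to check, should one want to, is that the $\beta$-ring structure is compatible with the power operations coming from the assumed global power functor structure on $\mathbb{A}\otimes C$ (rather than the canonical one on $\mathbb{A}$), but this is automatic since $\vartheta$ is built directly from the power operations $P$ of $R=\mathbb{A}\otimes C$ used in Construction \ref{constr:structure_map_of_tau_rings}.
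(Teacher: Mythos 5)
Your argument is correct and is essentially the intended one: the hypothesis supplies the global power functor structure on $\mathbb A\otimes C$, the lemma preceding Construction \ref{constr:deflations_for_Burnside_rings} supplies the $\mathbb A$-deflation pairing, and Proposition \ref{prop:pre_tau_rings_from_powered_algebras} then endows $(\mathbb A\otimes C)(G)\cong \mathbb A(G)\otimes C$ with a $\beta$-ring structure. One small point of wording: you claim the $\mathbb A$-deflation is available from the global Green functor structure alone, independently of power operations, but Definition \ref{def:global_power_functor_with_pairing} axiom \textit{vii)} explicitly involves the power operations $P^n$ of $R$; the pairing maps can indeed be written down using only the Green functor structure, yet verifying that they constitute an $\mathbb A$-deflation in the sense of that definition does presuppose the given power functor structure — which the hypothesis provides, so the argument goes through.
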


Note that the $\beta$-ring structures on Burnside rings and stable cohomotopy rings are already known by the classical literature. However, the above definition of the structure map $\vartheta$ illustrates how this structure can be obtained from a more naturally arising structure, namely from a global power functor structure on $R$ together with deflations. We hope that this allows for a more insightful picture of $\beta$-rings.

The definition of a $\beta$-ring only incorporates relations between the different operations $\vartheta(x)$ on $R(G)$, indexed by $x\in \mathbf B$. It does not provide any compatibility of these operations with the ring structure on $R(G)$. We now add a condition of ``external'' additivity, due to \cite{Guillot_2006}:

\begin{definition}\label{def:iterated_universal_tau_ring}
We define 
\[ \mathbf B^2=\bigoplus_{p,q\geq 0} \mathbb A(\Sigma_p\times \Sigma_q). \]
This has a ring structure analogous to the one on $\mathbf B$, given by $x\cdot y= \tr_{\Sigma_p\times \Sigma_q\times\Sigma_r\times \Sigma_s}^{\Sigma_{p+r} \times \Sigma_{q+s}}(x\times y)$ for $x\in \mathbb A(\Sigma_p \times \Sigma_q)$ and $y\in \mathbb A(\Sigma_r \times \Sigma_s)$. Moreover, we have maps
\[ \Phi\colon \mathbf B\to \mathbf B^2, \, x\mapsto \sum_{p+q=m} \Phi^\ast_{p,q} x \textup{ for } x\in \mathbb A(\Sigma_m)\]
and
\[ \times\colon \mathbf B\otimes \mathbf B\to \mathbf B^2, \, x\otimes y\mapsto x\times y \textup{ for } x\in \mathbb A(\Sigma_p), y\in \mathbb A(\Sigma_q). \]
\end{definition}

Additivity is then expressed by a morphism $\vartheta^2\colon \mathbf B^2\to \Map(A\times A, A)$ analogous to $\vartheta$, using the map $\Phi$.

\begin{definition}\label{def:additive_tau_ring}
An additive $\beta$-ring is a commutative ring $A$ with maps $\vartheta \colon \mathbf B\to \Map(A, A)$ and $\vartheta^2 \colon \mathbf B^2\to \Map(A\times A, A)$ such that $(A, \vartheta)$ is a $\beta$-ring and the following properties hold:
\begin{enumerate}[\itshape i)]
	\item $\vartheta^2(x\times y)(c,d) = \vartheta(x)(c)\cdot \vartheta(y)(d)$ for all $x,y\in \mathbf B$ and all $c,d\in A$.
	\item $\vartheta(x) (c+d) = \vartheta^2(\Phi x)(c,d)$ for all $x\in \mathbf B$ and $c,d\in A$.
\end{enumerate}
\end{definition}

We construct such a map $\vartheta^2$ for $R(G)$ when $R$ a global power functor with $\mathbb A$-deflations.

\begin{construction}\label{constr:iterated_duality_and_structure_morphism}
Let $R$ be a global power functor with $\mathbb A$-deflations. Then we define
\begin{align*}
D^2\colon \exp(R; G)\times  \exp(R; G) &\to \Hom(\mathbf B^2, R(G))\\
(x,y)&\mapsto \left(z\mapsto \sum_{p,q\geq 0} \langle (\delta_{p,q}^G)^\ast (x_p\times y_q), z_{p, q}\rangle\right),
\end{align*}
where $\delta_{p,q}^G=(\delta_p^G\times \delta_q^G)\circ \Delta_G\colon \Sigma_p\times \Sigma_q\times G\to \Sigma_p\wr G\times \Sigma_q\wr G$ is the diagonal on $G$. This in fact takes values in ring homomorphisms by a similar argument to Proposition \ref{prop:duality_morphism_lands_in_ring_morphisms}.\\
Moreover, we define
\[\bar\vartheta^2 \colon R(G)\times R(G)\nameto{P\times P} \exp(R; G)\times  \exp(R; G) \nameto{D^2} \Hom(\mathbf B^2, R(G)),\]
and denote the morphism adjoint to $\bar\vartheta^2$ as  
\[ \vartheta^2\colon \mathbf B^2\to \Map(R(G)\times R(G), R(G)).\]
\end{construction}

\begin{prop}\label{prop:additivity_for_tau_rings}
The morphisms $\vartheta$ and $\vartheta^2$ make $R(G)$ into an additive $\beta$-ring.
\end{prop}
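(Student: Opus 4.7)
The plan is to verify the two additivity axioms of Definition \ref{def:additive_tau_ring}, since the underlying $\beta$-ring structure of $R(G)$ is already furnished by Proposition \ref{prop:pre_tau_rings_from_powered_algebras}. Both axioms follow from unwinding the definitions of $\vartheta$ and $\vartheta^2$ in terms of $D$, $D^2$ and the power operations, and then invoking the formal properties collected in Definition \ref{def:global_power_functor_with_pairing} together with the additivity axiom for global power operations from \cite[Definition 5.1.6]{Schwede_2018}.

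For condition i), I would reduce by biadditivity to $x\in \mathbb A(\Sigma_p)$ and $y\in \mathbb A(\Sigma_q)$. Unwinding yields
\[
\vartheta^2(x\times y)(c,d) = \langle \Delta_G^\ast\bigl((\delta_p^G)^\ast P^p(c)\times (\delta_q^G)^\ast P^q(d)\bigr),\, x\times y\rangle_{\Sigma_p\times\Sigma_q, G}.
\]
Writing $x\times y = \iota_p^\ast(x)\cdot \iota_q^\ast(y)$ in $\mathbb A(\Sigma_p\times \Sigma_q)$ via the two projections $\iota_p, \iota_q$, and splitting the first argument analogously, property v) of Definition \ref{def:global_power_functor_with_pairing} (multiplicativity of the pairing) separates the expression into a product of two pairings. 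Property iii) (invariance under surjective homomorphisms) then collapses each factor to $\vartheta(x)(c)$, respectively $\vartheta(y)(d)$, giving the claim.

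For condition ii), fix $x\in \mathbb A(\Sigma_n)$ and compute $\vartheta(x)(c+d) = \langle (\delta_n^G)^\ast P^n(c+d), x\rangle_{\Sigma_n, G}$ starting from the additivity axiom
\[
P^n(c+d) = \sum_{p+q=n}\tr_{\Sigma_p\wr G\times \Sigma_q\wr G}^{\Sigma_n\wr G}\bigl(P^p(c)\times P^q(d)\bigr).
\]
The hard part will be computing $(\delta_n^G)^\ast$ of each transfer summand. For this I would invoke the double coset formula for the pullback square
\[
\begin{tikzcd}[column sep=5em]
\Sigma_p\times \Sigma_q\times G \arrow[r, "(\delta_p^G\times \delta_q^G)\circ \Delta_G"] \arrow[d] & \Sigma_p\wr G\times \Sigma_q\wr G \arrow[d]\\
\Sigma_n\times G \arrow[r, "\delta_n^G", swap] & \Sigma_n\wr G,
\end{tikzcd}
\]
where the left vertical map is the inclusion $\Sigma_p\times \Sigma_q\hookrightarrow \Sigma_n$ times $G$. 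Since $\Sigma_n$ acts transitively on $\Sigma_n/(\Sigma_p\times \Sigma_q)$, the double coset decomposition collapses to a single term, yielding
\[
(\delta_n^G)^\ast \tr_{\Sigma_p\wr G\times \Sigma_q\wr G}^{\Sigma_n\wr G}\bigl(P^p(c)\times P^q(d)\bigr) = \tr_{\Sigma_p\times\Sigma_q\times G}^{\Sigma_n\times G} \Delta_G^\ast\bigl((\delta_p^G)^\ast P^p(c)\times (\delta_q^G)^\ast P^q(d)\bigr).
\]
Finally, property ii) of Definition \ref{def:global_power_functor_with_pairing} (the transfer-restriction adjunction on the pairing) moves the outer transfer across as a restriction of $x$ along $\Sigma_p\times\Sigma_q\hookrightarrow \Sigma_n$, which is exactly $\Phi_{p,q}^\ast x$. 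Summing over the partitions $p+q=n$ reconstitutes $\vartheta^2(\Phi x)(c,d)$ by definition of $\Phi$ and $\vartheta^2$.
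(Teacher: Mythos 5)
Your proposal is correct and follows essentially the same route as the paper's own proof. For condition ii) you use exactly the same three ingredients: the additivity axiom $P^n(c+d)=\sum_{p+q=n}\tr(P^p(c)\times P^q(d))$, the observation that there is a single double coset so that $(\delta_n^G)^\ast$ commutes with the transfer up to replacing it by the inner transfer $\tr_{\Sigma_p\times\Sigma_q\times G}^{\Sigma_n\times G}$ (the paper phrases this as one double coset in $\Sigma_n\times G\backslash\Sigma_n\wr G/\Sigma_p\wr G\times\Sigma_q\wr G$, you phrase it as transitivity of $\Sigma_n$ on $\Sigma_n/(\Sigma_p\times\Sigma_q)$ — the same fact), and then property ii) of the $\mathbb A$-deflations to move the transfer into a restriction $\Phi_{p,q}^\ast$ on $x$. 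The only difference is cosmetic: the paper dismisses condition i) as an easy consequence of $\delta_{p,q}^G=(\delta_p^G\times\delta_q^G)\circ\Delta_G$, while you spell out the splitting via properties v) and iii) of the deflation pairing — a reasonable amount of extra detail, and it does reconstruct the ``easy calculation'' the paper leaves implicit.
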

\begin{proof}
We only prove part $ii)$ from Definition \ref{def:additive_tau_ring}, since the first assertion is an easy calculation, using the description $\delta_{p,q}^G= (\delta_p^G\times \delta_q^G)\circ \Delta_G$.\\
We thus calculate
\begin{align*}
\vartheta(x)(c+d) = & \sum_{m\geq 0} \langle (\delta_{m}^G)^\ast P^m(c+d), x_m\rangle \\
= & \sum_{m\geq 0} \Big\langle (\delta_m^G)^\ast \sum_{p+q=m} \tr_{\Sigma_p\wr G\times \Sigma_q\wr G}^{\Sigma_{p+q}\wr G} (P^p(c)\times P^q(d)), x_m\Big\rangle \\
= & \sum_{p,q\geq 0} \langle \tr_{\Sigma_p\times \Sigma_q\times G}^{\Sigma_{p+q}\times G} (\delta_{p,q}^G)^\ast (P^p(c)\times P^q(d)), x_{p+q} \rangle\\
= & \sum_{p,q\geq 0} \langle (\delta_{p,q}^G)^\ast (P^p(c)\times P^q(d)),\Phi_{p,q}^\ast x_{p+q} \rangle\\
= & D^2(P(c)\times P(d))(\Phi x) = \vartheta^2(\Phi x)(c,d).
\end{align*}
Here, in the third line, we use the observation that there is only one double coset in \[\Sigma_{p+q}\times G\backslash \Sigma_{p+q}\wr G / \Sigma_p\wr G\times \Sigma_q\wr G, \]
and hence the double coset formula for $(\delta_{p+q}^G)^\ast \tr_{\Sigma_p\wr G\times \Sigma_q\wr G}^{\Sigma_{p+q}\wr G}$ consists of a single summand.
\end{proof}

Finally, we consider in which sense this construction is functorial. We can study functoriality both in the global power functor $R$ and in the compact Lie group $G$. 

\begin{definition}\label{def:morphisms_of_tau_rings}
Let $A$ and $A^\prime$ be additive $\beta$-rings with structure morphisms $\vartheta, \vartheta^2$ and $\vartheta^\prime,\vartheta^{\prime2}$. Then a morphism $f\colon A\to A^\prime$ of $\beta$-rings is a ring homomorphism $f$ such that the relations 
\[ f(\vartheta(x)(a)) = \vartheta^\prime (x)(f(a)) \textup{ and } f(\vartheta^2(y)(a_1, a_2)) = \vartheta^{\prime2} (y)(f(a_1), f(a_2)) \]
hold for all $x\in \mathbf B, y\in \mathbf B^2$ and $a, a_1,a_2\in A$.
\end{definition}

\begin{prop}\label{prop:restrictions_induce_tau_ring_morphism}
\begin{enumerate}[i)]
\item Let $G$ be a compact Lie group and $f\colon R\to S$ be a morphism of global power functors with $\mathbb A$-deflations. Then $f(G)$ is a morphism between the $\beta$-rings $R(G)$ and $S(G)$.
\item Let $\varphi\colon K\to G$ be a homomorphism of compact Lie groups and $R$ be a global power functor with $\mathbb A$-deflations. Then $\varphi^\ast$ is a morphism between the $\beta$-rings $R(G)$ and $R(K)$.
\end{enumerate}
\end{prop}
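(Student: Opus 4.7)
My plan is to prove both parts by unwinding the formula for $\vartheta$ from Construction \ref{constr:structure_map_of_tau_rings} and using the compatibilities that come with being a morphism of global power functors with $\mathbb A$-deflations (for part $i$) and with being a restriction in such a structure (for part $ii$). For both parts, it suffices to verify the compatibility with $\vartheta$; the argument for $\vartheta^2$ is entirely parallel after replacing $\delta_n^G$ by $\delta_{p,q}^G$ from Construction \ref{constr:iterated_duality_and_structure_morphism}.

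For part $i$, unwinding gives
\[\vartheta(x)(a)=\sum_{n\geq 0}\langle (\delta_n^G)^\ast P^n(a),x_n\rangle\]
for $a\in R(G)$ and $x=(x_n)\in \mathbf B$. I then apply $f(G)$ and push it inside each term. Since $f$ is a morphism in $\mathcal GlPow_{\mathbb A\textup{-defl}}$, it is additive, commutes with the restriction $(\delta_n^G)^\ast$, commutes with the power operations $P^n$, and commutes with the pairing $\langle\_,\_\rangle$ in its first argument. Each of these is a single application of the defining property of a morphism in the respective category, so the calculation reads
\[f(\langle (\delta_n^G)^\ast P^n(a),x_n\rangle)=\langle (\delta_n^G)^\ast P^n(f(a)),x_n\rangle,\]
and summing over $n$ yields $f(\vartheta(x)(a))=\vartheta'(x)(f(a))$. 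The compatibility with $\vartheta^2$ follows the same pattern using that $f$ also commutes with external products.

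For part $ii$, the crucial point is the identity
\[\delta_n^G\circ (\Sigma_n\times\varphi)=(\Sigma_n\wr\varphi)\circ \delta_n^K\colon \Sigma_n\times K\longrightarrow \Sigma_n\wr G,\]
which is immediate on elements since both send $(\sigma,k)$ to $(\sigma;\varphi(k),\ldots,\varphi(k))$. Using property $i)$ of the $\mathbb A$-deflation in Definition \ref{def:global_power_functor_with_pairing} for the homomorphism $\varphi\colon K\to G$, I compute
\[\varphi^\ast\langle (\delta_n^G)^\ast P^n(a),x_n\rangle=\langle (\Sigma_n\times\varphi)^\ast(\delta_n^G)^\ast P^n(a),x_n\rangle,\]
and then rewrite $(\Sigma_n\times\varphi)^\ast(\delta_n^G)^\ast=(\delta_n^K)^\ast(\Sigma_n\wr\varphi)^\ast$ by the identity above. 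Finally, $(\Sigma_n\wr\varphi)^\ast P^n(a)=P^n(\varphi^\ast a)$ is the naturality of the power operations in a global power functor (axiom relating $P^n$ to restrictions along $\Sigma_n\wr\varphi$). Summing over $n$ gives $\varphi^\ast\vartheta_G(x)(a)=\vartheta_K(x)(\varphi^\ast a)$, which is the required compatibility.

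I do not expect any real obstacle here: both statements are formal diagram chases, and the only combinatorial input is the one line identity $\delta_n^G\circ(\Sigma_n\times\varphi)=(\Sigma_n\wr\varphi)\circ \delta_n^K$ together with its iterated variant for $\delta_{p,q}^G$. The mildly subtle point is bookkeeping which compact Lie group indexes each instance of the pairing $\langle\_,\_\rangle_{K,G}$, so that the relevant axiom of Definition \ref{def:global_power_functor_with_pairing} is applied at the correct level; once this is set up carefully, the proof reduces to stringing together the defining compatibilities.
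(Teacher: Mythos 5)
Your proposal is correct and follows essentially the same route as the paper's proof: for part $i)$ the paper runs the identical calculation using the compatibility of $f$ with restrictions, power operations, and the deflation pairing, and for part $ii)$ it uses the same three ingredients you identify (naturality $P^n(\varphi^\ast a)=(\Sigma_n\wr\varphi)^\ast P^n(a)$, the identity $(\delta_n^K)^\ast(\Sigma_n\wr\varphi)^\ast=(\Sigma_n\times\varphi)^\ast(\delta_n^G)^\ast$, and axiom $i)$ of the $\mathbb A$-deflation). The paper also restricts its displayed computation to $\vartheta$ and remarks that $\vartheta^2$ is analogous, exactly as you do.
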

\begin{proof}
We only check the compatibility with $\vartheta$, the calculations for $\vartheta^2$ are similar.\\
For the first assertion, we calculate for $x\in \mathbf B$ and $b\in R(G)$:
\begin{align*}
\vartheta_S(x)(f(G)(b)) = & D_G(P_S(f(G)(b)))(x) = \sum_{n\geq 0} \langle (\delta_n^G)^\ast P^n_S(f(G)(b)),x_n\rangle\\
= & \sum_{n\geq 0}  \langle f(\Sigma_n \times G) (\delta_n^G)^\ast P^n_R(b),x_n\rangle\\
= & \sum_{n\geq 0} f(G) \langle (\delta_n^G)^\ast P^n_R(b),x_n\rangle = f(G)(\vartheta_R(x)(b)).
\end{align*}
For the second assertion, we calculate for $x\in \mathbf B$ and $c\in R(G)$:
\begin{align*}
\vartheta_K(x)(\varphi^\ast(c)) = & \sum_{n\geq 0} \langle (\delta_n^K)^\ast P^n(\varphi^\ast (c)), x_n\rangle \\
= & \sum_{n\geq 0} \langle (\delta_n^K)^\ast (\Sigma_n\wr \varphi)^\ast P^n(c), x_n\rangle \\
= & \sum_{n\geq 0} \langle (\Sigma_n\times \varphi)^\ast (\delta_n^G)^\ast P^n(c), x_n\rangle \\
= & \sum_{n\geq 0} \varphi^\ast\langle (\delta_n^G)^\ast P^n(c), x_n\rangle = \varphi^\ast \vartheta_G(x)(c).
\end{align*}
The corresponding calculations with $\vartheta^2$ work analogously.
\end{proof}

Thus, we have proven the following result, where we denote by $\textup{Rep}$ the category of compact Lie groups and conjugacy classes of continuous group homomorphisms and by $\beta\textup{-Rings}$ the category of additive $\beta$-rings. Moreover, we denote by $\mathcal GlPow_{\mathbb A\textup{-defl}}$ the category of global power functors with $\mathbb A$-deflations.

\begin{thm}\label{thm:powered_algebras_yield_tau_rings}
The assignment $(G, R)\mapsto R(G)$ extends to a functor
\[ \ev\colon \textup{Rep}^{\operatorname{op}} \times \mathcal GlPow_{\mathbb A\textup{-defl}} \to \beta\textup{-Rings}, \]
which sends a conjugacy class of a morphism of compact Lie groups to the corresponding restriction.
\end{thm}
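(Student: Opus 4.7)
The plan is to assemble the theorem from the pieces already in hand: Proposition \ref{prop:additivity_for_tau_rings} supplies the additive $\beta$-ring structure on each $R(G)$, and Proposition \ref{prop:restrictions_induce_tau_ring_morphism} shows that morphisms of global power functors with $\mathbb A$-deflations and restrictions along group homomorphisms induce morphisms of $\beta$-rings. What remains is thus to verify four things: (a) functoriality of $R \mapsto R(G)$ as a functor of $\beta$-rings for fixed $G$, (b) contravariant functoriality of $G \mapsto R(G)$ for fixed $R$, (c) bifunctoriality, i.e.\ compatibility of the two separate functorialities in the product, and (d) descent of the group-theoretic functoriality from continuous homomorphisms to conjugacy classes, so that we land in $\textup{Rep}^{\operatorname{op}}$.

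For (a) and (b), I would observe that the underlying ring homomorphisms are just evaluation of a morphism of global power functors at $G$, respectively restriction $\varphi^\ast$ of the underlying global functor. Both of these already compose strictly, since the structure of a global power functor with $\mathbb A$-deflations is preserved by the forgetful functor to global functors, so composition is inherited from $\GF$ — no new verification is needed beyond noting that Proposition \ref{prop:restrictions_induce_tau_ring_morphism} certifies the $\beta$-ring compatibility. For (c), the bifunctoriality square for a pair $(\varphi\colon K\to G,\, f\colon R\to S)$ is the identity
\[ f(K)\circ \varphi^\ast_R \;=\; \varphi^\ast_S \circ f(G), \]
which is just naturality of $f$ as a morphism of global functors, evaluated at the morphism $\varphi$ of the Burnside category $\mathbf A$. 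Both sides being ring homomorphisms of $\beta$-rings, equality as set-maps suffices.

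The only genuinely non-formal step is (d): checking that $\varphi^\ast$ depends only on the conjugacy class of $\varphi$. The hard part here is not proving a new fact but invoking the right one — namely, that the Burnside category $\mathbf A$ factors through the category $\textup{Rep}$ of compact Lie groups and conjugacy classes of homomorphisms, so that any two conjugate continuous homomorphisms $\varphi, \psi\colon K\to G$ induce the same restriction $\varphi^\ast = \psi^\ast\colon R(G)\to R(K)$ for every global functor, in particular for every global power functor with $\mathbb A$-deflations. This is a standard property of global functors (cf.\ \cite[Construction 4.2.1 and Remark 4.2.6]{Schwede_2018}): an inner automorphism $c_g\colon G\to G$ induces the identity on $R(G)$ because $g\in G$ defines a path from $\Id_G$ to $c_g$ in the space of homomorphisms, and restrictions are homotopy invariant in this sense. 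Since the $\beta$-ring structure on $R(G)$ is built canonically out of the global-functor restrictions and the global power operations via the formula in Construction \ref{constr:structure_map_of_tau_rings}, this already implies the compatibility of $\varphi^\ast$ with $\vartheta$ and $\vartheta^2$ depends only on the class $[\varphi]\in \textup{Rep}(K,G)$.

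Putting these four steps together yields the required functor $\ev$. I would expect the write-up to be brief: one line reducing objects and morphisms to the preceding propositions, one line for bifunctoriality by naturality, and a short paragraph citing the factorisation of $\mathbf A$ through $\textup{Rep}$ to handle conjugacy-invariance.
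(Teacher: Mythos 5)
Your proposal is correct and follows exactly the route the paper takes, which is simply to assemble Propositions \ref{prop:additivity_for_tau_rings} and \ref{prop:restrictions_induce_tau_ring_morphism} into a bifunctoriality statement. You spell out two points the paper leaves implicit — the bifunctoriality square $f(K)\circ\varphi^\ast_R = \varphi^\ast_S\circ f(G)$ and the descent of $\varphi^\ast$ to conjugacy classes via the factorisation of the Burnside category through $\textup{Rep}$ — and both observations are accurate and belong exactly where you put them.
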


In this theorem, we only treat restrictions. In fact, transfers do not induce morphisms of $\beta$-rings. The reason is that transfers do not commute with the morphism $(\delta_n^G)^\ast$.

To illustrate the theory of $\beta$-rings, we calculate the $\beta$-operations in one example.

\begin{example}\label{example:beta_rings_as_tau_rings}
We apply our theory to the global power functor $\mathbb A$, where $C=\mathbb Z=\mathbb A(e)$. Then we obtain $\beta$-ring structures on the Burnside rings $\mathbb A(G)$ for all compact Lie groups $G$. The operations here are given as follows:\\
For the element $x= \Sigma_n/H \in \mathbf B =\bigoplus_{n\geq 0} \mathbb A(\Sigma_n)$, we obtain for finite $G$ and a finite $G$-set $X$ the formula
\[ \vartheta_H(X) \coloneqq \vartheta_{\Sigma_n/H}(X) = \langle P^n(X), \Sigma_n/H \rangle = \Sigma_n/H \times_{\Sigma_n} X^n = X^n/H,\]
where we consider the resulting set as a $G$-set. This formula agrees with the one from \cite{Vallejo_1990} and generalizes to compact Lie groups as shown in \cite{Rymer_1977}. Thus, in this case, we obtain the classical $\beta$-ring structure on $\mathbb A(G)$, using our abstract definition. Also the iterated operations $\vartheta^2$ used for additivity agree with those defined in \cite[Example 3.2]{Guillot_2006}. In fact, we have
\[ \vartheta_H^2(X,Y)= (X^p\times Y^q)/H\]
for a finite group $G$, finite $G$-sets $X$ and $Y$ and $H\subset \Sigma_p\times \Sigma_q$.
\end{example}

The above construction of $\beta$-ring structures on $R(G)$ highlights the importance of a global point of view. Theorem \ref{thm:powered_algebras_yield_tau_rings} shows that the notion of a global power functor with $\mathbb A$-deflations encodes compatible $\beta$-ring structures for all compact Lie groups at once. In this way, we may approach the still rather mysterious theory of $\beta$-rings from the direction of the well-structured global power functors.\\
The comparison \ref{thm:powered_algebras_yield_tau_rings} is not perfect, however. It remains open to what extent we can represent all $\beta$-rings by global power functors, for example. In general, the condition of having global power operations on a ring is stronger than admitting a $\beta$-ring structure. Also, we require no multiplicative behaviour of the operations $\vartheta$, whereas the power operations of a global power functor are multiplicative. In face of the complications posed in the analysis of $\beta$-rings, starting with finding a feasible definition, it seems sensible to propose that the notion of global power functors is the more fundamental one.

\appendix
\section{Transferring monads under lax functors}\label{section:monads_under_lax_functors}

In Section \ref{section:adjunctions_between_infinity_ring_spectra}, we study two lifting theorems for functors between algebras over the derived symmetric algebra monad in the global and stable homotopy categories. To separate the homotopy theoretic properties needed to provide the liftings from the formal background in monad theory, it is convenient to use the language of 2-categories. We also treat aspects of double categories, which we use when we encounter both left and right derived functors. For the theory of 2-categories, we refer to \cite{Kelly_Street_1974}, \cite{Street_1972b} and \cite[Chapter 7]{Borceux_1994_1}, for the theory of double categories, we refer to \cite{Kelly_Street_1974} and \cite{Shulman_2011}.

\begin{definition}\label{def:2_and_double_cats}
A 2-category is a category enriched in the category $\operatorname{Cat}$ of categories, and a double category is a category object in $\operatorname{Cat}$.
\end{definition}
Thus, explicitly, a 2-category consists of classes of objects, morphisms and transformations, where we have a horizontal composition $\star$ and a vertical composition $\circ$ of transformations, and compositions of morphisms is strictly unital and associative. For horizontal and vertical composition, we use the conventions
\vspace*{-1em}
\[ (\eta\colon g\to g^\prime)\star (\vartheta\colon f\to f^\prime) =
\begin{tikzcd}[cramped]
X \arrow[r, bend left, "f", ""{name=start theta, below}] \arrow[r, bend right, "f^\prime", swap, ""{name=end theta, above}] \arrow[Rightarrow, "\,\vartheta", from = start theta, to= end theta]
& Y \arrow[r, bend left, "g", ""{name=start eta, below}] \arrow[r, bend right, "g^\prime", swap, ""{name=end eta, above}] \arrow[Rightarrow, "\,\eta", from = start eta, to= end eta]
& Z
\end{tikzcd}
\textup{ and }
(\eta\colon g\to h)\circ (\vartheta\colon f\to g) = 
\begin{tikzcd}
X 	\arrow[r, bend left = 60, "f", ""{name=start theta, below, below=-2pt}]
	\arrow[r, "g" near start, ""{name=end theta}, ""{name=start eta, below=-2pt}]
	\arrow[r, bend right = 60, "h" swap, ""{name=end eta}]
& Y.
\arrow[Rightarrow, "\,\vartheta", from= start theta, to= end theta]
\arrow[Rightarrow, "\,\eta", from= start eta, to= end eta]
\end{tikzcd}
\]
A double category consists of a class of objects, classes of horizontal and vertical morphisms each being part of a category with common objects, and a class of transformations 
\[\begin{tikzcd}
X\arrow[r]\arrow[d] & Y \arrow[d] \arrow[ld, phantom, "\Swarrow"]\\
Z\arrow[r] & W,
\end{tikzcd}\]
also called squares or 2-cells. Transformations can be composed both horizontally and vertically, and all possible orders of composition agree. We denote horizontal composition by $\horizontal$ and vertical composition by $\vertical$. Note that for any double category $\mathbf C$, we obtain two 2-categories $\mathcal V(\mathbf C)$ and $\mathcal H(\mathbf C)$ by considering only vertical morphisms and 2-cells with identities as horizontal morphisms, or considering horizontal morphisms respectively.

For these notions of higher categories, there exist various versions of functors and natural transformations between them. We need the following:

\begin{definition}\label{def:lax_functors_2_cat}
Let $\mathcal C$ and $\mathcal D$ be 2-categories. A lax 2-functor $F\colon \mathcal C\to \mathcal D$ consists of the following data:
\begin{enumerate}[\itshape i)]
\item assignments $X\mapsto F(X)$, $(f\colon X\to Y)\mapsto (F(f)\colon F(X)\to F(Y))$ and $(\eta\colon f\to g)\mapsto (F(\eta)\colon F(f)\to F(g))$ of objects, morphisms and transformations,
\item and transformations $\alpha_X\colon id_{F(X)}\to F(id_X)$ and $\mu_{g,f}\colon F(g)\circ F(f) \to F(gf)$ for any object $X$ and any pair $(g,f)$ of composable morphisms in $\mathcal C$.
\end{enumerate}
These have to satisfy the compatibility conditions given in \cite[Definition 7.5.1]{Borceux_1994_1}.
\end{definition}

\begin{definition}\label{def:lax_transformation_2_cat}
Let $F,G\colon \mathcal C\to \mathcal D$ be two lax functors between 2-categories. A lax natural transformation $\eta\colon F\to G$ between $F$ and $G$ consists of assignments 
\[X\mapsto (\eta_X\colon F(X) \to G(X)) \textup{ and } (f\colon X\to Y)\mapsto \left(
\begin{tikzcd}
F(X) \arrow[r, "\eta_X"] \arrow[d, "Ff", swap] & G(X) \arrow[d, "Gf"] \arrow[ld, phantom, "\Swarrow_{\eta_f}"]\\
F(Y) \arrow[r, "\eta_Y", swap] & G(Y)
\end{tikzcd}\right),\]
such that the compatibility conditions given in \cite[Definition 7.5.2]{Borceux_1994_1} are satisfied.
\end{definition}
For two lax transformations $\eta\colon F\to G$ and $\theta\colon G\to H$ between lax $2$-functors, the composite is given by sending $X$ to the morphism $\theta_X\circ \eta_X$ and a morphism $f\colon X\to Y$ to the transformation 
\[\begin{tikzcd} 
F(X) \arrow[r, "\eta_X"] \arrow[d, "Ff", swap] & G(X) \arrow[d, "Gf"] \arrow[r, "\theta_X"] \arrow[ld, phantom, "\Swarrow_{\eta_f}"] & H(X) \arrow[d, "Hf"] \arrow[ld, phantom, "\Swarrow_{\theta_f}"]\\
F(Y) \arrow[r, "\eta_Y", swap] & G(Y) \arrow[r, "\theta_Y", swap] & H(Y).
\end{tikzcd} \]

We now relate these notions to the theory of monads. First note that the definition of a monad can be given in any 2-category, generalizing an endofunctor $T\colon \mathcal C\to \mathcal C$ to an endomorphism $T\colon X\to X$ of an object $X$ and the multiplication and unitality natural transformations $\mu\colon TT\to T$ and $\eta\colon \Id\to T$ to corresponding transformations. Moreover, we can consider morphisms between such monads, compare \cite[§1]{Street_1972a}.

\begin{definition}\label{def:lax_monad_functor}\label{def:monadic_natural_transformation}
	Let $\mathcal C$ be a 2-category and $(P, \mu, \eta)$ and $(Q, \nu, \eps)$ be monads on objects $X$ and $Y$ of $\mathcal C$ respectively. A (lax) monad morphism is a pair $(F, \rho)$, consisting of a morphism $F\colon X\to Y$ and a transformation $\rho\colon QF\to FP$, such that the diagrams
	\[\begin{tikzcd}
	F  \arrow[r, "\eps F"] \arrow[dr, "F\eta", swap] & QF \arrow[d, "\rho"]\\
	& FP
	\end{tikzcd}
	\hspace{1em}\textup{and}\hspace{1em}
	\begin{tikzcd}
	QQF \arrow[r, "Q\rho"] \arrow[dd, "\nu F", swap] & QFP \arrow[d, "\rho P"]\\
	& FPP \arrow[d, "F\mu"]\\
	QF \arrow[r, "\rho", swap] & FP
	\end{tikzcd}\]
	commute.\\
	Let $(F, \rho), (G, \sigma)\colon X\to Y$ be two monad functors between $P$ and $Q$. Then a monadic transformation between $F$ and $G$ is a transformation $\theta\colon F\to G$ such that $\theta P \circ \rho = \sigma \circ Q\theta$ as transformations $QF\to GP$.
\end{definition}

Note that if a 2-category $\mathcal C$ admits a construction of algebras, i.e. a right adjoint to the inclusion of $\mathcal C$ into the category of monads in $\mathcal C$, then a monad morphism $(F, \rho)\colon P\to Q$ induces a morphism between the corresponding objects of algebras over $P$ and $Q$. In particular, a monad functor in $\mathrm{Cat}$ induces a functor between the categories of algebras, and a monadic transformation a transformation between the induced functors.

The following observation goes back to \cite[5.4.1]{Benabou_1967}:

\begin{lemma}\label{lemma:monad_is_lax_functor}
Let $\mathcal C$ be a $2$-category, and let $1$ be the terminal $2$-category with a single object $\ast$, its identity morphism and the identity natural transformation. Then, the category of lax $2$-functors $1\to \mathcal C$ and lax natural transformations and the category of monads and lax monad morphisms in $\mathcal C$ are isomorphic via the functor
\begin{align*}
(T\colon 1\to \mathcal C) \mapsto &\; (T(id_\ast)\colon T(\ast)\to T(\ast),\: \mu\colon T(id_\ast)\circ T(id_\ast)\to T(id_\ast),\: \eta\colon id_{T(\ast)} \to T(id_\ast))\\
(\rho\colon S\to T) \mapsto & \left(\rho_\ast\colon S(\ast)\to T( \ast),\:\rho(id_\ast): 
\begin{tikzcd}[ampersand replacement=\&]
T(\ast) \arrow[r, "\rho_\ast"] \arrow[d, "S(id_\ast)", swap] \& T(\ast) \arrow[d, "T(id_\ast)"] \arrow[ld, phantom, "\Swarrow"]\\
S(\ast) \arrow[r, "\rho_\ast", swap] \& T(\ast)
\end{tikzcd}
\right).
\end{align*}
\end{lemma}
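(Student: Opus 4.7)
The proof is essentially a bookkeeping exercise: I will unpack the definitions of a lax $2$-functor $1 \to \mathcal C$ and a lax natural transformation between two such, and check that the resulting data and coherence axioms match exactly the definition of a monad in $\mathcal C$ and a lax monad morphism. The strategy therefore has three parts, one for objects, one for morphisms, and one for composition, followed by verifying that the assignment is bijective in both directions.

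First, I would unpack a lax $2$-functor $T\colon 1 \to \mathcal C$. Since $1$ has a single object $\ast$, a single morphism $id_\ast$, and only the identity $2$-cell, the data of $T$ consists of an object $X = T(\ast)$, an endomorphism $t = T(id_\ast)\colon X \to X$, and nothing on the level of $2$-cells beyond identities. The lax structure supplies exactly $\alpha_\ast\colon id_X \to t$ and $\mu_{id_\ast, id_\ast}\colon t\circ t \to T(id_\ast\circ id_\ast) = t$, to be read as the unit $\eta$ and multiplication $\mu$ of a prospective monad. The two coherence axioms for a lax $2$-functor from Definition \ref{def:lax_functors_2_cat} reduce, in the presence of only identities and a single composable pair, to associativity of $\mu$ and the two unit laws for $\eta$, so they become precisely the monad axioms. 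Conversely, any monad $(t, \mu, \eta)$ on $X$ clearly defines a lax $2$-functor $1\to \mathcal C$ by reversing this translation, and the two assignments are mutually inverse on objects.

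Second, I would unpack a lax natural transformation $\rho\colon S \to T$ between two such lax functors. It gives a morphism $\rho_\ast\colon S(\ast) \to T(\ast)$ and a $2$-cell $\rho(id_\ast)$ filling the naturality square against $id_\ast$, which after substituting $S(id_\ast) = s$ and $T(id_\ast) = t$ is exactly a $2$-cell of the shape $t\circ \rho_\ast \to \rho_\ast \circ s$ (or the other direction, following the convention fixed in the preceding definition). The lax naturality axioms of Definition \ref{def:lax_transformation_2_cat} specialise, for the unique morphism $id_\ast$ and its pair $(id_\ast, id_\ast)$, to the two hexagonal diagrams appearing in Definition \ref{def:lax_monad_functor}: one involving the units of $S$ and $T$ and $\rho$, the other involving the multiplications of $S$ and $T$ iterated with $\rho$. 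These two diagrams are the defining identities of a lax monad morphism, so the data of $\rho$ is exactly that of a lax monad morphism $(\rho_\ast, \rho(id_\ast))$. The inverse assignment is immediate.

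Third, I would check that this bijection is compatible with composition. Composition of two lax natural transformations $\rho\colon S \to T$ and $\sigma\colon T \to U$ is given, as recalled just before Lemma~\ref{lemma:monad_is_lax_functor}, by horizontal pasting of the naturality squares. Applied to the single morphism $id_\ast$, this pasting reproduces verbatim the standard formula for the composition of lax monad morphisms, namely the horizontal composite $\sigma_\ast\rho(id_\ast) \circ \sigma(id_\ast)\rho_\ast$ beneath the composed $1$-cell $\sigma_\ast\circ \rho_\ast$. Identities match identities on both sides trivially. Together with the bijection on objects and morphisms, this shows that the assignment is a functor which is bijective on objects and fully faithful, hence an isomorphism of categories.

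The only real subtlety is a notational one: one has to be careful about the direction of the naturality $2$-cells (lax vs. oplax convention) and about the order of whiskering in the two coherence diagrams, so that Definition \ref{def:lax_transformation_2_cat} specialises to Definition \ref{def:monadic_natural_transformation} on the nose rather than up to a choice of convention. Once the conventions are pinned down, the verifications are mechanical diagram-chases, with no genuine obstacle beyond tracking unitors carefully in the degenerate cases where one of the composition $2$-cells is the identity.
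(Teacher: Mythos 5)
Your proof is correct and takes the same route the paper intends: the paper's own proof is the one-line remark ``The proof is an easy translation of the corresponding properties,'' and your write-up simply supplies the bookkeeping that that sentence suppresses, unpacking the lax-functor and lax-transformation coherence data for the terminal $2$-category and matching them against the monad and lax monad morphism axioms (including the direction check $T(id_\ast)\circ\rho_\ast \to \rho_\ast\circ S(id_\ast)$, which correctly lines up with $\rho\colon QF\to FP$ in Definition \ref{def:lax_monad_functor}).
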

The proof is an easy translation of the corresponding properties.

Using this description, we see that monads are preserved under any lax 2-functor,  and lax monad morphisms are preserved if we moreover assume that some of the structure maps of a lax functor are invertible.

\begin{corollary}\label{corollary:lax_functor_preserves_monads}
Let $\mathcal C$ and $\mathcal D$ be $2$-categories and let $F\colon \mathcal C\to \mathcal D$ be a lax $2$-functor. Let $(T\colon X\to X, \nu\colon T\circ T\to T, \eps\colon id_X\to T)$ be a monad in $\mathcal C$. Then
\[ (F(T)\colon F(X)\to F(X),\: F(\nu)\circ \mu_{T,T}\colon FT\circ FT\to FT,\: F(\eps)\circ \eta_X\colon id_{FX}\to FT)\]
is a monad in $\mathcal D$.\\
Moreover, let $S,\, T$ be two monads in $\mathcal C$ on objects $X$ and $Y$ respectively, let $(f\colon X\to Y, \rho\colon Tf\to fS)$ be a lax monad morphism, and assume that the transformation $\mu_{f,S}$ is invertible. Then $(Ff\colon FX\to FY, \mu_{f,S}\inverse\circ F(\rho)\circ \mu_{T,f}\colon FT\circ Ff\to Ff\circ FS)$ is a lax monad morphism between $FS$ and $FT$.
\end{corollary}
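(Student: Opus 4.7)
The plan is to reduce everything to the general fact that lax $2$-functors and lax natural transformations compose, and then invoke the bijection from Lemma \ref{lemma:monad_is_lax_functor} between monads in $\mathcal C$ and lax $2$-functors $1\to \mathcal C$. Under this bijection, the desired monad (resp.\ monad morphism) structure in $\mathcal D$ is obtained by pre-composition with the lax $2$-functors representing the given monad $T$ (resp.\ the given lax monad morphism $(f,\rho)$), followed by post-composition with $F$. The corollary then becomes a matter of reading off the induced structure maps.

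For the first assertion, let $\tilde T\colon 1\to \mathcal C$ correspond to $(T,\nu,\eps)$. The composite assignment $F\circ \tilde T$ sends $\ast$ to $F(X)$ and $id_\ast$ to $F(T)$, and it carries a natural lax structure: the unit is $F(\eps)\circ \eta_X\colon id_{F(X)}\to F(id_X)\to F(T)$, and the associator on the unique composable pair is $F(\nu)\circ \mu_{T,T}\colon F(T)\circ F(T)\to F(T\circ T)\to F(T)$. That these verify the coherence axioms of a lax $2$-functor is immediate from the coherence axioms for $F$ together with the monad axioms of $T$. Reading the resulting monad back off via Lemma \ref{lemma:monad_is_lax_functor} yields exactly the data stated in the corollary.

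For the second assertion, the lax monad morphism $(f,\rho)$ corresponds to a lax natural transformation $\tilde \rho\colon \tilde S\to \tilde T$ with components $f\colon X\to Y$ and the $2$-cell $\rho\colon Tf\to fS$. Whiskering with $F$, I take $F(f)$ as the underlying morphism and, for the required $2$-cell component, the composite
\[ F(T)\circ F(f) \xrightarrow{\mu_{T,f}} F(Tf) \xrightarrow{F(\rho)} F(fS) \xrightarrow{\mu_{f,S}^{-1}} F(f)\circ F(S), \]
which is precisely the formula in the corollary's second clause. It is exactly at the last arrow that invertibility of $\mu_{f,S}$ is needed; without it, the composite would not land in $F(f)\circ F(S)$. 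The lax naturality square of $F\circ \tilde\rho$ reduces to the lax naturality of $\tilde\rho$, the lax functoriality of $F$, and the compatibility of $\rho$ with $\mu$ and $\eps$ from Definition \ref{def:lax_monad_functor}.

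The main (and essentially only) obstacle is a careful bookkeeping of how the coherence pentagons and triangles for $F$ interact with the coherence axioms for $\tilde T$ and $\tilde\rho$; all invocations of the invertibility hypothesis occur in orienting the composite above in the correct direction. Since the verification is a standard diagram chase in $2$-categorical algebra that specializes general results on composition of lax $2$-functors and lax natural transformations, I would present it concisely by pointing to the two structural facts above and indicating explicitly where $\mu_{f,S}^{-1}$ enters.
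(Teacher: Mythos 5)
Your proposal is correct and takes essentially the same approach as the paper. Both proofs invoke Lemma \ref{lemma:monad_is_lax_functor} to identify monads (resp.\ lax monad morphisms) in $\mathcal C$ with lax $2$-functors $1\to\mathcal C$ (resp.\ lax natural transformations between them), then deduce the first claim from the fact that composites of lax $2$-functors are lax $2$-functors, and the second from the observation that applying a lax $2$-functor to a lax natural transformation produces a lax natural transformation precisely when the relevant $\mu_{f,S}$ is invertible, which is needed to orient the composite $2$-cell as $FT\circ Ff\to Ff\circ FS$.
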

\begin{proof}
The first part of this corollary follows directly from the above lemma: We can consider the monad $T$ in $\mathcal C$ as a lax 2-functor $T\colon 1\to \mathcal C$. Then, the composition $F\circ T\colon 1\to \mathcal D$ is a lax 2-functor, with coherence morphisms the composites of the coherence morphisms of $F$ and $T$. Thus, $FT$ is a monad in $\mathcal D$, and the structure of the monad is exactly given by the described morphism.\\
For the second part, we note that the above lemma shows that a lax monad morphism from $S$ to $T$ is the same as a lax natural transformation between the corresponding lax 2-functors $1\to \mathcal C$. It is an easy argument that a lax 2-functor with invertible transformation $\mu_{f,S}$ preserves such a transformation.
\end{proof}

We now consider the double categorical context. This is used in Section \ref{section:adjunctions_between_infinity_ring_spectra} in order to handle the occurrence of both right and left derived functors. These different types of functors can conveniently be handled by assigning them as vertical and horizontal morphisms of a double category, respectively.\\
In a double category $\mathbf C$, we use a similar formalism as for 2-categories to consider whether a corresponding notion of weak double functor preserves monads and morphisms between them. We thus first define the appropriate notion of a weak double functor.

\begin{definition}\label{def:lax_oplax_double_functor}
Let $\mathbf C$ and $\mathbf D$ be double categories. A lax-oplax double functor $F\colon \mathbf C\to \mathbf D$ consists of assignments of objects, horizontal 1-cells, vertical 1-cells and 2-cells of $\mathbf D$ to those of $\mathbf C$, and the following coherence data:
\begin{enumerate}[\itshape i)]
\item Invertible unitality 2-cells
\[\begin{tikzcd}
FX \arrow[r, "F(id_X)"] \arrow[d, equals] & FX \arrow[d, equals]\arrow[ld, phantom, "\Swarrow_{\alpha_X^h}"]\\
FX \arrow[r, equals, "id_{F(X)}", swap] & FX
\end{tikzcd}
\hspace{1em} \textup{and} \hspace{1em}
\begin{tikzcd}
FX \arrow[r, equals] \arrow[d, "F(id_X)", swap] & FX \arrow[d, equals, "id_{F(X)}"]\arrow[ld, phantom, "\Swarrow_{\alpha_X^v}"]\\
FX \arrow[r, equals] & FX
\end{tikzcd}\]
for any object $X$ of $\mathbf C$.
\item Composition 2-cell
\[\begin{tikzcd}
FX \arrow[rr, "F(gf)"] \arrow[d, equals] && FZ \arrow[d, equals] \arrow[lld, phantom, "\Swarrow_{\mu_{g,f}^h}"]\\
FX \arrow[r, "Ff", swap] & FY \arrow[r, "Fg", swap] & FZ
\end{tikzcd}
\hspace{1em} \textup{and} \hspace{1em}
\begin{tikzcd}
	FX \arrow[r, equals] \arrow[dd, "F(gf)", swap] & FX \arrow[d, "Ff"] \arrow[ldd, phantom, "\Swarrow_{\mu_{g,f}^v}"]\\
	& FY \arrow[d, "Fg"]\\
	FZ \arrow[r, equals] & FZ
\end{tikzcd}\]
for composable pairs $X\nameto{f} Y\nameto{g}Z$ of horizontal and vertical morphisms, respectively.
\end{enumerate}
These coherence cells need to satisfy the unitality, associativity and naturality relations as written down in \cite[Definition 6.1 {\itshape v)} and {\itshape vi)}]{Shulman_2011}.
\end{definition}
\begin{remark}\label{remark:double_functor_definition}
Note that in \cite{Shulman_2011}, the direction of the vertical structure 2-cells is reversed. In that work, all of the above are assumed to be isomorphisms, so the direction of the cells is irrelevant. In our application, the 2-cells $\mu_{g,f}^v$ are not invertible in general, so we have to take care of the orientation. We choose the given convention since deriving (vertical) left derivable functors comes endowed with an oplax structure, and deriving (horizontal) right derivable functors with a lax structure.\\
On the other hand, the unitality 2-cells $\alpha_X^h$ and $\alpha_X^v$ are assumed to be invertible. This allows us to obtain from a lax-oplax double functor a lax 2-functor $\mathcal V(F)\colon \mathcal V(\mathbf C)\to \mathcal V(\mathbf D)$ by applying $F$ to vertical morphisms, and by defining
\[ \mathcal V(F) \left( \begin{tikzcd}
X \arrow[r, equals] \arrow[d, "f", swap] & X \arrow[d, "g"] \arrow[ld, phantom, "\Swarrow_{\eta}"]\\
Y \arrow[r, equals] & Y
\end{tikzcd}\right)
= \begin{tikzcd}[row sep = large]
FX \arrow[r, equals] \arrow[d, equals] & FX \arrow[d, equals]\arrow[ld, phantom, "\Swarrow_{(\alpha_X^h)\inverse}"] \\
FX \arrow[r, "F(id_X)"] \arrow[d, "Ff", swap] & FX \arrow[d, "Fg"] \arrow[ld, phantom, "\Swarrow_{F\eta}"]\\
FY \arrow[r, "F(id_Y)", swap] \arrow[d, equals] & FY \arrow[d, equals]\arrow[ld, phantom, "\Swarrow_{\alpha_X^h}"]\\
FY\arrow[r, equals] & FY.
\end{tikzcd}\]
In the same way, we obtain an oplax 2-functor $\mathcal H(F)\colon \mathcal H(\mathbf C)\to \mathcal H(\mathbf D)$.
\end{remark}

In fact, the constraint that all $\alpha_X^h$ are invertible can be used to strictify $F$ into a lax-oplax functor where $\alpha_X^h=id_X$ holds. The main result is the following:

\begin{lemma}\label{lemma:shrinking_of_unitors_for_double_functor}
Let $\mathbf C$ and $\mathbf D$ be double categories and $F\colon \mathbf C\to \mathbf D$ be a lax-oplax double functor. Let 
\[\begin{tikzcd}
X\arrow[r, equals] \arrow[d, "f", swap] & X\arrow[d, "g"] \arrow[ld, phantom, "\Swarrow_{\theta}"]\\
Y \arrow[r, equals] & Y
\end{tikzcd} 
\hspace{1em} \textrm{ and } \hspace{1em}
\begin{tikzcd}
X \arrow[r, "h"] \arrow[d, "g", swap] & X^\prime \arrow[d, "g^\prime"] \arrow[ld, phantom, "\Swarrow_{\eta}"]\\
Y \arrow[r, "k", swap] & Y^\prime
\end{tikzcd}\]
be $2$-cells in $\mathbf C$. Then, the $2$-cells
\[\begin{tikzcd}
FX \arrow[r, "Fh"] \arrow[d, "Ff", swap] & FX^\prime \arrow[d, "Fg^\prime"] \arrow[ld, phantom, "\Swarrow_{F(\theta\horizontal \eta)}"]\\
FY \arrow[r, "Fk", swap] & FY^\prime
\end{tikzcd} 
\hspace{1em} \textrm{ and } \hspace{1em}
\begin{tikzcd}
FX \arrow[r, equals] \arrow[d, equals]
	& FX \arrow[r, "Fh"] \arrow[d, equals] \arrow[ld, phantom, "\Swarrow_{(\alpha_X^h)\inverse}"]
	& FX^\prime \arrow[ddd, "Fg^\prime"] \arrow[lddd, phantom, "\Swarrow_{F(\eta)}"]\\
FX \arrow[r, "F(id_X)"] \arrow[d, "Ff", swap] 
	& FX \arrow[d, "Fg"] \arrow[ld, phantom, "\Swarrow_{F(\theta)}"]\\
FY \arrow[r, "F(id_Y)", swap] \arrow[d, equals] 
	& FY \arrow[d, equals] \arrow[ld, phantom, "\Swarrow_{\alpha_Y^h}"]\\
FY \arrow[r, equals]
	& FY\arrow[r, "Fk", swap]
	& FY^\prime 
\end{tikzcd}\]
in $\mathbf D$ agree. The analogous statement holds for $F(\eta\horizontal \theta)$.
\end{lemma}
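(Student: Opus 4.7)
The plan is to derive the identity from the axioms of a lax-oplax double functor: the naturality of the horizontal compositor $\mu^h$, the right unit axiom relating $\mu^h$ and $\alpha^h$, and the interchange law of the double category $\mathbf{D}$.

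First, I would exploit the strict unitality of horizontal composition in $\mathbf{C}$: since $h\circ id_X=h$ and $k\circ id_Y=k$, naturality of the horizontal compositor $\mu^h$ applied to the $2$-cells $\theta\colon id_X\Rightarrow id_Y$ and $\eta\colon h\Rightarrow k$ yields the identity
\[
F(\theta \horizontal \eta) \vertical \mu^h_{k,\,id_Y} \;=\; \mu^h_{h,\,id_X} \vertical (F\theta \horizontal F\eta)
\]
of $2$-cells from $Fh$ to $Fk\circ F(id_Y)$. Second, I would invoke the right unit axiom of a lax-oplax double functor: for every horizontal $1$-cell $p\colon X\to Y$, the axiom asserts that the composite $\mu^h_{p,\,id_X}\vertical(\alpha_X^h\horizontal 1_{Fp})$ equals the identity $2$-cell $1_{Fp}$ on $Fp$. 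Since $\alpha_X^h$ is invertible, so is $\alpha_X^h\horizontal 1_{Fp}$; this forces the compositor to equal the explicit isomorphism $\mu^h_{p,\,id_X}=(\alpha_X^h)^{-1}\horizontal 1_{Fp}$.

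Substituting this formula for both $\mu^h_{h,\,id_X}$ and $\mu^h_{k,\,id_Y}$ in the naturality identity, and then vertically cancelling $\alpha_Y^h\horizontal 1_{Fk}$ on the right by using its invertibility, I obtain
\[
F(\theta \horizontal \eta) \;=\; \big((\alpha_X^h)^{-1} \horizontal 1_{Fh}\big) \vertical (F\theta \horizontal F\eta) \vertical \big(\alpha_Y^h \horizontal 1_{Fk}\big).
\]
The interchange law of $\mathbf{D}$ then regroups this triple vertical composition of horizontal composites as a single horizontal composite of vertical composites, giving
\[
\big((\alpha_X^h)^{-1}\vertical F\theta \vertical \alpha_Y^h\big) \horizontal \big(1_{Fh}\vertical F\eta \vertical 1_{Fk}\big) \;=\; \big((\alpha_X^h)^{-1}\vertical F\theta \vertical \alpha_Y^h\big)\horizontal F\eta,
\]
which is precisely the $2$-cell on the right-hand side of the statement. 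The analogous identity for $F(\eta\horizontal \theta)$ will follow by the mirror argument, using the left unit axiom in place of the right one.

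The main obstacle I expect is careful bookkeeping of orientations and placements. Because $F$ is only lax-oplax, the horizontal compositor $\mu^h$ is oplax — it points from $F$ of a composite to the composite of the $F$'s — so the naturality square has a specific direction which must be tracked, and one must verify that $\mu^h_{p,\,id_X}$ is a two-sided, not just one-sided, inverse of $\alpha_X^h\horizontal 1_{Fp}$ (this uses both invertibility of $\alpha_X^h$ and the unit axiom). With these details in place, the remainder is a standard interchange computation in a double category.
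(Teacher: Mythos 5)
Your proposal is correct and follows essentially the same route as the paper's proof: both arguments rest on the naturality of $\mu^h$, the unit axiom identifying $\mu^h_{p,\,id}$ with $(\alpha^h)^{-1}\horizontal 1_{Fp}$ (which also yields its invertibility), and the interchange law; the paper merely packages these steps as a three-stage chain of pasting diagrams while you write them out equationally.
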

\begin{proof}
We use the following chain of pasting diagrams:
\begin{align*}
\begin{tikzcd}[ampersand replacement=\&]
FX \arrow[r, equals] \arrow[d, equals]
	\& FX \arrow[r, "Fh"] \arrow[d, equals] \arrow[ld, phantom, "\Swarrow_{(\alpha_X^h)\inverse}"]
	\& FX^\prime \arrow[ddd, "Fg^\prime"] \arrow[lddd, phantom, "\Swarrow_{F(\eta)}"]\\
FX \arrow[r, "F(id_X)"] \arrow[d, "Ff", swap] 
	\& FX \arrow[d, "Fg"] \arrow[ld, phantom, "\Swarrow_{F(\theta)}"]\\
FY \arrow[r, "F(id_Y)", swap] \arrow[d, equals] 
	\& FY \arrow[d, equals] \arrow[ld, phantom, "\Swarrow_{\alpha_Y^h}"]\\
FY \arrow[r, equals]
	\& FY\arrow[r, "Fk", swap]
	\& FY^\prime 
\end{tikzcd} =
	& \begin{tikzcd}[ampersand replacement=\&]
	FX \arrow[r, equals] \arrow[d, equals]
		\& FX \arrow[r, "Fh"] \arrow[d, equals] \arrow[ld, phantom, "\Swarrow_{(\alpha_X^h)\inverse}"]
		\& FX^\prime \arrow[d, equals] \arrow[ld, phantom, "\Swarrow_{id_{Fh}}"]\\
	FX \arrow[r, "F(id_X)"] \arrow[d, "Ff", swap] 
		\& FX \arrow[d, "Fg"] \arrow[r, "Fh"] \arrow[ld, phantom, "\Swarrow_{F(\theta)}"]
		\& FX^\prime \arrow[d, "Fg^\prime"] \arrow[ld, phantom, "\Swarrow_{F(\eta)}"]\\
	FY \arrow[r, "F(id_Y)", swap] \arrow[d, equals] 
		\& FY \arrow[d, equals] \arrow[r, "Fk", swap] \arrow[ld, phantom, "\Swarrow_{\alpha_Y^h}"]
		\& FY^\prime \arrow[d, equals] \arrow[ld, phantom, "\Swarrow_{id_{Fk}}"]\\
	FY \arrow[r, equals]
		\& FY\arrow[r, "Fk", swap]
		\& FY^\prime 
	\end{tikzcd}\\
= \begin{tikzcd}[ampersand replacement=\&]
	FX \arrow[rr, "Fh"] \arrow[d, equals]
		\& \& FX^\prime \arrow[d, equals] \arrow[lld, phantom, "\Swarrow_{\mu_{h, id}^h}"]\\
	FX \arrow[r, "F(id_X)"] \arrow[d, "Ff", swap] 
		\& FX \arrow[d, "Fg"] \arrow[r, "Fh"] \arrow[ld, phantom, "\Swarrow_{F(\theta)}"]
		\& FX^\prime \arrow[d, "Fg^\prime"] \arrow[ld, phantom, "\Swarrow_{F(\eta)}"]\\
	FY \arrow[r, "F(id_Y)", swap] \arrow[d, equals] 
		\& FY \arrow[r, "Fk", swap]
		\& FY^\prime \arrow[d, equals] \arrow[lld, phantom, "\Swarrow_{(\mu_{k, id}^h)\inverse}"]\\
	FY \arrow[rr, "Fk", swap]
		\&\& FY^\prime 
	\end{tikzcd}
=& \begin{tikzcd}[ampersand replacement = \&]
FX \arrow[r, "Fh"] \arrow[d, "Ff", swap] \& FX^\prime \arrow[d, "Fg^\prime"] \arrow[ld, phantom, "\Swarrow_{F(\theta\horizontal \eta)}"]\\
FY \arrow[r, "Fk", swap] \& FY^\prime.
\end{tikzcd}
\end{align*}
Here, we used the unitality and naturality conditions on a lax-oplax double functor in the second and third step respectively. Also note that the unitality condition guarantees that the transformation $\mu_{k, id}^h$ is indeed invertible.
\end{proof}

Now, we define the relevant notions of monads and morphisms between them in a double category. In our application, we have a left derivable monad and a right derivable monad morphism, and this motivates the following definition. Moreover, we also define monadic transformations between monad morphisms in this context.
\begin{definition}\label{def:monad_in_double_category}
Let $\mathbf C$ be a double category. A vertical monad $T$ in $\mathbf C$ is a monad in the vertical 2-category $\mathcal V(\mathbf C)$. A horizontal monad morphism between two vertical monads $S$ and $T$ on objects $X$ and $Y$ respectively is a horizontal morphism $F\colon X\to Y$ together with a 2-cell 
\[\begin{tikzcd}
X\arrow[r, "F"] \arrow[d, "S", swap] & Y \arrow[d, "T"] \arrow[ld, phantom, "\Swarrow_{\rho}"]\\
X\arrow[r, "F", swap] & Y,
\end{tikzcd}\]
satisfying the unitality and mulitplicativity conditions
\[ \begin{tikzcd}
X\arrow[r, "F"] \arrow[d, "S", swap]
	& Y \arrow[d, "T"] \arrow[r, equals] \arrow[ld, phantom, "\Swarrow_{\rho}"]
	& Y \arrow[d, equals] \arrow[ld, phantom, "\Swarrow_{\eta_T}"]\\
X\arrow[r, "F", swap]
	& Y \arrow[r, equals] 
	& Y
\end{tikzcd}
=  
\begin{tikzcd}
X\arrow[d, "S", swap] \arrow[r, equals] 
	& X\arrow[d, equals] \arrow[r, "F"] \arrow[ld, phantom, "\Swarrow_{\eta_S}"]
	& Y \arrow[d, equals] \arrow[ld, phantom, "\Swarrow_{id_{F}}"]\\
X \arrow[r, equals]
	& X\arrow[r, "F", swap] 
	& Y
\end{tikzcd}\]
and
\[\begin{tikzcd}
X \arrow[r, equals] \arrow[dd, "S", swap] 
	& X \arrow[d, "S", swap] \arrow[r, "F"] \arrow[ldd, phantom, "\Swarrow_{\mu_S}"]
	& Y \arrow[d, "T"] \arrow[ld, phantom, "\Swarrow_\rho"]\\
& X \arrow[r, "F"] \arrow[d, "S", swap] 
	& Y\arrow[d, "T"] \arrow[ld, phantom, "\Swarrow_\rho"]\\
X \arrow[r, equals] 
	& X \arrow[r, "F", swap] 
	& Y
\end{tikzcd}
=  
\begin{tikzcd}
X \arrow[r, "F"] \arrow[dd, "S", swap]
	& Y\arrow[dd, "T"] \arrow[r, equals] \arrow[ldd, phantom, "\Swarrow_\rho"] 
	& Y \arrow[d, "T"] \arrow[ldd, phantom, "\Swarrow_{\mu_{T}}"]\\
&& Y \arrow[d, "T"]\\
X \arrow[r, "F", swap] 
	& Y \arrow[r, equals]
	& Y.
\end{tikzcd}\]
For two horizontal monad morphisms $(F, \rho)$ and $(G, \sigma)$ between $S$ and $T$, a monadic transformation is a 2-cell
\[ \begin{tikzcd}
X \arrow[r, "F"] \arrow[d, equals] & Y \arrow[d, equals] \arrow[ld, phantom, "\Swarrow_{\eta}"]\\
X \arrow[r, "G", swap] & Y
\end{tikzcd}
\textup{ such that }
\begin{tikzcd}
X \arrow[r, "F"] \arrow[d, "S", swap] & Y \arrow[d, "T"] \arrow[ld, phantom, "\Swarrow_{\rho}"]\\
X \arrow[r, "F"] \arrow[d, equals] & Y \arrow[d, equals] \arrow[ld, phantom, "\Swarrow_{\eta}"]\\
X \arrow[r, "G", swap] & Y
\end{tikzcd}
= 
\begin{tikzcd}
X \arrow[r, "F"] \arrow[d, equals] & Y \arrow[d, equals] \arrow[ld, phantom, "\Swarrow_{\eta}"]\\
X \arrow[r, "G"] \arrow[d, "S", swap] & Y \arrow[d, "T"] \arrow[ld, phantom, "\Swarrow_{\sigma}"]\\
X \arrow[r, "G", swap] & Y
\end{tikzcd}\]
holds.
\end{definition}

\begin{prop}\label{prop:lax_double_functor_preserves_monads}
Let $\mathbf C$ and $\mathbf D$ be two double categories and let $L\colon \mathbf C\to \mathbf D$ be a lax-oplax double functor. Let $(S, \mu, \eta)$ be a vertical monad in $\mathbf C$. Then
\[(\mathcal V(L)(S), \mathcal V(L)(\mu)\circ \mu_{S,S},\mathcal V(L)(\eta) \circ \alpha_X^v)\]
is a vertical monad in $\mathbf D$.\\
Moreover, let $S$ and $T$ be vertical monads in $\mathbf C$ and let $(F,\rho)$ be a horizontal monad morphism between them. Then $(LF, L\rho)$ is a horizontal monad morphism between $\mathcal V(L)(S) $ and $\mathcal V(L)(T)$.\\
Furthermore, for any monadic transformation $\eta\colon F\to G$ between two monad morphisms, the natural transformation $\mathcal H(L)(\eta)$ is a monadic transformation between $LF$ and $LG$.
\end{prop}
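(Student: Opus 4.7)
The plan is to reduce each of the three assertions to an appropriate application of Corollary \ref{corollary:lax_functor_preserves_monads}, combined with pasting of the structure $2$-cells $\alpha^h, \alpha^v, \mu^h, \mu^v$ of $L$. For the first assertion, a vertical monad in $\mathbf{C}$ is by definition a monad in the $2$-category $\mathcal{V}(\mathbf{C})$, and by Remark \ref{remark:double_functor_definition} the assignment $\mathcal{V}(L)$ is a lax $2$-functor $\mathcal{V}(\mathbf{C})\to \mathcal{V}(\mathbf{D})$. Applying Corollary \ref{corollary:lax_functor_preserves_monads} to $\mathcal{V}(L)$ immediately produces a monad structure on $\mathcal{V}(L)(S)$. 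The only thing left is to verify that the compositor and unitor of $\mathcal{V}(L)$ — obtained in Remark \ref{remark:double_functor_definition} by sandwiching $\mu_{S,S}^v$ and $\alpha_X^v$ between $\alpha^h$'s and their inverses — yield the claimed formulas for the multiplication and unit of $\mathcal{V}(L)(S)$. This is a direct unwinding, using that the $\alpha^h$'s are invertible and cancel in pairs.

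For the second assertion, I would take the candidate structure $2$-cell for $(LF, L\rho)$ to be $L\rho$ itself, whose boundary matches the required shape. Each of the two axioms for a horizontal monad morphism is an equation between pasted $2$-cells in $\mathbf{C}$, and functoriality of $L$ on $2$-cells preserves such equations. However, $L$ respects pasting only up to structure cells, so one must insert $\mu^v$ (to mediate between $L(S\vertical S)$ and $LS\vertical LS$), $\mu^h$ (to mediate between $L(F\horizontal \mathrm{id})$ and $LF\horizontal L\mathrm{id}$), and their unitary counterparts $\alpha^v, \alpha^h$ at the boundaries of each pasting diagram. The verification is that, on both sides of each axiom, the inserted cells rearrange into precisely the structure cells defining the multiplication and unit of $\mathcal{V}(L)(S)$ and $\mathcal{V}(L)(T)$, so what remains is the $L$-image of the original axiom for $(F,\rho)$. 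Invertibility of the horizontal unitors $\alpha^h$, together with Lemma \ref{lemma:shrinking_of_unitors_for_double_functor} for simplifying the horizontal pastings with $\alpha^h$, makes these cancellations tractable.

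For the third assertion, a monadic transformation $\eta\colon F\to G$ is a $2$-cell with identity vertical boundaries, so it lives in $\mathcal{H}(\mathbf{C})$. The image $\mathcal{H}(L)(\eta)$ is defined analogously to Remark \ref{remark:double_functor_definition} by pasting $L\eta$ with $\alpha^v_X$ and $(\alpha_Y^v)^{-1}$ so as to produce strict identity vertical boundaries in $\mathbf{D}$. The monadicity equation $\eta \vertical \rho = \sigma \vertical \eta$ in $\mathbf{C}$ is an equation of pasted $2$-cells, which, after applying $L$ and inserting the appropriate $\mu^v$'s and $\alpha^v$'s at the boundaries exactly as in the previous paragraph, becomes the monadicity equation for $\mathcal{H}(L)(\eta)$ relating $LF$ and $LG$ via $L\rho$ and $L\sigma$. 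The main obstacle throughout parts two and three is purely combinatorial: the careful bookkeeping of which structure cell appears at which boundary of each pasting diagram, and verifying via the coherence axioms of a lax-oplax double functor (plus Lemma \ref{lemma:shrinking_of_unitors_for_double_functor}) that all inserted cells cancel correctly.
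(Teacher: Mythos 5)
Your proposal follows the same strategy as the paper: apply Corollary \ref{corollary:lax_functor_preserves_monads} to the lax $2$-functor $\mathcal V(L)$ for the first part, and establish the second and third parts by pasting-diagram computations in which the structure cells of $L$ are inserted and then cancelled via Lemma \ref{lemma:shrinking_of_unitors_for_double_functor} (and its horizontal analogue for the monadic transformation). This is precisely the paper's argument, so the proposal is correct.
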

\begin{proof}
The first part is a direct consequence of \eqref{corollary:lax_functor_preserves_monads}, since a vertical monad is a monad in the vertical $2$-category $\mathcal V(\mathbf C)$ and $L$ induces a lax 2-functor $\mathcal V(L)\colon \mathcal V(\mathbf C)\to \mathcal V(\mathbf D)$.

We now prove the second part. We check the unitality condition on $(LF, L\rho)$, and thus consider
\begin{align*}
\begin{tikzcd}[ampersand replacement = \&]
LX \arrow[r, "LF"] \arrow[ddd, "LS", swap] 
	\& LY \arrow[r, equals] \arrow[d, equals] \arrow[lddd, phantom, "\Swarrow_{L\rho}"]
	\& LY \arrow[r, equals] \arrow[d, equals] \arrow[ld, phantom, "\Swarrow_{\alpha_Y\inverse}"]
	\& LY \arrow[ddd, equals] \arrow[lddd, phantom, "\Swarrow_{\alpha_Y}" below]\\
 \& LY \arrow[r, "L(id_Y)"] \arrow[d, "LT", swap]
	 \& LY \arrow[d, "L(id_Y)"] \arrow[ld, phantom, "\Swarrow_{L(\eta_T)}"]\\
 \& LY \arrow[r, "L(id_Y)", swap] \arrow[d, equals]
	 \& LY \arrow[d, equals] \arrow[ld, phantom, "\Swarrow_{\alpha_Y}"]\\
LX\arrow[r, "LF", swap]
	\& LY \arrow[r, equals]
	\& LY \arrow[r, equals]
	\& LY
\end{tikzcd}
& =\\
= \begin{tikzcd}[ampersand replacement = \&, column sep = large]
LX \arrow[r, "LF"] \arrow[d, "LS", swap] 
	\& LY \arrow[r, equals] \arrow[d, "L(id_Y)"] \arrow[ld, phantom, "\Swarrow_{L(\rho\horizontal \eta_T)}"]
	\& LY \arrow[d, equals] \arrow[ld, phantom, "\Swarrow_{\alpha_Y}"right]\\
LX\arrow[r, "LF", swap]
	\& LY \arrow[r, equals]
	\& LY
\end{tikzcd} 
&= \begin{tikzcd}[ampersand replacement = \&, column sep = large]
LX \arrow[r, "LF"] \arrow[d, "LS", swap] 
	\& LY \arrow[r, equals] \arrow[d, "L(id_Y)"] \arrow[ld, phantom, "\Swarrow_{L(\eta_S\horizontal id_F)}"]
	\& LY \arrow[d, equals] \arrow[ld, phantom, "\Swarrow_{\alpha_Y}" right]\\
LX\arrow[r, "LF", swap]
	\& LY \arrow[r, equals]
	\& LY
\end{tikzcd}\\
= \begin{tikzcd}[ampersand replacement = \&]
LX\arrow[r, equals] \arrow[d, equals]
	\& LX \arrow[r, "LF"] \arrow[d, equals] \arrow[ld, phantom, "\Swarrow_{\alpha_X\inverse}"]
	\& LY \arrow[r, equals] \arrow[ddd, "L(id_Y)"] \arrow[lddd, phantom, "\Swarrow_{L(id_F)}" below]
	\& LY \arrow[ddd, equals] \arrow[lddd, phantom, "\Swarrow_{\alpha_Y}" below]\\
LX \arrow[r, "LF"] \arrow[d, "LS", swap]
	\& LX \arrow[d, "L(id_X)"] \arrow[ld, phantom, "\Swarrow_{L(\eta_S)}"]\\
LX \arrow[r, "LF", swap] \arrow[d, equals]
	\& LX  \arrow[d, equals] \arrow[ld, phantom, "\Swarrow_{\alpha_X}"]\\
LX\arrow[r, equals]
	\& LX \arrow[r, "LF", swap]
	\& LY \arrow[r, equals]
	\& LY
\end{tikzcd}
& = \begin{tikzcd}[ampersand replacement = \&]
LX\arrow[r, equals] \arrow[d, equals]
	\& LX \arrow[r, equals] \arrow[d, equals] \arrow[ld, phantom, "\Swarrow_{\alpha_X\inverse}"]
	\& LX \arrow[r, "LF"] \arrow[ddd, "L(id_Y)"] \arrow[lddd, phantom, "\Swarrow_{\alpha_X}" below]
	\& LY \arrow[ddd, equals] \arrow[lddd, phantom, "\Swarrow_{id_{LF}}" below]\\
LX \arrow[r, "LF"] \arrow[d, "LS", swap]
	\& LX \arrow[d, "L(id_X)"] \arrow[ld, phantom, "\Swarrow_{L(\eta_S)}"]\\
LX \arrow[r, "LF", swap] \arrow[d, equals]
	\& LX  \arrow[d, equals] \arrow[ld, phantom, "\Swarrow_{\alpha_X}"]\\
LX\arrow[r, equals]
	\& LX \arrow[r, equals]
	\& LX \arrow[r, "LF", swap]
	\& LY.
\end{tikzcd}
\end{align*}
Here, we use Lemma \ref{lemma:shrinking_of_unitors_for_double_functor} in the first and third step.\\
The multiplicativity condition is obtained by similar pasting diagrams. Thus $(LF, L( \rho))$ is a horizontal monad morphism.\\
The fact that $\mathcal H(L)(\eta)$ is a monadic transformation between $LF$ and $LG$ is proven in the same way, using a horizontal version of \eqref{lemma:shrinking_of_unitors_for_double_functor} for the exchange relation.
\end{proof}

Using this proposition, we also consider how the structure transformations of a lax-oplax double functor behave for a composite of monad functors.

\begin{lemma}\label{lemma:oplax_structure_map_is_monadic}
Let $\mathbf C$ and $\mathbf D$ be double categories, $R\colon X\to X,\, S\colon Y\to Y$ and $T\colon Z\to Z$ be vertical monads in $\mathbf C$ and let $(F,\rho)$ be a horizontal monad morphism from $R$ to $S$ and $(G, \sigma)$ be a horizontal monad morphism from $S$ to $T$. Let moreover $L\colon \mathbf C\to \mathbf D$ be a lax-oplax double functor. Then the structure maps $\mu_{G,F}^h \colon L(GF)\to LG \circ LF$ and $\alpha_X^h\colon L(id_X)\to id_{LX}$ are monadic transformations in $\mathbf D$.
\end{lemma}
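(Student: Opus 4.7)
The plan is to recognize both monadic transformation axioms as direct instances of the coherence axioms built into the notion of a lax-oplax double functor, so that no additional input beyond Definition \ref{def:lax_oplax_double_functor} is required.

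For $\mu^h_{G,F}$: by Proposition \ref{prop:lax_double_functor_preserves_monads}, both $L(GF)$ with structure cell $L(\sigma \horizontal \rho)$ and $LG \circ LF$ with structure cell $L\sigma \horizontal L\rho$ are horizontal monad morphisms from $\mathcal V(L)(R)$ to $\mathcal V(L)(T)$. The cell $\mu^h_{G,F}$ has horizontal source $L(GF)$, horizontal target $LG\circ LF$, and vertical identities on both sides, so it has the correct shape to be a candidate monadic transformation. The condition to verify is the vertical pasting identity
\[ \mu^h_{G,F} \;\vertical\; L(\sigma \horizontal \rho) \;=\; (L\sigma \horizontal L\rho) \;\vertical\; \mu^h_{G,F}, \]
which is exactly the naturality of the composition cells $\mu^h_{-,-}$ with respect to vertical $2$-cells, applied to the pair $(\rho,\sigma)$. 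This is part of the defining data of the lax-oplax double functor $L$, encoded in the naturality axioms of \cite[Definition 6.1]{Shulman_2011} that \ref{def:lax_oplax_double_functor} imports.

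For $\alpha^h_X$: the horizontal identity $id_X\colon X\to X$ carries a canonical horizontal monad morphism structure from $S$ to $S$, whose structure $2$-cell is the horizontal unit $2$-cell on $S$ (the $2$-cell with $id_X$ on top and bottom, $S$ on both sides, serving as the unit for horizontal composition). Applying $L$ and comparing with the analogous trivial structure on $id_{LX}$, the monadic transformation axiom for $\alpha^h_X$ reduces to the horizontal unitality coherence of the lax-oplax double functor together with the naturality of $\alpha^h$ along the vertical morphism $S$; both of these again belong to the defining data of a lax-oplax double functor.

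The main obstacle will be carefully tracking orientation conventions: as noted in Remark \ref{remark:double_functor_definition}, the excerpt reverses the direction of the vertical coherence cells $\mu^v$ relative to \cite{Shulman_2011}, while the horizontal cells $\mu^h$ retain their standard orientation and remain invertible only in the unitality case. Once the correct instances of the coherence axioms are identified, the proof is a direct unwinding of the pasting equations; no further homotopical or monad-theoretic input is needed, and this lemma completes the tool-kit used in Section \ref{section:adjunctions_between_infinity_ring_spectra} to lift the unit $I$ and counit $J^{-1}$ of the adjunction between the stable and global homotopy categories to the categories of $H_\infty$- and $G_\infty$-ring spectra.
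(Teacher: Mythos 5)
Your proposal is correct and follows essentially the same route as the paper, whose proof is a one-line appeal to the naturality constraints of a lax-oplax double functor (the axioms imported from Shulman's Definition 6.1): the $\mu^h_{G,F}$ case is exactly the naturality of $\mu^h$ applied to the squares $(\rho,\sigma)$, and the $\alpha^h_X$ case is the naturality of $\alpha^h$ with respect to the vertical morphism $R$. Two small slips worth flagging: in your discussion of $\alpha^h_X$ you consistently write $S$ where $R$ is meant (the monad on $X$ is $R$, not $S$), and only the naturality of $\alpha^h$ is actually used there — the separate unit-coherence law relating $\mu^h$ and $\alpha^h$ is not needed.
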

The proof follows easily using the naturality constrains of a lax-oplax double functor, see \cite[6.2]{Shulman_2011}.

\clearpage
\bibliographystyle{hplain}
\bibliography{Literatur}

\end{document}